\documentclass[reqno,11pt]{amsart}
\usepackage{xcolor}
\usepackage[margin=1.1in]{geometry}
\usepackage{amsthm,amsmath,amssymb,dsfont}
\usepackage{mathrsfs,amsfonts,functan,extarrows,mathtools}

\usepackage{hyperref}
\hypersetup{hidelinks}

 \allowdisplaybreaks
\usepackage{cite}
\usepackage{indentfirst, latexsym, amssymb, enumerate,amsmath,graphicx}
\usepackage{float}
\usepackage{relsize}
\usepackage{marginnote}
\usepackage{stmaryrd}
\usepackage{esint}
\usepackage{graphicx}
\usepackage{bm}
\usepackage{caption}
\usepackage{subfigure}
\usepackage{color}
\usepackage{graphicx}
\usepackage{subfigure}
\usepackage{float}
\usepackage{paralist}
\usepackage{indentfirst}

\allowdisplaybreaks[4]
\theoremstyle{plain}
\newtheorem{thm}{Theorem}[section]

\newtheorem{lem}[thm]{Lemma}
\newtheorem{prop}[thm]{Proposition}

\theoremstyle{definition}

\theoremstyle{remark}
\newtheorem{rem}{Remark}[section]

\numberwithin{equation}{section}

\usepackage{appendix}
\usepackage{xcolor}

\newcommand{\eps}{{\varepsilon}}

\begin{document}

\title[Compressible Navier-Stokes-Vlasov-Fokker-Planck system]
{Global well-posedness and inviscid limit of the compressible Navier-Stokes-Vlasov-Fokker-Planck system with density-dependent friction force}

\author[F. Li]{Fucai Li}
\address{School of Mathematics, Nanjing University, Nanjing 210093, China}
\email{fli@nju.edu.cn}

\author[J. Ni]{Jinkai Ni}
\address{School  of Mathematics, Nanjing University, Nanjing 210093, China}
\email{jni@smail.nju.edu.cn}

\author[D. Wang]{Dehua Wang} 
\address{Department of Mathematics, University of Pittsburgh, Pittsburgh, PA 15260, USA}
\email{dhwang@pitt.edu} 

\begin{abstract}

This paper investigates the global dynamics of a three-dimensional fluid-particle interaction system that couples the compressible barotropic Navier-Stokes equations with the Vlasov-Fokker-Planck equation through a density-dependent friction force. The study establishes the global well-posedness, uniform-in-viscosity estimates, the global inviscid limit, and optimal large-time decay rates for classical solutions near equilibrium. First, for initial perturbations in $H^3$ sufficiently close to equilibrium, regularity estimates that are uniform in the viscosity coefficient are derived, and the existence of global classical solutions to the Cauchy problem is obtained.  These uniform bounds enable us to rigorously justify the global-in-time inviscid limit as viscosity vanishes, with an explicit convergence rate proportional to the viscosity coefficient. This behavior differs significantly from that of the pure compressible Navier-Stokes system in the absence of particle interactions, emphasizing the stabilizing influence of kinetic coupling. Consequently, we establish for the first time the global existence of classical solutions to the compressible Euler-Vlasov-Fokker-Planck system. Moreover, under an additional mild assumption on the initial data, optimal time decay rates for both the solution and its spatial derivatives are obtained. Notably, the dissipative and microscopic components decay at a rate half an order faster than the macroscopic solution itself, indicating a novel relaxation mechanism induced by fluid-particle interactions. The analysis introduces new energy and dissipation structures for the coupled system, overcoming substantial difficulties arising from fluid-particle interactions.

\end{abstract}

\keywords{Compressible Navier-Stokes equations, Vlasov-Fokker-Planck equation, density-dependent friction force,  global well-posedness, time decay rates, vanishing viscosity limit}

\subjclass[2020]{76N06, 35Q84, 76N10, 35B40}

\date{\today}
\maketitle

\tableofcontents
%-----------------section one-------------------------------------------------------------
%\renewcommand{\theequation}{\thesection.\arabic{equation}}
%\setcounter{equation}{0}
\setcounter{equation}{0}
 \indent % \allowdisplaybreaks

\section{Introduction}
%\subsection{The models}
%Fluid-particle models, in which dispersed particles suspended in fluids, are widely applied across various fields such as combustion theory \cite{Wfa-1958,Wfa-1985}, chemical engineering \cite{CP-siam-1983}, medical biosprays \cite{BBJM-esaim-2005}, sedimentation phenomena \cite{BWC-zamm-2000}, and diesel engine operations \cite{RM-1952,RM-1952-a}. In this paper, we consider the following fluid-particle model, which consists of the compressible barotropic Navier-Stokes  equations coupled with the Vlasov-Fokker-Planck   equation featuring a density-dependent friction force (cf. \cite{LMW-SIAM-2017,LNW-2025-preprint}):
Fluid-particle flows, in which dispersed particles are suspended in a fluid, have wide applications across various fields, including combustion theory \cite{Wfa-1958,Wfa-1985}, chemical engineering \cite{CP-siam-1983}, medical biosprays \cite{BBJM-esaim-2005}, sedimentation phenomena \cite{BWC-zamm-2000}, and diesel engine operations \cite{RM-1952,RM-1952-a}. In this paper, we consider the following fluid-particle system, consisting of the compressible barotropic Navier-Stokes  equations coupled with the Vlasov-Fokker-Planck   equation featuring a density-dependent friction force (cf. \cite{LMW-SIAM-2017,LNW-2025-preprint}):
\begin{equation}\label{A1}
\left\{\begin{aligned}
&\partial_{t} \rho+\mathrm{div}(\rho u)=0,  \\
&\partial_{t}(\rho u)+\mathrm{div}(\rho u\otimes u)+\nabla P-\mu\Delta u =-\rho\int_{\mathbb{R}^{3}}(u-v)F {\rm d}v,\\
&\partial_{t}F+v\cdot\nabla F+\rho\mathrm{div}_{v}[(u-v)F-\nabla_{v}F]=0,
 \end{aligned}
 \right.
\end{equation}
with the initial data
\begin{equation}\label{A-1}
(\rho,u,F)|_{t=0}=(\rho_0(x),u_0(x),F_0(x)), \quad x\in\mathbb{R}^3. 
\end{equation}
Here,   $\rho = \rho(t, x) \geq 0$ and $u = u(t, x) \in \mathbb{R}^3$, defined for $(t, x) \in \mathbb{R}^+ \times \mathbb{R}^3$, represent the mass density and velocity of the fluid, respectively. The function $F = F(t, x, v) \geq 0$, defined for $(t, x, v) \in \mathbb{R}^+ \times \mathbb{R}^3 \times \mathbb{R}^3$, denotes the particle density distribution function. In \eqref{A1}, $P = P(\rho)$ is the pressure function  depending only on $\rho$, with $P'(\cdot) > 0$ (As for a prototype, we recall that $P(\rho) = c_0 \rho^\gamma$ with $\gamma > 1$ and $c_0 > 0$ for the case of an isentropic gas). Furthermore, the parameter $\mu > 0$ represents the viscosity coefficient of the fluid.

The equations \eqref{A1}$_2$ and \eqref{A1}$_3$ interact with each other through the friction force $\rho(u - v)$. As noted by Mellet and Vasseur \cite{MV-MMMAS-2007}, this friction force generally depends on the fluid density, which reflects its influence on the particle motion. This formulation has greater physical significance compared to the density-independent friction force $u - v$, which is widely used in different fluid-particle models \cite{CG-CPDE-2006,CKL-JHDE-2013,DL-KRM-2013,EHM-N-2021,Hd-pmp-2022,
HM-MAMS-2024,HMM-arma-2020,LNW-2025 SAM,LLY22,LWW-ARMA-2022,
 LS-JDE-2021,LSZ-2025-KRM,MV-MMMAS-2007,MV-CMP-2008,Mu-Wang-20,
 SY-2020-JDE,SWYZ-2023-JDE,Ww-CMS-2024}. 
 However, due to the presence of the fluid density $\rho$ in \eqref{A1}, certain trilinear terms arise in fluid-particle models. Consequently, obtaining global solutions for such models becomes more challenging, and only a few rigorous mathematical results are available; for example, see \cite{Wang-Yu2015,CJ-SIAM-2021,BD-JHDE-2006,Choi-Kwon-15,LMW-SIAM-2017}. Specifically, Li, Mu, and Wang \cite{LMW-SIAM-2017} established the global well-posedness of classical solutions within the $H^4(\mathbb{R}^3)$ framework under the condition that the initial data is a small perturbation of the global Maxwellian. Moreover, they demonstrated that the solutions decay at a rate of $(1+t)^{- {1}/{2}}$ in the $L^\infty$ norm over $\mathbb{R}^3$. However,  the decay behavior in the natural $L^2$-norm remains unaddressed in their paper. Recently, Li, Ni and  Wu \cite{LNW-2025-preprint}  improved these results by reducing the required regularity of the initial data from $H^4(\mathbb{R}^3)$ to $H^2(\mathbb{R}^3)$ and deriving optimal time-decay rates for strong solutions and their gradients in both the $L^2$-norm and the $L^p$-norm for $2 \leq p \leq 6$. It is noted that the \emph{fixed} viscosity coefficient $\mu>0$ plays a crucial role in the arguments of  \cite{LMW-SIAM-2017, LNW-2025-preprint}. 

When the viscosity term $\mu \Delta u$ is omitted, the system \eqref{A1} reduces to the following compressible Euler-Vlasov-Fokker-Planck (Euler-VFP) system (cf.\cite{CGL-JCP-2008}):  
\begin{equation}\label{A2}
\left\{
\begin{aligned}
&\partial_{t} \rho + \mathrm{div}(\rho u) = 0, \\
&\partial_{t}(\rho u) + \mathrm{div}(\rho u \otimes u) + \nabla P = -\rho \int_{\mathbb{R}^{3}}(u - v)F \, {\rm d}v, \\
&\partial_{t}F + v \cdot \nabla F + \rho\,\mathrm{div}_{v}[(u - v)F - \nabla_{v}F] = 0.
\end{aligned}
\right.
\end{equation}
Formally, the inviscid fluid-particle system \eqref{A2} can be obtained by taking the limit $\mu \to 0$ in the compressible Navier-Stokes-Vlasov-Fokker-Planck (NS-VFP) system \eqref{A1}–\eqref{A-1}. However, 
to our best knowledge, until now, there are  no mathematical results on global well-posedness for this system \eqref{A2}. One of our objectives is to address this issue and establish the existence of global solutions to it.
%system \eqref{A2}.

More precisely, in this paper, we study the \emph{vanishing viscosity limit} to the NS-VFP system \eqref{A1}, and establish the global well-posedness of the Euler-VFP system \eqref{A2}. Furthermore, we derive the sharp convergence rate of $\mathcal{O}(\mu)$ in this limiting process, which significantly improves upon the previously known rate of $\mathcal{O}(\mu^{\frac{1}{2}})$ obtained in \cite{LWW-ARMA-2022}. Under the additional assumption that the initial data lie in $L^q$ with $q \in [1, \frac{6}{5})$, we obtain the optimal time-decay rates for classical solutions and their spatial gradients in the $L^2$-norm, as well as in the $L^p$-norm for $2 \leq p \leq 6$. Finally, we demonstrate the exponential decay of $(\rho, u, F)$ for the Euler-VFP system \eqref{A2} in $\mathbb{T}^3$.

\subsection{Brief reviews on previous results of fluid-particle systems}
For our Euler-VFP system \eqref{A2}, when the diffusion term $\rho\Delta_v F$ is omitted, it reduces to the following inviscid compressible system:  
\begin{equation}\label{A3}  
    \left\{  
    \begin{aligned}  
    &\partial_{t} \rho + \mathrm{div}(\rho u) = 0, \\  
    &\partial_{t}(\rho u) + \mathrm{div}(\rho u\otimes u) + \nabla P = -\rho\int_{\mathbb{R}^{3}}(u-v)F \, {\rm d}v, \\  
    &\partial_{t}F + v\cdot\nabla F + \rho\mathrm{div}_{v}[(u-v)F] = 0.  
    \end{aligned}  
    \right.  
\end{equation}  
%Baranger and   Desvillettes \cite{BD-JHDE-2006} first established the local existence of classical solutions to \eqref{A3}, assuming that the initial data are sufficiently smooth and their supports satisfy appropriate conditions. 
The local existence of classical solutions to \eqref{A3} was established by Baranger and   Desvillettes \cite{BD-JHDE-2006} when the initial data are sufficiently smooth and their supports satisfy appropriate conditions.
For the incompressible version corresponding to the system \eqref{A3}:  
\begin{equation} \label{A4}  
    \left\{  
      \begin{aligned}  
        &\partial_{t} \rho + \mathrm{div}(\rho u) = 0,\quad{\rm div}u=0, \\  
        &\partial_{t}(\rho u) + \mathrm{div}(\rho u\otimes u) + \nabla \pi = -\rho\int_{\mathbb{R}^{3}}(u-v)F \, {\rm d}v, \\  
        &\partial_{t}F + v\cdot\nabla F + \rho\mathrm{div}_{v}[(u-v)F] = 0,  
      \end{aligned}  
    \right.  
\end{equation}  
%Wang and Yu \cite{Wang-Yu2015} obtained the global existence of weak solutions to \eqref{A4} with large initial data in a bounded domain under the reflection boundary condition. Choi and Kwon \cite{Choi-Kwon-15} established the existence of strong solutions on $[0,T]$ to \eqref{A4} in both $\mathbb{T}^3$ and $\mathbb{R}^3$ for any $T>0$, provided that the initial data are sufficiently small and regular.
the global weak solutions to \eqref{A4} were obtained by Wang and Yu \cite{Wang-Yu2015} for large initial data in a bounded domain with the reflection boundary condition, 
 and the existence of strong solutions on $[0,T]$ for any fixed $T>0$ to \eqref{A4} was proved in \cite{Choi-Kwon-15} by Choi and Kwon in both $\mathbb{T}^3$ and $\mathbb{R}^3$ provided that the initial data are sufficiently small and regular.

%  In this work, we extend these findings to our Euler-VFP system \eqref{A2}.

For the compressible NS-VFP system \eqref{A1}, when the density-independent friction force $u - v$ is considered, the system reduces to the following form:
\begin{equation}\label{A5}
\left\{\begin{aligned}
&\partial_{t} \rho+\mathrm{div}(\rho u)=0,  \\
&\partial_{t}(\rho u)+\mathrm{div}(\rho u\otimes u)+\nabla P-\mu\Delta u =- \int_{\mathbb{R}^{3}}(u-v)F {\rm d}v,\\
&\partial_{t}F+v\cdot\nabla F+ \mathrm{div}_{v}[(u-v)F-\nabla_{v}F]=0.
 \end{aligned}
 \right.
\end{equation}
%Mellet and Vasseur \cite{MV-MMMAS-2007} established the global existence of weak solutions to \eqref{A5} in a bounded domain $\Omega\subset\mathbb{R}^3$. Furthermore, they investigated the asymptotic behavior of these solutions in \cite{MV-CMP-2008}. Chae, Kang, and Lee \cite{CKL-JHDE-2013} proved the global existence and exponential decay of classical solutions to \eqref{A5} in $\mathbb{T}^3\times\mathbb{R}^3$. Very recently, Wang \cite{Ww-CMS-2024} established the global existence of strong solutions in $H^2$ and derived the optimal decay rates for all spatial derivatives of the solutions to \eqref{A5}, including an additional term $\nabla{\rm div}u$ in the momentum equation \eqref{A5}$_2$.
%
The existence and asymptotic behavior of weak solutions to \eqref{A5} were proved by Mellet and Vasseur \cite{MV-MMMAS-2007,MV-CMP-2008} in a three-dimensional bounded domain.
%, and their asymptotic behavior was studied in \cite{MV-CMP-2008}.  
The global existence and exponential decay of classical solutions were investigated by Chae, Kang, and Lee \cite{CKL-JHDE-2013} in $\mathbb{T}^3\times\mathbb{R}^3$. In a recent work by Wang \cite{Ww-CMS-2024} the global existence of strong solutions was established in $H^2$ with the optimal decay rates for solutions' spatial derivatives, including an added term $\nabla{\rm div}u$ in the momentum equation \eqref{A5}$_2$.
 For the compressible Euler-VFP equations with friction force $u - v$, i.e., by neglecting the viscosity term $\mu \Delta u$ in \eqref{A5}$_2$, 
\begin{equation}\label{A6}  
\left\{  
\begin{aligned}  
&\partial_{t} \rho + \mathrm{div}(\rho u) = 0, \\  
&\partial_{t}(\rho u) + \mathrm{div}(\rho u \otimes u) + \nabla P = - \int_{\mathbb{R}^{3}} (u - v) F \, {\rm d}v, \\  
&\partial_{t}F + v \cdot \nabla F + \mathrm{div}_{v}[(u - v)F - \nabla_{v}F] = 0.  
\end{aligned}  
\right.  
\end{equation}  
Carrillo and Goudon \cite{CG-CPDE-2006} performed asymptotic analysis and hydrodynamic expansions to  \eqref{A6} in both the flowing and bubbling regimes. Duan and Liu \cite{DL-KRM-2013} established the global well-posedness of classical solutions to   \eqref{A6} under small initial perturbations around the equilibrium state 
$(\rho,u,F)=(1, 0, M)$. Recently, Li, Wang and Wang \cite{LWW-ARMA-2022} studied the global asymptotic stability of the systems   \eqref{A5} and  \eqref{A6} in $\mathbb{R}^3$.  For additional results on compressible fluid-particle models, interested readers can refer to \cite{LLY22,Mu-Wang-20,CJ-SIAM-2021,LS-JDE-2021,LNW-24-1} and the references cited therein.

For the incompressible NS-VFP equations with a density-independent friction force of the form $\rho( u - v) $,
it takes the  following form:  
\begin{equation} \label{A7}  
\left\{  
\begin{aligned}  
&\partial_{t} \rho + \mathrm{div}(\rho u) = 0, \quad \mathrm{div}\, u = 0, \\  
&\partial_{t}(\rho u) + \mathrm{div}(\rho u \otimes u) + \nabla \pi-\mu\Delta u = - \int_{\mathbb{R}^{3}} (u - v) F \, \mathrm{d}v, \\  
&\partial_{t} F + v \cdot \nabla F + \mathrm{div}_{v}[(u - v) F] = \Delta_v F.  
\end{aligned}  
\right.  
\end{equation}  
This system is known as the incompressible inhomogeneous NS-VFP system. However, only a few mathematical results are available for this model. Su and Yao \cite{SY-2020-JDE} studied the hydrodynamic limit of system \eqref{A7} in a bounded domain $\Omega \subset \mathbb{R}^2$.
Jiang, Li, and Ni \cite{JLN-preprint-2024} proved the global well-posedness and large-time behavior of solutions to system \eqref{A7}.
When the diffusion term $\Delta_v F$ in \eqref{A7}$_3$ is omitted, the corresponding hydrodynamic limit was established in a bounded domain $\Omega \subset \mathbb{R}^3$ in \cite{SWYZ-2023-JDE}.  Li, Shou, and Zhang \cite{LSZ-2025-KRM} obtained exponential stability of solutions to the NS–Vlasov system near vacuum in $\mathbb{R}^3$. Very recently, %Collaborated with Shou 
in \cite{lnsw-2025} we joint with Shou obtained the global well-posedness, optimal decay rates,  and inviscid limit to the system \eqref{A7} 
with more general friction force $\rho(u-v)$.

In  the system \eqref{A7}, when the density $\rho$ is a positive constant,  it  is reduced to the so-called incompressible homogeneous NS-VFP system:  
\begin{equation} \label{A8}  
  \left\{  
  \begin{aligned}  
    &\partial_{t} u + u \cdot \nabla u + \nabla \pi - \mu \Delta u = - \int_{\mathbb{R}^{3}} (u - v) F \, \mathrm{d}v, \\  
    &\mathrm{div}\, u = 0, \\  
    &\partial_{t} F + v \cdot \nabla F + \mathrm{div}_{v}[(u - v) F] = \Delta_v F.  
  \end{aligned}  
  \right.  
\end{equation}  
%There is a considerable body of literature devoted to the above system and related models. 
%Hamdache \cite{Hk-JJIAM-1998} first established the global existence and long-time behavior of weak solutions for the Vlasov-Stokes system in a bounded domain. 
A substantial amount of literature exists on this system and its related models. Hamdache \cite{Hk-JJIAM-1998} first proved the global existence and long-term behavior of weak solutions to the Vlasov-Stokes system in a bounded domain. 
The global existence of weak solutions to the incompressible homogeneous Navier-Stokes-Vlasov (NSV) system  (i.e., ignoring the $\Delta_v F$ in \eqref{A8}) on a three-dimensional torus $\mathbb{T}^3$ was proved by Boudin, Desvillettes, Grandmont, Moussa \cite{BDGM-DIE-2009}. Yu \cite{Yc-JMPA-2013} investigated the global existence of weak solutions to the NSV system under reflective boundary conditions in a bounded domain $\Omega \subset \mathbb{R}^d$, where $d = 2, 3$, and further established uniqueness in $\mathbb{R}^2$. Subsequently, Boudin, Grandmont, and Moussa \cite{BGM-2017-JDE} proved the existence of global weak solutions for the NSV system in a time-dependent domain $\Omega_t\subset \mathbb{R}^3$. 
Recently, %Han-Kwan et al. 
various results were obtained in \cite{HMM-arma-2020, EHM-N-2021, Hd-pmp-2022} on the global existence and large-time behavior of weak solutions to the NSV system on a torus $\mathbb{T}^3$, the whole space $\mathbb{R}^3$, and a bounded domain $\Omega \subset \mathbb{R}^3$, respectively. For a regular equilibrium $(u,F)=(0,M)$, 
Goudon, He, Moussa, and Zhang in \cite{GHMZ-sjma-2010} established the global existence of classical solutions to the system \eqref{A8} in a torus $\mathbb{T}^3$. Furthermore, Carrillo, Duan, and Moussa \cite{CDM-krm-2011} studied the corresponding inviscid case of the system \eqref{A8} (i.e. ignoring the term $-\mu \Delta u$ in \eqref{A8}).
For more results on the  incompressible fluid-particle models,   interested readers can refer to \cite{Danchin-2024,LNW-2025 SAM,GJV-2004-2-IUMJ,GJV-IUMJ-2004,HM-MAMS-2024}.

\subsection{Main results}
We focus on the stability of global classical solutions to the system \eqref{A1} around the equilibrium  state:  
$$  
(\rho, u, F) = (\rho_{\infty}, u_{\infty}, M_{[\rho_{\infty},u_{\infty}]}),  
$$  
where $\rho_{\infty} > 0$ is a given constant, $u_{\infty} = (u_{\infty1}, u_{\infty2}, u_{\infty3})$  
a given constant vector, and  %$M_{[\rho_{\infty},u_{\infty}]}$ 
$$M_{[\rho_{\infty},u_{\infty}]} =\frac{\rho_\infty}{(2\pi)^{\frac{3}{2}}} e^{-\frac{|v-u_\infty|^2}{2}}$$
is the Maxwellian distribution.  %, i.e.,  $M_{[\rho_{\infty},u_{\infty}]} =\frac{\rho_\infty}{(2\pi)^{\frac{3}{2}}} e^{-\frac{|v-u_\infty|^2}{2}}$.

Without loss of generality, we can choose $\rho_{\infty} = 1$ and $u_{\infty} = (0,0,0)$ throughout this paper for notational  and presentation
simplicity. We denote $M=M(v)=M_{[1,0]}$. To achieve the objectives of our study, we consider the following standard transformations:  
$$
\rho ^\mu= 1 + \varrho^\mu, \quad F^\mu = M + \sqrt{M} f^\mu.
$$  
Accordingly, the Cauchy problem \eqref{A1}–\eqref{A-1} can be rewritten as
\begin{equation}\label{C1}
\left\{\begin{aligned}
&\partial_{t}\varrho^\mu+(\varrho^\mu+1){\rm div} u^\mu+\nabla\varrho^\mu\cdot u^\mu=0,\\  
&\partial_{t}u^\mu+u^\mu\cdot\nabla u^\mu+{\frac{P^{\prime}(1+\varrho^\mu)}{1+\varrho^\mu}}\nabla\varrho^\mu-\frac{\mu\Delta u^\mu}{1+\varrho^\mu}={b^\mu-u^\mu-a^\mu u^\mu},\\
& \partial_{t}f^\mu+v\cdot\nabla_x f^\mu+u^\mu\cdot\nabla_{v}f^\mu-{\frac{1}{2}}u^\mu\cdot vf^\mu-u^\mu\cdot v\sqrt{M}\\
&\quad= \mathcal{L}f^\mu+\varrho^\mu\Big(\mathcal{L}f^\mu-u^\mu\cdot\nabla_v f^\mu+\frac{1}{2}u^\mu\cdot vf^\mu+u^\mu\cdot v\sqrt{M}\Big),\\ 
&(\varrho^\mu,u^\mu,f^\mu)|_{t=0}=(\varrho_0^\mu(x),u_0^\mu(x),f_0^\mu(x)),\\
 \end{aligned}
 \right.
\end{equation}
with $\varrho^{\mu}_{0}(x) = \rho^{\mu}_{0}(x) - 1$ and $f^{\mu}_0 = \frac{F_0^\mu(x,v) - M}{\sqrt{M}}$. 

For clarity, we have introduced the superscript $\mu$ in the unknowns to explicitly indicate their dependence on the parameter $\mu$.
Here, $\mathcal{L}$ in \eqref{C1} denotes the linearized Fokker-Planck operator, which is defined as  
\begin{align*}  
\mathcal{L}f^\mu = \frac{1}{\sqrt{M}} \mathrm{div}_{v} \left[ M \nabla_{v} \left( \frac{f^\mu}{\sqrt{M}} \right) \right].  
\end{align*}  
Moreover, $a^\mu= a^{f^\mu}$ and $b^\mu = b^{f^\mu}$ represent the corresponding moments of $f^\mu$, defined by  
\begin{align*}  
a^\mu=a^{f^{\mu}}(t,x) = \int_{\mathbb{R}^{3}} \sqrt{M} f^\mu(t,x,v)\, {\rm d}v, \quad 
b^\mu=b^{f^\mu}(t,x) = \int_{\mathbb{R}^{3}} v \sqrt{M} f^\mu(t,x,v)\, {\rm d}v.  
\end{align*}  
We define the dissipation functional as follows:  
\begin{equation}\label{D0}  
\begin{aligned}  
\mathcal{D}(\varrho^\mu ,u^\mu , f^\mu )(t) := &\, \|b^\mu - u^\mu \|_{H^3}^2 + \|\nabla (\varrho^\mu,a^\mu ,b^\mu )\|_{H^2}^2 \\  
& + \sum_{|\alpha|\leq 3} \|\{\mathbf{I}-\mathbf{P}\}\partial^\alpha f^\mu \|_{\nu}^2  
+ \sum_{\substack{1 \leq |\beta| \leq 3\\ |\alpha|+|\beta| \leq 3}} \|\partial_\beta^\alpha\{\mathbf{I}-\mathbf{P}\} f^\mu \|_\nu^2,  
\end{aligned}  
\end{equation}  
where $\{\mathbf{I}-\mathbf{P}\}f^\mu$ denotes the microscopic component of $f^\mu$, as introduced in Section 2.3.

Similarly, for $\mu=0$, by applying the transformations  
$$  
\rho = 1 + \varrho, \quad F = M + \sqrt{M} f,  
$$  
the Cauchy problem for \eqref{A2} can be reformulated as   
\begin{equation}\label{C2}
\left\{\begin{aligned}
&\partial_{t}\varrho +(\varrho +1){\rm div} u +\nabla\varrho \cdot u =0,\\  
&\partial_{t}u +u\cdot\nabla u +{\frac{P^{\prime}(1+\varrho )}{1+\varrho }}\nabla\varrho ={b -u -a  u },\\
& \partial_{t}f +v\cdot\nabla_x f +u \cdot\nabla_{v}f -{\frac{1}{2}}u \cdot vf -u \cdot v\sqrt{M}\\
&\quad= \mathcal{L}f +\varrho \Big(\mathcal{L}f -u\cdot\nabla_v f+\frac{1}{2}u \cdot vf +u \cdot v\sqrt{M}\Big),\\ 
&(\varrho ,u,f )|_{t=0}=(\varrho_0 (x),u_0 (x),f_0 (x)), 
 \end{aligned}
 \right.
\end{equation}
where $a=a^f$ and $b=b^f$.

Our first result addresses the global existence of classical solutions to the compressible NS-VFP system \eqref{C1}. Specifically, we derive regularity estimates that are uniform in both $\mu$ and time.

\begin{thm}\label{T1.1} 
Let $0<\mu<1$.
Assume that $(\varrho_0^\mu,u_0^\mu,f_0^\mu)$
satisfies  $(\varrho_0^\mu,u_0^\mu )\in H^3$, $f_0^\mu\in H^3_{x,v}$ and $F_0^\mu=M+\sqrt{M}f_0^\mu\geq 0$.
There exists a constant $\varepsilon_0>0$ independent of $\mu$ such that if
\begin{align}\label{a-1}
\mathcal{E}_0^\mu:=\|(\varrho^{\mu}_0,u^{\mu}_0 )\|_{H^3}^2+\|f_0^\mu\|_{H_{x,v}^3}^2\leq \varepsilon_0,
\end{align}
then the compressible  NS-VFP system  \eqref{C1} 
admits a unique global classical solution $(\varrho^{\mu},u^{\mu}, \linebreak f^{\mu})$, which satisfies $\rho^{\mu}=1+\varrho^\mu>0$, $F^\mu=M+\sqrt{M}f^\mu\geq 0$, and
\begin{align}\label{a-2}
&\sup_{t\geq 0}\big(\|(\varrho^{\mu},u^{\mu}  )(t)\|_{H^{3}}^2+\|f^\mu(t)\|_{H_{x,v}^3}^2\big)\nonumber\\
&\quad +\int_{0}^{t} \mathcal{D}(\varrho^\mu,u^\mu, f^\mu)(\tau){\rm d}\tau 
 +\mu\int_0^{t}\|\nabla u (\tau)\|_{H^3}^2{\rm d}\tau \leq C_0\mathcal{E}_0^\mu,\quad t\in \mathbb{R}_{+},
\end{align}
where $C_0 > 0$ is a constant that is independent of  $\mu$ and time $t$.
\end{thm}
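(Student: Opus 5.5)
The plan is the standard combination of local existence, a uniform-in-$\mu$ a priori estimate, and a continuity argument. Local well-posedness of \eqref{C1} in $H^3\times H^3_{x,v}$ with $F^\mu\ge0$ and $\rho^\mu>0$ for small data follows from classical iteration (a parabolic–hyperbolic system coupled with a kinetic Fokker–Planck equation). Global existence and \eqref{a-2} then reduce to the following a priori statement. If on $[0,T]$ the solution satisfies
\[
\sup_{0\le t\le T}\big(\|(\varrho^\mu,u^\mu)(t)\|_{H^3}^2+\|f^\mu(t)\|_{H^3_{x,v}}^2\big)\le \delta
\]
with $\delta$ small and independent of $\mu$, then there exist an energy functional $\mathcal{E}(t)$ equivalent to this norm and a constant $\lambda>0$, both independent of $\mu$, such that
\[
\frac{d}{dt}\mathcal{E}(t)+\lambda\mathcal{D}(t)+\lambda\mu\|\nabla u^\mu\|_{H^3}^2\le 0 .
\]
The essential constraint is that the viscosity may never be used to control anything. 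Every dissipation of $\varrho^\mu$ and $u^\mu$ must come from the relaxation term $b^\mu-u^\mu$ and from the coupling with the kinetic part, exactly as in the inviscid Euler–VFP setting.

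\textbf{Step 1: zeroth and higher-order energy estimates.} Apply $\partial^\alpha$, $|\alpha|\le3$, to \eqref{C1}. Test the $\varrho$-equation with $\frac{P'(1+\varrho)}{(1+\varrho)^2}\partial^\alpha\varrho$ (symmetrizer), the $u$-equation with $\partial^\alpha u$, and the $f$-equation with $\partial^\alpha f$. The linear coupling terms $-u\cdot v\sqrt M$ and $b-u$ combine with the Fokker–Planck dissipation: since $\langle \mathcal L f,f\rangle=-\|\{\mathbf I-\mathbf P\}f\|_\nu^2-|b|^2$ roughly, we get $-\langle\mathcal Lf,f\rangle-2u\cdot b+|u|^2\gtrsim \|\{\mathbf I-\mathbf P\}f\|_\nu^2+|b-u|^2$. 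This produces $\|b-u\|_{H^3}^2+\sum\|\{\mathbf I-\mathbf P\}\partial^\alpha f\|_\nu^2$.

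The viscous term gives $+\mu\|\nabla\partial^\alpha u\|^2$ plus the commutator $\mu\,\partial^\alpha(1/(1+\varrho))$. That commutator is bounded by $\mu\delta^{1/2}\|\nabla u\|_{H^3}^2$ and is absorbed. Here, to avoid an $H^4$ norm of $\varrho$, I would integrate by parts onto $\nabla\varrho\,\partial^\alpha u$ terms carefully.

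The trilinear terms $\varrho(\mathcal Lf-u\cdot\nabla_vf+\dots)$ and $a u$ are bounded by $\delta^{1/2}\mathcal D$. A typical example is $\langle\varrho\,\mathcal L f,f\rangle$: split $f=\mathbf Pf+\{\mathbf I-\mathbf P\}f$ and use that $\mathcal L\mathbf P$ only sees $b$, so products always contain $\{\mathbf I-\mathbf P\}f$ or $b-u$, or contain $\nabla$ of something via Sobolev embedding. The most dangerous term is the top-order convection $u\cdot\nabla\partial^\alpha\varrho$, which must be handled by symmetrization and integration by parts. This is standard hyperbolic treatment.

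\textbf{Step 2: macroscopic dissipation.} We still need $\|\nabla(\varrho,a,b)\|_{H^2}^2$, and this is the main obstacle since no viscosity is available. For $a$ and $b$, use the moment (fluid-type) system derived from the $f$-equation: $\partial_ta+\operatorname{div}b=0$, $\partial_tb+\nabla a+\nabla\cdot\Gamma(\{\mathbf I-\mathbf P\}f)=-(b-u)(1+\varrho)+\dots$. Build interactive functionals $\sum_{|\alpha|\le2}\langle\partial^\alpha b,\nabla\partial^\alpha a\rangle$ yielding $\|\nabla a\|_{H^2}^2$, and, via the macro–micro equations for higher moments, $\|\nabla b\|_{H^2}^2$ (Duan-type). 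For $\varrho$, use $\sum_{|\alpha|\le2}\langle\partial^\alpha u,\nabla\partial^\alpha\varrho\rangle$. Its time derivative gives $P'(1)\|\nabla\partial^\alpha\varrho\|^2-\|\operatorname{div}\partial^\alpha u\|^2+\langle\partial^\alpha(b-u),\nabla\partial^\alpha\varrho\rangle$. The bad term $\|\nabla u\|_{H^2}^2$ is controlled by $\|\nabla(b-u)\|_{H^2}^2+\|\nabla b\|_{H^2}^2$, both already dissipated, so no $\mu$ is needed. The viscous contribution $\mu\langle\Delta\partial^\alpha u,\nabla\partial^\alpha\varrho\rangle$ is bounded by $\mu\|\nabla u\|_{H^3}^2+\mu\|\nabla\varrho\|_{H^2}^2$, which is absorbable since $\mu<1$ and the coefficient is small.

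\textbf{Step 3: mixed derivatives and closing.} Apply $\partial^\alpha_\beta$ to the microscopic equation for $\{\mathbf I-\mathbf P\}f$ and use coercivity of $\mathcal L$ in the $\nu$-norm. Combining inductively in $|\beta|$ with small weights yields the last sum in \eqref{D0}.

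Taking a suitable linear combination $\mathcal E$ of the energies from Steps 1–3 (small multiples of the interactive functionals) gives the differential inequality above for $\delta$ small. Integrating in time gives \eqref{a-2} with $C_0$ independent of $\mu$ and $t$. Choosing $\varepsilon_0$ so that $C_0\varepsilon_0<\delta/2$ closes the continuity argument. Uniqueness follows from an $L^2$ energy estimate on the difference of two solutions, and positivity of $F^\mu$ follows from the maximum principle for the Fokker–Planck equation.
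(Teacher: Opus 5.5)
Your proposal is correct and follows the paper's proof essentially step for step. Both use the symmetrizer $\frac{P'(1+\varrho)}{(1+\varrho)^2}\partial^\alpha\varrho$ in the higher-order estimates, interactive functionals for $\|\nabla(a,b)\|_{H^2}$ and $\|\nabla\varrho\|_{H^2}$ with viscosity replaced by $\|\nabla u\|_{H^2}\lesssim\|b-u\|_{H^3}+\|\nabla b\|_{H^2}$, the mixed-derivative estimates, and a continuity argument with $\mu$-independent constants. One correction of emphasis: the term that actually needs the symmetrizer is not the convection $u\cdot\nabla\partial^\alpha\varrho$, which is harmless after integrating by parts, but $(1+\varrho)\,\mathrm{div}\,\partial^\alpha u\,\partial^\alpha\varrho$, which loses a derivative and, because of that weight, cancels against the pressure term of the $u$-equation up to the controllable remainder $-\int\nabla\big(\frac{P'(1+\varrho)}{1+\varrho}\big)\cdot\partial^\alpha u\,\partial^\alpha\varrho$.
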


Based on the uniform estimates derived in Theorem \ref{T1.1}, we investigate the vanishing viscosity limit as $\mu \to 0$ and prove the global existence of classical solutions to the compressible Euler-VFP system \eqref{C2}.

\begin{thm}\label{T1.2} Assume that $(\varrho_0, u_0, f_0)$ satisfies $(\varrho_0, u_0) \in H^3$ and $f_0 \in H_{x,v}^3$, with $F_0 = M + \sqrt{M}f_0 \geq 0$. Let $\{(\varrho_0^\mu, u_0^\mu, f_0^\mu)\}_{0 < \mu < 1}$ be a sequence such that $(\varrho^\mu_0, u^\mu_0) \to (\varrho_0, u_0)$ in $H^3$ and $f_0^\mu \to f_0$ in $H^3_{x,v}$. There exists a constant $\varepsilon_1 > 0$, independent of $\mu$, such that if  
\begin{align}\label{b-1}
\tilde{\mathcal{E}}_0 := \|(\varrho_0, u_0)\|_{H^3}^2 + \|f_0\|_{H_{x,v}^3}^2 \leq \varepsilon_1,
\end{align}  
then, for the global classical solution $(\varrho^\mu, u^\mu, f^\mu)$ of the compressible NS-VFP system \eqref{C1} subject to the initial data $(\varrho_0^\mu, u_0^\mu, f_0^\mu)$ obtained in Theorem \ref{T1.1},  there exists a limit $(\varrho, u, f)$ as $\mu \to 0$, such that, up to a subsequence, the following convergence holds:  
\begin{equation}\label{b2}
\left\{
\begin{aligned}
(\varrho^\mu, u^\mu) &\rightharpoonup (\varrho, u) \quad \text{weakly}^* \text{ in } L^{\infty}(\mathbb{R}_{+}; H^3(\mathbb{R}^3)), \\
f^\mu &\rightharpoonup f \quad \quad\,\,\,\,\text{weakly}^* \text{ in } L^{\infty}(\mathbb{R}_{+}; H_{x,v}^3(\mathbb{R}^3 \times \mathbb{R}^3)), \\
(\varrho^\mu, u^\mu) &\to (\varrho, u) \quad \text{strongly in } C_{\rm loc}(\mathbb{R}_{+}; H^2_{\rm loc}(\mathbb{R}^3)).
\end{aligned}
\right.
\end{equation}  
The limit $(\varrho, u, f)$ is the unique global classical solution to the compressible Euler-VFP system \eqref{C2} associated with the initial data $(\varrho_0, u_0, f_0)$. Moreover, $(\varrho, u, f)$ satisfies $\rho = 1 + \varrho > 0$, $F = M + \sqrt{M}f \geq 0$, and  
\begin{align}\label{b3}
&\sup_{t \geq 0} \big( \|(\varrho, u)(t)\|_{H^{3}}^2 + \|f(t)\|_{H_{x,v}^3}^2 \big) + \int_{0}^{t} \mathcal{D}(\varrho, u,   f)(\tau)\, {\rm d}\tau \leq C_1 \tilde{\mathcal{E}}_0, \quad \forall\, t \in \mathbb{R}_{+},
\end{align}  
where the $C_1 > 0$ is a constant independent of $\mu$ and time $t$.  
\end{thm}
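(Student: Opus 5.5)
The plan is to pass to the limit using the uniform bounds of Theorem \ref{T1.1}, then identify the limit as the solution of \eqref{C2}, and finally prove uniqueness by an energy estimate on differences.

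\textbf{Smallness and uniform bounds.} Since $(\varrho_0^\mu,u_0^\mu,f_0^\mu)\to(\varrho_0,u_0,f_0)$ in $H^3\times H^3\times H^3_{x,v}$, we have $\mathcal{E}_0^\mu\le 2\tilde{\mathcal E}_0$ for all $\mu$ small enough. Choosing $\varepsilon_1=\varepsilon_0/2$, Theorem \ref{T1.1} then applies uniformly in $\mu$. It gives $\|(\varrho^\mu,u^\mu)(t)\|_{H^3}^2+\|f^\mu(t)\|_{H^3_{x,v}}^2+\int_0^t\mathcal D(\varrho^\mu,u^\mu,f^\mu)\,d\tau\le 2C_0\tilde{\mathcal E}_0$, together with $\mu\int_0^t\|\nabla u^\mu\|_{H^3}^2\,d\tau\le C$.

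\textbf{Compactness.} From \eqref{C1}$_1$–\eqref{C1}$_2$ we read off $\|\partial_t(\varrho^\mu,u^\mu)\|_{H^2}\le C$. The viscous term is handled by $\|\mu\Delta u^\mu/(1+\varrho^\mu)\|_{H^1}\le C\mu\|u^\mu\|_{H^3}$, which is bounded uniformly; so we get $\partial_t u^\mu$ bounded in $L^\infty(H^1)$ in any case. For $f^\mu$, equation \eqref{C1}$_3$ gives $\partial_t f^\mu$ bounded in $L^\infty(\mathbb R_+;H^1_{x,v}$-weighted$)$, or at least in $L^\infty(H^{-1}_{x,v})$ with weights, since $\mathcal L$ loses two $v$-derivatives and a factor $\langle v\rangle^2$.

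We then apply Banach–Alaoglu to obtain the weak-$*$ limits in \eqref{b2}. The Aubin–Lions–Simon lemma, with the compact embedding $H^3(B_R)\hookrightarrow H^2(B_R)$, on each $[0,T]\times B_R$ followed by a diagonal argument, gives strong convergence of $(\varrho^\mu,u^\mu)$ in $C([0,T];H^2(B_R))$. Similarly, $f^\mu\to f$ strongly in $C([0,T];H^2(B_R\times B_R'))$.

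\textbf{Passing to the limit.} Strong local convergence handles the nonlinear terms: products like $u^\mu\cdot\nabla u^\mu$, $\varrho^\mu\mathcal L f^\mu$, $u^\mu\cdot\nabla_v f^\mu$, and the moments $a^\mu u^\mu$, $b^\mu$. For the moments, the $v$-tails are controlled by Gaussian decay of $\sqrt M$ and the uniform $L^2_v$ bounds. The viscous term tends to $0$ in distributions because $\mu\|\Delta u^\mu\|_{L^2}\le C\mu$. Hence $(\varrho,u,f)$ solves \eqref{C2} in distributions, and, having $H^3$ regularity, it is a classical solution, with continuity in time following from the equation.

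Next we check the remaining properties of the limit. Positivity $1+\varrho>0$ passes to the limit since $\|\varrho\|_{L^\infty}\le C\sqrt{\varepsilon_1}$. The sign $F\ge0$ passes by a.e. convergence. The attainment of the initial data follows from strong convergence in $C_{\rm loc}$. The bound \eqref{b3} follows from weak lower semicontinuity of norms, applied to the energy and to each quadratic term of $\mathcal D$, which are convex in the unknowns.

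\textbf{Uniqueness (main obstacle).} Let $(\varrho_i,u_i,f_i)$ be two solutions and take the difference. Without viscosity, the difference estimate for the hyperbolic part must be done at the $L^2$ level using the symmetrizer $P'(1+\varrho)/(1+\varrho)^2$ for the $(\varrho,u)$ system. For the $f$ difference we use the coercivity of $-\mathcal L$ on $\{\mathbf I-\mathbf P\}$. The terms $\varrho\,\mathcal L f$ are handled by integration by parts in $v$ against the dissipation $\|\cdot\|_\nu$, and the terms $u\cdot\nabla_v f$, $\tfrac12 u\cdot v f$ by smallness in $H^3\subset W^{1,\infty}$. The coupling terms $b-u-au$ are lower order. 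Gronwall on $[0,T]$ then yields uniqueness.

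The delicate point is ensuring that the terms with unbounded $v$-weight, such as $(u_1-u_2)\cdot v f_1$, are absorbed. I would pair such a term with $f_1-f_2$ and bound it by $\|u_1-u_2\|_{L^2}\,\|\langle v\rangle f_1\|_{L^\infty_xL^2_v}\,\|f_1-f_2\|$. This requires $\langle v\rangle f_1\in L^\infty_xL^2_v$, which follows from the $\nu$-norm dissipation integrated in time (giving an $L^1_t$ or $L^2_t$ factor, which is enough for Gronwall). Uniqueness of the limit also implies that the whole family converges, not just a subsequence.
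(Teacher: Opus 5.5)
Your proposal is correct and follows the paper's route: uniform-in-$\mu$ bounds from Theorem~\ref{T1.1}, weak-$*$ compactness, the bounds $\partial_t\varrho^\mu\in L^\infty(H^2)$ and $\partial_t u^\mu\in L^\infty(H^1)$, Aubin--Lions, and passage to the limit in distributions. Your first claim of an $H^2$ bound on $\partial_t u^\mu$ fails because of $\mu\Delta u^\mu$, but you correctly fall back to $H^1$, which is all Aubin--Lions needs. The lower-semicontinuity derivation of \eqref{b3}, the positivity checks, and the $L^2$-level difference/Gronwall argument for uniqueness (with the $v$-weights placed on the solution that has the dissipation bound) fill in steps the paper only asserts.
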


To establish global convergence over time as $\mu \rightarrow 0$, we derive an error estimate that quantifies the difference between the global solutions of the systems \eqref{C1} and \eqref{C2}.

\begin{thm}\label{T1.3}
Let $(\varrho^\mu, u^\mu, f^\mu)$ and $(\varrho, u,  f)$ be the global classical solutions to the compressible NS-VFP system \eqref{C1} and the compressible Euler-VFP system \eqref{C2}, as obtained in Theorem \ref{T1.1} and Theorem \ref{T1.2}, respectively, associated with the initial data $(\varrho_0^\mu, u_0^\mu,   f_0^\mu)$ and $(\varrho_0, u_0,  f_0)$. 
Furthermore, assume that 
\begin{align}\label{c1}
\|(\varrho_0^\mu-\varrho_0,u_0^\mu-u_0 )\|_{H^1}+\|f_0^\mu-f_0\|_{H_{x,v}^1}\leq \mu.
\end{align}
Then, it follows that  
\begin{align}\label{c2}
&\sup_{t\geq 0}\big(\|(\varrho^\mu-\varrho,u^\mu-u)(t)\|_{H^1}^2+\|(f^\mu-f)(t)\|_{H_{x,v}^1}^2\big)+\int_0^t   \|\big((b^\mu-u^\mu)-(b-u)\big)(\tau)\big\|_{H^1}^2   {\rm d}\tau\nonumber\\
& \quad+\int_0^t\Big( \|\nabla(\varrho^\mu-\varrho)(\tau)\|_{L^2}^2+ \sum_{|\alpha|\leq 1}\|\{\mathbf{I}-\mathbf{P}\}\partial^\alpha (f^\mu-f)(\tau)\|_{\nu}^2+\|\nabla (f^\mu-f)(\tau)\|_{L_{x,v}^2}^2\Big){\rm d}\tau\nonumber\\
&\quad\quad \leq  C_2\mu^2 ,\quad t\in \mathbb{R}_{+},
\end{align}
where the   constant $C_2>0$ is a constant independent of $\mu$ and  time $t$. 
\end{thm}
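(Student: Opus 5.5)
We work with the differences
\[
(\bar\varrho,\bar u,\bar f):=(\varrho^\mu-\varrho,\;u^\mu-u,\;f^\mu-f),
\]
and subtract \eqref{C2} from \eqref{C1}. The resulting system has the same linear structure as the perturbation system: a symmetrizable hyperbolic part for $(\bar\varrho,\bar u)$, relaxation $\bar b-\bar u$, and the linearized Fokker--Planck operator $\mathcal L\bar f$. The right-hand side consists of two kinds of terms.

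\emph{Bilinear difference terms.} Examples are $\bar u\cdot\nabla u^\mu+u\cdot\nabla\bar u$ and $\big(\tfrac{P'(1+\varrho^\mu)}{1+\varrho^\mu}-\tfrac{P'(1+\varrho)}{1+\varrho}\big)\nabla\varrho^\mu$. There are also the trilinear terms $\bar\varrho(\mathcal L f^\mu-u^\mu\cdot\nabla_vf^\mu+\cdots)$ and $\varrho(\mathcal L\bar f-\bar u\cdot\nabla_v f^\mu-u\cdot\nabla_v\bar f+\cdots)$, as well as $\bar a u^\mu+a\bar u$.

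\emph{Viscous source.} This is the single term $\mu\Delta u^\mu/(1+\varrho^\mu)$, and it carries the $\mu$-dependence.

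The plan is to run the $H^1$-level version of the energy method behind Theorem \ref{T1.1} on this difference system, obtaining a Lyapunov inequality
\[
\frac{d}{dt}\mathcal E_{\rm diff}(t)+\lambda\,\mathcal D_{\rm diff}(t)\le C\,\mathcal R(t),
\]
where $\mathcal E_{\rm diff}\sim\|(\bar\varrho,\bar u)\|_{H^1}^2+\|\bar f\|_{H^1_{x,v}}^2$ and $\mathcal D_{\rm diff}$ is the integrand on the left of \eqref{c2}.

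\emph{Step 1: zeroth and first order estimates.} Take the weighted $L^2$ inner product of the $\bar\varrho$-equation with $\tfrac{P'(1+\varrho)}{(1+\varrho)^2}\bar\varrho$, of the $\bar u$-equation with $\bar u$, and of the $\bar f$-equation with $\bar f$. Repeat after applying $\partial^\alpha$ with $|\alpha|=1$, and also for $\partial^\alpha_\beta$ with $|\alpha|+|\beta|\le1$ applied to $\{\mathbf I-\mathbf P\}\bar f$. The fluid--particle coupling cancels exactly as in the a priori estimate, producing the dissipation $\|\bar b-\bar u\|_{H^1}^2+\sum\|\{\mathbf I-\mathbf P\}\partial^\alpha\bar f\|_\nu^2$. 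The viscous source is handled by integrating by parts once, for instance
\[
\Big|\int \partial^\alpha\Big(\frac{\mu\Delta u^\mu}{1+\varrho^\mu}\Big)\partial^\alpha\bar u\Big|\le \eta\|\nabla\bar u\|^2_{L^2}+C_\eta\,\mu^2\|\nabla u^\mu\|_{H^2}^2 .
\]
However, $\|\nabla\bar u\|^2$ is not in $\mathcal D_{\rm diff}$ directly. So I would rather bound the term by $\mu\|\Delta u^\mu\|_{H^1}\|\bar u\|_{H^1}$ and Young it into $\mu^2\|\nabla u^\mu\|_{H^2}^2$ plus a term controlled through $\|\nabla\bar u\|\le\|\nabla(\bar b-\bar u)\|+\|\nabla\bar b\|$, the latter via the macroscopic dissipation of Step 2. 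The remaining $\|\bar u\|_{L^2}$ piece at zero order has no dissipation. There I would use $\mu\|\Delta u^\mu\|_{L^2}\|\bar u\|_{L^2}\le \mu\|\nabla u^\mu\|_{H^1}\sup_t\mathcal E_{\rm diff}^{1/2}$. Since \eqref{a-2} gives only $\mu\int\|\nabla u^\mu\|_{H^3}^2\le C$, I would instead write $\Delta u^\mu=\nabla(\nabla u^\mu)$ and use $\|\nabla u^\mu\|_{H^2}\lesssim\|\nabla(b^\mu-u^\mu)\|_{H^2}+\|\nabla b^\mu\|_{H^2}$. Both are in $\mathcal D(\varrho^\mu,u^\mu,f^\mu)$, which is integrable in time uniformly in $\mu$ by Theorem \ref{T1.1}. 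Hence
\[
\int_0^t\mu^2\|\nabla u^\mu\|_{H^2}^2\,d\tau\le C\mu^2\mathcal E_0^\mu,
\]
and this is the source of the sharp $\mu^2$ rate.

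\emph{Step 2: macroscopic dissipation.} Use the moment/interactive functional on the difference system, as in Section 2's macro--micro decomposition: thirteen-moment equations for $\bar a,\bar b$ coupled with $\nabla\bar\varrho$ and $\bar b-\bar u$. This recovers $\|\nabla(\bar\varrho,\bar a,\bar b)\|_{L^2}^2$. Adding a small multiple to Step 1 yields $\mathcal D_{\rm diff}$, including $\|\nabla\bar f\|^2_{L^2_{x,v}}$ as macro plus micro parts.

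\emph{Step 3: nonlinear terms.} Each difference term has the form (difference) $\times$ (background solution), and is bounded using Sobolev embeddings. We use smallness $\|(\varrho^\mu,u^\mu,\varrho,u)\|_{H^3}+\|(f^\mu,f)\|_{H^3}\lesssim\sqrt{\varepsilon_0}$ from \eqref{a-2}, \eqref{b3}, together with the time-integrable dissipation of the background. For example,
\[
|\langle \bar u\cdot\nabla u^\mu,\bar u\rangle|\le\|\bar u\|_{L^3}\|\nabla u^\mu\|_{L^2}\|\bar u\|_{L^6}\lesssim \sqrt{\varepsilon_0}\,\|\nabla\bar u\|^2+\dots .
\]
Terms of the form $\|\bar u\|_{L^2}^2\cdot X(t)$, where $\bar u$ appears with no derivative, cannot be absorbed. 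There I would use Gronwall with $X(t)\lesssim\|\nabla u^\mu\|_{H^2}+\mathcal D(t)$ and $\int_0^\infty X\,dt\lesssim1$.

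\emph{Main obstacle.} The main obstacle is that Gronwall step, since $\|\nabla u^\mu\|_{H^2}$ (not squared) need not be time-integrable in $\mathbb R^3$. I would first try to always pair an undifferentiated $\bar u$ with $\bar b-\bar u$ or $\nabla$ of something via the $L^3$--$L^6$ splitting. The aim is for every nonlinear term to be bounded by $\sqrt{\varepsilon_0}\,\mathcal D_{\rm diff}$ plus $\mathcal E_{\rm diff}\mathcal D(t)$, with $\mathcal D$ integrable. Integrating in time then gives
\[
\mathcal E_{\rm diff}(t)+\int_0^t\mathcal D_{\rm diff}\,d\tau\lesssim \mathcal E_{\rm diff}(0)+\mu^2 .
\]
By \eqref{c1}, $\mathcal E_{\rm diff}(0)\le\mu^2$, which gives \eqref{c2}.
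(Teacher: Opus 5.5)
Your proposal is essentially the paper's proof. The ingredients match: the same difference system, zeroth- and first-order energy estimates (with the weighted symmetrizer $\frac{P'(1+\varrho)}{(1+\varrho)^2}$ at first order), the moment functional for $\nabla(\bar a,\bar b)$, the interactive term $\int\nabla\bar\varrho\cdot\bar u$ for $\nabla\bar\varrho$, and the $\nabla_v\{\mathbf I-\mathbf P\}\bar f$ estimate. The source of the rate is also the same: $\mu^2\int_0^t\|\nabla u^\mu\|_{H^2}^2\,{\rm d}\tau\lesssim\mu^2$, via $\|\nabla u^\mu\|_{H^2}^2\lesssim\mathcal D(\varrho^\mu,u^\mu,f^\mu)$ and \eqref{a-2}. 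At zeroth order there is in fact no leftover undifferentiated $\|\bar u\|_{L^2}$ piece from the viscous term: after one integration by parts it is bounded by $\mu\|\nabla u^\mu\|_{L^2}\big(\|\nabla\bar u\|_{L^2}+\|\nabla\varrho^\mu\|_{L^3}\|\bar u\|_{L^6}\big)$.

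The only difference is bookkeeping. The paper does not use Gronwall. It works with the integrated functional $\widetilde{\mathcal X}^\mu(t)$ and bounds each trilinear term by (sup-in-time of a difference) $\times$ ($L^2_t$ norm of a difference dissipation) $\times$ ($L^2_t$ norm of a background dissipation), which gives $\lesssim(\varepsilon_0^{1/2}+\varepsilon_1^{1/2})\widetilde{\mathcal X}^\mu(t)$. This is equivalent to your scheme of $\sqrt{\varepsilon_0}\,\mathcal D_{\rm diff}+\mathcal E_{\rm diff}\mathcal D(t)$ followed by Gronwall.

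The obstacle you flag is real, but only for the zeroth-order terms whose background factor carries no derivative. These are $-\int a|\bar u|^2$ and $-\int \bar a\,u^\mu\cdot\bar u$ from the momentum equation. Neither $\|a\|_{L^\infty}$ nor $\|a\|_{L^3}^2$ is known to be time-integrable, so no H\"older splitting alone puts them into your scheme.

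They are handled exactly by the pairing you anticipate. Take the difference of the force terms $\tfrac12 u\cdot vf-u\cdot\nabla_v f$ in the $\bar f$-equation, replace $f,\bar f$ by $\mathbf Pf,\mathbf P\bar f$, and test against $\bar f$. This contributes precisely $(a\bar u+\bar a u^\mu)\cdot\bar b$, since the $\bar a\,b\cdot\bar u$ contributions cancel. The sum with the momentum terms is therefore $(a\bar u+\bar a u^\mu)\cdot(\bar b-\bar u)$, which is bounded by $\big(\|a\|_{L^3}\|\bar u\|_{L^6}+\|u^\mu\|_{L^3}\|\bar a\|_{L^6}\big)\|\bar b-\bar u\|_{L^2}\lesssim\sqrt{\varepsilon_0}\,\mathcal D_{\rm diff}$. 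This is the identity the paper invokes right after \eqref{G4.9}.

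The remaining terms weighted by $\varrho$, $\varrho^\mu$ or $\bar\varrho$, for example $\bar\varrho(u^\mu-b^\mu)\cdot\bar b$ and $\varrho(\bar u-\bar b)\cdot\bar b$, fit your scheme directly.
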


\begin{rem}
For $\mu\geq 1$, the well-posedness  of classical solutions stated in Theorem \ref{T1.1} remain valid (cf. \cite{LMW-SIAM-2017,LNW-2025-preprint}). In this paper, we focus on the case $0<\mu<1$, aiming to derive uniform estimates with respect to $\mu$ within this range so as to study the inviscid limit as $\mu\rightarrow 0$.
\end{rem}

The next result concerns the time decay of the classical solutions to the Euler-VFP system \eqref{C2}.
\begin{thm}\label{T1.4}
 Assume that
\eqref{b-1} holds and $\|(\varrho_0,u_0)\|_{L^q}+\|f_0\|_{\mathcal{Z}_q}$ is bounded for $1\leq q<\frac{6}{5}$, then
the solution $(\varrho,u ,f)$ to the compressible Euler-VFP system \eqref{C2} satisfies
\begin{align} 
 \|\nabla^k(\varrho, u )(t)\|_{L^2 } +\|\nabla^kf(t)\|_{{L_v^2(L^{2})}}
&\leq C_3(1+t)^{-\frac{3}{2} (\frac{1}{q} - \frac{1}{2} )-\frac{k}{2}},\quad k=0,1,\label{d1-1}\\
\|\nabla^k(\varrho, u)(t)\|_{L^2 } +\|\nabla^kf(t)\|_{{L_v^2(L^{2})}}
&\leq C_3(1+t)^{-\frac{3}{2} (\frac{1}{q} - \frac{1}{2} )-\frac{1}{2}},\,\,\,\,\,\, k=2,3.\label{d1}
\end{align}
and
\begin{align}
\| (\varrho, u )(t)\|_{L^p}+\|  f(t)\|_{L_v^2(L^p)} &\,\leq C_3(1+t)^{-\frac{3}{2}\big(\frac{1}{q}-\frac{1}{p}\big)},\,\,\,\quad\quad 2\leq p\leq 6,\label{d2-1}\\  
\| (\varrho, u)(t)\|_{L^p}+\|  f(t)\|_{L_v^2(L^p)} &\,\leq C_3(1+t)^{-\frac{3}{2}\big(\frac{1}{q}-\frac{1}{6}\big)},\,\,\,  \quad\,\,\,\,\,\, 6\leq p\leq \infty. \label{d2}
\end{align}
{Furthermore, we have
\begin{align}\label{LNW-new}
\|(b-u)(t)\|_{L^2}+\|\{\mathbf{I}-\mathbf{P}\}f(t)\|_{L_{v}^2(L^2)}\leq  C_3(1+t)^{-\frac{3}{2} (\frac{1}{q} - \frac{1}{2} )-\frac{1}{2}},
\end{align}}
where  the  constant $C_3>0$ is a constant independent of    time $t$.
\end{thm}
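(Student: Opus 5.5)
We will prove Theorem \ref{T1.4} by combining the spectral analysis of the linearized Euler-VFP system with the uniform energy estimates of Theorem \ref{T1.2}, through a Duhamel-type argument. Write \eqref{C2} as $\partial_t U + \mathbb{B}U = N(U)$ with $U=(\varrho,u,f)$, where $\mathbb{B}$ is the linear part: $\partial_t\varrho+{\rm div}u$, $\partial_t u+P'(1)\nabla\varrho-(b-u)$, and $\partial_t f+v\cdot\nabla_x f-u\cdot v\sqrt M-\mathcal{L}f$. The nonlinear terms $N(U)$ are at least quadratic, and the cubic ones come from the density-dependent friction $\varrho(\cdots)$. Note that $N(U)$ has no viscous part.

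First, I would study the linearized semigroup $e^{-t\mathbb{B}}$ in Fourier variables. This involves a micro-macro decomposition of $\hat f$ into $\hat a\sqrt M$, $\hat b\cdot v\sqrt M$ and $\{\mathbf I-\mathbf P\}\hat f$, and a Lyapunov functional $\mathcal{E}(\hat U)$ built from $|\hat U|^2$ plus interactive terms of the form ${\rm Re}(i\xi\hat\varrho\cdot\bar{\hat u})/(1+|\xi|^2)$ and the usual $a$--$b$ moment corrections. The goal is
$$\partial_t\mathcal{E}(\hat U)+\lambda\frac{|\xi|^2}{1+|\xi|^2}\mathcal{E}(\hat U)\le 0 .$$
Without viscosity, the high-frequency part has no smoothing, since the hyperbolic pair $(\varrho,u)$ has no parabolic effect. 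Dissipation of $\varrho$ at high frequency comes only through the friction $b-u$ and the coupling $\nabla\varrho\leftrightarrow u$. This is why a weight $|\xi|^2/(1+|\xi|^2)$, and not $|\xi|^2$, appears, and why the rates saturate for $k=2,3$ in \eqref{d1}.

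From this estimate I will derive the linear bounds
$$\|\nabla^k e^{-t\mathbb{B}}U_0\|\lesssim (1+t)^{-\frac32(\frac1q-\frac12)-\frac k2}\big(\|U_0\|_{L^q\cap\mathcal{Z}_q}+\|\nabla^k U_0\|\big).$$
These come from a low/high frequency splitting, Hausdorff–Young for $|\xi|\le1$, and exponential decay for $|\xi|\ge1$. In addition, the dissipative quantities $b-u$ and $\{\mathbf I-\mathbf P\}f$ enjoy an extra factor $|\xi|$ at low frequencies, because they are controlled by $\partial_t$ of the macroscopic quantities. This gives the extra $(1+t)^{-1/2}$ in \eqref{LNW-new}.

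Next comes the nonlinear step. Define
$$\mathcal{M}(t)=\sup_{0\le s\le t}\Big[(1+s)^{\sigma}\|U(s)\|_{L^2}+(1+s)^{\sigma+\frac12}\big(\|\nabla U(s)\|_{H^2}+\|(b-u)(s)\|_{L^2}+\|\{\mathbf I-\mathbf P\}f(s)\|\big)\Big],\qquad \sigma=\tfrac32\big(\tfrac1q-\tfrac12\big).$$
Through Duhamel's formula I will estimate $\|N(U)\|_{L^1\cap L^2}\lesssim \|U\|_{H^3}(\|\nabla U\|_{H^2}+\|b-u\|+\|\{\mathbf I-\mathbf P\}f\|_\nu)$. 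For $1\le q<6/5$ we have $\sigma>1/2$, so the time integrals $\int_0^t(1+t-s)^{-\sigma-k/2}(1+s)^{-2\sigma-1/2}ds$ close and give $\mathcal{M}(t)\lesssim \|U_0\|_{L^q\cap H^3}+\mathcal{M}(t)^2$. Smallness from \eqref{b3} then yields \eqref{d1-1} for $k=0$ and \eqref{LNW-new}.

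For the derivative rates I will combine this with a high-order energy inequality obtained from the proof of Theorem \ref{T1.2}:
$$\frac{d}{dt}\mathcal{E}_1(t)+\lambda\mathcal{D}_1(t)\le0,\qquad \mathcal{E}_1\sim\|\nabla U\|_{H^2}^2 .$$
Since $\mathcal{D}_1$ controls $\|\nabla^2(\varrho,u,a,b)\|_{H^1}$ and the microscopic parts, but not $\|\nabla(\varrho,u)\|$ itself, I will add $\|\nabla U^{\rm low}\|^2$ to both sides and use the linear decay for the low frequencies. This gives $(1+t)^{-\sigma-1/2}$ for $k=1,2,3$, hence \eqref{d1}. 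The $L^p$ bounds \eqref{d2-1} and \eqref{d2} then follow from Gagliardo–Nirenberg, $\|g\|_{L^p}\lesssim\|g\|^{1-\theta}\|\nabla g\|^\theta$, and, for $p\ge6$, from $\|g\|_{L^\infty}\lesssim\|\nabla g\|^{1/2}\|\nabla^2g\|^{1/2}$ combined with the saturated rate.

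I expect the main obstacle to be the linear Lyapunov functional without viscosity, together with the closure of the high-frequency energy using only the friction dissipation. The trilinear terms $\varrho\mathcal{L}f$ and $\varrho u\cdot v\sqrt M$ must be absorbed using the $\nu$-norm dissipation and the smallness of $\|\varrho\|_{H^3}$. The delicate point is the top-order term $u\cdot\nabla\varrho$ at level $\nabla^3$. I will handle it by integrating by parts to obtain $\|{\rm div}u\|_{L^\infty}\|\nabla^3\varrho\|^2$, and control this by $\|\nabla u\|_{H^2}\mathcal{E}_1$, which has just enough decay ($(1+t)^{-\sigma-1/2}$ with $\sigma+\frac12>1$) to be integrable in time.
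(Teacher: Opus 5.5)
Your linear step and your high-order step (absorbing high frequencies into the dissipation and feeding in the low-frequency decay) are essentially the paper's Theorem \ref{T5.1} and Proposition \ref{P5.2}, and the $L^p$ interpolation is the same. The genuine gap is the closure of the nonlinear bootstrap.

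Your inequality $\mathcal M(t)\lesssim\|U_0\|_{L^q\cap H^3}+\mathcal M(t)^2$ closes only if $\|U_0\|_{L^q}$ is small. The theorem assumes it is merely bounded. Estimate \eqref{b3} makes $\sup_t\|U(t)\|_{H^3}$ small, not $\mathcal M$: the linear contribution $\sup_t(1+t)^{\sigma}\|\mathbb A(t)U_0\|_{\mathcal Z_2}$ is controlled only by $\|U_0\|_{\mathcal Z_q}+\|U_0\|_{\mathcal Z_2}$, which need not be small.

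You also cannot make the inequality linear by putting the small, non-decaying $H^3$ bound on one factor. Your structural claim $\|N(U)\|\lesssim\|U\|_{H^3}(\|\nabla U\|_{H^2}+\|b-u\|+\|\{\mathbf I-\mathbf P\}f\|_\nu)$ is false. The friction $\rho\int(u-v)F\,{\rm d}v=\rho\big((1+a)u-b\big)$ produces the zero-order quadratic terms $-au$ in $S_u$ and $(1+\varrho)a\,u\cdot v\sqrt M$ in $S_f$. These contain no derivative, no $b-u$ and no microscopic factor. With $\|a\|_{L^2}\le\delta$ and $\|u\|_{L^2}\lesssim(1+s)^{-\sigma}\mathcal M$ you only get $\int_0^t(1+t-s)^{-3/4}(1+s)^{-\sigma}{\rm d}s\sim(1+t)^{\frac14-\sigma}$, which is too slow. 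These terms need decay from both factors; this is where $q<\frac65$ (i.e. $2\sigma>1$) really enters. But once both factors carry decay, the size of the data comes back.

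The paper gets around this in two steps.
\begin{itemize}
\item First it proves the slower rate $(1+t)^{-\frac34(\frac1q-\frac12)}$ via $\mathcal E_\infty$ in \eqref{G6.5}. The linear part is estimated with $p=\frac{4q}{q+2}$ and $\|U_0\|_{\mathcal Z_{4q/(q+2)}}\le\|U_0\|_{\mathcal Z_q}^{1/2}\|U_0\|_{\mathcal Z_2}^{1/2}$. This is small because $\|U_0\|_{\mathcal Z_2}$ is small while $\|U_0\|_{\mathcal Z_q}$ is only bounded, so Lemma \ref{L2.5} applies.
\item Then it runs a second bootstrap for $\mathcal E_{1,\infty}$ in \eqref{G6.14}, which is \emph{linear} in $\mathcal E_{1,\infty}$. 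In every quadratic term one factor is bounded by the already-proved slow rate with the small constant $\|U_0\|_{\mathcal H^3}+\|U_0\|_{\mathcal Z_q}^{1/2}\|U_0\|_{\mathcal Z_2}^{1/2}$. The worst term $au$ then gives $(1+s)^{-\frac94(\frac1q-\frac12)}$ against $(1+t-s)^{-\frac34}$, which closes exactly for $q<\frac65$.
\end{itemize}

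A second, smaller gap concerns \eqref{LNW-new}. You only assert the extra factor $|\xi|$ for $b-u$ and $\{\mathbf I-\mathbf P\}f$ at low frequency; the Lyapunov inequality with weight $\frac{|\xi|^2}{1+|\xi|^2}$ does not yield it. Putting these quantities into $\mathcal M$ would also require the microscopic part of the Duhamel integrals to decay faster, even though some sources (e.g. $h=\varrho\mathcal L\{\mathbf I-\mathbf P\}f$) are themselves microscopic.

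The paper needs none of this. Once \eqref{d1-1}--\eqref{d1} are known, it uses two a posteriori arguments:
\begin{itemize}
\item the damped equation $\partial_t(b-u)+2(b-u)=\mathcal Q_1$ in \eqref{LNWG6.23}, where $\mathcal Q_1$ consists of $\nabla\varrho$, $\nabla a$, ${\rm div}\,\Gamma(\{\mathbf I-\mathbf P\}f)$ and quadratic terms;
\item an $L^2$ energy estimate for the microscopic equation \eqref{LNWG6.25} using the coercivity \eqref{G2.5-2}, whose right-hand side involves only $\partial_t\mathbf Pf$, $\nabla(a,b)$ and $b-u$.
\end{itemize}
All of these sources decay at rate $\sigma+\frac12$. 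Dropping $b-u$ and $\{\mathbf I-\mathbf P\}f$ from $\mathcal M$ and adding these a posteriori arguments would fix that part of your proof.
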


\begin{rem}
We point out that Theorem \ref{T1.4} still holds for the NS-VFP system \eqref{C1} for fixed  $\mu>0$.
Here we choose to present the case ``$\mu=0$" to  emphasize the uniform of $\mu$ in the arguments. 
 
\end{rem}

\begin{rem}
  In \cite{DL-KRM-2013} on the compressible Euler-VFP system \eqref{A6}, the initial data $(\varrho_0, u_0, f_0)$ is required to be sufficiently small in the $L^1$-norm. In this paper, we only need a weaker requirement that $(\varrho_0, u_0, f_0)$ is bounded in the $L^q$-norm for $q \in [1, \frac{6}{5})$. 
  %Thus, our assumption on the $L^q$-norm constitutes a strictly weaker condition than the $L^1$-norm requirement imposed in \cite{DL-KRM-2013}.
\end{rem}

\begin{rem}
The classical $L^q-L^2$ type optimal decay rates, for $q\in [1,\frac{6}{5})$, are consistent with the significant results established %by Duan et al. 
in \cite{DLUY-2007-JDE} for the barotropic Navier-Stokes equations.
\end{rem}

\begin{rem}
The microscopic part $\{\mathbf{I}-\mathbf{P}\}f$ and the relative velocity $b-u$ decay in time at an additional $\frac{1}{2}$-order faster than $(\varrho,u,f)$. This newly observed phenomenon in this paper reveals the relaxation effects induced by the linear Fokker-Planck operator $\mathcal{L}$ and the friction term for fluid-particle interactions.
\end{rem}

\begin{rem}
In the proof of Theorem \ref{T1.4}, the nonlinear term $\varrho\mathcal{L}f$ in \eqref{C2}$_3$ introduces significant challenges in estimating the time decay rates of $(\varrho,u,f)$. For the friction force $u - v$ considered in \cite{DL-KRM-2013,Ww-CMS-2024}, the nonlinear terms in the perturbation of the particle equation $f$ do not involve the term $v^2f$. %, and their structure is relatively simpler due to the presence of the friction force $u - v$. 
As a result, the a priori estimates can be closed without requiring energy estimates on the mixed space-velocity derivatives of the solutions; see \cite{DL-KRM-2013,Ww-CMS-2024} for further details. In contrast, our density-dependent friction force $\varrho(u - v)$ leads to the appearance of the nonlinear term $\varrho\mathcal{L}f$, which necessitates incorporating energy estimates involving mixed space-velocity derivatives of the solutions in order to handle the nonlinear term $R_5(t)$ (see estimates \eqref{G6.9} and \eqref{G6.18}).
\end{rem}

\begin{rem}
  As mentioned earlier, the system \eqref{C2} was first proposed by Carrillo, Goudon and Lafitte \cite{CGL-JCP-2008}. However, to our best knowledge, there are no mathematical results on it to date. Our results (Theorems \ref{T1.2} and \ref{T1.4}) provide the first global well-posedness and time-decay estimates for classical solutions to this system. 
\end{rem}

\subsection{Difficulties and strategies in our proofs}

To begin with, we employed the refined energy method to establish uniform-in-$\mu$ a priori estimates for $(\varrho^\mu,u^\mu,f^\mu)$ in the compressible NS-VFP system \eqref{C1}, under the assumption that the initial data possess $H^3$ regularity. While the zero-order estimates for $(\varrho^\mu, u^\mu, f^\mu)$ are relatively straightforward, the conventional energy method fails to provide effective higher-order estimates for these variables. The main difficulty arises when dealing with the integral  
\begin{align*}  
\int_{\mathbb{R}^3}\partial^\alpha (\varrho^\mu \nabla\cdot u^\mu)\partial^\alpha\varrho^\mu\,{\rm d}x  
\end{align*}  
during the derivation of high-order energy estimates, because it cannot be controlled directly via standard techniques.  

Inspired by \cite{DL-KRM-2013}, we exploit a symmetric structure involving the term $\frac{P^{\prime}(1+\varrho^\mu)}{(1+\varrho^\mu)^2}\partial^\alpha\varrho^\mu$. By combining this term with the velocity equation, we transform the problematic term into the following integral:  
\begin{align*}  
\int_{\mathbb{R}^3}\nabla\left(\frac{P^\prime(1+\varrho^\mu)}{1+\varrho^\mu}  \right)\cdot\partial^\alpha u^\mu\,\partial^\alpha\varrho^\mu\,{\rm d}x.  
\end{align*}  
As a result, applying Lemmas \ref{L2.1}–\ref{L2.2}, we are able to close the energy estimate uniformly for all $\mu > 0$ (see also the estimates of $\varrho$ in Lemma \ref{L3.2}). Furthermore, we derive the uniform dissipation of $u^\mu$ by analyzing the dissipation of both $b^\mu - u^\mu$ and $b^\mu$, specifically through the inequality  
\begin{align*}  
\|\nabla u^\mu\|_{H^2} \lesssim \|b^\mu - u^\mu\|_{H^3} + \|\nabla b^\mu\|_{H^2},  
\end{align*}  
where $b^\mu - u^\mu$ serves as a relaxation term and $b^\mu$ behaves similarly to a solution of an elliptic equation (cf. \cite{DL-KRM-2013,LNW-2025-preprint}).  
Then, the uniformity allows us to establish the global well-posedness of classical solutions to system \eqref{C1} and analyze the vanishing viscosity limit as $\mu \rightarrow 0$. This result significantly improves upon our previous findings in \cite{LNW-2025-preprint}, where the $H^2$-regularity was required and $\mu > 0$ was assumed to be a fixed constant. Indeed, in the current work, the $H^3$-regularity is necessary, as demonstrated by the estimate for $\partial_t\varrho^\mu$ in \eqref{NJK-aaaa}.

To rigorously demonstrate the global-in-time convergence of the limiting process, it is essential to establish a uniform convergence rate by analyzing the difference between the global classical solutions of the systems \eqref{C1} and \eqref{C2}. To this end, we construct the following energy functional (see \eqref{G4.4}):  
\begin{align*}  
\widetilde{\mathcal{X}}^\mu(t) :=&\sup_{\tau\in[0,t]}\big(\|(\widetilde{\varrho}^\mu,\widetilde{u}^\mu )(\tau)\|_{H^1}^2 +\|\widetilde{f}^\mu(\tau)\|_{H_{x,v}^1}^2\big)+\int_{0}^t  \|\nabla(\widetilde{\varrho}^\mu,\widetilde{a}^\mu,\widetilde{b}^\mu )(\tau)\|_{L^2}^2 {\rm d}\tau \nonumber\\  
& +\int_0^t \|(\widetilde{b}^\mu-\widetilde{u}^\mu)(\tau)\|_{H^1}^2   {\rm d}\tau+\int_0^t\sum_{|\alpha|\leq 1}\|\{\mathbf{I}-\mathbf{P}\}\partial^\alpha\widetilde{f}^\mu(\tau)\|_{\nu}^2{\rm d}\tau\nonumber\\  
&+\int_0^t \|\nabla_v \{\mathbf{I}-\mathbf{P}\}\widetilde{f}^\mu(\tau)\|_{\nu}^2{\rm d}\tau.  
\end{align*}  
Since the VFP equations in the systems \eqref{C1} and \eqref{C2} involves the macro-micro decomposition of particles, the estimates of $\widetilde{a}^\mu$ and $\widetilde{b}^\mu$ become  significantly more difficult and challenging, which complicates the energy estimates for these variables (see Lemma \ref{L4.3} below). By carefully estimating $\widetilde{\varrho}^\mu$, $\widetilde{u}^\mu$, and $\widetilde{f}^\mu$ through using a refined energy method, we derive Lemmas \ref{L4.1}-\ref{L4.5}, which enable us to rigorously justify the global-in-time vanishing viscosity limit with a convergence rate of $\mu$, as shown by the following uniform estimate:  
\begin{align*}  
\widetilde{\mathcal{X}}^\mu(t) \lesssim \mu^2.  
\end{align*}  
Therefore, the proof of   Theorem \ref{T1.3} is rigorously established.

Next, we shall investigate the optimal time-decay rates of $(\varrho, u, f)$ for the Euler-VFP system \eqref{C2}. To achieve this, we first linearize system \eqref{C2}, and then apply Fourier analysis techniques to the linearized problem \eqref{G5.1}-\eqref{G5.2} in order to derive sharp pointwise estimates of $(\varrho, u, f)$. Based on these estimates and by employing low-frequency and high-frequency decomposition techniques to construct a general decay framework, we establish Theorem \ref{T5.1}. This theorem demonstrates that $(\varrho, u, f)$ exhibits heat-like diffusion behavior at low frequencies and damping effects at high frequencies. Moreover, we obtain $L^p$-$L^q$ estimates for the problem \eqref{G5.1}-\eqref{G5.2}, which enable us to determine the optimal time-decay rates under the assumption that the initial data belong to $L^q$ for $q \in [1, \frac{6}{5})$.

Before analyzing the nonlinear Euler-VFP system \eqref{C2}, we establish the following higher-order Lyapunov inequality (see Proposition \ref{P5.2}):  
\begin{align*}  
&\frac{\rm d}{{\rm d}t}\big( \|\nabla^k(\varrho,u )\|_{L^2}^2+\|\nabla^k f\|_{L_{x,v}^2}^2\big) + \lambda_{11}\big(  \|\nabla^k(\varrho,u)\|_{L^2}^2+\|\nabla^k f\|_{L_{x,v}^2}^2\big)  
\lesssim \|\nabla^k(\varrho^L,a^L,b^L)\|_{L^2}^2,  
\end{align*}  
for any $k=2,3$, based on Lemmas \ref{L5.3}–\ref{L5.5}, which involve careful computations. This inequality is a crucial step in deriving the optimal time-decay rates of $(\varrho,u,f)$ without requiring the smallness assumption on $\|(\varrho_0,u_0,f_0)\|_{\mathcal{Z}_q}$ for $q\in[1,\frac{6}{5})$. In contrast to \cite{LNW-2025-preprint}, where the strong assumption that $\|(\varrho_0,u_0,f_0)\|_{\mathcal{Z}_1}$ is bounded is imposed for the NS-VFP system \eqref{C1}, here we assume only the weaker condition that $\|(\varrho_0,u_0,f_0)\|_{\mathcal{Z}_q}$ is bounded for $q\in[1,\frac{6}{5})$.

Then, we address the nonlinear Euler-VFP system \eqref{C2}. According to Duhamel's principle, the solution $U(t)$ of the nonlinear problem can be expressed as  
\begin{align*}  
U(t) = \mathbb{A}(t)U_0 + \int_0^t \mathbb{A}(t - s)\big(S_\varrho(s), S_u(s), S_f(s)\big) \, \mathrm{d}s.  
\end{align*}  
Next, we estimate the nonlinear terms $S_\varrho(s), S_u(s)$, and $S_f(s)$ in the $\mathcal{Z}_2$-norm. To remove the smallness assumption, we first derive slower time decay rates for $(\varrho, u, f)$ (see \eqref{decay-a}). Subsequently, by employing an iterative approach, we obtain the optimal time decay rates of $(\varrho, u, f)$ and their spatial derivatives, which allow us to bound $\mathcal{E}_{1,\infty}$ (see \eqref{G6.14}) via the inequality  
\begin{align*}  
\mathcal{E}_{1,\infty}(t) \lesssim \|U_0\|_{\mathcal{H}^3 \cap \mathcal{Z}_q}^2.  
\end{align*}  
By applying interpolation inequalities, we further establish the optimal time decay rates of $(\varrho, u, f)$ in the $L^p$-norm for all $2 \leq p \leq 6$. As noted in \cite{LNW-2025-preprint}, since the term $\varrho\mathcal{L}f$ cannot be fully incorporated into the absolute energy functional $\mathcal{E}(t)$, the corresponding dissipation functional $\mathcal{D}(t)$ cannot be derived. Consequently, as shown in the estimates of $\|R_5(t)\|_{\mathcal{Z}_2}^2$ given by \eqref{G6.9} and \eqref{G6.18}, the optimal time decay rates for the second-order and third-order spatial derivatives of $(\varrho, u, f)$ cannot be obtained. 
 Therefore, to close the energy estimates, the energy estimate involving the mixed space-velocity derivatives of $f$, as stated in Lemma \ref{L3.5}, becomes necessary -- this is due to the fact that the nonlinear term $\varrho\mathcal{L}f$ contains terms such as $\varrho v^2f$, whose estimate, which prevents us from closing the bound on $\|R_5(t)\|_{\mathcal{Z}_2}^2$, must be used. 
 This approach differs from that in \cite{DL-KRM-2013,Ww-CMS-2024}, where consideration of the mixed derivatives of $f$ may not be required.

Furthermore, we derive improved decay estimates for the relative velocity $b - u$ and the microscopic part $\{\mathbf{I}-\mathbf{P}\}f$. To analyze $b - u$, we take advantage of the damping effects and rewrite the equation of $b - u$ according to \eqref{LNWG6.23}. Then, by semigroup theory and the decay estimates \eqref{d1-1}--\eqref{d1}, we successfully handle the nonlinear terms (see \eqref{LNWG6.24}). As for the microscopic part, we write the equation of $\{\mathbf{I}-\mathbf{P}\}f$ as
\begin{align}
\label{LNWGGG}
&\partial_t \{\mathbf{I}-\mathbf{P}\}f-(\varrho + 1)\mathcal{L}{\{\mathbf{I}-\mathbf{P}\}}f+v\cdot\nabla\{\mathbf{I}-\mathbf{P}\}f \nonumber\\
&\quad=-\partial_t\mathbf{P}f-v\cdot\nabla\mathbf{P}f-(\varrho + 1) u\cdot\nabla_v f+(\varrho + 1)(u - b)\cdot v\sqrt{M}+\frac{1}{2}(\varrho + 1)u\cdot v f.
\end{align}
Note that the Fokker-Planck operator $\mathcal{L}$ provides additional relaxation effects.
 Meanwhile, the terms on the right-hand side of \eqref{LNWGGG} can be controlled 
 by the improved decay of $b - u$, $\nabla (a,b)$, and $\partial_t (a,b)$ 
 (see \eqref{LNWG6.26}--\eqref{LNWG6.27}), which enables us to achieve a $\tfrac{1}{2}$-order enhancement in the decay rate of $\{\mathbf{I}-\mathbf{P}\}f$. 
% Therefore, the proof of Theorem \ref{T1.4} is completed.

Finally, by applying the conservation laws and the Poincar\'{e}'s inequality, we establish the exponential decay in time of $(\varrho, u, f)$ for the Euler-VFP system \eqref{C2} in $\mathbb{T}^3$.

\subsection{Outline of the paper}
The rest of this paper is organized as follows. 
In Section 2, we introduce notation, useful lemmas, the macro-micro decomposition framework, and properties of the Fokker-Planck operator $\mathcal{L}$, which are used throughout the paper. 
In Section 3, we apply the refined energy method to establish the existence of global classical solutions to the compressible NS-VFP system \eqref{C1} in $\mathbb{R}^3$ with a uniform viscosity coefficient $\mu > 0$, and prove Theorem \ref{T1.1}. 
In Section 4, we investigate the vanishing viscosity limit problem and thereby proving Theorem \ref{T1.2}, while deriving a convergence rate of $\mathcal{O}(\mu)$ for the solutions, as stated in Theorem \ref{T1.3}. 
In Section 5, we systematically derive time-decay estimates for $(\varrho, u, f)$ associated with the linearized compressible Euler-VFP system \eqref{C2}, and establish a higher-order Lyapunov inequality using low- and high-frequency decomposition techniques. 
In Section 6, we first obtain the slower time-decay rates of $(\varrho, u, f)$. Subsequently, utilizing an iterative approach, we derive the optimal time-decay rates under the additional assumption that the norm $\|(\varrho, u, f)\|_{\mathcal{Z}_q}$ is bounded for $q \in [1, \frac{6}{5})$. 
Finally, in Section 7, we extend our results to the periodic domain $\mathbb{T}^3$ and establish exponential time-decay rates of $(\varrho, u, f)$ for the Euler-VFP system \eqref{C2}.

\medskip

 \section{Preliminaries}
 
In this section, we introduce the necessary notation, useful lemmas, and the main properties of the Fokker-Planck operator $\mathcal{L}$, which will be frequently used in the subsequent analysis.

\subsection{Notation}
The letter $C$ denotes a generic positive constant that is independent of time $t$, and its value may vary from line to line. The notation $a \lesssim b$ means that there exists a constant $C > 0$ such that $a \leq C b$. The expression $A \sim B$ indicates the two-sided inequality $\frac{A}{C} \leq B \leq CA$ holds for some universal constant $C$. We define $\|(g,h)\|_{X} := \|g\|_{X} + \|h\|_{X}$, where $g(x)$ and $h(x)$ are functions belonging to a Banach space $X$.
We use $\langle\cdot,\cdot\rangle$ to denote the inner product on the Hilbert space $L^2_v = L^2(\mathbb{R}^3_v)$, i.e.,  
\begin{align*}
\langle g,h \rangle := \int_{\mathbb{R}^3} g(v) h(v) \, \mathrm{d}v, \quad g,h \in L_v^2,
\end{align*}  
with the corresponding norm $\|\cdot\|_{L^2}$.

We denote the weight function $\nu(v) := 1 + |v|^2$ and define the corresponding norm $|\cdot|_{\nu}$ by  
$$
|g|_{\nu}^2 := \int_{\mathbb{R}^3} \left(|\nabla_v g(v)|^2 + \nu(v)|g(v)|^2 \right) {\rm d}v, \quad g = g(v).
$$  
The norm $\|\cdot\|_{\nu}$ denotes the spatial integration of $|\cdot|_{\nu}$, defined as  
$$
\|g\|_{\nu}^2 :=  \int_{\mathbb{R}^3 \times \mathbb{R}^{3}} \left(|\nabla_{v} g(x,v)|^2 + \nu(v)|g(x,v)|^2 \right) {\rm d}x{\rm d}v.
$$  
For $q \geq 1$, the standard space-velocity mixed Lebesgue space $Z_q = L_v^2(L_x^q) = L^2(\mathbb{R}_v^3; L^q(\mathbb{R}^3_x))$ is defined by  
$$
\|g\|_{Z_q}^2 := \int_{\mathbb{R}^3}\Big(\int_{\mathbb{R}^3}|g(x,v)|^q\mathrm{d}x\Big)^{\frac{2}{q}}\mathrm{d}v.
$$  
Furthermore, we introduce the norms $\|\cdot\|_{\mathcal{Z}_{q}}$ and $\|\cdot\|_{\mathcal{H}^{m}}$ for the integer $m \geq 0$ and the real number $q \geq 1$ as follows:  
$$
\begin{aligned}
\|(f,\varrho, u)\|_{\mathcal{Z}_{q}} &= \|f\|_{Z_{q}}+\|\varrho\|_{L^{q}}+\|u\|_{L^{q}}, \\
\|(f,\varrho, u)\|_{\mathcal{H}^{m}} &= \|f\|_{L^{2}_{v}(H^m)}+\|\varrho\|_{H^{m}}+\|u\|_{H^{m}}.
\end{aligned}
$$

For an integrable function $g:\mathbb{R}^{3}\rightarrow\mathbb{R}$, its Fourier transform is defined as  
$$  
\widehat{g}(\xi)=\mathcal{F}g(\xi)=\int_{\mathbb{R}^{3}}e^{-ix\cdot \xi}g(x)\,{\rm d}x, \quad x\cdot \xi=\sum\limits_{j=1}^{3}x_{j}\xi_{j},  
$$  
where $\xi\in\mathbb{R}^{3}$. Here, $i=\sqrt{-1}\in\mathbb{C}$ denotes the imaginary unit. For two complex-valued functions $f$ and $g$, the expression $(g|h) := g \cdot \overline{h}$ represents the dot product of $g$ with the complex conjugate of $h$.

For any multi-indices $\alpha=(\alpha_{1},\alpha_{2},\alpha_{3})$ and $\beta=(\beta_{1},\beta_{2},\beta_{3})$, we denote  
$$
\partial_{x}^{\alpha}\partial_{v}^{\beta} = \partial_{x_{1}}^{\alpha_{1}}\partial_{x_{2}}^{\alpha_{2}}\partial_{x_{3}}^{\alpha_{3}}\partial_{v_{1}}^{\beta_{1}}\partial_{v_{2}}^{\beta_{2}}\partial_{v_{3}}^{\beta_{3}},
$$  
where $x = (x_1, x_2, x_3)$ denotes the spatial variable and $v = (v_1, v_2, v_3)$ denotes the microscopic velocity of the particles. The lengths of $\alpha$ and $\beta$ are defined as $|\alpha| = \alpha_1 + \alpha_2 + \alpha_3$ and $|\beta| = \beta_1 + \beta_2 + \beta_3$, respectively. We define  
$$
\|g\|_{H^s} = \sum_{|\alpha| \leq s} \|\partial^\alpha g\|_{L^2}, \quad \|g\|_{H_{x, v}^s} = \sum_{|\alpha| + |\beta| \leq s} \|\partial_{x}^{\alpha}\partial_{v}^{\beta} g\|_{L_{x,v}^2}.
$$  
The sysmbol $C_\alpha^\beta$ denotes the generalized binomial coefficient. For notational clarity, in this paper we use the symbols $\Delta$ and $\nabla$ to indicate $\Delta_x$ and $\nabla_x$, respectively.

Finally, we recall the decomposition of  low-frequency and high-frequency components. For a function $g(x) \in L^2(\mathbb{R}^3)$, we define its low- and high-frequency decompositions, denoted by $g^L$ and $g^H$, respectively, as  
\begin{align}\label{G2.1}
g^{L}(x) = \phi_0(D_x)g(x), \quad g^{H}(x) = \phi_1(D_x)g(x).
\end{align}  
Here, $D_x = \frac{1}{\sqrt{-1}} (\partial_{x_1}, \partial_{x_2}, \partial_{x_3})$, and $\phi_0(D_x)$ and $\phi_1(D_x)$ are pseudo-differential operators corresponding to the smooth cut-off functions $\phi_0(z)$ and $\phi_1(z)$ satisfying $0 \leq \phi_0(z) \leq 1$ and $\phi_1(z) = 1 - \phi_0(z), z\in \mathbb{R}^3$, where  
\begin{align}\label{G2.2}
\phi_0(z) =
\begin{cases}
1, & |z| \leq \frac{r_0}{2}, \\
0, & |z| > r_0,
\end{cases}
\end{align}  
for a fixed constant $r_0 > 0$. From \eqref{G2.1}--\eqref{G2.2}, it follows directly that  
\begin{align*}
g(x) = g^{L}(x) + g^{H}(x).
\end{align*}  
For any $g \in H^2(\mathbb{R}^3)$, applying the   Plancherel theorem yields the following estimates:  
\begin{align}  \label{G2.3}
\|g^H\|_{L^2} \leq \frac{C}{r_0} \|\nabla g\|_{L^2}, \quad \|g^H\|_{L^2} \leq \frac{C}{r_0^2} \|\nabla^2 g\|_{L^2}, \quad \|\nabla^2 g^L\|_{L^2} \leq C r_0 \|\nabla g^L\|_{L^2}.
\end{align}

\subsection{Useful lemmas}
Next, we present several useful lemmas that will be frequently employed throughout this paper.

\begin{lem}[{\!\!{\cite[Lemma 2.1]{CDM-krm-2011}}}]\label{L2.1}   
For any $g,h\in H^3(\mathbb{R}^3)$ and any multi-index $\alpha$  with $1\leq|\alpha|\leq3$, it holds
\begin{align*}
\|g\|_{L^{\infty}(\mathbb{R}^{3})} \lesssim \,& \|\nabla g\|_{L^{2}(\mathbb{R}^{3})}^{\frac{1}{2}}
\|\nabla^{2}g\|_{L^{2}(\mathbb{R}^{3})}^{\frac{1}{2}}, \\
\|gh\|_{H^{2}(\mathbb{R}^{3})}\lesssim\,&  \|g\|_{H^{2}(\mathbb{R}^{3})}\|\nabla h\|_{H^{2}(\mathbb{R}^{3})}, \\
\|\partial^{\alpha}(gh)\|_{L^{2}(\mathbb{R}^{3})}
 \lesssim\,& \|\nabla g\|_{H^{2}(\mathbb{R}^{3})}\|\nabla h\|_{H^{2}(\mathbb{R}^{3})}. 
\end{align*}
\end{lem}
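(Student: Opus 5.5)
We will prove the three inequalities of Lemma \ref{L2.1} separately, using only Sobolev embeddings, interpolation on the Fourier side, and the Leibniz rule. Throughout, the key observation is that every bound is \emph{homogeneous} in $\nabla h$ (and in $\nabla g$ for the first and third). So we must avoid using $\|h\|_{L^2}$ and rely on $\dot H^1\hookrightarrow L^6$ and on the first inequality instead.

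For the first inequality, we split $g$ at a frequency $R>0$. The low-frequency part is bounded by
\[
\int_{|\xi|\le R}|\widehat g|\,{\rm d}\xi\le \Big(\int |\xi|^2|\widehat g|^2\Big)^{1/2}\Big(\int_{|\xi|\le R}|\xi|^{-2}\Big)^{1/2}\lesssim R^{1/2}\|\nabla g\|_{L^2}.
\]
The high-frequency part is bounded in the same way by $R^{-1/2}\|\nabla^2 g\|_{L^2}$. We then bound $\|g\|_{L^\infty}\le \|\widehat g\|_{L^1}$ and optimize in $R$, which gives $\|g\|_{L^\infty}\lesssim\|\nabla g\|_{L^2}^{1/2}\|\nabla^2 g\|_{L^2}^{1/2}$. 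As a consequence, $\|h\|_{L^\infty}\lesssim\|\nabla h\|_{H^1}$.

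For the second inequality, we expand $\partial^\gamma(gh)$ with $|\gamma|\le2$ by the Leibniz rule.
\begin{itemize}
\item Terms with all derivatives on $g$ are bounded by $\|\partial^\gamma g\|_{L^2}\|h\|_{L^\infty}\lesssim\|g\|_{H^2}\|\nabla h\|_{H^1}$.
\item Terms with at least one derivative on $h$ take the form $\partial^{\gamma_1}g\,\partial^{\gamma_2}h$ with $|\gamma_2|\ge1$.
  \begin{itemize}
  \item If $|\gamma_1|=0$, we use $\|g\|_{L^\infty}\|\partial^{\gamma_2}h\|_{L^2}$ with $\|g\|_{L^\infty}\lesssim\|g\|_{H^2}$.
  \item If $|\gamma_1|=|\gamma_2|=1$, we use $\|\nabla g\|_{L^3}\|\nabla h\|_{L^6}\lesssim\|g\|_{H^2}\|\nabla^2h\|_{L^2}$.
  \end{itemize}
\end{itemize}
This gives $\|gh\|_{H^2}\lesssim\|g\|_{H^2}\|\nabla h\|_{H^2}$.

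For the third inequality, we again write $\partial^\alpha(gh)=\sum C_\alpha^\beta\,\partial^\beta g\,\partial^{\alpha-\beta}h$ with $1\le|\alpha|\le3$, and treat the terms by cases.
\begin{itemize}
\item \emph{Extreme terms.} When $\beta=0$ or $\beta=\alpha$, one factor carries all derivatives and the other appears undifferentiated. We use $\|g\|_{L^\infty}\|\partial^\alpha h\|_{L^2}$, with $\|g\|_{L^\infty}\lesssim\|\nabla g\|_{H^1}$ by the first inequality and $\|\partial^\alpha h\|_{L^2}\le\|\nabla h\|_{H^2}$ since $|\alpha|\ge1$. The term $\beta=\alpha$ is symmetric.
\item \emph{Mixed terms} ($1\le|\beta|<|\alpha|$, so $|\alpha|\in\{2,3\}$). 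Both factors carry at least one derivative.
  \begin{itemize}
  \item If the split is $1{+}1$ or $1{+}2$, we put the first-order factor in $L^\infty$ or $L^3$, and the other factor in $L^2$ or $L^6$.
  \item For example, $\|\nabla g\,\nabla^2 h\|_{L^2}\le\|\nabla g\|_{L^\infty}\|\nabla^2h\|_{L^2}$, and $\|\nabla g\|_{L^\infty}\lesssim\|\nabla^2g\|_{L^2}^{1/2}\|\nabla^3 g\|_{L^2}^{1/2}\le\|\nabla g\|_{H^2}$.
  \item Alternatively, we can use $\|\nabla g\|_{L^3}\|\nabla^2h\|_{L^6}$.
  \end{itemize}
\end{itemize}
Every resulting factor is a derivative of order between $1$ and $3$, hence controlled by $\|\nabla g\|_{H^2}$ or $\|\nabla h\|_{H^2}$.

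The only genuine subtlety is keeping all bounds homogeneous, so that $\|g\|_{L^2}$ and $\|h\|_{L^2}$ never appear in the third inequality. This is handled by the first inequality, which is why we prove it first.
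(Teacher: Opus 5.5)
Your proof is correct: the Fourier-splitting bound $\|g\|_{L^\infty}\lesssim\|\nabla g\|_{L^2}^{1/2}\|\nabla^2 g\|_{L^2}^{1/2}$ holds, up to the harmless factor $(2\pi)^{-3}$ from the paper's Fourier convention. The Leibniz--H\"older--Sobolev case analysis that follows covers every term while keeping the right-hand sides homogeneous in $\nabla h$ (and in $\nabla g$ for the third estimate). The paper gives no proof of its own and simply quotes Lemma~2.1 of Carrillo--Duan--Moussa, which rests on the same standard ingredients (a Gagliardo--Nirenberg-type $L^\infty$ bound plus Leibniz, H\"older and $\dot H^1\hookrightarrow L^6$), so your argument is essentially a self-contained version of the cited result.
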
 

\begin{lem}[{\!\!{\cite[Lemmas 2.1--2.2]{Dk-MZ-1992}}}]\label{L2.2}
For any $g \in H^1(\mathbb{R}^3)$, one has
\begin{align*}
\|g\|_{L^6(\mathbb{R}^3)} \lesssim\,& \|\nabla g\|_{L^2(\mathbb{R}^3)}\lesssim\|g\|_{H^1(\mathbb{R}^3)},
\end{align*}
and
\begin{align*}
  \|g\|_{L^q(\mathbb{R}^3)} \lesssim\,& \|g\|_{H^1(\mathbb{R}^3)}, 
\end{align*}
for $2 \leq q \leq 6$.
\end{lem}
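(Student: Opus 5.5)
The plan is to prove the homogeneous Sobolev inequality $\|g\|_{L^6}\lesssim\|\nabla g\|_{L^2}$ first for $g\in C_c^\infty(\mathbb{R}^3)$, extend it to $H^1(\mathbb{R}^3)$ by density, and then get the $L^q$ bounds for $2\le q\le 6$ by interpolation. The middle inequality $\|\nabla g\|_{L^2}\le\|g\|_{H^1}$ is immediate from the definition of the $H^1$ norm.

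\textbf{Step 1 (the $L^1$ Gagliardo--Nirenberg inequality).} For $\varphi\in C_c^1(\mathbb{R}^3)$ I will show
$$\|\varphi\|_{L^{3/2}}\le\prod_{i=1}^3\|\partial_i\varphi\|_{L^1}^{1/3}.$$
The fundamental theorem of calculus along the $x_i$-axis gives
$$|\varphi(x)|\le\int_{\mathbb{R}}|\partial_i\varphi(x_1,\dots,y_i,\dots,x_3)|\,{\rm d}y_i=:A_i(\hat x_i),$$
where $A_i$ does not depend on $x_i$. Hence $|\varphi|^{3/2}\le\prod_i A_i^{1/2}$. I then integrate successively in $x_1$, $x_2$, $x_3$. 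At each stage one factor is independent of the current variable, and I apply Cauchy--Schwarz to the other two factors. This is the Loomis--Whitney argument. It is the only step with real content and where I expect the main difficulty. The obstacle is just the bookkeeping of which factors depend on which variables, which I will write out explicitly.

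\textbf{Step 2 ($L^6$ bound).} Apply Step 1 to $\varphi=|g|^4$, which is $C^1$ with $|\nabla\varphi|\le4|g|^3|\nabla g|$. By H\"older with exponents $2$ and $2$,
$$\|g\|_{L^6}^4=\||g|^4\|_{L^{3/2}}\lesssim\int|g|^3|\nabla g|\,{\rm d}x\le\|g\|_{L^6}^3\|\nabla g\|_{L^2}.$$
Dividing by $\|g\|_{L^6}^3$, which is finite since $g$ is compactly supported, gives $\|g\|_{L^6}\lesssim\|\nabla g\|_{L^2}$.

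For general $g\in H^1$, take $g_n\in C_c^\infty$ with $g_n\to g$ in $H^1$ and, along a subsequence, almost everywhere. Fatou's lemma then gives
$$\|g\|_{L^6}\le\liminf_n\|g_n\|_{L^6}\lesssim\lim_n\|\nabla g_n\|_{L^2}=\|\nabla g\|_{L^2}.$$

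\textbf{Step 3 (interpolation for $2\le q\le6$).} Choose $\theta\in[0,1]$ with $\frac1q=\frac\theta2+\frac{1-\theta}6$. H\"older's inequality gives $\|g\|_{L^q}\le\|g\|_{L^2}^\theta\|g\|_{L^6}^{1-\theta}$. Combining this with Step 2 and Young's inequality yields
$$\|g\|_{L^q}\lesssim\|g\|_{L^2}+\|\nabla g\|_{L^2}\lesssim\|g\|_{H^1}.$$
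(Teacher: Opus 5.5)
Your argument is correct. The Gagliardo--Nirenberg (Loomis--Whitney) bound $\|\varphi\|_{L^{3/2}}\le\prod_i\|\partial_i\varphi\|_{L^1}^{1/3}$, applied to $\varphi=|g|^4$ together with H\"older, gives $\|g\|_{L^6}\le 4\|\nabla g\|_{L^2}$ on $C_c^\infty$. The density and Fatou step extends this to $H^1(\mathbb{R}^3)$, and $L^2$--$L^6$ interpolation with Young's inequality yields the $L^q$ bounds for $2\le q\le 6$.

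The paper gives no proof of this lemma; it simply quotes it from \cite{Dk-MZ-1992} as a standard Sobolev embedding. Your proposal therefore supplies the classical self-contained argument behind that citation, and nothing is missing.
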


\begin{lem}[{\!\!\cite[Appendix]{commutator1},\cite{commutator2}}]\label{L2.3}
Let $g$ and $h$ be two Schwarz functions. For $k\geq 0$, one has
\begin{align*}
\|\nabla^{k}(gh) \|_{L^r} \lesssim &\, \|g\|_{L^{r_1} }\|\nabla^{k}h\|_{L^{r_2} }+C\|h\|_{L^{r_3} }\|\nabla^{k}g\|_{L^{r_4} },\\
\|\nabla^{k}(gh)-g\nabla^k h \|_{L^r} \lesssim  &\, \|\nabla g\|_{L^{r_1}}\|\nabla^{k-1}h\|_{L^{r_2}}+C\|h\|_{L^{r_3}}\|\nabla^{k}g\|_{L^{r_4}},    
\end{align*}
where $1<r,r_2,r_4<\infty$ and $r_i(1\leq i\leq 4)$ satisfy 
\begin{align*}
\frac{1}{r_1}+\frac{1}{r_2}=\frac{1}{r_3}+\frac{1}{r_4}=\frac{1}{r}.   
\end{align*}
\end{lem}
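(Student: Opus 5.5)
This lemma is quoted from the literature, so I only sketch how I would verify it. The approach is Fourier/Littlewood--Paley based (Kato--Ponce type), and by density of Schwartz functions it suffices to argue for Schwartz $g,h$.

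For the first (product) estimate I would expand $\nabla^k(gh)=\sum_{j\le k}C_k^j\nabla^j g\,\nabla^{k-j}h$. The endpoint terms $j=0$ and $j=k$ are handled directly by H\"older with exponents $(r_1,r_2)$ and $(r_3,r_4)$. The intermediate terms $\nabla^j g\,\nabla^{k-j}h$ are reduced to the endpoints by H\"older with exponents $p_j,q_j$ chosen so that $1/p_j=\frac{j}{k}\frac1{r_4}+\frac{k-j}{k}\frac1{r_1}$, $1/q_j=\frac{j}{k}\frac1{r_3}+\frac{k-j}{k}\frac1{r_2}$. Gagliardo--Nirenberg interpolation then gives $\|\nabla^j g\|_{L^{p_j}}\lesssim\|g\|_{L^{r_1}}^{1-j/k}\|\nabla^k g\|_{L^{r_4}}^{j/k}$, and symmetrically for $h$. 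Young's inequality closes the bound.

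The interpolation fails when $r_1$ or $r_3=\infty$, and this is the delicate point. Here I would instead use the Bony paraproduct decomposition $gh=T_gh+T_hg+R(g,h)$. Each piece is estimated by Littlewood--Paley square functions and Fefferman--Stein maximal inequalities, which need $1<r_2,r_4<\infty$, exactly the hypothesis stated.

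For the commutator estimate I would write $\nabla^k(gh)-g\nabla^kh=\sum_{j\ge1}C_k^j\nabla^jg\,\nabla^{k-j}h$. Since every term now carries at least one derivative on $g$, the same interpolation applies with $g$ replaced by $\nabla g$ and $k$ by $k-1$, namely between $\|\nabla g\|_{L^{r_1}}\|\nabla^{k-1}h\|_{L^{r_2}}$ and $\|h\|_{L^{r_3}}\|\nabla^kg\|_{L^{r_4}}$. In the paraproduct formulation the main term is the commutator $[\nabla^k,T_g]h$, controlled by $\|\nabla g\|_{L^{r_1}}\|\nabla^{k-1}h\|_{L^{r_2}}$ via a Coifman--Meyer estimate.

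The main obstacle is the $L^\infty$ endpoint, which requires the paraproduct and maximal-function machinery rather than H\"older plus Gagliardo--Nirenberg. This is precisely what the cited references \cite{commutator1,commutator2} supply.
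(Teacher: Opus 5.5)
Your Leibniz--H\"older--Gagliardo--Nirenberg--Young argument is correct. However, your claim that it breaks down when $r_1=\infty$ or $r_3=\infty$ is false, so the ``main obstacle'' you identify does not exist for this statement.

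For $1\le j\le k-1$ and $1/p_j=(1-j/k)/r_1+(j/k)/r_4$, the scaling-line inequality $\|\nabla^j g\|_{L^{p_j}}\lesssim\|g\|_{L^{r_1}}^{1-j/k}\|\nabla^k g\|_{L^{r_4}}^{j/k}$ holds for every $r_1$ admitted in the lemma, including $r_1=\infty$. In that case it is the Gagliardo--Nirenberg--Moser inequality, which is exactly the tool behind the classical calculus inequality $\|\nabla^k(gh)\|_{L^2}\lesssim\|g\|_{L^\infty}\|\nabla^kh\|_{L^2}+\|h\|_{L^\infty}\|\nabla^kg\|_{L^2}$. For $k=2$, $j=1$, $s=r_4$ you can check it in one line by integrating by parts: $\int|\nabla g|^{2s}{\rm d}x=-\sum_{i=1}^3\int g\,\partial_i\big(\partial_ig\,|\nabla g|^{2s-2}\big){\rm d}x\lesssim\|g\|_{L^\infty}\|\nabla^2g\|_{L^s}\|\nabla g\|_{L^{2s}}^{2s-2}$, i.e. $\|\nabla g\|_{L^{2s}}^2\lesssim\|g\|_{L^\infty}\|\nabla^2 g\|_{L^s}$.

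So your first paragraph already proves the product estimate for all exponents allowed in the lemma. Your reduction of the commutator to the product estimate for the pair $(\nabla g,h)$ with $k-1$ derivatives, applied term by term, then proves the second estimate in the same generality. The paraproduct paragraphs are superfluous here.

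The paper gives no proof; it only cites Kato--Ponce-type estimates. Those are formulated for fractional operators $\Lambda^s$ or $J^s$, for which no Leibniz formula exists. There the Bony decomposition, Coifman--Meyer bounds, square functions and the Fefferman--Stein inequality are genuinely needed. The hypothesis $1<r_2,r_4<\infty$ is what that machinery requires for the factors carrying the top-order derivative.

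Your route is self-contained and uses nothing beyond H\"older and Gagliardo--Nirenberg, at the cost of covering only integer $k$. That is exactly what the lemma states and what the paper uses ($k\le 3$). The cited harmonic-analysis route is what buys non-integer orders of differentiation.
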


\begin{lem} [{\!\!{\cite[Lemma 3.2]{CDM-krm-2011}}}]\label{L2.4}   
Given any $0<\beta_1\ne 1$ and $\beta_2>1$, then
\begin{align*}
\int_0^t (1+t-s)^{-\beta_1}(1+s)^{-\beta_2} {\rm d}s \lesssim (1+t)^{-\min\{\beta_1,\beta_2\}}, 
\end{align*}
for all $t\geq 0$.
\end{lem}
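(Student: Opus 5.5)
This is a time-convolution estimate, and I would prove it by splitting the integral at the midpoint $s=t/2$. For $t\le 1$ the integrand is bounded by $1$ and the interval has length at most $1$, so the bound is trivial. So assume $t\ge 1$ and write
\begin{align*}
\int_0^t (1+t-s)^{-\beta_1}(1+s)^{-\beta_2}\,{\rm d}s = \int_0^{t/2}\cdots\,{\rm d}s + \int_{t/2}^t\cdots\,{\rm d}s =: I_1+I_2 .
\end{align*}

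On $[0,t/2]$ we have $1+t-s\ge \frac{1}{2}(1+t)$. Hence
\begin{align*}
I_1\lesssim (1+t)^{-\beta_1}\int_0^{\infty}(1+s)^{-\beta_2}\,{\rm d}s\lesssim (1+t)^{-\beta_1},
\end{align*}
where the last step uses $\beta_2>1$.

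On $[t/2,t]$ we have $1+s\ge \frac12(1+t)$. Hence
\begin{align*}
I_2\lesssim (1+t)^{-\beta_2}\int_{t/2}^t(1+t-s)^{-\beta_1}\,{\rm d}s = (1+t)^{-\beta_2}\int_0^{t/2}(1+\tau)^{-\beta_1}\,{\rm d}\tau .
\end{align*}
This last integral is where the hypothesis $\beta_1\neq 1$ enters, and it is the only step needing care; the rest is routine. I split into two cases.

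If $\beta_1>1$, the integral is bounded by a constant. Then $I_2\lesssim (1+t)^{-\beta_2}$.

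If $0<\beta_1<1$, the integral is bounded by $\frac{1}{1-\beta_1}(1+t)^{1-\beta_1}$. Then $I_2\lesssim (1+t)^{1-\beta_1-\beta_2}\le (1+t)^{-\beta_1}$, since $\beta_2>1$.

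(The excluded case $\beta_1=1$ would produce a logarithm $\log(1+t)$, which is why it is ruled out.)

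In both cases $I_2\lesssim (1+t)^{-\min\{\beta_1,\beta_2\}}$. Together with $I_1\lesssim (1+t)^{-\beta_1}\le (1+t)^{-\min\{\beta_1,\beta_2\}}$, this gives the lemma, with an implicit constant depending only on $\beta_1$ and $\beta_2$.
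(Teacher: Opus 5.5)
Your proof is correct: the paper gives no argument of its own (it only cites Carrillo--Duan--Moussa), and your midpoint splitting at $s=t/2$ is the standard proof of this convolution estimate. One small correction to your parenthetical remark: for $\beta_1=1$ the only new factor is a logarithm in $I_2\lesssim(1+t)^{-\beta_2}\log(1+t)\lesssim(1+t)^{-1}$, which is absorbed because $\beta_2>1$, so the lemma still holds there and the hypothesis $\beta_1\neq 1$ is superfluous rather than forced by the argument.
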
 

\begin{lem} [{\!\!\cite[Lemma 3.3]{CDM-krm-2011}}]\label{L2.5}   
Let $\gamma>1$ and $g_1,g_2\in C(\mathbb{R}_+,\mathbb{R}_+)$ with
$g_1(0)=0$. For $A\in \mathbb{R}_+$, define 
$$\mathcal{B}_{A}:=\{y\in C(\mathbb{R}_+,\mathbb{R}_+)|\, \,y\leq A+g_1(A)y+g_2(A)y^{\gamma},\,\, y(0)\leq A\}.$$
Then, there exists a constant $A_0\in (0,\,\min\{A_1,A_2\})$ such that for
any $0<A\leq A_0$, 
\begin{align*}
y\in \mathcal{B}_A \Rightarrow \sup_{t\geq 0}y(t)\leq 2A.
\end{align*}
\end{lem}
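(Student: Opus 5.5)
The proof is a continuity (bootstrap) argument. I first choose $A_0$ so that the algebraic inequality defining $\mathcal{B}_A$ cannot be satisfied at the value $y=2A$. Continuity of $y$ then prevents $y$ from ever reaching $2A$.

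\emph{Choice of thresholds.} Since $g_1$ is continuous with $g_1(0)=0$, there is $A_1>0$ such that $g_1(A)<\frac14$ for all $0<A<A_1$. Since $g_2$ is continuous on $\mathbb{R}_+$, it is bounded on $[0,1]$, say by $K$. Because $\gamma>1$, we have $A^{\gamma-1}\to 0$ as $A\to0$. Hence there is $A_2\in(0,1]$ such that $2^{\gamma}g_2(A)A^{\gamma-1}\le 2^\gamma K A^{\gamma-1}<\frac12$ for all $0<A<A_2$. Take any $A_0\in(0,\min\{A_1,A_2\})$. Then for every $0<A\le A_0$,
\begin{align*}
A+g_1(A)\,(2A)+g_2(A)\,(2A)^{\gamma}
= A\Big(1+2g_1(A)+2^{\gamma}g_2(A)A^{\gamma-1}\Big)
< A\Big(1+\tfrac12+\tfrac12\Big)=2A .
\end{align*}

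\emph{Continuity step.} Fix $0<A\le A_0$ and $y\in\mathcal{B}_A$. Then $y(0)\le A<2A$. Suppose, for contradiction, that $y(t)\ge 2A$ for some $t>0$. Since $y$ is continuous, the intermediate value theorem gives a first time $t^*>0$ with $y(t^*)=2A$. Evaluating the defining inequality of $\mathcal{B}_A$ at $t^*$ gives
\begin{align*}
2A=y(t^*)\le A+g_1(A)(2A)+g_2(A)(2A)^\gamma .
\end{align*}
The right-hand side is strictly less than $2A$ by the choice of $A_0$, which is a contradiction. Hence $y(t)<2A$ for all $t\ge0$, and in particular $\sup_{t\ge0}y(t)\le 2A$.

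\emph{Main point.} There is no real obstacle here. The only point requiring care is that the superlinear term is absorbed because $\gamma>1$: the factor $A^{\gamma-1}$ becomes small while $g_2(A)$ stays bounded near $0$. Moreover, $A_0$ depends only on $g_1$, $g_2$ and $\gamma$, not on the particular $y$. This uniformity is what makes the lemma usable for closing the nonlinear decay estimates later in the paper.
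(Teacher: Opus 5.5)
Your proof is correct and is the standard continuity argument for this lemma. You choose $A_1,A_2$ so that $A+2Ag_1(A)+(2A)^{\gamma}g_2(A)<2A$, and then use the intermediate value theorem to exclude any time with $y(t^*)=2A$. The paper gives no proof of its own and quotes the result from Lemma 3.3 of the cited Carrillo--Duan--Moussa reference. Your choice of the thresholds $A_1$ and $A_2$ also supplies their definitions, which the statement as reproduced in the paper leaves undefined.
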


\subsection{Macro-micro decomposition and some properties of $\mathcal{L}$}
We present the macro-micro decomposition of our distribution function $f(t,x,v)$, which was originally proposed in the context of the Boltzmann equation with hard sphere potential \cite{GY-iumj-2004} and later adapted to the Fokker-Planck equation \cite{DFT-2010-CMP}. This decomposition expresses $f(t,x,v)$ as the sum of the fluid (macroscopic) component $\mathbf{P}f$ and the kinetic (microscopic) component   $\{\mathbf{I}-\mathbf{P}\}f$, such that  
\begin{align}\label{G2.4}  
f = \mathbf{P}f + \{\mathbf{I}-\mathbf{P}\}f.  
\end{align}
Here, $\mathbf{P}$ denotes the velocity orthogonal projection operator satisfying  
$$
\mathbf{P}: L^2 \rightarrow {\rm Span}\big\{\sqrt{M}, v_i\sqrt{M} \big\}, \quad 1 \leq i \leq 3.
$$  
Furthermore, $\mathbf{P}$ can be decomposed as  
$$
\mathbf{P} := \mathbf{P}_0 \oplus \mathbf{P}_1.
$$  
We define  
$$
a^f = \int_{\mathbb{R}^3} \sqrt{M} f \, {\rm d}v, \quad b^f = \int_{\mathbb{R}^3} v \sqrt{M} f \, {\rm d}v.
$$  
Accordingly, we obtain $\mathbf{P}f = \mathbf{P}_0 f + \mathbf{P}_1 f$, where  
$$
\mathbf{P}_0 f := a^f \sqrt{M}, \quad \mathbf{P}_1 f := b^f v \sqrt{M}.
$$  

It is well-known that  $\mathcal{L}$ is self-adjoint, and its kernel and range are given by  
$$
{\rm Ker}\,\mathcal{L} := {\rm Span}\big\{\sqrt{M}\big\}, \quad {\rm Range}\,\mathcal{L} := {\rm Span}\big\{\sqrt{M}\big\}^\perp.
$$  
Thus, $\mathcal{L}f$ can be expressed as  
$$
\mathcal{L}f = \mathcal{L}(\mathbf{I} - \mathbf{P})f + \mathcal{L}\mathbf{P}f = \mathcal{L}(\mathbf{I} - \mathbf{P})f - \mathbf{P}_1 f.
$$  
In addition, there exists a positive constant $\lambda_0 > 0$ such that (see \cite{LMW-SIAM-2017,DFT-2010-CMP,CDM-krm-2011}) 
\begin{align}
-\langle \mathcal{L}f, f\rangle
&\geq\lambda_{0}|\{\mathbf{I}-\mathbf{P}_0\}f|_{\nu}^{2},\label{G2.5-1}\\
-\langle \mathcal{L}\{\mathbf{I}-\mathbf{P}\}f,f\rangle
&\geq\lambda_{0}|\{\mathbf{I}-\mathbf{P}\}f|_{\nu}^{2}, \label{G2.5-2}\\
-\langle \mathcal{L}f, f\rangle
&\geq\lambda_{0}|\{\mathbf{I}-\mathbf{P}\}f|_{\nu}^{2}+|b|^{2}.\label{G2.5}
\end{align}

\medskip          
\section{Global Existence of Classical Solutions for the NS-VFP  System} % \eqref{C1}with Uniform Viscosity $\mu > 0$ in $\mathbb{R}^3$}

Throughout this section, we simplify notation by omitting the superscript $\mu$ on 
the unknowns. Our goal is to establish the global existence of classical solutions to the compressible NS-VFP system \eqref{C1} in the whole space $\mathbb{R}^3$.

\subsection{A priori estimates}
Now, we proceed to establish a uniform-in-time a priori estimate for $(\varrho, u, f)$ under the assumption that  
\begin{align}\label{G3.1}  
\sup_{0 \leq t < T} \big\{\|(\varrho,u)\|_{H^3} + \|f\|_{H^3_{x,v}} \big\} \leq \delta,  
\end{align}  
where $0 < \delta < 1$ is a sufficiently small constant independent of $\mu$. Here, $(\varrho, u, f)$ denotes the classical solution to the compressible NS-VFP system \eqref{C1} on the time interval $0 \leq t < T$, for some given $T > 0$.
By using the embedding $H^2(\mathbb{R}^3)\hookrightarrow L^\infty(\mathbb{R}^3)$ and  \eqref{G3.1}, we arrive at
\begin{align}\label{G3.2}
 \frac{1}{2}\leq \varrho+1\leq \frac{3}{2}.   
\end{align}

First, we establish the estimate of $\varrho$, $u$ and $f$ in $L^2$-norm.
\begin{lem}\label{L3.1}
For the classical solution $(\varrho, u,f)$ to the   NS-VFP system \eqref{C1}, there exist a positive constant $\lambda_1$ independent of $\mu$, such that
\begin{align}\label{G3.3}
& \frac{\rm d}{{\rm d}t}\big(P^{\prime}(1)\|\varrho\|_{L^2}^{2}+\|u\|_{L^2}^{2}+\|f\|_{L_{x,v}^2}^{2}\big)\nonumber\\
&\quad+\lambda_1\big(\|b-u\|_{L^2}^{2}
+\mu\|\nabla u\|_{L^2}^2+\|\{\mathbf{I}-\mathbf{P}\}f\|_{\nu}^{2}\big)
\lesssim \delta\|\nabla( \varrho,u,a,b)\|_{L^2}^2,
\end{align}
for any $0 \leq t < T$. 
\end{lem}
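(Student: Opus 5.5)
We prove \eqref{G3.3} by a weighted $L^2$ energy estimate. The weights $P'(1)$, $1$, $1$ are chosen so that the linear couplings cancel: the pressure term against $\varrho\,{\rm div}u$, and the friction term $b-u$ against the source $u\cdot v\sqrt{M}$ in the $f$-equation.

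First I multiply \eqref{C1}$_1$ by $P'(1)\varrho$ and \eqref{C1}$_2$ by $u$, and integrate over $\mathbb{R}^3$. I split $\frac{P'(1+\varrho)}{1+\varrho}=P'(1)+h(\varrho)$ with $|h(\varrho)|\lesssim|\varrho|$. The linear terms $P'(1)\int\varrho\,{\rm div}u\,dx$ and $P'(1)\int\nabla\varrho\cdot u\,dx$ then cancel after integration by parts. For the viscous term I write $\frac{1}{1+\varrho}=1+\tilde h(\varrho)$ and integrate by parts, which gives
\[
-\mu\int\frac{\Delta u\cdot u}{1+\varrho}\,dx=\mu\|\nabla u\|_{L^2}^2+\mu\int\nabla\tilde h(\varrho)\cdot\nabla u\cdot u\,dx+\mu\int\tilde h(\varrho)|\nabla u|^2\,dx .
\]
By \eqref{G3.2}, $\|\varrho\|_{L^\infty}\lesssim\delta$ is small, so the last term is absorbed into $\frac12\mu\|\nabla u\|_{L^2}^2$. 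The middle term is bounded by $\mu\|\nabla\varrho\|_{L^3}\|u\|_{L^6}\|\nabla u\|_{L^2}\lesssim\delta\|\nabla u\|_{L^2}^2$, using $\mu<1$ and Lemma~\ref{L2.2}. This is exactly what keeps the constants independent of $\mu$.

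The remaining nonlinear fluid terms are $\int\varrho^2{\rm div}u$-type terms, $\int(u\cdot\nabla u)\cdot u$ and $\int h(\varrho)\nabla\varrho\cdot u$. Each is cubic. I bound them by $L^3$--$L^6$--$L^2$ Hölder and Lemma~\ref{L2.2}, which gives $\lesssim\delta\|\nabla(\varrho,u)\|_{L^2}^2$. The key point is that every factor without a derivative is placed in $L^6$ or $L^3$ (controlled by gradients and $\delta$), never in $L^2$. The source term gives $\int(b-u)\cdot u\,dx-\int a|u|^2\,dx$, and I bound $|\int a|u|^2|\lesssim\|a\|_{L^3}\|u\|_{L^6}\|u\|_{L^2}$. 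Here I expect a difficulty, since $\|u\|_{L^2}$ is not dissipated. I plan to avoid it by placing $a\in L^{3/2}$ with $\|a\|_{L^{3/2}}$ small, since $f\in H^3_{x,v}$ only. Then $\|a\|_{L^{3/2}}\|u\|_{L^6}^2\lesssim\delta\|\nabla u\|_{L^2}^2$ if $\|a\|_{L^{3/2}}\lesssim\delta$. If this is unavailable, I will instead combine the term with the kinetic side: $-\int a|u|^2$ should pair with the $\varrho$-weighted terms in $f$, using the symmetric structure of the friction.

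Next I multiply \eqref{C1}$_3$ by $f$ and integrate in $(x,v)$. Transport vanishes. The $u\cdot\nabla_v f$ term vanishes, and $\frac12 u\cdot v f^2$ cancels against its counterpart on the right in the linear part. The source gives $\int u\cdot b\,dx$, while $\int\mathcal{L}f\,f$ gives, via \eqref{G2.5} (more precisely the identity $-\langle\mathcal{L}f,f\rangle=|\{\mathbf I-\mathbf P_0\}f|_\nu$-type with the $|b|^2$ part), $\lambda_0\|\{\mathbf I-\mathbf P\}f\|_\nu^2+\|b\|_{L^2}^2$. Adding the fluid part, the quadratic form $\|b\|^2-2\int u\cdot b+\|u\|^2$ equals $\|b-u\|_{L^2}^2$, which yields the relaxation dissipation. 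A sharper form is needed: I use $-\langle\mathcal L f,f\rangle=\|\nabla_v f+\frac v2 f\|^2$ exactly, and decompose.

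Finally I treat the $\varrho$-weighted kinetic terms: $\int\varrho(\mathcal L f-u\cdot\nabla_vf+\frac12u\cdot vf+u\cdot v\sqrt M)f$. This together equals $-\int\varrho|\nabla_v f+\frac v2 f-u\sqrt M-\dots|^2$-type plus cubic remainders. I will either absorb it with $\|\varrho\|_{L^\infty}\lesssim\delta$ into the dissipation, or split $f=\mathbf Pf+\{\mathbf I-\mathbf P\}f$. The $\{\mathbf I-\mathbf P\}f$ parts are absorbed by $\delta\|\{\mathbf I-\mathbf P\}f\|_\nu^2$. The $\mathbf Pf$ parts reduce to $\int\varrho(\,|b|^2, b\cdot u, a\,u\cdot b\,)$, bounded by $\|\varrho\|_{L^3}\|(a,b,u)\|_{L^6}\|(b-u,\dots)\|$ or $\lesssim\delta\|\nabla(\varrho,u,a,b)\|^2$ via Sobolev. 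This macroscopic bookkeeping with the $\varrho$-weighted friction is the main obstacle. To handle it I would rewrite $\varrho\,b\cdot(b-u)$ and $\varrho|b-u|^2$, so that the non-dissipated $L^2$ norms never appear alone.
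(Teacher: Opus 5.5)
Your overall scheme matches the paper's: the multipliers $P'(1)\varrho$, $u$, $f$, the linear couplings combining into $\|b-u\|_{L^2}^2$, integration by parts in the viscous term, and the macro--micro splitting of the $\varrho$-weighted kinetic terms. There is, however, a genuine gap at exactly the point you flagged.

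Your claim that the term $\frac12 u\cdot vf$ ``cancels against its counterpart on the right in the linear part'' is false. In \eqref{C1}$_3$, the unweighted $-\frac12 u\cdot vf$ on the left has only the $\varrho$-weighted counterpart $\frac12\varrho\,u\cdot vf$ on the right. After testing with $f$ you are therefore left with $\frac12\int(1+\varrho)\,u\cdot\langle vf,f\rangle\,{\rm d}x$, whose unweighted part is a new cubic term. This term is precisely what controls $-\int a|u|^2\,{\rm d}x$. Since $\mathbf{P}f=(a+b\cdot v)\sqrt{M}$, one has $\langle v\mathbf{P}f,\mathbf{P}f\rangle=2ab$, and hence
\begin{align*}
\frac12\int_{\mathbb{R}^3} u\cdot\langle v\mathbf{P}f,\mathbf{P}f\rangle\,{\rm d}x-\int_{\mathbb{R}^3} a|u|^2\,{\rm d}x
&=\int_{\mathbb{R}^3} a\,u\cdot(b-u)\,{\rm d}x\\
&\lesssim\|b-u\|_{L^2}\|u\|_{L^3}\|a\|_{L^6}\lesssim\delta\big(\|b-u\|_{L^2}^2+\|\nabla a\|_{L^2}^2\big).
\end{align*}
The cross and micro--micro pieces are bounded by $\|u\|_{L^3}\|(a,b)\|_{L^6}\|\{\mathbf{I}-\mathbf{P}\}f\|_\nu+\|u\|_{L^\infty}\|\{\mathbf{I}-\mathbf{P}\}f\|_\nu^2$. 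This is how the paper treats $I_2+I_3$.

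Neither of your fallback plans can replace this step. All three factors in $a|u|^2$ are controlled only through gradients ($L^6$, or $L^3$ with size $\delta$). Without a factor of $b-u$, Hölder's inequality always leaves an undissipated $L^2$ norm.
\begin{itemize}
\item Plan A needs $\|a\|_{L^{3/2}}\lesssim\delta$. The a priori assumption \eqref{G3.1} does not provide this: $H^3(\mathbb{R}^3)$ does not embed into $L^{3/2}$, and no $L^q$ information with $q<2$ is assumed in this lemma.
\item Plan B pairs the term with the wrong objects. The macroscopic part of the $\varrho$-weighted kinetic term is $\int\varrho\,a\,u\cdot b\,{\rm d}x$, which is quartic and cannot cancel a cubic term.
\end{itemize}
The fix is to keep the unweighted $\frac12\int u\cdot\langle vf,f\rangle\,{\rm d}x$ and pair its $\mathbf{P}$--$\mathbf{P}$ part with the friction contribution $-au$.

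A smaller point: absorbing all of $\int\varrho\langle\mathcal{L}f,f\rangle\,{\rm d}x$ via $\|\varrho\|_{L^\infty}$ would fail, because it contains $-\varrho|b|^2$ and $\|b\|_{L^2}^2$ is not dissipated. Your alternative split, which combines this with the $\varrho\,u\cdot v\sqrt{M}$ source into $\varrho(u-b)\cdot b$, is the right one.
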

\begin{proof}
Multiplying the equations in \eqref{C1} by $P^{\prime}(1)\varrho$, $u$, and $f$, respectively, then integrating 
the results over $\mathbb{R}^3$ and $\mathbb{R}^3\times \mathbb{R}^3$, and summing the results yields  
\begin{align}\label{G3.4}
\frac{1}{2}\frac{\rm d}{{\rm d}t}&\Big(P^{\prime}(1)\|\varrho\|_{L^2}^{2}+\|u\|_{L^2}^{2}+\|f\|_{L_{x,v}^2}^{2}\Big)+\|b-u\|_{L^2}^{2}
+\mu \|\nabla u\|_{L^2}^2\nonumber\\
&+\int_{\mathbb{R}^3}(\varrho+1)\langle-\mathcal{L}\{\mathbf{I}-\mathbf{P}\}f,f\rangle {\rm d}x \nonumber\\
=\,&-\int_{\mathbb{R}^3}(u\cdot\nabla u)\cdot u{\rm d}x+\int_{\mathbb{R}^3} u\cdot
\Big\langle\frac{1}{2}vf, f\Big\rangle {\rm d}x-\int_{\mathbb{R}^3}a|u|^2{\rm d}x-\frac{P^{\prime}(1)}{2}\int_{\mathbb{R}^3}\varrho^2{\rm div}u\mathrm{d}x\nonumber\\
&\,-\int_{\mathbb{R}^3}\Big(\frac{P^{\prime}(\varrho+1) }{\varrho+1}-P^{\prime}(1)  \Big)\nabla\varrho\cdot u \mathrm{d}x-\mu\int_{\mathbb{R}^3}  \frac{\varrho}{1+\varrho}  \Delta u\cdot u   \mathrm{d}x\nonumber\\
&\,+\int_{\mathbb{R}^3}\varrho\Big\langle \mathcal{L}\mathbf{P}f-u\cdot\nabla_v f+\frac{1}{2}u\cdot vf+u\cdot v\sqrt{M},f\Big\rangle {\rm d}x\nonumber\\
\equiv:& \sum_{j=1}^7 I_j.
 \end{align}
For the term $I_1$, applying Hölder’s inequality, the assumption \eqref{G3.1}, and Lemma \ref{L2.2}, we obtain  
\begin{align}\label{G3.5}
I_1\lesssim \|u\|_{L^3}\|u\|_{L^6} \|\nabla u\|_{L^2}\lesssim \|u\|_{H^1}\|\nabla u\|_{L^2}^2
\leq \delta \|\nabla u\|_{L^2}^2.
\end{align}
For the terms $I_2$ and $I_3$, we can deduce from the decomposition \eqref{G2.4} that  
\begin{align}\label{G3.6}
I_2+I_3=\,&\frac{1}{2}\int_{\mathbb{R}^3}u\cdot\langle v\{\mathbf{I}-\mathbf{P}\}f,\{\mathbf{I}-\mathbf{P}\}f\rangle\mathrm{d}x+\int_{\mathbb{R}^3}u\cdot\langle v\mathbf{P}f,\{\mathbf{I}-\mathbf{P}\}f\rangle\mathrm{d}x\nonumber\\
&+\frac{1}{2}\int_{\mathbb{R}^3}u\cdot\langle v\mathbf{P}f,\mathbf{P}f\rangle\mathrm{d}x-\int_{\mathbb{R}^3}a|u|^2\mathrm{d}x\nonumber\\
\lesssim\,&\|b-u\|_{L^2}\|u\|_{L^3}\|a\|_{L^6}+\|(a,b)\|_{L^6}\|\{\mathbf{I}-\mathbf{P}\}f\|_{\nu}\|u\|_{L^3}+\|u\|_{L^{\infty}}\|\{\mathbf{I}-\mathbf{P}\}f\|_{\nu}^2\nonumber\\
\lesssim\,&\delta\big(\|b-u\|_{L^2}^2+\|\nabla(a,b)\|_{L^2}^2+ \|\{\mathbf{I}-\mathbf{P}\}f\|_{\nu}^2        \big),
\end{align}
where we have used  H\"{o}lder’s and Young's inequalities, the assumption \eqref{G3.1} and Lemmas \ref{L2.1}--\ref{L2.2}.
Applying   similar arguments to those done in  \eqref{G3.5},  we have
\begin{align} 
I_4\lesssim\,&  \|\varrho\|_{L^3}\|\varrho\|_{L^6}\|\nabla u\|_{L^2}\lesssim \|\varrho\|_{H^1}\|\nabla(\varrho,u)\|_{L^2}^2 \lesssim \delta \|\nabla(\varrho,u)\|_{L^2}^2,\label{G3.7-1}\\
I_5\lesssim\,&  \|\varrho\|_{L^3}\| u\|_{L^6}\|\nabla\varrho\|_{L^2}\lesssim \|\varrho\|_{H^1}\|\nabla(\varrho,u)\|_{L^2}^2 \lesssim \delta \|\nabla(\varrho,u)\|_{L^2}^2.\label{G3.7}
\end{align}

For the term $I_6$, utilizing
integration by parts, H\"{o}lder’s inequality,
and Lemmas \ref{L2.1}--\ref{L2.2}, we have
\begin{align}\label{G3.8}
I_6=&\,\mu\int_{\mathbb{R}^3}\frac{\varrho}{1+\varrho} {\rm  div}u \nabla u {\rm d}x+\mu\int_{\mathbb{R}^3}\nabla\Big(\frac{\varrho}{1+\varrho}  \Big)\cdot \nabla u\cdot u{\rm d}x\nonumber\\
\lesssim&\, \mu \|\varrho\|_{L^\infty}\|\nabla u\|_{L^2}^2+\mu \Big\|\nabla\Big(\frac{\varrho}{1+\varrho}   \Big)\Big\|_{L^3}\|\nabla u\|_{L^2}\|u\|_{L^6}\nonumber\\
\lesssim&\, \mu \|\varrho\|_{H^2}\|\nabla u\|_{L^2}^2\nonumber\\
\lesssim&\, \mu\delta \|\nabla u\|_{L^2}^2.
\end{align}

For the remaining term $I_7$, by applying the decomposition \eqref{G2.4} and utilizing H\"{o}lder’s, Sobolev’s, and Young’s inequalities along with Lemmas \ref{L2.1}–\ref{L2.2}, we derive
\begin{align}\label{G3.9}
I_7=\,&\int_{\mathbb{R}^3}\varrho(u-b)\cdot b\mathrm{d}x+\frac{1}{2}\int_{\mathbb{R}^3}\varrho u\cdot\langle vf,f\rangle\mathrm{d}x\nonumber\\
\lesssim\,& \|\varrho\|_{L^3}\|b-u\|_{L^2}\|b\|_{L^6}+\frac{1}{2}\int_{\mathbb{R}^3}\varrho u\cdot\langle v\{\mathbf{I}-\mathbf{P}\}f,\{\mathbf{I}-\mathbf{P}\}f\rangle\mathrm{d}x\nonumber\\
&+\int_{\mathbb{R}^3}\langle v\mathbf{P}f,\{\mathbf{I}-\mathbf{P}\}f\rangle\mathrm{d}x+\int_{\mathbb{R}^3}\varrho u \cdot ab\mathrm{d}x\nonumber\\
\lesssim\,& \|\varrho\|_{H^1}\|b-u\|_{L^2}\|\nabla b\|_{L^2}+\|\varrho\|_{H^2}\|u\|_{H^2}\|\{\mathbf{I}-\mathbf{P}\}\|_{\nu}^2
\nonumber\\
&+\|\varrho\|_{H^1}\|u\|_{H^1}\|(a,b)\|_{L^6}\|\{\mathbf{I}-\mathbf{P}\}\|_{\nu}
+\|\varrho\|_{H^1}\|u\|_{H^1}\|\nabla(a,b)\|_{L^2}\nonumber\\
\lesssim\,&\delta\big( \|b-u\|_{L^2}^2+\|\nabla(a,b)\|_{L^2}^2+\|\{\mathbf{I}-\mathbf{P}\}\|_{\nu}^2\big).
\end{align}
 Then, by substituting the estimates \eqref{G3.5}–\eqref{G3.9} into \eqref{G3.4} and utilizing \eqref{G2.5} and \eqref{G3.2}, we ultimately derive \eqref{G3.3}.
\end{proof}

Similarly to \cite[Lemma 2.2]{DL-KRM-2013}, we present the $L^2$-norm estimates of $\nabla^k(\varrho, u, f)$ for $k = 1, 2,3$.

\begin{lem}\label{L3.2}
For the classical solution $(\varrho, u,f)$ to the   NS-VFP system \eqref{C1}, there exist a positive constant $\lambda_2$ independent of $\mu$, such that
\begin{align}\label{G3.10}
& \frac{\rm d}{{\rm d}t}\sum_{1\leq |\alpha|\leq 3}
\bigg(\bigg\|\frac{\sqrt{P^\prime(1+\varrho)}}{1+\varrho}\partial^\alpha \varrho\bigg\|_{L^2}^2+\|\partial^\alpha u\|_{L^2}^2+\|\partial^\alpha f\|_{L_{x,v}^2}^2\bigg)\nonumber\\
&\quad +\lambda_2\sum_{1\leq|\alpha|\leq 3}\big(\|\partial^\alpha(b-u)\|_{L^2}^2+\mu\|\nabla\partial^\alpha u\|_{L^2}^2+\|\partial^\alpha\{\mathbf{I}-\mathbf{P}\}f\|_{\nu}^2\big) 
 \lesssim \delta\|\nabla(\varrho,u,a,b)\|_{H^2}^2,
\end{align}
for any $0 \leq t<T$. 
\end{lem}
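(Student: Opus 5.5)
We apply $\partial^\alpha$ with $1\le|\alpha|\le3$ to \eqref{C1} and take inner products. The density equation is multiplied by $\frac{P'(1+\varrho)}{(1+\varrho)^2}\partial^\alpha\varrho$, the velocity equation by $\partial^\alpha u$, and the kinetic equation by $\partial^\alpha f$. We then integrate over $\mathbb{R}^3$ and $\mathbb{R}^3\times\mathbb{R}^3$ respectively and sum over $\alpha$.

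The weight $\frac{P'(1+\varrho)}{(1+\varrho)^2}$ is chosen so that the top-order terms cancel. From the density equation the leading term is $(1+\varrho)\,{\rm div}\,\partial^\alpha u$, weighted by $\frac{P'}{(1+\varrho)^2}\partial^\alpha\varrho$. From the velocity equation the leading term is $\frac{P'(1+\varrho)}{1+\varrho}\nabla\partial^\alpha\varrho\cdot\partial^\alpha u$. After one integration by parts these cancel up to
\[
\int_{\mathbb{R}^3}\nabla\Big(\tfrac{P'(1+\varrho)}{1+\varrho}\Big)\cdot\partial^\alpha u\,\partial^\alpha\varrho\,{\rm d}x,
\]
which is bounded by $\|\nabla\varrho\|_{L^\infty}\|\nabla(\varrho,u)\|_{H^2}^2\lesssim\delta\|\nabla(\varrho,u)\|_{H^2}^2$.

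Two further contributions from the weight are handled as follows.
\begin{itemize}
\item The time derivative falling on the weight gives $\int\partial_t\big(\tfrac{P'(1+\varrho)}{(1+\varrho)^2}\big)|\partial^\alpha\varrho|^2$. Since $\partial_t\varrho=-{\rm div}((1+\varrho)u)$, the $L^\infty$ norm of $\partial_t\varrho$ is $\lesssim\delta$. Bounding it needs $\nabla u\in L^\infty$, which is where $H^3$ regularity enters.
\item The transport term $u\cdot\nabla\partial^\alpha\varrho\,\partial^\alpha\varrho$ is integrated by parts into ${\rm div}(\text{weight}\cdot u)|\partial^\alpha\varrho|^2$, which is $\lesssim\delta\|\nabla\varrho\|_{H^2}^2$.
\end{itemize}

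The remaining commutators $[\partial^\alpha,\varrho]{\rm div}\,u$, $[\partial^\alpha,u\cdot\nabla]u$ and $\partial^\alpha\big(\big(\tfrac{P'(1+\varrho)}{1+\varrho}-P'(1)\big)\nabla\varrho\big)$ are estimated by Lemma \ref{L2.1}, third inequality, and Lemma \ref{L2.3}. Each is bounded by $\delta\|\nabla(\varrho,u)\|_{H^2}^2$.

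For the viscous term we write $-\mu\,\partial^\alpha\big(\tfrac{\Delta u}{1+\varrho}\big)$ and integrate by parts. The top-order part gives $\mu\int\frac{|\nabla\partial^\alpha u|^2}{1+\varrho}\ge\frac{2\mu}{3}\|\nabla\partial^\alpha u\|_{L^2}^2$ by \eqref{G3.2}. The commutator and weight-gradient terms are $\lesssim\mu\delta\|\nabla u\|_{H^3}^2$ and are absorbed. This is the uniform-in-$\mu$ point: every $\mu$-term carries a factor $\mu$ and is absorbed into the good $\mu$-dissipation, never into $\delta\|\nabla(\cdot)\|_{H^2}^2$.

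For the coupling, the friction term $\partial^\alpha(b-u)\cdot\partial^\alpha u$ from the fluid combines with $-\partial^\alpha u\cdot v\sqrt M$ from the kinetic equation, as in Lemma \ref{L3.1}. The linear part $\langle-\mathcal{L}\partial^\alpha f,\partial^\alpha f\rangle$ together with \eqref{G2.5} then yields
\[
\|\partial^\alpha(b-u)\|_{L^2}^2+\lambda_0\|\partial^\alpha\{\mathbf{I}-\mathbf{P}\}f\|_\nu^2 .
\]
Here $\partial^\alpha$ commutes with $\mathbf P$, and $v\cdot\nabla_x$ is skew. The remaining coupling terms are treated in three groups.
\begin{itemize}
\item The term $\partial^\alpha(a u)$ and the $\frac12u\cdot v f$, $u\cdot\nabla_v f$ terms are estimated by the macro-micro splitting, exactly as in \eqref{G3.6}.
\item The $\varrho$-weighted term $\varrho\mathcal{L}\partial^\alpha f$ is split off: $\int\varrho\langle\mathcal L\partial^\alpha\{\mathbf I-\mathbf P\}f,\partial^\alpha f\rangle$ merges with the main dissipation into $(1+\varrho)\lambda_0$, using \eqref{G3.2}.
\item The commutators $\partial^{\alpha_1}\varrho\,\mathcal{L}\partial^{\alpha-\alpha_1}f$ with $\alpha_1\ne0$ are handled by Hölder. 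The $L^\infty_x$ factor goes on the lower-order term, and the $\nu$-norms of $\{\mathbf I-\mathbf P\}f$ appear. We need the pairing $\langle\mathcal L g,h\rangle\lesssim|g|_\nu|h|_\nu$ and the bound $|\partial^\alpha\mathbf Pf|_\nu\lesssim|\partial^\alpha(a,b)|$.
\end{itemize}

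All terms containing $\partial^\alpha(a,b)$ are bounded by $\delta\|\nabla(a,b)\|_{H^2}^2$. I expect the main obstacle to be the trilinear terms $\partial^\alpha(\varrho u\cdot\nabla_v f)$ and $\partial^\alpha(\varrho u\cdot v f)$ at $|\alpha|=3$. When all derivatives fall on $f$, these are handled by skew-symmetry in $v$ or by $\nu$-weighted bounds on the microscopic part. When derivatives fall on $\varrho u$, the $x$-derivatives of $\varrho u$ must be placed in $L^3$ or $L^6$ and $f$ in $L^\infty_xL^2_v$ with the $\nu$-weight. The $|v|$ growth in these terms is controlled by the $|\cdot|_\nu$ norm on $\{\mathbf I-\mathbf P\}f$ and by the Gaussian decay of $\mathbf Pf$. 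Summing over $\alpha$ and absorbing small terms gives \eqref{G3.10}.
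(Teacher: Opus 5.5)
Your proposal is correct and follows essentially the paper's proof. It uses the same multipliers $\frac{P'(1+\varrho)}{(1+\varrho)^2}\partial^\alpha\varrho$, $\partial^\alpha u$, $\partial^\alpha f$, the same symmetrization leaving $\int\nabla\big(\frac{P'(1+\varrho)}{1+\varrho}\big)\cdot\partial^\alpha u\,\partial^\alpha\varrho\,{\rm d}x$, the same $L^\infty$ bound on $\partial_t\varrho$ (which is where $H^3$ is needed), and term-by-term H\"older/Sobolev bounds via Lemmas \ref{L2.1}--\ref{L2.3}. The only differences are cosmetic: you absorb $\varrho\mathcal{L}\{\mathbf{I}-\mathbf{P}\}\partial^\alpha f$ into the coercive term using $1+\varrho\geq\frac12$ and keep $\frac{1}{1+\varrho}$ as a weight in the viscous term, whereas the paper treats $\partial^\alpha(\varrho\mathcal{L}f)$ and $-\mu\frac{\varrho}{1+\varrho}\Delta u$ as perturbations controlled by $\|\varrho\|_{L^\infty}\lesssim\delta$.
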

\begin{proof}
Applying $\partial^\alpha$ with $1\leq |\alpha|\leq 3$
to the equations in \eqref{C1}, we have 
\begin{equation}\label{G3.11}
\left\{\begin{aligned}
&\partial_{t} (\partial^\alpha\varrho)+(1+\varrho){\rm div}\partial^\alpha u +u\cdot\nabla\partial^\alpha\varrho
  =[-\partial^\alpha,\varrho{\rm div}]u+[-\partial^\alpha,u\cdot\nabla]\varrho,  \\
&\partial_{t}(\partial^\alpha u)+u\cdot\nabla(\partial^\alpha u)+\frac{P^{\prime}(1+\varrho)}{1+\varrho}\nabla\partial^\alpha\varrho+\bigg[\partial^\alpha,\frac{P^{\prime}(1+\varrho)}{1+\varrho}\nabla\bigg]\varrho
-\mu\partial^\alpha \Delta u \\
&\quad =[-\partial^\alpha,u\cdot\nabla]u+\partial^\alpha(b-u-au)-\mu\partial^\alpha\Big(\frac{\varrho}{1+\varrho}\Delta u\Big),\\
&\partial_{t}(\partial^\alpha f)+v\cdot\nabla (\partial^\alpha f)+u\cdot\nabla_v (\partial^{\alpha}f)-\partial^{\alpha}u\cdot v\sqrt{M}-\mathcal{L}(\partial^\alpha f)\\
&\quad =[-\partial^\alpha,u\cdot\nabla_v]f+\frac{1}{2}\partial^\alpha\big((1+\varrho)u\cdot vf\big)+\partial^{\alpha}\big(\varrho(\mathcal{L}f-u\nabla_v f+u\cdot v\sqrt{M})\big),\\
 \end{aligned}
 \right.
\end{equation}
where  $[A, B]=AB-BA$ denotes the commutator of two operators $A$ and $B$.
Multiplying the equations \eqref{G3.11}$_1$, \eqref{G3.11}$_2$, and \eqref{G3.11}$_3$ by $\frac{P^{\prime}(1+\varrho)}{(1+\varrho)^2}\partial^\alpha \varrho$, $\partial^\alpha u$, and $\partial^\alpha f$, respectively, integrating the  results, and adding all these products up yields 
\begin{align}\label{G3.12}
\frac12\frac {\rm d}{{\rm d}t}&\bigg(\bigg\|\frac{\sqrt{P^\prime(1+\varrho)}}{1+\varrho}\partial^\alpha \varrho\bigg\|_{L^2}^2+\|\partial^\alpha u\|_{L^2}^2+\|\partial^\alpha f\|_{L_{x,v}^2}^2\bigg)
+\|\partial^{\alpha}(b-u)\|_{L^2}^{2}+  \mu  \|\nabla\partial^\alpha u\|_{L^2}^2 
 \nonumber\\
&+\int_{\mathbb{R}^3}\left\langle-\mathcal{L}\{\mathbf{I}-\mathbf{P}\}\partial^{\alpha}f,
\partial^{\alpha}f\right\rangle {\rm d}x                  \nonumber\\
=\,&\int_{\mathbb{R}^3}\langle[-\partial^{\alpha},u\cdot\nabla_{v}]f,\partial^{\alpha}f\rangle {\rm d}x 
+\frac{1}{2}\int_{\mathbb{R}^3}
\langle\partial^{\alpha}\big((1+\varrho)u\cdot vf\big),\partial^{\alpha}f\rangle {\rm d}x\nonumber\\
&+\int_{\mathbb{R}^3}
\langle\partial^{\alpha}\big(\varrho(\mathcal{L}f-u\nabla_v f+u\cdot v\sqrt{M})\big),\partial^{\alpha}f\rangle {\rm d}x+\frac{1}{2}\int_{\mathbb{R}^3}\partial_t \bigg[\frac{P^\prime(1+\varrho)}{(1+\varrho)^2}\bigg] |\partial^\alpha\varrho|^2{\rm d}x\nonumber\\
&+\int_{\mathbb{R}^3} \frac{P^\prime(1+\varrho)}{(1+\varrho)^2} \big([-\partial^\alpha,\varrho{\rm div}]u+[-\partial^\alpha,u\cdot\nabla]\varrho     \big)\partial^\alpha\varrho{\rm d}x+\int_{\mathbb{R}^3}\partial^{\alpha}(au)\cdot\partial^{\alpha}u\mathrm{d}x \nonumber\\
&+\frac{1}{2}\int_{\mathbb{R}^3}|\partial^{\alpha} u|^2{\rm div}u\mathrm{d}x-\int_{\mathbb{R}^3}\bigg[\partial^\alpha,\frac{P^{\prime}(1+\varrho)}{1+\varrho}\nabla\bigg]\varrho\cdot\partial^\alpha u{\rm d}x\nonumber\\
&-\int_{\mathbb{R}^3}
 \nabla \Big(\frac{P^{\prime}(1+\varrho)}{1+\varrho}-{P^{\prime}(1)}\Big)\partial^\alpha\varrho \cdot\partial^\alpha u\mathrm{d}x+\int_{\mathbb{R}^3}[-\partial^\alpha,u\cdot\nabla]u\cdot\partial^\alpha u{\rm d}x\nonumber\\
&-\mu\sum_{0\leq|\beta|<|\alpha|}C_{\alpha,\beta}
\int_{\mathbb{R}^3}\partial^{\alpha-\beta}\Big(\frac{\varrho}{1+\varrho}\Big)\partial^\beta\Delta u\cdot\partial^\alpha u \mathrm{d}x-\mu\int_{\mathbb{R}^3}\frac{\varrho}{1+\varrho}\partial^\alpha \Delta u\cdot \partial^\alpha u{\rm d}x\nonumber\\
\equiv:\,&\sum_{i=1}^{12} J_i,
\end{align}
where $C_{\alpha,\beta}$ denote  constants depending only on $\alpha $ and $\beta$.

For the terms $J_1$ and $J_2$, using the assumption \eqref{G3.1}, Lemmas \ref{L2.1}–\ref{L2.3}, as well as H\"{o}lder’s and Young's inequalities, we have  
\begin{align}\label{G3.13}
J_1\lesssim&\, \|\nabla_v
\partial^\alpha f\|_{L_{x,v}^2}\|\nabla u\|_{H^2}\|\nabla f\|_{L_v^2(H ^2)}\nonumber\\ 
\lesssim&\,\|u\|_{H^3}\bigg(\sum_{1\leq|\alpha|\leq 3}\|\partial^\alpha\{\mathbf{I}-\mathbf{P}\}f\|_{\nu}^2+\|\nabla(a,b)\|_{H^2}^2\bigg)\nonumber\\
\lesssim&\,\delta\bigg(\sum_{1\leq|\alpha|\leq 3}\|\partial^\alpha\{\mathbf{I}-\mathbf{P}\}f\|_{\nu}^2+\|\nabla(a,b)\|_{H^2}^2\bigg),\\\label{G3.14}
J_2\lesssim&\, \big\|\nabla\big((1+\varrho)u\big)\big\|_{H^2}\|\nabla f\|_{L_v^2(H^2)}\|v\partial^\alpha f\|_{L_{x,v}^2}\nonumber\\
\lesssim&\, \big(\|\nabla u\|_{H^2}+\|\nabla\varrho\|_{H^2}\|\nabla u\|_{H^2}\big)
\bigg(\sum_{1\leq|\alpha|\leq 3}\|\partial^\alpha\{\mathbf{I}-\mathbf{P}\}f\|_{\nu}^2+\|\nabla(a,b)\|_{H^2}^2\bigg)\nonumber\\
\lesssim&\,\delta\bigg(\sum_{1\leq|\alpha|\leq 3}\|\partial^\alpha\{\mathbf{I}-\mathbf{P}\}f\|_{\nu}^2+\|\nabla(a,b)\|_{H^2}^2\bigg).
\end{align}
For the term $J_{3}$, it follows from the decomposition  \eqref{G2.4} and  the assumption \eqref{G3.1}, together with Lemmas \ref{L2.1}--\ref{L2.3}, that  
\begin{align}\label{G3.15}
J_{3}\lesssim &\,\Big\|\partial^\alpha\Big(\varrho\sqrt{M}\nabla_v\Big(\frac{f}{\sqrt{M}}\Big)\Big)\Big\|_{L_{x,v}^2}
\Big\|\partial^\alpha\Big(\sqrt{M}\nabla_v\Big(\frac{f}{\sqrt{M}}\Big)\Big)\Big\|_{L_{x,v}^2} \nonumber\\
&+\|\partial^\alpha(\varrho u f)\|_{L_{x,v}^2}\|\nabla_v \partial^\alpha f\|_{L_{x,v}^2}+\|\partial^\alpha(\varrho u)\|_{L^2}\|\partial^\alpha b\|_{L^2}\nonumber\\
\lesssim&\,\big(\|\nabla\varrho\|_{H^2}+\|\nabla u\|_{H^2}\|\nabla\varrho\|_{H^2}\big)
\bigg(\sum_{1\leq|\alpha|\leq 3}\|\partial^\alpha\{\mathbf{I}-\mathbf{P}\}f\|_{\nu}^2+\|\nabla(u,a,b)\|_{H^2}^2\bigg)\nonumber\\
\lesssim&\,\delta\bigg(\sum_{1\leq|\alpha|\leq 3}\|\partial^\alpha\{\mathbf{I}-\mathbf{P}\}f\|_{\nu}^2+\|\nabla( u,a,b)\|_{H^2}^2\bigg).
\end{align}

For the term $J_4$, according to \eqref{C1}$_1$,  the assumption  \eqref{G3.1}, and Lemma \ref{L2.1}, one has
\begin{align}\label{NJK-aaaa}
\sup_{0\leq t<T,\,x\in\mathbb{R}^3} |\partial_t\varrho(t,x)|\lesssim (\|\varrho\|_{L^\infty}+1)\|\nabla u\|_{L^\infty}+\|u\|_{L^\infty}\|\nabla \varrho\|_{L^\infty}\lesssim \|u\|_{H^3}, 
\end{align}
this estimate consequently implies that  
\begin{align}\label{G3.16}
J_4\lesssim&\, \|u\|_{H^3}\|\nabla\varrho\|_{H^2}^2\lesssim\delta  \|\nabla\varrho\|_{H^2}^2.  
\end{align}
For the terms $J_5, \dots, J_{11}$, applying H\"{o}lder's and Young's inequalities,   \eqref{G3.1} and Lemmas \ref{L2.1}--\ref{L2.3}, we obtain
\begin{align}
J_5\lesssim&\, (1+\|\varrho\|_{L^\infty})\|\nabla \varrho\|_{H^2}\big(\|\nabla\varrho\|_{L^\infty} \|\nabla u\|_{H^2}+\|\nabla u\|_{L^\infty}\|\nabla\varrho\|_{H^2}
\big) \nonumber\\
\lesssim &\,\delta \|\nabla(\varrho,u)\|_{H^2}^2,\label{G3.17-1}\\
J_6\lesssim&\,  \|\nabla(au)\|_{H^2}\|\nabla u\|_{H^2} 
\lesssim  \|\nabla a\|_{H^2}\|\nabla u\|_{H^2}^2 
\lesssim  \delta \|\nabla u\|_{H^2}^2,\label{G3.17-2}\\
J_7\lesssim&\, \|\nabla u\|_{L^\infty}\|\nabla u\|_{H^2}^2 
\lesssim  \|u\|_{H^3}\|\nabla u\|_{H^2}^2 
\lesssim \delta \|\nabla u\|_{H^2}^2,\label{G3.17-3}\\
J_8+J_9\lesssim&\, \|\nabla\varrho\|_{L^\infty}\|\nabla\varrho\|_{H^2}\|\nabla u\|_{H^2} 
\lesssim  \|u\|_{H^3}\|\nabla\varrho\|_{H^2}^2 
 \lesssim  \delta \|\nabla\varrho\|_{H^2}^2,\label{G3.17-4}\\
J_{10}\lesssim&\, \|\nabla u\|_{L^\infty}\|\nabla u\|_{H^2}^2 
\lesssim  \delta \|\nabla u\|_{H^2}^2,\label{G3.17-5}\\
J_{11}\lesssim&\, \mu\sum_{0\leq |\beta|<|\alpha|} \Big\| \partial^{\alpha-\beta} \Big(\frac{\varrho}{1+\varrho}\Big)\Big\|_{L^3}\|\partial^\beta \Delta u\|_{L^2}\|\partial^\alpha u\|_{L^6} \nonumber\\
\lesssim &\, \mu \|\varrho\|_{H^3}\|\nabla^2 u\|_{H^2}^2
\lesssim \mu \delta \|\nabla^2 u\|_{H^2}^2.\label{G3.17}
\end{align}
For the last term $J_{12}$, applying integration by parts and Lemmas \ref{L2.1}--\ref{L2.2}, we have  
\begin{align}\label{G3.18}
J_{12}=&\,\mu\int_{\mathbb{R}^3}\nabla \Big(\frac{\varrho}{1+\varrho}  \Big) \cdot \partial^\alpha \nabla u\cdot \partial^\alpha u{\rm d}x+\mu \int_{\mathbb{R}^3} \frac{\varrho}{1+\varrho}    \partial^\alpha \nabla u\cdot \partial^\alpha {\rm div} u{\rm d}x \nonumber\\
\lesssim&\, \mu \|\nabla\varrho\|_{L^3}\|\partial^\alpha\nabla  u\|_{L^2}\|\partial^\alpha u\|_{L^6}+\mu \|\varrho\|_{L^\infty}\|\nabla\partial^\alpha u\|_{L^2}^2\nonumber\\
\lesssim&\, \mu \|\varrho\|_{H^3}\|\nabla^2 u\|_{H^2}^2\nonumber\\
\lesssim&\, \mu \delta \|\nabla^2 u \|_{H^2}^2.
\end{align}
Substituting the estimates \eqref{G3.13}–\eqref{G3.18} into \eqref{G3.12}, summing over $1 \leq |\alpha| \leq 3$, and applying \eqref{G2.5} and \eqref{G3.2} once again, we ultimately obtain the desired estimate \eqref{G3.10}.
\end{proof}

To incorporate the norm $\|\nabla (a,b)\|_{H^2}$ into the energy dissipation rate, as done in \cite{LMW-SIAM-2017,LNW-2025-preprint}, we analyze the following equations for $a$ and $b$:

\begin{equation}\label{G3.19}
\left\{\begin{aligned}
&\partial_{t}a+\nabla\cdot b=0,\\
&\partial_{t} b_i+\partial_{i} a+\sum_{j=1}^3\partial_j\Gamma_{ij}(\{\mathbf{I}-\mathbf{P}\}f)=(1+\varrho)(u_i-b_i)+(1+\varrho)u_ia,  \\
&\partial_{i}b_j+\partial_j b_i-(1+\varrho)(u_ib_j+u_jb_i)=-\partial_t \Gamma_{ij}(\{\mathbf{I}-\mathbf{P}\}f)+\Gamma_{ij}(\mathfrak{l}+\mathfrak{r}+\mathfrak{s}),  
 \end{aligned}
 \right.
\end{equation}
for $1\leq i,j\leq 3$,
where $\mathfrak{l},\mathfrak{r}$ ,and $\mathfrak{s}$ are given by
\begin{align}
\mathfrak{l}& :=\mathcal{L}\{\mathbf{I}-\mathbf{P}\}f-v\cdot\nabla\{\mathbf{I}-\mathbf{P}\}f, \label{lrs-1}\\
\mathfrak{r}& :=-u\cdot\nabla_v\{\mathbf{I}-\mathbf{P}\}f+\frac{1}{2}u\cdot v\{\mathbf{I}-\mathbf{P}\}f,\label{lrs-2}\\
\mathfrak{s}& :=\frac{\varrho}{\sqrt{M}}\nabla_v\cdot\big(\nabla_v\big(\sqrt{M}\{\mathbf{I}-\mathbf{P}\}f\big)
+v\sqrt{M}\{\mathbf{I}-\mathbf{P}\}f-u\sqrt{M}\{\mathbf{I}-\mathbf{P}\}f             \big), \label{lrs-3}
\end{align}
and 
\begin{align}\label{gammaa}
\Gamma_{ij}(\cdot):=\big\langle (v_iv_j-1)\sqrt{M},\cdot   \big\rangle 
\end{align}
is the moment functional. See \cite{LMW-SIAM-2017,LNW-2025-preprint} for
the derivation of equations \eqref{G3.19}. 
Define a temporal functional $\mathcal{E}_0(t)$ by
\begin{align*}
\mathcal{E}_0(t):=\sum_{|\alpha|\leq 2}\sum_{i,j=1}^3\int_{\mathbb{R}^3}\partial^\alpha(\partial_ib_j
+\partial_jbi)\partial^{\alpha}\Gamma_{ij}(\{\mathbf{I}-\mathbf{P}\}f)\mathrm{d}x-\sum_{|\alpha|\leq 2}\int_{\mathbb{R}^3}\partial^\alpha a\partial^{\alpha}\nabla\cdot b\mathrm{d}x. 
\end{align*}

Following a similar approach to that in \cite{LMW-SIAM-2017,LNW-2025-preprint}, we can establish the following lemma. The proof is omitted here for brevity.
\begin{lem}\label{L3.3}
For the classical solution $(\varrho, u,f)$ to the   NS-VFP system \eqref{C1}, there exist a positive constant $\lambda_3$ independent of $\mu$, such that
\begin{align}\label{G3.24}
\frac{\rm d}{{\rm d}t}\mathcal{E}_{0}(t)+\lambda_3\|\nabla(a,b)\|_{H^{1}}^{2}
\lesssim&\, \|\{\mathbf{I}-\mathbf{P}\}f\|_{L_{v}^{2}(H^{3})}^{2}+\|b-u\|_{H^{2}}^{2} , 
\end{align} 
for any $0 \leq t < T$.   
\end{lem}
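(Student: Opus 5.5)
We use the standard macroscopic dissipation argument of Guo / Duan type, applied to the moment system \eqref{G3.19}. We treat the two pieces of $\mathcal{E}_0(t)$ separately and then add them.

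\textbf{Dissipation of $\nabla b$.} For $|\alpha|\le 2$, apply $\partial^\alpha$ to \eqref{G3.19}$_3$, multiply by $\partial^\alpha(\partial_i b_j+\partial_j b_i)$, sum over $i,j$ and integrate over $\mathbb{R}^3$.
\begin{itemize}
\item The left side gives $\sum_{i,j}\|\partial^\alpha(\partial_ib_j+\partial_jb_i)\|_{L^2}^2$. By Korn's identity on $\mathbb{R}^3$ this equals $2\|\nabla\partial^\alpha b\|_{L^2}^2+2\|{\rm div}\,\partial^\alpha b\|_{L^2}^2$, which dominates $\|\nabla\partial^\alpha b\|_{L^2}^2$.
\item The term $-\partial_t\Gamma_{ij}$ is handled by moving the time derivative: it equals $-\frac{\rm d}{{\rm d}t}\int\partial^\alpha(\partial_ib_j+\partial_jb_i)\partial^\alpha\Gamma_{ij}\,{\rm d}x$ plus $\int\partial^\alpha\partial_t(\partial_ib_j+\partial_jb_i)\partial^\alpha\Gamma_{ij}\,{\rm d}x$.
\item In the remainder we integrate by parts in $x$ and substitute $\partial_t b$ from \eqref{G3.19}$_2$. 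This produces $\nabla a$, $\nabla\Gamma(\{\mathbf{I}-\mathbf{P}\}f)$, $(1+\varrho)(u-b)$ and $(1+\varrho)ua$, each paired with $\nabla\partial^\alpha\Gamma(\{\mathbf{I}-\mathbf{P}\}f)$.
\item Young's inequality then bounds this remainder by $\eta\|\nabla a\|_{H^2}^2+C_\eta\|\{\mathbf{I}-\mathbf{P}\}f\|_{L^2_v(H^3)}^2+C\|b-u\|_{H^2}^2$, plus nonlinear pieces.
\item Since $\Gamma_{ij}$ integrates against $(v_iv_j-1)\sqrt M$, one can move $\mathcal{L}$ and $v\cdot\nabla$ onto this rapidly decaying weight. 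Hence $\|\Gamma(\partial^\alpha\mathfrak{l})\|\lesssim\|\{\mathbf{I}-\mathbf{P}\}f\|_{L^2_v(H^3)}$.
\item Using \eqref{G3.1}, Lemma \ref{L2.1} and \eqref{G3.2}, the terms $\Gamma(\mathfrak{r}+\mathfrak{s})$ and $(1+\varrho)(u_ib_j+u_jb_i)$ are bounded by $\delta\|\nabla(u,a,b)\|_{H^2}^2+\|\{\mathbf{I}-\mathbf{P}\}f\|^2_{L^2_v(H^3)}$.
\end{itemize}

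\textbf{Dissipation of $\nabla a$.} Take $\partial^\alpha$ of \eqref{G3.19}$_2$ and test it with $\partial^\alpha\partial_i a$. The time-derivative term becomes $\frac{\rm d}{{\rm d}t}\int\partial^\alpha b\cdot\nabla\partial^\alpha a\,{\rm d}x+\|\partial^\alpha{\rm div}\,b\|^2_{L^2}$, where we use \eqref{G3.19}$_1$ and one integration by parts. Since $\int\partial^\alpha b\cdot\nabla\partial^\alpha a\,{\rm d}x=-\int\partial^\alpha a\,\partial^\alpha\nabla\cdot b\,{\rm d}x$, this matches the second part of $\mathcal{E}_0$. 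This step yields $\|\nabla\partial^\alpha a\|_{L^2}^2$ on the left, and the right side is controlled by
\[
\|\nabla b\|_{H^2}^2+\|\{\mathbf{I}-\mathbf{P}\}f\|^2_{L^2_v(H^3)}+\|b-u\|_{H^2}^2+\delta\|\nabla a\|_{H^2}^2 .
\]

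\textbf{Combination and main obstacle.} We add the $b$-estimate to a small multiple of the $a$-estimate, choosing $\eta$ small so the $\nabla a$ coupling is absorbed. Because the paper's $\mathcal{E}_0$ has unit weights, it is simplest to take the $b$-part with a factor and then rescale; this changes only the constants, since Lemma \ref{L3.3} hides all constants behind $\lesssim$ and $\lambda_3$. Every term on the right side either is absorbable or is one of the right-hand terms of \eqref{G3.24}. The nonlinear $\delta\|\nabla(u,a,b)\|^2$ contributions are absorbed once we note that $\|\nabla u\|\le\|\nabla(b-u)\|+\|\nabla b\|$.

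The main obstacle is bookkeeping the regularity at top order. In the $b$-estimate at $|\alpha|=2$, the term $\partial_t\Gamma$ after substitution involves $\nabla^3\Gamma(\{\mathbf{I}-\mathbf{P}\}f)$. This is exactly why the right-hand side of \eqref{G3.24} needs $\{\mathbf{I}-\mathbf{P}\}f$ in $H^3$ while the dissipation appears only at the level $\|\nabla(a,b)\|_{H^1}$. Consequently the $a$-part should be run only up to the order that keeps everything within $H^3$ of $f$. The $\mathfrak{s}$ term, containing $\varrho v^2\{\mathbf{I}-\mathbf{P}\}f$, is harmless here because it is tested against Gaussian moments.
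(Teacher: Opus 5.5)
Your overall route is the standard one the paper relies on: you test $\partial^\alpha$\eqref{G3.19}$_3$ against $\partial^\alpha(\partial_ib_j+\partial_jb_i)$ and $\partial^\alpha$\eqref{G3.19}$_2$ against $\partial^\alpha\nabla a$, move the time derivatives, and substitute \eqref{G3.19}$_{1,2}$. The paper omits this proof but carries out the same computation for the difference system in Lemma \ref{L4.3}.

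\textbf{The combination step fails as written.} Multiplying an inequality by a constant rescales both pieces of the functional together, so rescaling cannot change their relative weight. Adding the $b$-estimate to $\kappa$ times the $a$-estimate with $\kappa\ll1$ bounds the time derivative of $\sum_{|\alpha|\le2}\sum_{i,j}\int\partial^\alpha(\partial_ib_j+\partial_jb_i)\partial^\alpha\Gamma_{ij}\,{\rm d}x-\kappa\sum_{|\alpha|\le2}\int\partial^\alpha a\,\partial^\alpha\nabla\cdot b\,{\rm d}x$. That is not $\mathcal{E}_0$ for any overall constant. Your crude bound of the $a$-estimate's right side by $C\|\nabla b\|_{H^2}^2$ also does not let you take $\kappa=1$. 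The fix is to keep the exact structure:
\begin{itemize}
\item The $a$-estimate produces exactly $+\|\partial^\alpha{\rm div}\,b\|_{L^2}^2$, with coefficient one.
\item After absorbing half of $\sum_{i,j}\|\partial^\alpha(\partial_ib_j+\partial_jb_i)\|_{L^2}^2$, the $b$-estimate keeps $\|\nabla\partial^\alpha b\|_{L^2}^2+\|{\rm div}\,\partial^\alpha b\|_{L^2}^2$ on the left by Korn's identity, so the divergence terms cancel exactly.
\item The coupling $2\int\partial^\alpha\partial_ia\,\partial^\alpha\partial_j\Gamma_{ij}\,{\rm d}x$ in the $b$-estimate can be split by Young with parameter $\frac14$ and absorbed by the $\frac34\|\nabla\partial^\alpha a\|_{L^2}^2$ the $a$-estimate leaves.
\end{itemize}
With unit weights this leaves $\|\nabla\partial^\alpha b\|_{L^2}^2+\frac12\|\nabla\partial^\alpha a\|_{L^2}^2$ on the left, which is precisely the bookkeeping in \eqref{G4.30} and \eqref{G4.37}.

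\textbf{The regularity remark is off, and truncating the $a$-part would break the argument.} Both pieces must be run for every $|\alpha|\le2$, for two reasons. First, $\mathcal{E}_0$ contains the $a$-piece at $|\alpha|=2$. Second, the $b$-estimate at $|\alpha|=2$ produces $\eta\|\nabla^3a\|_{L^2}^2$, which only the $|\alpha|=2$ $a$-estimate can absorb. This costs nothing, since that estimate needs only $\|\nabla^3\Gamma_{ij}(\{\mathbf{I}-\mathbf{P}\}f)\|_{L^2}\lesssim\|\{\mathbf{I}-\mathbf{P}\}f\|_{L^2_v(H^3)}$. You therefore actually get $\lambda_3\|\nabla(a,b)\|_{H^2}^2$ on the left. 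That implies the stated $H^1$ bound and is what the proof of Theorem \ref{T1.1} uses, since $\mathcal{D}$ contains $\|\nabla(a,b)\|_{H^2}^2$.

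\textbf{A smaller bookkeeping slip.} Do not bound the nonlinear terms by $\delta\|\nabla u\|_{H^2}^2$. Converting that into dissipation via $\nabla u=\nabla b-\nabla(b-u)$ needs $\|b-u\|_{H^3}$, which is not on the right of \eqref{G3.24}. At this level $u$ carries at most two derivatives. So keep $u$ in the small norm from \eqref{G3.1}, and put $a$, $b$ (via $\|g\|_{L^\infty}\lesssim\|\nabla g\|_{H^1}$) or $\{\mathbf{I}-\mathbf{P}\}f$ in the dissipation. For example, $\|\partial^\alpha((1+\varrho)u_ia)\|_{L^2}\lesssim\delta\|\nabla a\|_{H^2}$ for $|\alpha|\le2$.
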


Next, we give the estimate of $\|\nabla\varrho\|_{H^2}$.
\begin{lem}\label{L3.4}
For the classical solution $(\varrho, u,f)$ to the   NS-VFP system \eqref{C1},  there exist a positive constant $\lambda_4$ independent of $\mu$, such that
\begin{align}\label{G3.25}
\frac{\rm d}{{\rm d}t}\sum_{|\alpha|\leq 2}\int_{\mathbb{R}^3}\partial^\alpha u\cdot\partial^\alpha\nabla\varrho\mathrm{d}x+\lambda_4\|\nabla\varrho\|_{H^{2}}^{2}
\lesssim&\, \|\nabla u\|_{H^2}^2+\mu\delta \|\nabla^2 u\|_{H^2}^2+\|b-u\|_{H^{2}}^{2} , 
\end{align} 
for any $0 \leq t <T$. 
\end{lem}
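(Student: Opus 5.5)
Apply $\partial^\alpha$ with $|\alpha|\leq 2$ to the momentum equation \eqref{C1}$_2$, written in the form used in \eqref{G3.11}$_2$:
\begin{align*}
\partial_t\partial^\alpha u+P'(1)\nabla\partial^\alpha\varrho
=-\partial^\alpha\Big(u\cdot\nabla u+\Big(\frac{P'(1+\varrho)}{1+\varrho}-P'(1)\Big)\nabla\varrho\Big)+\mu\partial^\alpha\Big(\frac{\Delta u}{1+\varrho}\Big)+\partial^\alpha(b-u-au).
\end{align*}
Take the $L^2$ inner product with $\partial^\alpha\nabla\varrho$ and sum over $|\alpha|\leq 2$. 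The pressure term yields the good quantity $P'(1)\|\nabla\varrho\|_{H^2}^2$.

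The time derivative is moved through the product:
\begin{align*}
\int_{\mathbb{R}^3}\partial_t\partial^\alpha u\cdot\partial^\alpha\nabla\varrho\,{\rm d}x=\frac{\rm d}{{\rm d}t}\int_{\mathbb{R}^3}\partial^\alpha u\cdot\partial^\alpha\nabla\varrho\,{\rm d}x+\int_{\mathbb{R}^3}\partial^\alpha{\rm div}u\,\partial^\alpha\partial_t\varrho\,{\rm d}x,
\end{align*}
after integrating by parts in the last term. Substituting $\partial_t\varrho=-{\rm div}u-{\rm div}(\varrho u)$ from \eqref{C1}$_1$ produces $-\|\partial^\alpha{\rm div}u\|_{L^2}^2$, which is bounded by $\|\nabla u\|_{H^2}^2$. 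The remaining piece is $\partial^\alpha{\rm div}u\,\partial^\alpha{\rm div}(\varrho u)$. Since $|\alpha|+1\leq3$, Lemma \ref{L2.1} bounds it by $\|\nabla u\|_{H^2}\|\nabla(\varrho u)\|_{H^2}\lesssim \delta\|\nabla(\varrho,u)\|_{H^2}^2$.

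The right-hand side terms are handled as follows.
\begin{itemize}
\item The convective and pressure-nonlinearity terms are bounded by $\delta\|\nabla(\varrho,u)\|_{H^2}\|\nabla\varrho\|_{H^2}$, using Lemmas \ref{L2.1}--\ref{L2.3} and \eqref{G3.1}.
\item The damping term $\partial^\alpha(b-u)$ is handled with Young's inequality, giving $\frac{\lambda}{4}\|\nabla\varrho\|_{H^2}^2+C\|b-u\|_{H^2}^2$.
\item The term $\partial^\alpha(au)$ is bounded by $\|\nabla a\|_{H^2}\|\nabla u\|_{H^2}\|\nabla\varrho\|_{H^2}\lesssim\delta(\|\nabla u\|_{H^2}^2+\|\nabla\varrho\|_{H^2}^2)$ via Lemma \ref{L2.1}. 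The same holds for the constant part, $a$ times $u$ at zero order, using $\|a\|_{L^3}\|u\|_{L^6}$.
\end{itemize}

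The main obstacle is the viscous term, because no estimate may lose a factor $\mu^{-1}$. Write
$$\frac{\Delta u}{1+\varrho}=\Delta u-\frac{\varrho}{1+\varrho}\Delta u.$$
The linear part gives $\mu\int\partial^\alpha\Delta u\cdot\partial^\alpha\nabla\varrho$. For $|\alpha|\leq2$ this involves at most four derivatives of $u$, so it is bounded by $\mu\|\nabla^2u\|_{H^2}\|\nabla\varrho\|_{H^2}\leq \frac{\lambda}{4}\|\nabla\varrho\|_{H^2}^2+C\mu^2\|\nabla^2u\|_{H^2}^2$. For $\mu<1$ the last term is at most $C\mu\|\nabla^2 u\|_{H^2}^2$, which Lemma \ref{L3.2} controls through the dissipation $\mu\|\nabla\partial^\alpha u\|^2$. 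The nonlinear part is treated by the Leibniz rule: putting $\partial^{\alpha-\beta}(\varrho/(1+\varrho))$ in $L^\infty$ or $L^3$ and $\partial^\beta\Delta u$ in $L^2$ or $L^6$ gives $\mu\delta\|\nabla^2u\|_{H^2}\|\nabla\varrho\|_{H^2}$. That is bounded by $\mu\delta\|\nabla^2u\|_{H^2}^2+\mu\delta\|\nabla\varrho\|_{H^2}^2$, i.e.\ the $\mu\delta$ contribution stated in \eqref{G3.25}.

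If the linear viscous contribution must be displayed exactly as in \eqref{G3.25}, absorb it as $\mu^2\|\nabla^2u\|^2_{H^2}\le\mu\|\nabla^2u\|_{H^2}^2$. Combined with the others, it is then absorbed in the final combination with Lemma \ref{L3.2}, whose dissipation $\mu\|\nabla\partial^\alpha u\|^2$ for $1\leq|\alpha|\leq3$ covers $\mu\|\nabla^2u\|_{H^2}^2$.

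Finally, collect all terms and choose $\delta$ small so that the $\delta\|\nabla\varrho\|_{H^2}^2$ contributions are absorbed into the left-hand side. This yields \eqref{G3.25} with $\lambda_4\approx P'(1)/2$, independent of $\mu$.
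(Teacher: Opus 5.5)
Your argument follows the paper's route for every term but one. Like the paper, you test $\partial^\alpha$ of the momentum equation against $\nabla\partial^\alpha\varrho$, move $\partial_t$ onto the cross term via the continuity equation, apply Young to $\partial^\alpha(b-u)$, and use \eqref{G3.1} with Lemmas \ref{L2.1}--\ref{L2.3} on the nonlinear terms. The exception is the viscous term $K_6=\mu\int\nabla\partial^\alpha\varrho\cdot\partial^\alpha\big(\Delta u/(1+\varrho)\big)\,{\rm d}x$.

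The paper bounds $K_6$ by $\mu\|u\|_{H^3}\|\nabla\varrho\|_{H^2}^2$, and that bound is not valid. For $|\alpha|=2$ the term contains four derivatives of $u$. Moreover its linear part $\mu\int\nabla\partial^\alpha\varrho\cdot\partial^\alpha\Delta u\,{\rm d}x$ carries no small factor at all. Your splitting $\Delta u/(1+\varrho)=\Delta u-\frac{\varrho}{1+\varrho}\Delta u$ handles it correctly. The nonlinear part genuinely produces the $\mu\delta\|\nabla^2u\|_{H^2}^2$ term of \eqref{G3.25}, and the linear part produces $C\mu^2\|\nabla^2u\|_{H^2}^2$.

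You should state plainly what this proves: \eqref{G3.25} with $\mu\delta\|\nabla^2u\|_{H^2}^2$ replaced by $C\mu\|\nabla^2u\|_{H^2}^2$, using $\mu^2\le\mu$. That inequality does not produce a factor $\delta$, so drop your sentence about displaying the term ``exactly as in \eqref{G3.25}''.

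In fact the literal form cannot hold with constants independent of $\mu$ and $\delta$ when $\mu\gg\delta$. Take $u=\nabla\phi$ with $\widehat\phi$ supported at $|\xi|\sim N$, $\varrho=c\mu\Delta\phi$ with $c>0$ small, and $f=u\cdot v\sqrt{M}$, so that $a=0$ and $b=u$. The linear viscous term then makes the left-hand side $\gtrsim\mu^2\|\nabla^2u\|_{H^2}^2$. Meanwhile the right-hand side and all nonlinear contributions are $O\big((\mu\delta+N^{-2})\|\nabla^2u\|_{H^2}^2\big)$.

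Your corrected version is exactly what the proof of Theorem \ref{T1.1} needs. After multiplying by $\tau_2$, the term $C\tau_2\mu\|\nabla^2u\|_{H^2}^2$ is absorbed by the dissipation $\lambda_2\mu\sum_{1\le|\alpha|\le3}\|\nabla\partial^\alpha u\|_{L^2}^2$ from Lemma \ref{L3.2}, once $\tau_2$ is chosen small independently of $\mu$. Hence \eqref{G3.34}--\eqref{G3.35} go through unchanged.
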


\begin{proof}
Applying $\partial^\alpha$ with $|\alpha|\leq 2$ to \eqref{C1}$_2$, one computes
\begin{align}\label{G3.26}
P^\prime(1)\|\nabla\partial^\alpha\varrho\|_{L^2}^2=
\, &-\int_{\mathbb{R}^3}\nabla\partial^\alpha\varrho\cdot\partial^\alpha\partial_t u\mathrm{d}x+\int_{\mathbb{R}^3}\nabla\partial^\alpha\varrho\cdot\partial^\alpha(b-u)\mathrm{d}x   \nonumber\\
&+\int_{\mathbb{R}^3}\nabla\partial^\alpha\varrho\cdot\partial^\alpha(au)\mathrm{d}x-\int_{\mathbb{R}^3}\nabla\partial^\alpha\varrho\cdot\partial^\alpha(u\cdot\nabla u)\mathrm{d}x\nonumber\\
&-\int_{\mathbb{R}^3}\nabla\partial^\alpha\varrho\cdot\partial^\alpha\Big(\Big(\frac{P^{\prime}(1)}{1+\varrho}-P^{\prime}(1)       \Big)\nabla\varrho\Big)\mathrm{d}x+\mu\int_{\mathbb{R}^3}\nabla\partial^\alpha\varrho\cdot\partial^\alpha\Big(\frac{ \Delta u}{1+\varrho}        \Big)\mathrm{d}x\nonumber\\
\equiv:\,&\sum_{j=1}^{6}K_j.
\end{align}
Regarding the term $K_{1}$, by using \eqref{C1}$_2$ and applying Lemmas \ref{L2.1}–\ref{L2.2}, we have
\begin{align}\label{G3.27}
K_{1}=& -\frac{{\rm d}}{{\rm d}t}\int_{\mathbb{R}^3}\nabla  \partial^\alpha\varrho\cdot\partial^\alpha u\mathrm{d}x+\int_{\mathbb{R}^3}\nabla  \partial^\alpha\partial_t\varrho\cdot\partial^\alpha u\mathrm{d}x\nonumber\\
=& -\frac{{\rm d}}{{\rm d}t}\int_{\mathbb{R}^3}\nabla  \partial^\alpha\varrho\cdot\partial^\alpha u\mathrm{d}x+\int_{\mathbb{R}^3}\partial^\alpha\big((\varrho+1){\rm div} u+\nabla\varrho\cdot u            \big) \partial^\alpha{\rm div} u\mathrm{d}x\nonumber\\
\lesssim& -\frac{{\rm d}}{{\rm d}t}\int_{\mathbb{R}^3}\nabla  \partial^\alpha\varrho\cdot\partial^\alpha u\mathrm{d}x+\|\nabla  u
\|_{H^2}^2+\|\varrho\|_{H^3}\|\nabla u\|_{H^2}^2\nonumber\\
\lesssim& -\frac{{\rm d}}{{\rm d}t}\int_{\mathbb{R}^3}\nabla  \partial^\alpha\varrho\cdot\partial^\alpha u\mathrm{d}x+\|\nabla u\|_{H^2}^2.
\end{align}
By utilizing \eqref{G3.1}--\eqref{G3.2},   H\"{o}lder’s and Young's inequalities, and invoking Lemmas \ref{L2.1}--\ref{L2.2}, we arrive at 
\begin{align}
K_{2}\leq\,&\frac{1}{5}P^{\prime}(1)\|\nabla\partial^\alpha\varrho\|_{L^2}^2+C\|u-b\|_{H^2}^2, \label{G3.28-1} \\
K_{3}\leq\,&\frac{1}{5}P^{\prime}(1)\|\nabla\partial^\alpha\varrho\|_{L^2}^2+C\|a\|_{H^2}^2\|\nabla u\|_{H^2}^2\nonumber\\
\leq&\,\frac{1}{5}P^{\prime}(1)\|\nabla\partial^\alpha\varrho\|_{L^2}^2+C\delta\|\nabla u\|_{H^2}^2, \label{G3.28-2}\\
K_{4}\leq\,&\frac{1}{5}P^{\prime}(1)\|\nabla\partial^\alpha\varrho\|_{L^2}^2+C\|u\|_{H^2}^2\|\nabla u\|_{H^2}^2\nonumber\\
\leq&\,\frac{1}{5}P^{\prime}(1)\|\nabla\partial^\alpha\varrho\|_{L^2}^2+C\delta\|\nabla u\|_{H^2}^2,\label{G3.28-3}\\
K_{5}\leq\,& \frac{1}{5}P^{\prime}(1)\|\nabla\partial^\alpha\varrho\|_{L^2}^2+C\|\varrho\|_{H^3}^2\|\nabla\varrho\|_{H^2}^2\nonumber\\
\leq&\,\frac{1}{5}P^{\prime}(1)\|\nabla\partial^\alpha\varrho\|_{L^2}^2+C\delta\|\nabla \varrho\|_{H^2}^2. \label{G3.28}
\end{align}
For the remaining term $K_6$, applying the assumption \eqref{G3.1}, Lemma \ref{L3.1}, and Young's inequality, and noticing that $0<\mu<1$, we deduce that
\begin{align}\label{G3.29}
K_{6}\lesssim\,&  \mu \|\nabla\partial^\alpha\varrho\|_{L^2}\|\nabla \varrho\|_{H^2}\|\nabla\Delta u\|_{H^1}^2 
\lesssim  \mu \|u\|_{H^3}\|\nabla\varrho \|_{H^2}^2 
\lesssim \delta \|\nabla\varrho\|_{H^2}^2.    
\end{align}
Putting the estimates \eqref{G3.27}–\eqref{G3.29} into \eqref{G3.26} and summing the results over all multi-indices $\alpha$ satisfying $|\alpha| \leq 2$, we finally obtain \eqref{G3.25}.
\end{proof}
Now, we proceed to estimate the mixed space-velocity derivatives of $ f $, that is, $ \partial^\alpha_x \partial^\beta_v f $. Since $ \|\partial^\alpha_x \partial^\beta_v \mathbf{P}f\|_{L_{x,v}^2} \lesssim \|\partial^\alpha f\|_{L_{x,v}^2} $, it suffices to estimate $ \|\partial_x^\alpha \partial_v^\beta \{\mathbf{I}-\mathbf{P}\}f\|_{L_{x,v}^2} $. By applying the operator $ \{\mathbf{I}-\mathbf{P}\} $ to \eqref{C1}$_3$ and utilizing the decomposition given in \eqref{G2.4}, we obtain  
\begin{align}\label{G3.30}
&\partial_t\{\mathbf{I}-\mathbf{P}\}f+v\cdot\nabla\{\mathbf{I}-\mathbf{P}\}f
+(1+\varrho)u\cdot\nabla_v\{\mathbf{I}-\mathbf{P}\}f-\frac{1}{2}(1+\varrho)u\cdot v\{\mathbf{I}-\mathbf{P}\}f\nonumber\\
\,&\quad =  (\varrho+1)\mathcal{L}\{\mathbf{I}-\mathbf{P}\}f\nonumber\\
 &\qquad +\mathbf{P}\Big(v\cdot\nabla\{\mathbf{I}-\mathbf{P}\}f+(1+\varrho)u\cdot\nabla_v \{\mathbf{I}-\mathbf{P}\}f
-\frac{1}{2}(1+\varrho)u\cdot v\{\mathbf{I}-\mathbf{P}\}f\Big)\nonumber\\
& \qquad -\{\mathbf{I}-\mathbf{P}\}\Big(v\cdot\nabla\mathbf{P}f+(1+\varrho)u\cdot\nabla_v \mathbf{P}f-\frac{1}{2}(1+\varrho)u\cdot v\mathbf{P}f\Big),
\end{align}
where we  have used the following facts:
\begin{align*}
\{\mathbf{I}-\mathbf{P}\}(u\cdot v\sqrt{M})=0, \quad \{\mathbf{I}-\mathbf{P}\}\big((1+\varrho)u\cdot v\sqrt{M}\big)=0,\quad \{\mathbf{I}-\mathbf{P}\}\mathcal{L}f=\mathcal{L}\{\mathbf{I}-\mathbf{P}\}f.
\end{align*}
The following lemma can be proved in the same spirit of   \cite{LMW-SIAM-2017,LNW-2025-preprint}
and we omit the details here.
\begin{lem}\label{L3.5}
Let $1\leq k\leq 3$.
For the classical solution $(\varrho, u,f)$ to the   NS-VFP system \eqref{C1}, there exist a positive constant $\lambda_5$ independent of $\mu$,
such that
 \begin{align}\label{G3.31}
& \frac{\mathrm{d}}{\mathrm{d} t}\sum_{1\leq k\leq3}C_k\sum_{\substack{|\beta|=k \\
|\alpha|+|\beta| \leq 3}}\|\partial^\alpha_x\partial^\beta_v\{\mathbf{I}-\mathbf{P}\} f\|_{L_{x,v}^2}^2+\lambda_5\sum_{\substack{1\leq|\beta|\leq 3 \\
|\alpha|+|\beta| \leq 3}}\|\partial^\alpha_x\partial^\beta_v\{\mathbf{I}-\mathbf{P}\} f\|_\nu^2\nonumber \\
&\quad  \lesssim   \|\nabla (a, b)\|_{H^2}^2+\sum_{|\alpha| \leq 3}\left\|\partial^\alpha\{\mathbf{I}-\mathbf{P}\} f\right\|_\nu^2 ,
\end{align}
 for any $0 \leq t <T$,  and $C_k$ are some positive constants.
\end{lem}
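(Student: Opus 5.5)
We argue by induction on the order $k=|\beta|$ of the velocity derivative. The equation to differentiate is the microscopic equation \eqref{G3.30}. For multi-indices with $|\beta|=k\ge1$ and $|\alpha|+|\beta|\le3$, we apply $\partial^\alpha_x\partial^\beta_v$ to \eqref{G3.30}, take the $L^2_{x,v}$ inner product with $\partial^\alpha_x\partial^\beta_v\{\mathbf{I}-\mathbf{P}\}f$, and sum over such $(\alpha,\beta)$. The weights $C_k$ will be chosen to decrease rapidly in $k$, with $C_1\gg C_2\gg C_3$ relative to the constants that appear, so that commutator terms at level $k$ are absorbed by the dissipation at level $k-1$ or by the right-hand side of \eqref{G3.31}.

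\textbf{Linear part.} Write $\mathcal{L}g=\Delta_v g-\frac{|v|^2}{4}g+\frac32 g$. Then
\[
\langle\partial_v^\beta\mathcal{L}g,\partial_v^\beta g\rangle\le-\lambda|\partial_v^\beta g|_\nu^2+C\sum_{|\beta'|<|\beta|}|\partial_v^{\beta'}g|_\nu^2 ,
\]
because $[\partial_v^\beta,|v|^2]$ produces only lower-order velocity derivatives with at most linear weights. The coefficient $(1+\varrho)$ is harmless by \eqref{G3.2}. Its $x$-derivatives fall on $\varrho$ and are handled as nonlinear terms, after integrating by parts in $v$ where $\Delta_v$ appears.

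The transport term $v\cdot\nabla_x$ is antisymmetric. Its commutator with $\partial_v^\beta$ gives $\partial_{x_i}\partial_v^{\beta-e_i}\partial^\alpha\{\mathbf{I}-\mathbf{P}\}f$, which has one fewer $v$-derivative and the same total order. This term is bounded by $\eta|\partial^\alpha_x\partial^\beta_v\{\mathbf{I}-\mathbf{P}\}f|_\nu^2+C_\eta\|\partial^{\alpha+e_i}_x\partial^{\beta-e_i}_v\{\mathbf{I}-\mathbf{P}\}f\|^2$. When $|\beta|=1$ the second piece is a pure $x$-derivative term $\|\partial^{\alpha'}\{\mathbf{I}-\mathbf{P}\}f\|_\nu^2$ with $|\alpha'|\le3$, which is permitted on the right of \eqref{G3.31}. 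When $|\beta|\ge2$ it is absorbed by the level $k-1$ dissipation through the choice of $C_k$.

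The source terms on the right of \eqref{G3.30} all lie in the span of Gaussians times polynomials in $v$. The term $\mathbf{P}(v\cdot\nabla\{\mathbf{I}-\mathbf{P}\}f)$ and the term $\{\mathbf{I}-\mathbf{P}\}(v\cdot\nabla\mathbf{P}f)$ have all $v$-derivatives bounded by $\|\nabla\partial^\alpha\{\mathbf{I}-\mathbf{P}\}f\|_\nu$ and $\|\nabla\partial^\alpha(a,b)\|$. By Young's inequality these produce $\|\nabla(a,b)\|_{H^2}^2+\sum_{|\alpha|\le3}\|\partial^\alpha\{\mathbf{I}-\mathbf{P}\}f\|_\nu^2$.

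\textbf{Nonlinear part.} The remaining terms are $(1+\varrho)u\cdot\nabla_v\{\mathbf{I}-\mathbf{P}\}f$, $\frac12(1+\varrho)u\cdot v\{\mathbf{I}-\mathbf{P}\}f$, the $\mathbf{P}$ and $\{\mathbf{I}-\mathbf{P}\}$ projections carrying $(1+\varrho)u$, and $\varrho\mathcal{L}\{\mathbf{I}-\mathbf{P}\}f$ with derivatives falling on $\varrho$. In each product we place the lowest-order factor in $L^\infty_x$ or $L^3_x$ and the others in $L^2$ or $L^6$, using Lemmas \ref{L2.1}--\ref{L2.3}. The weights $v$ and $|v|^2$ are absorbed into the $|\cdot|_\nu$ norms of the two $f$ factors. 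For the $|v|^2$ coming from $\varrho\mathcal{L}$ we first integrate by parts in $v$, so that $|v|$ falls on each factor. The smallness \eqref{G3.1} then bounds all of these by $\delta$ times the left-hand dissipation plus the right-hand side of \eqref{G3.31}.

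\textbf{Main obstacle.} The hardest term is $\partial^\alpha_x\partial^\beta_v(\varrho\mathcal{L}\{\mathbf{I}-\mathbf{P}\}f)$ in the cases $|\alpha|=2$ or $3$ where all $x$-derivatives fall on $\varrho$. There, for example, $\partial^3\varrho\,\frac{|v|^2}{4}\{\mathbf{I}-\mathbf{P}\}f$ appears with $\partial^3\varrho$ only in $L^2_x$. We would integrate by parts in $v$ to split the $|v|^2$ weight, put $\{\mathbf{I}-\mathbf{P}\}f$ in $L^\infty_x$ via its $H^2_x$ norm in the $\nu$-weighted space, and use $\|\nabla\varrho\|_{H^2}\le\delta$. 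If this fails at the top order, the fallback is to shift one $x$-derivative by integrating by parts in $x$, making use of the fact that the total order is at most $3$.

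Adding the levels $k=1,2,3$ with the chosen weights $C_k$ and taking $\delta$ small then yields \eqref{G3.31}.
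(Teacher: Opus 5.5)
Your proposal is correct and follows the standard mixed-derivative argument the paper relies on (it omits the proof and defers to the cited works). That argument applies $\partial^\alpha_x\partial^\beta_v$ to \eqref{G3.30}, uses coercivity of $\mathcal{L}$ modulo lower-order $v$-derivatives, controls the $[\partial_v^\beta, v\cdot\nabla_x]$ commutator by the level-$(|\beta|-1)$ dissipation (or by the pure $x$-derivative term when $|\beta|=1$), and sums with rapidly decreasing weights $C_k$. One small remark: since $|\beta|\geq 1$ forces $|\alpha|\leq 2$, the $\partial^3\varrho$ case you flag as the main obstacle never arises. The worst factor is $\partial^2\varrho\in L^3$, which pairs with an $L^6_x$ bound on $|v|\,\partial_v^\beta\{\mathbf{I}-\mathbf{P}\}f$ controlled by $\|\nabla_x\partial_v^\beta\{\mathbf{I}-\mathbf{P}\}f\|_\nu$.
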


\subsection{Proof of Theorem \ref{T1.1}}
In this subsection, we aim to establish the global existence of solutions to the   NS-VFP system \eqref{C1} in $\mathbb{R}^3$.
\begin{proof}[Proof of Theorem \ref{T1.1}]
With Lemmas \ref{L3.1}–\ref{L3.5} established, we are now in a position to prove the global existence of classical solutions to the compressible NS-VFP system \eqref{C1} with the uniform $\mu>0$. To this end, we define the temporal energy functional
\begin{align}\label{G3.32}
\mathcal{E}(t):=\,&\|  u  \|_{ {H}^3}^2+\|  f \|_{ {H}^3_{x,v}}^2+\Big\|\sqrt{P^\prime(1)}\varrho\Big\|_{L^2}^2
+\sum_{1\leq |\alpha|\leq 3}
 \bigg\|\frac{\sqrt{P^\prime(1+\varrho)}}{1+\varrho}\partial^\alpha \varrho\bigg\|_{L^2}^2+\tau_{1}\mathcal{E}_0(t)
\nonumber\\
&+\tau_2\sum_{|\alpha|\leq 2}\int_{\mathbb{R}^3}\partial^\alpha u\cdot\partial^\alpha\nabla\varrho\mathrm{d}x+\tau_{3}\sum_{1\leq k\leq3}C_k\sum_{\substack{|\beta|=k \\
|\alpha|+|\beta| \leq 3}}\|\partial^\alpha_x\partial^\beta_v\{\mathbf{I}-\mathbf{P}\} f\|_{L_{x,v}^2}^2,   
\end{align}
where $0 < \tau_1, \tau_2, \tau_3 \ll 1$ and $\tau_3 \ll \tau_1$, with all parameters representing sufficiently small positive constants.

Under the assumption \eqref{G3.1}, recalling the definition of $\mathcal{E}_0(t)$, we find that
\begin{align}\label{G3.33}
\mathcal{E}(t)\backsim  \|(\varrho, u )(t)\|_{{H}^3}^2+\|f(t)\|_{{H}^3_{x,v}}^2,   
\end{align}
uniformly for all $0\leq t< T$.
By utilizing the effective dissipation mechanism, we obtain  
\begin{align*}  
\|\nabla u(t)\|_{H^2}^2 \lesssim \|b - u(t)\|_{H^3}^2 + \|\nabla b(t)\|_{H^2}^2,  
\end{align*}  
which implies that $\|\nabla u(t)\|_{H^2}^2 \leq \mathcal{D}(\varrho, u, f)(t)$. The sum of the 
inequalities \eqref{G3.3}, \eqref{G3.10}, $\tau_1\times $ \eqref{G3.24}, $\tau_2\times$ \eqref{G3.25}, and $\tau_3\times$ \eqref{G3.31} gives
\begin{align}\label{G3.34}
\frac{{\rm d}}{{\rm d}t}\mathcal{E}(t)+\lambda_6 \mathcal{D}(\varrho,u
,f)(t)+\mu \|\nabla u (t)\|_{H^3}^2 \lesssim \delta     \mathcal{D}(\varrho,u
,f)(t),
\end{align}
for any $0\leq t<T$, where the general positive constant $\lambda_6<\min\{\lambda_1,\lambda_2,\tau_1\lambda_3,\tau_2\lambda_4,\tau_3\lambda_5\}$.
Hence, for sufficiently small $0 < \delta < 1$, the time integration of \eqref{G3.34} implies that there exists a positive constant $\lambda_7 > 0$, independent of $\mu$, such that  
\begin{align}  \label{G3.35}
\mathcal{E}(t) + \lambda_7 \int_0^t \mathcal{D}(\varrho, u, f)(\tau) \, {\rm d}\tau \leq \mathcal{E}(0),  
\end{align}  
for all $0 \leq t < T$.
Thanks to \eqref{G3.33}, the validity of \eqref{G3.1} can be established by selecting \begin{align*}\mathcal{E}(0)\sim \|(\varrho_0,u_0)\|_{H^3}^2+\|f_0\|_{H_{x,v}^3}^2,\end{align*} which is sufficiently small.

The existence and uniqueness of the local solution for the compressible NS-VFP system \eqref{C1} can be established through the application of linearization   techniques and the Banach contraction mapping principle (cf. \cite{Gy-CPAM-2004, GY-iumj-2004,LMW-SIAM-2017}). For brevity, we omit the detailed proof here. By combining the continuity argument with the local existence result and the uniform a priori estimates given in \eqref{G3.35}, we are able to deduce the global existence of classical solutions $(\varrho, u, f)$ to the system \eqref{C1}. Furthermore, by applying the maximum principle, it follows that $F = M + \sqrt{M}f \geq 0$. This concludes the proof of Theorem \ref{T1.1}.
\end{proof}

\medskip

\section{Justification of Vanishing Viscosity Limit of the NS-VFP system}% \eqref{C1}}
In this section, we focus on proving the global existence of the compressible Euler-VFP system \eqref{C2} by considering the vanishing viscosity limit of the compressible NS-VFP system \eqref{C1}. Furthermore, we demonstrate that this inviscid limit is valid globally in time and complete the proof of Theorem \ref{T1.3}.
First, we begin the proof of Theorem \ref{T1.2}.

\subsection{Proof of Theorem \ref{T1.2}}
Let $(\varrho_0, u_0, f_0)$ satisfy \eqref{b-1}. By the density argument, one can construct a sequence $\{(\varrho_0^\mu, u_0^\mu, f_0^\mu)\}_{\mu \in (0,1)}$ such that $(\varrho_0^\mu, u_0^\mu) \to (\varrho_0, u_0)$ in $H^3$ and $f_0^\mu \to f_0$ in $H_{x,v}^3$. Consequently, recalling the constant $\eps_0 > 0$ given by \eqref{a-1}, we choose $\mu_0 > 0$ such that  
\begin{align*}  
\|(\varrho_0^\mu - \varrho_0, u_0^\mu - u_0)\|_{H^3}^2 + \|f_0^\mu - f_0\|_{H_{x,v}^3}^2 < \frac{\eps_0}{2}   
\end{align*}  
holds true for all $0 < \mu < \mu_0$. Combining this with \eqref{b-1}, we deduce that  
\begin{align*}  
\mathcal{E}_0^\mu = \|(\varrho_0^\mu, u_0^\mu)\|_{H^3}^2 + \|f_0^\mu\|_{H_{x,v}^3}^2 < \frac{\eps_0}{2} + \|(\varrho_0, u_0)\|_{H^3}^2 + \|f_0\|_{H_{x,v}^3}^2 \leq \frac{\eps_0}{2} + \eps_1 \leq \eps_0,  
\end{align*}  
provided that $\eps_1 \leq \frac{\eps_0}{2}$. According to Theorem \ref{T1.1}, for any $0 < \mu < \mu_0$, there exists a unique global solution $(\varrho^\mu, u^\mu, f^\mu)$ to the compressible NS-VFP system \eqref{C1}. This solution satisfies the regularity estimate \eqref{b3}, which is uniform in both $\mu$ and time $t$. As a direct consequence of \eqref{b3}, there exists a limit $(\varrho, u, f)$ such that as $\mu \to 0$, up to a subsequence,  
\begin{align*}  
&(\varrho^\mu, u^\mu) \rightharpoonup (\varrho, u) \quad \text{weakly$^*$ in } L^\infty(\mathbb{R}^+; H^3),\\  
&\quad \quad \,\,f^\mu \rightharpoonup f  \quad\quad\,\,\,\, \text{weakly$^*$ in } L^\infty(\mathbb{R}^+; H_{x,v}^3).  
\end{align*}

To show the strong convergence of $(\varrho^\mu, u^\mu)$, we need to  rigorously justify the convergence of  all nonlinear terms in \eqref{C1}$_1$-\eqref{C1}$_2$ in the sense of distributions.
To this end,  it is essential to analyze the time derivatives $\partial_t \varrho^\mu$ and $\partial_t u^\mu$.
%  and $\partial_t f^\mu$. 
Specifically, based on the system \eqref{C1} and the estimate \eqref{a-2}, we can derive that 
\begin{align*}  
\|\partial_t\varrho^\mu\|_{L^{\infty}_t(H^2)} \lesssim&\, \|\nabla u^\mu\|_{L^{\infty}_t(H^2)}\|\nabla\varrho^\mu\|_{L^{\infty}_t(H^2)}+\|\nabla u^\mu\|_{L^{\infty}_t(H^2)}\nonumber\\
\lesssim&\,\big(1+\|\varrho^\mu\|_{L^{\infty}_t(H^3)}\big)\|u^\mu\|_{L^{\infty}_t(H^3)}\nonumber\\
\lesssim&\, C,
\end{align*}  
and  
\begin{align*}
\|\partial_t u^\mu\|_{L^{\infty}_t(H^1)}\lesssim&\, \|u^\mu\cdot\nabla u^\mu\|_{L^{\infty}_t(H^1)} +\|b^\mu-u^\mu \|_{L^{\infty}_t(H^1)}+ \mu\Big\| \frac{ \Delta u^\mu }{1+\varrho^\mu}\Big\|_{L^{\infty}_t(H^1)}\\
&+\Big\|\frac{P^\prime(1+\varrho^\mu)}{1+\varrho^\mu}\nabla \varrho^\mu\Big\|_{L^{\infty}_t(H^1)}+\|a^{\mu}u^\mu\|_{L^{\infty}_t(H^1)}\\
\lesssim &\,\|u^\mu\|_{L^{\infty}_t(H^2)}\|\nabla u^\mu\|_{L^{\infty}_t(H^2)}+(1+\|\varrho^\mu\|_{L^{\infty}_t(H^2)}) \|\nabla \varrho^\mu\|_{L^{\infty}_t(H^1)}\\
& +\|a^{\mu}\|_{L^{\infty}_t(H^2)} \|\nabla u^\mu\|_{L^{\infty}_t(H^2)} +\mu (1+\|\varrho^\mu\|_{L^{\infty}_t(H^2)}) \|\nabla u^\mu\|_{L^{\infty}_t(H^2)}\nonumber\\
&+\|b^\mu-u^\mu \|_{L^{\infty}_t(H^1)}\nonumber\\
\lesssim&\, C,
\end{align*}
where $C$ denotes a constant independent of $\mu$.
Here, we have utilized the condition $0 < \mu < 1$.

By applying the Aubin-Lions lemma, we deduce that, up to a subsequence extraction,  
\begin{align*}  
(\varrho^\mu, u^\mu) \rightarrow (\varrho, u) \quad \text{strongly in } C_{\rm loc}(\mathbb{R}_+; H^2_{\rm loc}).  
\end{align*}  
Thus, \eqref{b2} is satisfied. Consequently, one can directly verify that the solution $(\varrho, u, f)$, with $\varrho = \rho - 1$ and $f = \frac{F - M}{\sqrt{M}}$, satisfies the Euler-VFP system \eqref{C2} in the sense of distributions. Therefore, the proof of Theorem \ref{T1.2} is completed.
\hfill $\Box$

\smallskip 

Next, we proceed to prove Theorem \ref{T1.3}.

\subsection{Proof of Theorem \ref{T1.3}}

We are now in a position to establish the global-in-time convergence of the inviscid limit. For $\mu \in (0,1)$, let $(\varrho^\mu, u^\mu, f^\mu)$ denote the global solution to the compressible NS-VFP system \eqref{C1}, as obtained in Theorem \ref{T1.1}. We recall that the equations governing $(\varrho^\mu, u^\mu, f^\mu)$ are given by  
\begin{equation}\left\{
\begin{aligned}\label{G4.1} 
&\partial_{t}\varrho^\mu+(\varrho^\mu+1){\rm div} u^\mu+\nabla\varrho^\mu\cdot u^\mu=0,\\  
&\partial_{t}u^\mu+u^\mu\cdot\nabla u^\mu+{\frac{P^{\prime}(1+\varrho^\mu)}{1+\varrho^\mu}}\nabla\varrho^\mu-\frac{\mu\Delta u^\mu}{1+\varrho^\mu}={b^\mu-u^\mu-a^\mu u^\mu},\\
& \partial_{t}f^\mu+v\cdot\nabla_x f^\mu+u^\mu\cdot\nabla_{v}f^\mu-{\frac{1}{2}}u^\mu\cdot vf^\mu-u^\mu\cdot v\sqrt{M}\\
&\quad= \mathcal{L}f^\mu+\varrho^\mu\Big(\mathcal{L}f^\mu-u^\mu\cdot\nabla_v f^\mu+\frac{1}{2}u^\mu\cdot vf^\mu+u^\mu\cdot v\sqrt{M}\Big).
\end{aligned}\right.
\end{equation}
Similarly, for the global solution $(\varrho, u, f)$ established in Theorem \ref{T1.2}, it   satisfies the compressible Euler-VFP system: 
\begin{equation}\left\{
\begin{aligned}\label{G4.2} 
&\partial_{t}\varrho +(\varrho +1){\rm div} u +\nabla\varrho \cdot u =0,\\  
&\partial_{t}u +u\cdot\nabla u +{\frac{P^{\prime}(1+\varrho )}{1+\varrho }}\nabla\varrho ={b -u -a  u },\\
& \partial_{t}f +v\cdot\nabla_x f +u \cdot\nabla_{v}f -{\frac{1}{2}}u \cdot vf -u \cdot v\sqrt{M}\\
&\quad= \mathcal{L}f +\varrho \Big(\mathcal{L}f -u\cdot\nabla_v f+\frac{1}{2}u \cdot vf +u \cdot v\sqrt{M}\Big).\\  
\end{aligned}\right.
\end{equation}
Define the differences 
\begin{equation*}
\begin{aligned}
&(\widetilde{\varrho}^\mu,\widetilde{u}^\mu, \widetilde{f}^\mu):=(\varrho^\mu-\varrho ,u^\mu-u , f^\mu-f ),
\end{aligned}
\end{equation*}
and
\begin{equation*}
\begin{aligned}
(\widetilde{a}^\mu,\widetilde{b}^\mu):=(a^\mu-a , b^\mu-b ).
\end{aligned}
\end{equation*}
It follows from \eqref{G4.1} and \eqref{G4.2} that  $(\widetilde{\varrho}^\mu,\widetilde{u}^\mu,\widetilde{f}^\mu)$ satisfies  
\begin{equation}\label{G4.3}
\left\{
\begin{aligned}
&\partial_t \widetilde{\varrho}^\mu+{\rm div}\widetilde{u}^\mu=\widetilde{F}_1,\\
&\partial_{t}\widetilde{u}^\mu+ {P^\prime(1 )} \nabla\widetilde{\varrho}^\mu-(\widetilde{b}^\mu-\widetilde{u}^\mu)=-a \widetilde{u}^\mu-\widetilde{a}^\mu u^\mu+\frac{\mu\Delta u^\mu}{1+\varrho^\mu}+\widetilde{F}_2+\widetilde{F}_3,\\
&\partial_{t}\widetilde{f}^\mu+v\cdot\nabla_x \widetilde{f}^\mu+  \varrho u^\mu\cdot\nabla_{v}\widetilde{f}^\mu+u^\mu\cdot\nabla_{v}\widetilde{f}^\mu+ \widetilde{u}^\mu\cdot\nabla_v f\\
& \quad +\frac{1}{2}(1+\varrho) {u}^\mu\cdot v \widetilde{f} ^{\mu}+\frac{1}{2}(1+\varrho) \widetilde{u}^\mu\cdot v  {f}   -\widetilde{u}^\mu\cdot v\sqrt{M}-\mathcal{L}\widetilde{f}^\mu=\widetilde{F}_4+\widetilde{F}_5,
\end{aligned}\right.
\end{equation}
where $\widetilde{F}_i$, $i=1,\dots, 5$, are given by
\begin{equation*}
\begin{aligned}
\widetilde{F}_1:=&\,-\widetilde{u}^\mu\cdot \nabla \varrho-u^\mu\cdot \nabla \widetilde{\varrho}^\mu-\widetilde{\varrho}^\mu {\rm div}u-\varrho{\rm div}\widetilde{u}^\mu ,\\
\widetilde{F}_2:=&\,-\Big(\frac{P^\prime(1+\varrho^\mu)}{1+\varrho^\mu}-\frac{P^\prime(1+\varrho) }{1+\varrho }\Big) \nabla \varrho-\Big(\frac{P^\prime(1+\varrho^\mu)}{1+\varrho^\mu}-P^\prime(1)\Big)\nabla\widetilde{\varrho}^\mu,\\
\widetilde{F}_3:=&\, -\widetilde{u}^\mu\cdot \nabla u -u^\mu\cdot\nabla \widetilde{u}^\mu  ,\\
\widetilde{F}_4:=&\,\widetilde\varrho^\mu \Big(-u^\mu\cdot\nabla_v f^\mu+\frac{1}{2}u^\mu\cdot vf^\mu\Big) -\varrho  \widetilde{u}^\mu\cdot\nabla_v f ,\\
\widetilde{F}_5:=&\,\widetilde\varrho^\mu(\mathcal{L}\{\mathbf{I}-\mathbf{P}\}f^\mu+\mathcal{L}\mathbf{P}f^\mu+u^\mu\cdot v\sqrt{M})+\varrho (\mathcal{L}\mathbf{P}\widetilde{f}^\mu+\widetilde{u}^\mu\cdot v\sqrt{M}).
\end{aligned}
\end{equation*}

We first establish the estimates of $\|(\widetilde{\varrho}^\mu,\widetilde{u}^\mu)\|_{H^1}$ and $\| \widetilde{f}^\mu \|_{H_{x,v}^1}$. To achieve this, we introduce the following functional:
 \begin{align}\label{G4.4}  
\widetilde{\mathcal{X}}^\mu(t) :=&\sup_{\tau\in[0,t]}\big(\|(\widetilde{\varrho}^\mu,\widetilde{u}^\mu )(\tau)\|_{H^1}^2 +\|\widetilde{f}^\mu(\tau)\|_{H_{x,v}^1}^2\big)+\int_{0}^t  \|\nabla(\widetilde{\varrho}^\mu,\widetilde{a}^\mu,\widetilde{b}^\mu )(\tau)\|_{L^2}^2 {\rm d}\tau \nonumber\\  
& +\int_0^t \|(\widetilde{b}^\mu-\widetilde{u}^\mu)(\tau)\|_{H^1}^2   {\rm d}\tau+\int_0^t\sum_{|\alpha|\leq 1}\|\{\mathbf{I}-\mathbf{P}\}\partial^\alpha\widetilde{f}^\mu(\tau)\|_{\nu}^2{\rm d}\tau\nonumber\\  
&+\int_0^t \|\nabla_v \{\mathbf{I}-\mathbf{P}\}\widetilde{f}^\mu(\tau)\|_{\nu}^2{\rm d}\tau.  
\end{align}  
It is obvious that 
\begin{align}\label{G4.5}  
\int_0^t \|\nabla \widetilde{u}^\mu(\tau)\|_{L^2}^2 {\rm d}\tau \lesssim& \int_0^t\big( \|\nabla \widetilde{b}^\mu(\tau)\|_{L^2}^2+\| (  \widetilde{b}^\mu-\widetilde{u}^\mu)(\tau)\|_{H^1}^2 \big){\rm d}\tau \lesssim \widetilde{\mathcal{X}}^\mu(t),  
\end{align}  
and  
\begin{align}\label{G4.6}  
\int_0^t \|\nabla \widetilde{f}^\mu(\tau)\|_{L_{x,v}^2}^2 {\rm d}\tau \lesssim& \int_0^t\big( \|\nabla (\widetilde{a}^\mu,\widetilde{b}^\mu )(\tau)\|_{L^2}^2+ \|\nabla \{\mathbf{I}-\mathbf{P}\}\widetilde{f}^\mu(\tau)\|_{\nu}^2\big){\rm d}\tau \lesssim \widetilde{\mathcal{X}}^\mu(t).  
\end{align}  

\smallskip 
Below we estimate the terms in \eqref{G4.4} through Lemmas \ref{L4.1}--\ref{L4.5}.

\begin{lem}\label{L4.1}
It holds that
\begin{align}\label{G4.7}
&\sup_{\tau\in[0,t]}\big( \| {P^\prime(1)}\|\widetilde\varrho^\mu(\tau) \|_{L^2}^2+\| \widetilde{u}^\mu  (\tau)\|_{L^2}^2+\|\widetilde{f}^\mu(\tau)\|_{L_{x,v}^2}^2\big) \nonumber\\
&\quad + \int_{0}^t  \|(\widetilde{b}^\mu-\widetilde{u}^\mu)(\tau)\|_{L^2}^2{\rm d}\tau  
+\int_{0}^t \|  \{\mathbf{I}-\mathbf{P}\}\widetilde{f}^\mu(\tau)\|_{\nu}^2   {\rm d}\tau \nonumber\\
  &\qquad \leq\widetilde{\mathcal{X}}^\mu(0)+C\mu^2+C \big(\eps_0^\frac{1}{2}  +\eps_1^\frac{1}{2}\big)\widetilde{\mathcal{X}}^\mu(t),
\end{align}
where $C>0$ is a constant independent of    $\mu$ and time $t$.
\end{lem}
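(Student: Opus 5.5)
We will carry out a zeroth-order energy estimate on the difference system \eqref{G4.3}, following the pattern of Lemma \ref{L3.1}. Concretely, we multiply \eqref{G4.3}$_1$ by $P'(1)\widetilde\varrho^\mu$, \eqref{G4.3}$_2$ by $\widetilde u^\mu$ and \eqref{G4.3}$_3$ by $\widetilde f^\mu$, integrate over $\mathbb{R}^3$ and $\mathbb{R}^3\times\mathbb{R}^3$, and add. The linear terms $P'(1)\int{\rm div}\widetilde u^\mu\,\widetilde\varrho^\mu+P'(1)\int\nabla\widetilde\varrho^\mu\cdot\widetilde u^\mu$ cancel after integration by parts. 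The coupling terms $-\int(\widetilde b^\mu-\widetilde u^\mu)\cdot\widetilde u^\mu$ and $-\int \widetilde u^\mu\cdot\langle v\sqrt M,\widetilde f^\mu\rangle=-\int\widetilde u^\mu\cdot\widetilde b^\mu$ combine with $-\langle\mathcal L\widetilde f^\mu,\widetilde f^\mu\rangle$. By \eqref{G2.5}, this yields the dissipation $\|\widetilde b^\mu-\widetilde u^\mu\|_{L^2}^2+\lambda_0\|\{\mathbf I-\mathbf P\}\widetilde f^\mu\|_\nu^2$. The transport term $v\cdot\nabla_x\widetilde f^\mu$ integrates to zero. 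We then integrate in time over $[0,t]$; the initial energy is bounded by $\widetilde{\mathcal X}^\mu(0)$.

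Every remaining term is a trilinear expression. In it, one factor is a background quantity ($\varrho,u,a,b,f$ or their $\mu$-versions), bounded in $H^3$ by $\eps_0^{1/2}$ or $\eps_1^{1/2}$ via \eqref{a-2} and \eqref{b3}. The other two factors are difference quantities. We plan to bound each such term by $(\eps_0^{1/2}+\eps_1^{1/2})$ times time integrals that are components of $\widetilde{\mathcal X}^\mu(t)$.

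\textbf{Bilinear-in-difference terms.} The tools are H\"older's inequality, the embeddings of Lemmas \ref{L2.1}--\ref{L2.2}, and the pattern $\|\widetilde\varrho^\mu\|_{L^6}\lesssim\|\nabla\widetilde\varrho^\mu\|_{L^2}$. Typical cases:
\begin{itemize}
\item $\int\widetilde F_1\widetilde\varrho^\mu$ is bounded by $\|\nabla\varrho\|_{L^3}\|\widetilde u^\mu\|_{L^2}\|\widetilde\varrho^\mu\|_{L^6}$ plus similar terms, using integration by parts on $u^\mu\cdot\nabla\widetilde\varrho^\mu\,\widetilde\varrho^\mu$ and $\varrho\,{\rm div}\widetilde u^\mu\,\widetilde\varrho^\mu$.
\item $\widetilde u^\mu$ is split as $(\widetilde u^\mu-\widetilde b^\mu)+\widetilde b^\mu$. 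The $L^2$ part uses the relaxation integral, and $\|\nabla\widetilde u^\mu\|_{L^2}$ is controlled through \eqref{G4.5}.
\item $\widetilde f^\mu$ is decomposed by \eqref{G2.4}. Velocity-weighted terms such as $\frac12(1+\varrho)u^\mu\cdot v|\widetilde f^\mu|^2$ and $\widetilde\varrho^\mu\mathcal L f^\mu\,\widetilde f^\mu$ are treated through the $\nu$-norm of the microscopic part plus $L^6$ bounds on $(\widetilde a^\mu,\widetilde b^\mu)$ via $\|\nabla(\widetilde a^\mu,\widetilde b^\mu)\|_{L^2}$.
\item For $\widetilde F_5$, we use $\mathcal L\mathbf P\widetilde f^\mu=-\widetilde b^\mu\cdot v\sqrt M$, which gives $\int\varrho(\widetilde u^\mu-\widetilde b^\mu)\cdot\widetilde b^\mu$. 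This is of the good form.
\end{itemize}
Where a time integral of a squared background norm such as $\|\nabla u^\mu\|_{H^2}^2$ appears instead, we use $\sup_t$ of difference norms together with $\int_0^\infty\mathcal D\,d\tau\lesssim\eps$.

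\textbf{Main obstacle.} We expect the main obstacle to be the viscous source $\int_0^t\int\frac{\mu\Delta u^\mu}{1+\varrho^\mu}\cdot\widetilde u^\mu$, because it must produce exactly $C\mu^2$ with no time growth. We will integrate by parts to obtain
\[
\mu\int\frac{\nabla u^\mu:\nabla\widetilde u^\mu}{1+\varrho^\mu}+\mu\int\nabla\Big(\frac1{1+\varrho^\mu}\Big)\cdot\nabla u^\mu\,\widetilde u^\mu,
\]
and then apply Young's inequality:
\[
\mu\|\nabla u^\mu\|_{L^2}\|\nabla\widetilde u^\mu\|_{L^2}\le \eta\|\nabla\widetilde u^\mu\|_{L^2}^2+C_\eta\mu^2\|\nabla u^\mu\|_{L^2}^2 .
\]
The time integral of the last term is $\lesssim\mu^2\eps_0$ by \eqref{a-2}, since $\|\nabla u^\mu\|_{H^2}^2\lesssim\mathcal D$. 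The first piece is absorbed into $\widetilde{\mathcal X}^\mu(t)$ through \eqref{G4.5}.

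This absorption is where care is needed. The smallness cannot come from $\eta$ alone, since $\widetilde{\mathcal X}^\mu$ appears on the right-hand side with coefficient $C(\eps_0^{1/2}+\eps_1^{1/2})$. We will therefore try $\eta=\mu\le$ small with a weighted split: $\mu\|\nabla u^\mu\|\|\nabla\widetilde u^\mu\|\le \mu\|\nabla u^\mu\|^2+\mu\|\nabla\widetilde u^\mu\|^2$. Alternatively, we put all of $\|\nabla\widetilde u^\mu\|$ against $\|\nabla u^\mu\|_{L^2}$ in $L^2_t$ and use $\|\nabla u^\mu\|_{L^2_tL^2}\lesssim\eps_0^{1/2}$. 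This gives
\[
\mu\,\eps_0^{1/2}\,\widetilde{\mathcal X}^{1/2}\le C\mu^2+\eps_0\widetilde{\mathcal X},
\]
which fits the claimed form.
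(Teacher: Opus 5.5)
Your overall scheme is the paper's: the same $L^2$ energy identity for \eqref{G4.3}, the same dissipation $\|\widetilde b^\mu-\widetilde u^\mu\|_{L^2}^2+\lambda_0\|\{\mathbf I-\mathbf P\}\widetilde f^\mu\|_\nu^2$, and your final treatment of the viscous term is exactly \eqref{G4.11}. That treatment is: integrate by parts, bound by $\mu\|\nabla u^\mu\|_{L^2_tL^2}\|\nabla\widetilde u^\mu\|_{L^2_tL^2}\lesssim\mu\eps_0^{1/2}(\widetilde{\mathcal X}^\mu)^{1/2}$, then apply Young. Drop the $\eta=\mu$ variant, which only gives $O(\mu)$.

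The viscous term is not the real obstacle, though. The gap is your claim that every remaining trilinear term can be bounded on its own. To be uniform in $t$ on $\mathbb R^3$, a term needs two factors that are square-integrable in time. The undifferentiated differences $\widetilde\varrho^\mu,\widetilde u^\mu,\widetilde a^\mu,\widetilde b^\mu$ are in $L^2_t$ only in $L^6_x$, through their gradients. The undifferentiated background quantities $a,u^\mu,b^\mu$ are in $L^2_t$ only in $L^p_x$ with $p\ge 6$. H\"older then forces the third factor into $L^{3/2}_x$, which is not available.

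Consequently the following terms cannot be closed individually:
\begin{itemize}
\item $-\int\widetilde a^\mu u^\mu\cdot\widetilde u^\mu$ and $-\int a|\widetilde u^\mu|^2$ from the momentum equation;
\item the macroscopic part $\int(1+\varrho)\widetilde a^\mu u^\mu\cdot\widetilde b^\mu$ of $\frac12\int(1+\varrho)u^\mu\cdot\langle v\widetilde f^\mu,\widetilde f^\mu\rangle$ (here your ``$L^6$ bounds on $(\widetilde a^\mu,\widetilde b^\mu)$'' would require $u^\mu\in L^{3/2}$);
\item the macroscopic part $\int(1+\varrho)a\,\widetilde u^\mu\cdot\widetilde b^\mu$ of the terms $\widetilde u^\mu\cdot\langle\frac12 vf-\nabla_v f,\widetilde f^\mu\rangle$;
\item $\int\widetilde\varrho^\mu u^\mu\cdot\widetilde b^\mu$ and $-\int\widetilde\varrho^\mu b^\mu\cdot\widetilde b^\mu$, coming from $\widetilde\varrho^\mu(u^\mu\cdot v\sqrt M+\mathcal L\mathbf Pf^\mu)$.
\end{itemize}

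The missing idea is to group these terms so that a relative velocity appears, as the paper does in \eqref{G4.9}--\eqref{G4.10} and \eqref{G4.14}. Use the identities $\frac12\langle v\mathbf P\widetilde f^\mu,\mathbf P\widetilde f^\mu\rangle=\widetilde a^\mu\widetilde b^\mu$, $\langle\frac12 v\mathbf Pf-\nabla_v\mathbf Pf,\mathbf P\widetilde f^\mu\rangle=a\,\widetilde b^\mu$, and $\mathcal L\mathbf Pf^\mu=-b^\mu\cdot v\sqrt M$. Up to harmless macro--micro and micro--micro pieces, the sums become
\[
\int\widetilde a^\mu u^\mu\cdot(\widetilde b^\mu-\widetilde u^\mu)+\int a\,\widetilde u^\mu\cdot(\widetilde b^\mu-\widetilde u^\mu)+\int\widetilde\varrho^\mu(u^\mu-b^\mu)\cdot\widetilde b^\mu+\int\varrho\,(\widetilde a^\mu u^\mu+a\,\widetilde u^\mu)\cdot\widetilde b^\mu .
\]
These four terms are bounded, respectively, by:
\begin{itemize}
\item $\|u^\mu\|_{L^3}\|\widetilde a^\mu\|_{L^6}\|\widetilde b^\mu-\widetilde u^\mu\|_{L^2}$;
\item $\|a\|_{L^3}\|\widetilde u^\mu\|_{L^6}\|\widetilde b^\mu-\widetilde u^\mu\|_{L^2}$;
\item $\|\widetilde\varrho^\mu\|_{L^3}\|b^\mu-u^\mu\|_{L^2}\|\widetilde b^\mu\|_{L^6}$;
\item $\|\varrho\|_{L^3}\|(u^\mu,a)\|_{L^3}\|(\widetilde a^\mu,\widetilde u^\mu)\|_{L^6}\|\widetilde b^\mu\|_{L^6}$, which works because a product of two background factors does lie in $L^{3/2}$.
\end{itemize}
Each bound has two $L^2_t$ factors.

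You carried out this regrouping for $\varrho(\mathcal L\mathbf P\widetilde f^\mu+\widetilde u^\mu\cdot v\sqrt M)$. It must be done systematically for all the zeroth-order macroscopic cross terms; without it the estimate does not close uniformly in time.
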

\begin{proof}
It follows from    \eqref{G2.5} and \eqref{G4.3} that
\begin{align}\label{G4.8}
\frac{1}{2}\big(&P^\prime(1)\|\widetilde{\varrho}^\mu\|_{L^2}^2+\|\widetilde{u}^\mu\|_{L^2}^2+\|\widetilde{f}^\mu\|_{L_{x
,v}^{2}}^2\big)+\int_0^t\|\widetilde{b}^\mu-\widetilde{u}^\mu\|_{L^2}^{2} {\rm d}\tau\nonumber\\
&+\int_0^t\!\int_{\mathbb{R}^3}(\varrho+1)\langle-\mathcal{L}\{\mathbf{I}-\mathbf{P}\}\widetilde{f}^\mu,\widetilde{f}^\mu\rangle {\rm d}x {\rm d}\tau\nonumber\\ 
\leq\,&\frac{1}{2}\int_0^t\!\int_{\mathbb{R}^3} (1+\varrho)u^\mu\cdot \langle v \widetilde{f}^\mu,\widetilde{f}^\mu\rangle {\rm d}x{\rm d}\tau-\int_0^t\!\int_{\mathbb{R}^3}\langle u^\mu\cdot\nabla_{v}\widetilde{f}^\mu,\widetilde{f}^\mu\rangle{\rm d}x{\rm d}\tau -\int_0^t\!\int_{\mathbb{R}^3} \widetilde{a}^\mu  {u}^{\mu}\cdot
 \widetilde{u}^{\mu}  {\rm d}x{\rm d}\tau\nonumber\\
 &+\frac{1}{2}\int_0^t\!\int_{\mathbb{R}^3} (1+\varrho)\widetilde{u}^\mu\cdot \langle v  {f} ,\widetilde{f}^\mu\rangle {\rm d}x{\rm d}\tau-\int_0^t\int_{\mathbb{R}^3}\langle \widetilde{u}^\mu\cdot\nabla_{v} {f} ,\widetilde{f}^\mu\rangle{\rm d}x{\rm d}\tau -\int_0^t\!\int_{\mathbb{R}^3} {a}^\mu 
 |\widetilde{u}^{\mu}|^2 {\rm d}x{\rm d}\tau\nonumber\\
& +\mu\int_0^t\!\int_{\mathbb{R}^3} \frac{\Delta u^\mu\cdot\widetilde{u}^\mu }{1+\varrho^\mu} {\rm d}x{\rm d}\tau+\int_0^t\!\int_{\mathbb{R}^3} \widetilde{F}_1   \widetilde{\varrho}^{\mu}{\rm d}x{\rm d}\tau+\int_0^t\!\int_{\mathbb{R}^3} \widetilde{F}_2 \cdot \widetilde{u}^{\mu}{\rm d}x{\rm d}\tau\nonumber\\
&+\int_0^t\!\int_{\mathbb{R}^3} \widetilde{F}_3 \cdot \widetilde{u}^\mu{\rm d}x{\rm d}\tau
+\int_0^t\!\int_{\mathbb{R}^3\times\mathbb{R}^3} \widetilde{F}_4 \widetilde{f}^\mu{\rm d}x{\rm d}v{\rm d}\tau+\int_0^t\!\int_{\mathbb{R}^3\times\mathbb{R}^3} \widetilde{F}_5 \widetilde{f}^\mu{\rm d}x{\rm d}v{\rm d}\tau\nonumber\\
&+\frac{1}{2}\big(P^\prime(1)\|\widetilde{\varrho}^\mu(0)\|_{L^2}^2+\|\widetilde{u}^\mu(0)\|_{L^2}^2+\|\widetilde{f}^\mu(0)\|_{L_{x
,v}^{2}}^2\big)\nonumber\\
\equiv:\,&\sum_{j=1}^{12}\widetilde{I}_j+\frac{1}{2}\big(P^\prime(1)\|\widetilde{\varrho}^\mu(0)\|_{L^2}^2+\|\widetilde{u}^\mu(0)\|_{L^2}^2+\|\widetilde{f}^\mu(0)\|_{L_{x
,v}^{2}}^2\big).    
\end{align}

Similarly to \eqref{G3.6}, by applying the decomposition \eqref{G2.4}, utilizing Lemmas \ref{L2.1}–\ref{L2.2}, and using the property that $v^k\sqrt{M}\lesssim1$ for all $k\geq 0$, we obtain 
\begin{align}\label{G4.9}
\widetilde{I}_1+\widetilde{I}_2+\widetilde{I}_3
\lesssim&\, \big(1+\|\varrho\|_{L_t^\infty(H^3)}\big)    \int_0^{t}\big( \|u^\mu\|_{L^{\infty}} \|\{\mathbf{I-P}\}\widetilde{f}^\mu\|_{\nu}^2
+\|\widetilde{b}^\mu-\widetilde{u}^\mu\|_{L^2}\|u^\mu\|_{L^3}\|\widetilde{a}^\mu\|_{L^6}  \big){\rm d}\tau\nonumber \\
& +\big(1+\|\varrho\|_{L_t^\infty(H^3)}\big) \int_0^t\|u^\mu\|_{L^3} \|(\widetilde{a}^\mu,\widetilde{b}^\mu)\|_{L^6}\|\{\mathbf{I-P}\}\widetilde{f}^\mu\|_{\nu}^2 {\rm d}\tau\nonumber\\
&+\int_0^t\|\varrho\|_{H^3}\|u^{\mu}\|_{H^2}\|\nabla (\widetilde{a}^{\mu},\widetilde{b}^{\mu})\|_{L^2}^2{\rm d}\tau\nonumber\\
\lesssim&\,  \big(1+\eps_1^{\frac{1}{2}} \big) \|u^\mu \|_{L_t^\infty(H^2)}\int_0^t \big( \|\widetilde{b}^\mu-\widetilde{u}^\mu\|_{L^2}^2+\|\nabla(\widetilde{a}^\mu,\widetilde{b}^\mu\|_{L^2}^2
+\|\{\mathbf{I-P}\}\widetilde{f}^\mu\|_{\nu}^2\big){\rm d}\tau \nonumber\\
\lesssim&\,  \big(1+\eps_0^\frac{1}{2}+\eps_1^{\frac{1}{2}} \big) \widetilde{\mathcal{X}}^\mu(t).
\end{align}
Here, we have used the facts that 
\begin{align*}
\frac{1}{2}u^\mu\cdot v\mathbf{P}\widetilde{f}^\mu-u^\mu\cdot\nabla_v \mathbf{P}\widetilde{f}^\mu=(\widetilde{a}^\mu u^\mu\cdot v-u^\mu\cdot \widetilde{b}^\mu+u^\mu\cdot v \widetilde{b}^\mu\cdot v)\sqrt{M},  
\end{align*}
and
\begin{align*}
\int_{\mathbb{R}^3}( u^\mu\cdot v \widetilde{b}^\mu\cdot v   )\mathbf{P}\widetilde{f}^\mu \sqrt{M}{\rm d}v=\widetilde{a}^\mu u^\mu \cdot \widetilde{b}^\mu. 
\end{align*}
Taking a similar arguments to that used in   \eqref{G4.9}, we have
\begin{align}\label{G4.10}
\widetilde{I}_4+\widetilde{I}_5+\widetilde{I}_6\lesssim&\,    \big(1+\eps_0^\frac{1}{2}+\eps_1^{\frac{1}{2}} \big) \widetilde{\mathcal{X}}^\mu(t). 
\end{align}
For the term $\widetilde{I}_7$, which affects the convergence rate of the inviscid limit, we apply integration by parts,
H\"{o}lder's and Young's inequalities to obtain  
\begin{align}\label{G4.11}
\widetilde{I}_7=&\, -\mu\int_0^t \! \int_{\mathbb{R}^3}\frac{\nabla u^\mu}{1+\varrho^\mu} {\rm div}\widetilde{u}^\mu {\rm d}x{\rm d}\tau-\mu\int_0^t \! \int_{\mathbb{R}^3} {\nabla u^\mu}\cdot\nabla\Big( \frac{1}{1+\varrho^\mu} \Big)  \cdot \widetilde{u}^\mu {\rm d}x{\rm d}\tau\nonumber\\
\lesssim&\, \mu\int_0^t \|\nabla u^\mu\|_{L^2}\|\nabla\widetilde{u}^\mu\|_{L^2}{\rm d}\tau+\mu\int_0^t \|\nabla u^\mu\|_{L^2}\Big\|\nabla\Big( \frac{1}{1+\varrho^\mu}  \Big) \Big\|_{L^3}\|\widetilde{u}^\mu\|_{L^6}{\rm d}\tau\nonumber\\
\lesssim&\, \mu \big(1+\|\varrho^\mu\|_{L_t^\infty(H^3)}\big)\|\nabla\widetilde{u}^\mu\|_{L_t^2(L^2)}\|\nabla u^\mu\|_{L_t^2(L^2)}\nonumber\\
\lesssim&\, \big( \eps_0^\frac{1}{2}+\eps_1^\frac{1}{2}    \big)\widetilde{\mathcal{X}}^\mu(t)+\mu^2.
\end{align}
For the terms $\widetilde{I}_8$, $\widetilde{I}_9$, and $\widetilde{I}_{10}$, using H\"{o}lder's inequality and Lemmas \ref{L2.1}--\ref{L2.2} leads to  
\begin{align}
\widetilde{I}_8\lesssim&\, \int_0^t \|\widetilde\varrho^\mu\|_{L^3} \big(\|(\widetilde{\varrho}^\mu,\widetilde{ u}^\mu)\|_{L^6}\|\nabla(\varrho,u)\|_{L^2}+\|u^\mu\|_{L^6}\|\nabla\widetilde{\varrho}^\mu\|_{L^2}+\|\varrho\|_{L^6}\|\nabla\widetilde{u}^\mu\|_{L^2}             \big)  {\rm d}\tau \nonumber\\
\lesssim&\, \|(\widetilde{\varrho}^\mu,\widetilde{u}^\mu)\|_{L_t^\infty(H^1)} \|\nabla(\widetilde{\varrho}^\mu,\widetilde{u}^\mu)\|_{L_t^2(L^2)}\big(\| \nabla( {\varrho} , {u} )\|_{L_t^2(L^2)}+\|\nabla {u}^\mu\|_{L_t^2(L^2)}\big) \nonumber\\
\lesssim&\, \big( \eps_0^\frac{1}{2}+\eps_1^\frac{1}{2}    \big)\widetilde{\mathcal{X}}^\mu(t),\label{G4.12-1}\\
\widetilde{I}_9\lesssim&\,\int_0^t \|\widetilde{u}^\mu\|_{L^3}\big(\|\nabla\varrho\|_{L^2}\| \widetilde\varrho^\mu\|_{L^6} + \|\nabla\widetilde{\varrho}^\mu\|_{L^2}\|\varrho^\mu\|_{L^6} \big) {\rm d}\tau \nonumber\\
\lesssim&\, \|\widetilde{u}^\mu\|_{L_t^\infty(H^1)}\|\nabla\widetilde{\varrho}^\mu\|_{L_t^2(L^2)}
\big(\|\nabla\varrho\|_{L_t^2(L^2)}+\|\nabla\varrho^\mu\|_{L_t^2(L^2)}\big)\nonumber\\
\lesssim&\, \big( \eps_0^\frac{1}{2}+\eps_1^\frac{1}{2}    \big)\widetilde{\mathcal{X}}^\mu(t),\label{G4.12-2}\\
\widetilde{I}_{10}\lesssim&\, \int_0^t \|\widetilde{u}^\mu\|_{L^3}\big(\|\nabla u\|_{L^2}\| \widetilde{u}^\mu\|_{L^6} + \|\nabla\widetilde{u}^\mu\|_{L^2}\|u^\mu\|_{L^6} \big) {\rm d}\tau \nonumber\\
\lesssim&\, \|\widetilde{u}^\mu\|_{L_t^\infty(H^1)}\|\nabla\widetilde{u}^\mu\|_{L_t^2(L^2)}\big(\|\nabla u\|_{L_t^2(L^2)}+\|\nabla u^\mu\|_{L_t^2(L^2)}\big)\nonumber\\
\lesssim&\, \big( \eps_0^\frac{1}{2}+\eps_1^\frac{1}{2}    \big)\widetilde{\mathcal{X}}^\mu(t).\label{G4.12}
\end{align}
For the term $\widetilde{I}_{11}$,  applying H\"{o}lder's inequality, the macro-micro decomposition \eqref{G2.4},  and Lemmas \ref{L2.1}–\ref{L2.2}, yields 
\begin{align}\label{G4.13}
\widetilde{I}_{11} \lesssim&\, \int_0^t \|\widetilde{f}^\mu\|_{L_{x,v}^2}\|\widetilde{\varrho}^\mu\|_{L^6}\|u^\mu\|_{L^6}\big(\|\nabla\{\mathbf{I}-\mathbf{P\}}f^\mu\|_{\nu}+\|\nabla(a^\mu,b^\mu)\|_{L^2} \big){\rm d}\tau\nonumber\\
&+\int_0^t  \|\widetilde{f}^\mu\|_{L_{x,v}^2}\|\varrho\|_{L^6}\|\widetilde{u}^\mu\|_{L^6}\big(\|\nabla\{\mathbf{I}-\mathbf{P\}}f^\mu\|_{\nu}+\|\nabla(a^\mu,b^\mu)\|_{L^2} \big){\rm d}\tau\nonumber\\
\lesssim&\,\big(\eps_0^\frac{1}{2}+\eps_1^\frac{1}{2}  \big) \|\widetilde{f}^\mu\|_{L_t^\infty(L_{x,v}^2)}
\|\nabla(\widetilde\varrho^\mu,\widetilde{u}^\mu)\|_{L_t^2(L^2)}\big(\|\nabla\{\mathbf{I}-\mathbf{P\}}f^\mu\|_{L_t^2(\nu)}^2+\|\nabla(a^\mu,b^\mu)\|_{L^2}^2\big)\nonumber\\
\lesssim &\, \big( \eps_0^\frac{1}{2}+\eps_1^\frac{1}{2}    \big)\widetilde{\mathcal{X}}^\mu(t).
\end{align}
Notice that $\mathcal{L}\mathbf{P}f^\mu=-b^\mu v\sqrt{M}$ and 
$\mathcal{L}\mathbf{P}\widetilde{f}^\mu=-\widetilde{b}^\mu v\sqrt{M}$.
Similar to \eqref{G4.13},  for the remaining   $\widetilde{I}_{12}$, it can be estimated as follows:
\begin{align}\label{G4.14}
 \widetilde{I}_{12}\lesssim&\,  \int_0^t \|\widetilde{\varrho}^\mu\|_{L^3}  \big(\|\nabla\widetilde{b}^\mu\|_{L^2} \|b^\mu-u^\mu\|_{L^2}+\|\{\mathbf{I}-\mathbf{P}\}f^\mu\|_{\nu}\|\nabla\{\mathbf{I}-\mathbf{P}\}\widetilde{f}^\mu\|_{\nu}\big){\rm d}\tau \nonumber\\
 &+\int_0^t \|\widetilde{\varrho}^\mu\|_{L^3}\|\{\mathbf{I}-\mathbf{P}\}f^\mu\|_{\nu}\|\nabla(\widetilde{a}^\mu,\widetilde{b}^\mu)\|_{L^2}{\rm d}\tau+\int_0^t \|\varrho\|_{L^3}\|\nabla \widetilde{b}^\mu\|_{L^2}\|\widetilde{b}^\mu-\widetilde{u}^\mu\|_{L^2}{\rm d}\tau\nonumber\\
 \lesssim&\, \|\widetilde{\varrho}^\mu\|_{L_t^\infty(H^1)} \|\nabla(\widetilde{a}^\mu,\widetilde{b}^\mu)\|_{L_t^2(L^2)} \big(\|\{\mathbf{I}-\mathbf{P}\}f^\mu\|_{L_t^2(\nu)}+\|\nabla\{\mathbf{I}-\mathbf{P}\}f^\mu\|_{L_t^2(\nu)}   \big)\nonumber\\
 &+\|\nabla(\widetilde{a}^\mu,\widetilde{b}^\mu)\|_{L_t^2(L^2)}\big(\|\widetilde{\varrho}^\mu\|_{L_t^\infty(H^1)} \|b^\mu-u^\mu\|_{L_t^2(L^2)} +\| {\varrho} \|_{L_t^\infty(H^1)} \|\widetilde{ b}^\mu-\widetilde{u}^\mu\|_{L_t^2(L^2)}\big)\nonumber\\
\lesssim &\, \big( \eps_0^\frac{1}{2}+\eps_1^\frac{1}{2}    \big)\widetilde{\mathcal{X}}^\mu(t).
\end{align}

Substituting the estimates \eqref{G4.9}–\eqref{G4.14} into \eqref{G4.8}, and applying \eqref{G2.5} and \eqref{G3.2}, we ultimately derive the desired \eqref{G4.7}.
\end{proof}

\begin{lem}\label{L4.2}
It holds that
\begin{align}\label{G4.15}
&\sup_{\tau\in[0,t]}\bigg( 
 \bigg\|\frac{\sqrt{P^\prime(1+\varrho(\tau))}}{1+\varrho(\tau)}\nabla\widetilde{\varrho}^\mu(\tau)\bigg\|_{L^2}^2+\|\nabla\widetilde{u}^\mu(\tau)\|_{L^2}^2+\|\nabla \widetilde{f}^\mu(\tau)\|_{L_{x
,v}^{2}}^2\bigg)\nonumber\\
& \quad + \int_{0}^t  \|\nabla (\widetilde{b}^\mu-\widetilde{u}^\mu)(\tau)\|_{L^2}^2{\rm d}\tau  
+\int_{0}^t \|  \nabla\{\mathbf{I}-\mathbf{P}\}\widetilde{f}^\mu(\tau)\|_{\nu}^2   {\rm d}\tau  \nonumber\\ &\qquad\leq\widetilde{\mathcal{X}}^\mu(0)+C\mu^2+C \big(\eps_0^\frac{1}{2}  +\eps_1^\frac{1}{2}\big)\widetilde{\mathcal{X}}^\mu(t),
\end{align}
where $C>0$ is a constant independent of    $\mu$ and time $t$.
\end{lem}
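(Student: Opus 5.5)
We apply $\partial^\alpha$ with $|\alpha|=1$ to the difference system \eqref{G4.3}, exactly as in the proof of Lemma \ref{L3.2}, but now with the Euler density $\varrho$ in the symmetrizing weight. Concretely, we multiply the differentiated $\widetilde{\varrho}^\mu$-equation by $\frac{P'(1+\varrho)}{(1+\varrho)^2}\partial^\alpha\widetilde{\varrho}^\mu$, the $\widetilde{u}^\mu$-equation by $\partial^\alpha\widetilde{u}^\mu$, and the $\widetilde{f}^\mu$-equation by $\partial^\alpha\widetilde{f}^\mu$. We then integrate over $[0,t]\times\mathbb{R}^3$ (respectively $\mathbb{R}^3\times\mathbb{R}^3$) and sum over $|\alpha|=1$.

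Before this, it is convenient to rewrite \eqref{G4.3}$_1$–\eqref{G4.3}$_2$ in the quasi-symmetric form $\partial_t\widetilde\varrho^\mu+(1+\varrho)\mathrm{div}\,\widetilde u^\mu+u^\mu\cdot\nabla\widetilde\varrho^\mu=\dots$ and $\partial_t\widetilde u^\mu+\frac{P'(1+\varrho)}{1+\varrho}\nabla\widetilde\varrho^\mu+\dots$. In this form the top-order cross terms $\int \frac{P'(1+\varrho)}{1+\varrho}\,\partial^\alpha\mathrm{div}\,\widetilde u^\mu\,\partial^\alpha\widetilde\varrho^\mu$ cancel after integration by parts. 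What remains is a commutator of the type $\int\nabla\big(\frac{P'(1+\varrho)}{1+\varrho}\big)\partial^\alpha\widetilde u^\mu\,\partial^\alpha\widetilde\varrho^\mu$, which is bounded by $\|\nabla\varrho\|_{L^\infty}\|\nabla(\widetilde\varrho^\mu,\widetilde u^\mu)\|_{L^2}^2$. The transport term $u^\mu\cdot\nabla\partial^\alpha\widetilde\varrho^\mu$ gives $\|\nabla u^\mu\|_{L^\infty}\|\nabla\widetilde\varrho^\mu\|^2_{L^2}$ after integrating by parts, and $\partial_t$ of the weight is handled as in \eqref{NJK-aaaa}. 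All of these are bounded by $(\eps_0^{1/2}+\eps_1^{1/2})\int_0^t\|\nabla(\widetilde\varrho^\mu,\widetilde u^\mu)\|_{L^2}^2$, which is $\lesssim(\eps_0^{1/2}+\eps_1^{1/2})\widetilde{\mathcal X}^\mu(t)$ by \eqref{G4.5}. Here $\|\nabla u^\mu\|_{L^\infty}$ and $\|\nabla\varrho\|_{L^\infty}$ are bounded in time by the $H^3$ bounds \eqref{a-2} and \eqref{b3}. The dissipative terms $\|\partial^\alpha(\widetilde b^\mu-\widetilde u^\mu)\|_{L^2}^2$ and $\int(1+\varrho)\langle-\mathcal L\{\mathbf I-\mathbf P\}\partial^\alpha\widetilde f^\mu,\partial^\alpha\widetilde f^\mu\rangle$ appear on the left exactly as in \eqref{G3.12}, using \eqref{G2.5} and \eqref{G3.2}.

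The remaining nonlinear terms come from $\partial^\alpha$ of $a\widetilde u^\mu$, $\widetilde a^\mu u^\mu$, $\widetilde F_1,\dots,\widetilde F_5$, and the $v$-transport and $v$-growth terms in \eqref{G4.3}$_3$. Each is a product of a difference quantity with a background quantity $(\varrho,u,f)$ or $(\varrho^\mu,u^\mu,f^\mu)$. We place the background factor in $L^\infty$ or $L^3$ and control it by $\eps_0^{1/2}+\eps_1^{1/2}$. The difference factors are placed in $L^2_tL^2$ dissipation norms already contained in $\widetilde{\mathcal X}^\mu$, namely $\nabla(\widetilde\varrho^\mu,\widetilde a^\mu,\widetilde b^\mu)$, $\widetilde b^\mu-\widetilde u^\mu$, $\{\mathbf I-\mathbf P\}\partial^\alpha\widetilde f^\mu$ and $\nabla_v\{\mathbf I-\mathbf P\}\widetilde f^\mu$, together with \eqref{G4.5}–\eqref{G4.6}. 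The growing weights $v\widetilde f^\mu$ and $\nabla_v\widetilde f^\mu$ are handled through the macro–micro splitting as in \eqref{G4.9}: the $\mathbf P$ part contributes moments $\widetilde a^\mu,\widetilde b^\mu$, and the $\{\mathbf I-\mathbf P\}$ part is absorbed by the $\nu$-norm. Some terms contain a difference quantity without a derivative, such as $\partial^\alpha\varrho\,\widetilde u^\mu$, $\widetilde\varrho^\mu\,\partial^\alpha(\mathcal L f^\mu)$, or $\widetilde u^\mu\cdot\nabla_v\partial^\alpha f$. For these we use $L^6\lesssim\|\nabla\cdot\|_{L^2}$ (Lemma \ref{L2.2}) on the difference and put the derivative on the background, bounding the background's time integral by $\int_0^t\mathcal D\lesssim\eps$ or its $L^\infty_t$ norm. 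If a product of an $L^\infty_t$ difference norm with an $L^2_t$ background dissipation arises, it is bounded by $\widetilde{\mathcal X}^\mu(t)^{1/2}\cdot\eps^{1/2}\cdot\widetilde{\mathcal X}^\mu(t)^{1/2}$, as in \eqref{G4.13}.

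The viscous term $\mu\int_0^t\!\int\partial^\alpha\big(\frac{\Delta u^\mu}{1+\varrho^\mu}\big)\cdot\partial^\alpha\widetilde u^\mu$ is the one that fixes the rate $\mu^2$, and I expect it to be the main obstacle. Integrating by parts once gives
\begin{align*}
\mu\Big|\int_0^t\!\!\int \frac{\nabla^2 u^\mu}{1+\varrho^\mu}\,\nabla^2\widetilde u^\mu\Big|+\dots,
\end{align*}
but $\nabla^2\widetilde u^\mu$ is not controlled in $L^2_tL^2$ by $\widetilde{\mathcal X}^\mu$. Instead I would keep $\partial^\alpha\Delta u^\mu$ on the viscous side and estimate
\begin{align*}
\mu\int_0^t\|\nabla\Delta u^\mu\|_{L^2}\|\nabla\widetilde u^\mu\|_{L^2}\,{\rm d}\tau\le \mu\|\nabla^3u^\mu\|_{L^2_tL^2}\|\nabla\widetilde u^\mu\|_{L^2_tL^2}\le C\mu^2+\eta\,\widetilde{\mathcal X}^\mu(t),
\end{align*}
with $\eta$ small. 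This uses the uniform bound $\int_0^t\|\nabla u^\mu\|_{H^2}^2\lesssim\int_0^t\mathcal D\le C_0\mathcal E_0^\mu$ from \eqref{a-2}, which is independent of $\mu$, together with \eqref{G4.5}. The commutator piece $\mu\,\nabla(1+\varrho^\mu)^{-1}\Delta u^\mu$ is handled the same way. Collecting all the estimates and adding the initial contribution $\widetilde{\mathcal X}^\mu(0)$ yields \eqref{G4.15}. The $\eta\widetilde{\mathcal X}^\mu(t)$ term is harmless: it is absorbed by the same mechanism as in Lemma \ref{L4.1}, with $\eta$ taken comparable to $\eps_0^{1/2}+\eps_1^{1/2}$.
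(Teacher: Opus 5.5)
Your proposal is correct and takes essentially the same route as the paper. You differentiate the difference system once and symmetrize with the weight $\frac{P'(1+\varrho)}{(1+\varrho)^2}$ built from the Euler density, so the leftover pressure term is $\big(\frac{P'(1+\varrho^\mu)}{1+\varrho^\mu}-\frac{P'(1+\varrho)}{1+\varrho}\big)\nabla\varrho^\mu$, a difference times a background. Every remaining term is then bounded as a small background factor times difference norms in $\widetilde{\mathcal{X}}^\mu$, and the viscous term is handled as in the paper, without integrating by parts, via $\mu\|\nabla u^\mu\|_{L^2_t(H^2)}\|\nabla\widetilde u^\mu\|_{L^2_t(L^2)}\lesssim\mu^2+(\eps_0^{1/2}+\eps_1^{1/2})\widetilde{\mathcal{X}}^\mu(t)$ using the $\mu$-uniform dissipation bound of Theorem~\ref{T1.1}.
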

\begin{proof}
By applying the operator $\nabla$ to \eqref{G4.3} and employing the energy method outlined in Lemma \ref{L3.2}, one has
\begin{align}\label{G4.16}
\frac{1}{2}\bigg( & \bigg\|\frac{\sqrt{P^\prime(1+\varrho)}}{1+\varrho}\nabla\widetilde{\varrho}^\mu\bigg\|_{L^2}^2+\|\nabla\widetilde{u}^\mu\|_{L^2}^2+\|\nabla \widetilde{f}^\mu\|_{L_{x
,v}^{2}}^2\bigg)+\int_0^t\|\nabla(\widetilde{b}^\mu-\widetilde{u}^\mu)\|_{L^2}^{2} {\rm d}\tau\nonumber\\
&+\int_0^t\!\int_{\mathbb{R}^3}(\varrho+1)\langle-\mathcal{L}\nabla\{\mathbf{I}-\mathbf{P}\}\widetilde{f}^\mu,\nabla\widetilde{f}^\mu\rangle {\rm d}x {\rm d}\tau\nonumber\\ 
\leq\,&\frac{1}{2}\int_0^t\!\int_{\mathbb{R}^3} \big \langle \nabla\big((1+\varrho)u^\mu\cdot v \widetilde{f}^\mu\big),\nabla\widetilde{f}^\mu\big\rangle {\rm d}x{\rm d}\tau-\int_0^t\!\int_{\mathbb{R}^3}\big\langle \nabla \big(u^\mu\cdot\nabla_{v}\widetilde{f}^\mu\big),\nabla\widetilde{f}^\mu\big\rangle{\rm d}x{\rm d}\tau \nonumber\\
&-\int_0^t\!\int_{\mathbb{R}^3} \nabla(\widetilde{a}^\mu  {u}^{\mu})\cdot
 \nabla\widetilde{u}^{\mu}  {\rm d}x{\rm d}\tau+\frac{1}{2}\int_0^t\!\int_{\mathbb{R}^3}  \big\langle \nabla\big((1+\varrho)\widetilde{u}^\mu\cdot v  {f} \big),\nabla\widetilde{f}^\mu\big\rangle {\rm d}x{\rm d}\tau \nonumber\\
 &-\int_0^t\int_{\mathbb{R}^3}\big\langle \nabla\big(\widetilde{u}^\mu\cdot\nabla_{v} {f} \big),\widetilde{f}^\mu\big\rangle{\rm d}x{\rm d}\tau -\int_0^t\!\int_{\mathbb{R}^3}\nabla( {a}^\mu \widetilde{u}^\mu)\cdot\nabla
  \widetilde{u}^{\mu} {\rm d}x{\rm d}\tau\nonumber\\
& +\int_0^t\!\int_{\mathbb{R}^3}\nabla\varrho\cdot\langle\mathcal{L}\{\mathbf{I}-\mathbf{P}\}\widetilde{f}^\mu,\nabla\widetilde{f}^\mu 
 \rangle {\rm d}x{\rm d}\tau+\mu\int_0^t\!\int_{\mathbb{R}^3} \nabla\Big(\frac{\Delta u^\mu }{1+\varrho^\mu}\Big) \cdot\nabla\widetilde{u}^\mu {\rm d}x{\rm d}\tau\nonumber\\
& +\int_0^t\!\int_{\mathbb{R}^3} \frac{P^\prime(1+\varrho)}{(1+\varrho)^2}\nabla(\widetilde{F}_1+\varrho{\rm div}\widetilde{u}^\mu  ) \cdot\nabla\widetilde{\varrho}^{\mu}{\rm d}x{\rm d}\tau+\frac{1}{2}\int_0^t\!\int_{\mathbb{R}^3}\partial_t \bigg[  \frac{P^\prime (1+\varrho)}{(1+\varrho)^2} \bigg] |\nabla \widetilde\varrho^\mu|^2{\rm d}\tau \nonumber\\
&+\int_0^t\!\int_{\mathbb{R}^3} \nabla \bigg(\bigg( \frac{P^\prime(1+\varrho^\mu)}{1+\varrho^\mu}
-\frac{P^\prime(1+\varrho)}{1+\varrho }  \bigg)\nabla\varrho^\mu\bigg) \cdot \nabla\widetilde{u}^{\mu}{\rm d}x{\rm d}\tau+\int_0^t\!\int_{\mathbb{R}^3} \nabla\widetilde{F}_3 \cdot \nabla\widetilde{u}^\mu{\rm d}x{\rm d}\tau\nonumber\\
&+\int_0^t\!\int_{\mathbb{R}^3\times\mathbb{R}^3} \nabla \widetilde{F}_4 \cdot\nabla\widetilde{f}^\mu{\rm d}x{\rm d}v{\rm d}\tau+\int_0^t\!\int_{\mathbb{R}^3\times\mathbb{R}^3} \nabla\widetilde{F}_5 \cdot\nabla\widetilde{f}^\mu{\rm d}x{\rm d}v{\rm d}\tau\nonumber\\
&+\frac{1}{2}\bigg(  \bigg\|\frac{\sqrt{P^\prime(1+\varrho)}}{1+\varrho}\nabla\widetilde{\varrho}^\mu(0)\bigg\|_{L^2}^2+\|\nabla\widetilde{u}^\mu(0)\|_{L^2}^2+\|\nabla \widetilde{f}^\mu(0)\|_{L_{x
,v}^{2}}^2\bigg)\nonumber\\
\equiv:\,&\sum_{i=1}^{14}\widetilde{J}_i+\frac{1}{2}\bigg(  \bigg\|\frac{\sqrt{P^\prime(1+\varrho)}}{1+\varrho}\nabla\widetilde{\varrho}^\mu(0)\bigg\|_{L^2}^2+\|\nabla\widetilde{u}^\mu(0)\|_{L^2}^2+\|\nabla \widetilde{f}^\mu(0)\|_{L_{x
,v}^{2}}^2\bigg).    
\end{align}

Based on the macro-micro decomposition \eqref{G2.4} and Lemmas \ref{L2.1}–\ref{L2.2}, 
for the terms $\widetilde{J}_1$, $\widetilde{J}_2$, and $\widetilde{J}_3$, one gets 
\begin{align}
\widetilde{J}_1\lesssim&\, \int_0^t \|\nabla\widetilde{f}^\mu\|_{L_{x,v}^2}\big(\|\nabla(\widetilde{a}^\mu,\widetilde{b}^\mu)\|_{L^2}+\|\nabla\{\mathbf{I}-\mathbf{P}\}\widetilde{f}^\mu\|_{\nu}  \big)\| u^\mu\|_{L^6} \|\nabla\varrho\|_{L^6} {\rm d}\tau \nonumber\\
&+\int_0^t \big(1+\|\varrho\|_{ L^\infty}\big)\|\nabla u^\mu\|_{L^3}\big(\|\nabla(\widetilde{a}^\mu,\widetilde{b}^\mu)\|_{L^2}+\|\nabla\{\mathbf{I}-\mathbf{P}\}\widetilde{f}^\mu\|_{\nu}  \big)\|\nabla\widetilde{f}^\mu\|_{L_{x,v}^2}{\rm d}\tau  \nonumber\\
&+\int_0^t \big(1+\|\varrho\|_{ L^\infty}\big)\|  u^\mu\|_{L^\infty}\big(\|\nabla(\widetilde{a}^\mu,\widetilde{b}^\mu)\|_{L^2}+\|\nabla\{\mathbf{I}-\mathbf{P}\}\widetilde{f}^\mu\|_{\nu}  \big)\|\nabla\widetilde{f}^\mu\|_{L_{x,v}^2}{\rm d}\tau  \nonumber\\
\lesssim&\, \big(1+\eps_1^\frac{1}{2}\big) \|\widetilde{f}^\mu\|_{L_t^\infty(L_v^2(H^1))}\big(\|\nabla(\widetilde{a}^\mu,\widetilde{b}^\mu)\|_{L_t^2(L^2)}+\|\nabla\{\mathbf{I}-\mathbf{P}\}\widetilde{f}^\mu\|_{L_t^2(\nu)}  \big) \|\nabla u^\mu\|_{L_t^2(H^1)}\nonumber\\
\lesssim&\,  \big(\eps_0^\frac{1}{2}  +\eps_1^\frac{1}{2}\big)\widetilde{\mathcal{X}}^\mu(t),\label{G4.17-1}\\
\widetilde{J}_2\lesssim&\,\int_0^t \|\nabla\widetilde{f}^\mu\|_{L_{x,v}^2} \|\nabla u^\mu\|_{L^3}\big(\|\nabla\{\mathbf{I}-\mathbf{P}\}\widetilde{f}^\mu\|_{\nu}+\|\nabla(\widetilde{a}^\mu,\widetilde{b}^\mu)\|_{L^2}\big) {\rm d}\tau \nonumber\\
&+\int_0^t \|\nabla\widetilde{f}^\mu\|_{L_{x,v}^2} \| u^\mu\|_{L^\infty}\big(\|\nabla\{\mathbf{I}-\mathbf{P}\}\widetilde{f}^\mu\|_{\nu}+\|\nabla(\widetilde{a}^\mu,\widetilde{b}^\mu)\|_{L^2}\big) {\rm d}\tau \nonumber\\
\lesssim&\, \|\widetilde{f}^\mu\|_{L_t^\infty(L_v^2(H^1))} \|\nabla u^\mu\|_{L_t^2(H^1)} \big(\|\nabla\{\mathbf{I}-\mathbf{P}\}\widetilde{f}^\mu\|_{L_t^2(\nu)}+\|\nabla(\widetilde{a}^\mu,\widetilde{b}^\mu)\|_{L_t^2(L^2)}\big)\nonumber\\
\lesssim&\,  \big(\eps_0^\frac{1}{2}  +\eps_1^\frac{1}{2}\big)\widetilde{\mathcal{X}}^\mu(t),\label{G4.17-2}\\
\widetilde{J}_3\lesssim&\, \int_0^t(\|\nabla\widetilde{a}^\mu\|_{L^2}\|u^\mu\|_{L^\infty}+\|\widetilde{a}^\mu\|_{L^6}\|\nabla u^\mu\|_{L^3} )  \|\nabla\widetilde{u}^\mu\|_{L^2}{\rm d}\tau\nonumber\\
\lesssim&\, \|\nabla\widetilde{a}^\mu\|_{L_t^2(L^2)}\|\nabla\widetilde{u}^\mu\|_{L_t^2(L^2)}\|u^\mu\|_{L_t^\infty(H^2)}\nonumber\\
\lesssim&\,  \big(\eps_0^\frac{1}{2}  +\eps_1^\frac{1}{2}\big)\widetilde{\mathcal{X}}^\mu(t).\label{G4.17}
\end{align}
Similar to the estimates in \eqref{G4.17-1}--\eqref{G4.17}, we can deduce  that
\begin{align}\label{G4.18}
\widetilde{J}_4+\widetilde{J}_5+\widetilde{J}_6\lesssim    \big(\eps_0^\frac{1}{2}  +\eps_1^\frac{1}{2}\big)\widetilde{\mathcal{X}}^\mu(t). 
\end{align}
For the term $\widetilde{J}_7$, by applying the macro-micro decomposition \eqref{G2.4} and utilizing the property that $v^k \sqrt{M} \lesssim 1$  for any $k \geq 0$, we obtain  
\begin{align}\label{G4.19}
\widetilde{J}_7\lesssim&\, \int_0^t\|\nabla\varrho\|_{L^3} \big(\|\nabla\{\mathbf{I}-\mathbf{P}\}\widetilde{f}
^\mu\|_{\nu}+\|\nabla(\widetilde{a}^\mu,\widetilde{b}^\mu)\|_{L^2} 
  \big) \|\nabla \widetilde{f}^\mu\|_{L_{x,v}^2} {\rm d}\tau   \nonumber\\
  \lesssim&\, \|\nabla\varrho\|_{L_t^2(H^1)}\big(\|\nabla\{\mathbf{I}-\mathbf{P}\}\widetilde{f}
^\mu\|_{L_t^2(\nu)}+\|\nabla(\widetilde{a}^\mu,\widetilde{b}^\mu)\|_{L_t^2(L^2)} 
  \big) \|  \widetilde{f}^\mu\|_{L_t^\infty(L_v^2(H^1))} \nonumber\\
  \lesssim &\,   \big(\eps_0^\frac{1}{2}  +\eps_1^\frac{1}{2}\big)\widetilde{\mathcal{X}}^\mu(t). 
\end{align}
For the term $\widetilde{J}_8$, applying H\"{o}lder's inequality  in conjunction with Lemma \ref{L2.1}, we arrive at
\begin{align}\label{G4.20}
\widetilde{J}_8\lesssim&\, \mu\int_0^t\Big(\Big\|\nabla\Big( \frac{1}{1+\varrho^\mu} \Big)\Big\|_{L^\infty} \|\Delta u^\mu\|_{L^2} +\|\nabla\Delta u^\mu\|_{L^2}\Big)\|\nabla\widetilde{u}^\mu\|_{L^2}{\rm d}\tau\nonumber\\
\lesssim&\, \mu \big(1+\|\varrho\|_{L_t^\infty(H^3)}\big) \|\nabla u^\mu\|_{L_t^2(H^2)}\|\nabla\widetilde{u}^\mu\|_{L_t^2(L^2)}\nonumber\\
\lesssim&\, \mu^2+ \big(\eps_0^\frac{1}{2}+\eps_1^\frac{1}{2}\big)\widetilde{\mathcal{X}}^\mu(t).
\end{align}
For the term $\widetilde{J}_9$, by direct calculation, it follows that  
\begin{align}\label{G4.21}
\widetilde{J}_9\lesssim&\, \big(1+\eps_1^\frac{1}{2}\big)\int_0^t   \|\nabla\widetilde\varrho^\mu\|_{L^2} \big(\|\nabla\widetilde{u}^\mu\|_{L^2} \|\nabla\varrho\|_{L^\infty} +\|\widetilde{u}^\mu\|_{L^6}\|\nabla^2\varrho\|_{L^3}+\|\nabla u^\mu\|_{L^\infty}\|\nabla\widetilde{\varrho}^\mu\|_{L^2}\big){\rm d}\tau\nonumber\\  
&+\big(1+\eps_1^\frac{1}{2}\big)\int_0^t \big(\|\nabla\widetilde{\varrho}^\mu\|_{L^2}^2\|{\rm div}u^\mu\|_{L^\infty}+\|\nabla\widetilde{\varrho}^\mu\|_{L^2}^2\|{\rm div}u \|_{L^\infty} \big){\rm d}\tau\nonumber\\
&+\big(1+\eps_1^\frac{1}{2}\big)\int_0^t\|\nabla\widetilde{\varrho}^\mu\|_{L^2}\|\widetilde\varrho^\mu\|_{L^6}\|\nabla {\rm div}u\|_{L^3} {\rm d}\tau \nonumber\\
\lesssim&\, \|\nabla\widetilde{\varrho}^\mu\|_{L_t^2(L^2)}\|\nabla(\widetilde{\varrho}^\mu,\widetilde{u}^\mu)\|_{L_t^2(L^2)} \big( \|(\varrho,u)\|_{L_t^\infty(H^3)}+ \|u^\mu\|_{L_t^\infty(H^3)} \big)\nonumber\\
\lesssim&\,  \big(\eps_0^\frac{1}{2}+\eps_1^\frac{1}{2}\big)\widetilde{\mathcal{X}}^\mu(t).
\end{align}
For the term $\widetilde{J}_{10}$, by applying the estimate \eqref{NJK-aaaa} established in Lemma \ref{L3.2}, we have 
\begin{align}\label{G4.22}
\widetilde{J}_{10}\lesssim \int_0^t \|u\|_{H^3} \|\nabla\widetilde{\varrho}^\mu\|_{L^2}^2{\rm d}\tau \lesssim \|u\|_{L_t^\infty(H^3)}\|\nabla\widetilde{\varrho}^\mu\|_{L_t^2(L^2)}\lesssim\big(\eps_0^\frac{1}{2}+\eps_1^\frac{1}{2}\big)\widetilde{\mathcal{X}}^\mu(t).
\end{align}
For the terms $\widetilde{J}_{11}$ and $\widetilde{J}_{12}$, from integration by parts, 
 H\"{o}lder's inequality and Lemmas \ref{L2.1}--\ref{L2.2}, it holds  
\begin{align}\label{G4.23}
 \widetilde{J}_{11}\lesssim&\,\int_0^t \big(\|\nabla\widetilde{\varrho}^\mu\|_{L^2}\|\nabla\varrho^\mu\|_{L^\infty}+\|\widetilde{\varrho}^\mu\|_{L^6}\|\nabla^2\varrho^\mu\|_{L^3}\big)\|\nabla\widetilde{u}^\mu\|_{L^2}   {\rm d}\tau\nonumber\\
 \lesssim&\,\|\varrho^\mu\|_{L_t^\infty(H^3)}\|\nabla\widetilde\varrho^\mu\|_{L_t^2(L^2)}\|\nabla\widetilde{u}^\mu\|_{L_t^2(L^2)}\nonumber\\
 \lesssim&\, \big(\eps_0^\frac{1}{2}+\eps_1^\frac{1}{2}\big)\widetilde{\mathcal{X}}^\mu(t),\nonumber\\
 \widetilde{J}_{12}\lesssim&\, \int_0^t \big(\|\nabla\widetilde{u}^\mu\|_{L^2}^2(\|\nabla u\|_{L^\infty}+\|{\rm div}u^\mu\|_{L^\infty}+\|\nabla u^\mu\|_{L^\infty}) +\|\widetilde{u}^\mu\|_{L^6}\|\nabla^2 u\|_{L^3}\|\nabla\widetilde{u}^\mu\|_{L^2}\big){\rm d}\tau \nonumber\\
 \lesssim&\, \|\nabla\widetilde{u}^\mu\|_{L_t^2(L^2)} ^2 \big(\|u^\mu\|_{L_t^\infty(H^3)}+\|u \|_{L_t^\infty(H^3)}   \big)\nonumber\\
 \lesssim&\, \big(\eps_0^\frac{1}{2}+\eps_1^\frac{1}{2}\big)\widetilde{\mathcal{X}}^\mu(t).
\end{align}
Finally, for the remaining terms $\widetilde{J}_{13}$ and $\widetilde{J}_{14}$, by using the decomposition \eqref{G2.4} and Lemmas \ref{L2.1}--\ref{L2.3}, we get
\begin{align}\label{G4.24}
 \widetilde{J}_{13}\lesssim&\,    \int_0^t \|\nabla\widetilde{f}^\mu\|_{L_{x,v}^2}\|\nabla\widetilde{\varrho}^\mu\|_{L^2} \|u^\mu\|_{L^\infty}\bigg(\sum_{1\leq|\alpha|\leq 2}\|\partial^\alpha\{\mathbf{I}-\mathbf{P}\}f^\mu\|_{\nu}+\|\nabla(a^\mu,b^\mu)\|_{H^1}\bigg){\rm d}\tau\nonumber\\
 &+\int_0^t \|\nabla\widetilde{f}^\mu\|_{L_{x,v}^2}\|\widetilde\varrho^\mu\|_{L^6}\|\nabla u^\mu\|_{L^6} \big( \|\nabla\{\mathbf{I}-\mathbf{P}\}f^\mu\|_{\nu}+\|\nabla(a^\mu,b^\mu)\|_{L^2}    \big){\rm d}\tau\nonumber\\
&+\int_0^t \|\nabla\widetilde{f}^\mu\|_{L_{x,v}^2}\|\widetilde\varrho^\mu\|_{L^6}\|  u^\mu\|_{L^6} \big( \|\nabla^2\{\mathbf{I}-\mathbf{P}\}f^\mu\|_{\nu}+\|\nabla^2(a^\mu,b^\mu)\|_{L^2}    \big){\rm d}\tau\nonumber\\
&+\int_0^t \|\nabla\widetilde{f}^\mu\|_{L_{x,v}^2}\|\nabla\widetilde{u}^\mu\|_{L^2}  \|\varrho\|_{L^\infty} \bigg(\sum_{1\leq|\alpha|\leq 2}\|\partial^\alpha\{\mathbf{I}-\mathbf{P}\}f^\mu\|_{\nu}+\|\nabla(a^\mu,b^\mu)\|_{H^1}\bigg) {\rm d}\tau\nonumber\\
&+\int_0^t \|\nabla\widetilde{f}^\mu\|_{L_{x,v}^2} \| \widetilde{u}^\mu\|_{L^6} \|\nabla \varrho\|_{L^6} \big( \|\nabla\{\mathbf{I}-\mathbf{P}\}f^\mu\|_{\nu}+\|\nabla(a^\mu,b^\mu)\|_{L^2}    \big){\rm d}\tau\nonumber\\
&+\int_0^t \|\nabla\widetilde{f}^\mu\|_{L_{x,v}^2} \| \widetilde{u}^\mu\|_{L^6} \|  \varrho\|_{L^6} \big( \|\nabla^2\{\mathbf{I}-\mathbf{P}\}f^\mu\|_{\nu}+\|\nabla^2(a^\mu,b^\mu)\|_{L^2}    \big){\rm d}\tau\nonumber\\
\lesssim&\,\eps_0^\frac{1}{2} \|\nabla\widetilde{f}^\mu\|_{L_t^\infty(L_{x,v}^2)} \|(\widetilde{\varrho}^\mu,\widetilde{u}^\mu)\|_{L_t^\infty(H^1)}  \bigg(\sum_{1\leq|\alpha|\leq 2}\|\partial^\alpha\{\mathbf{I}-\mathbf{P}\}f^\mu\|_{L_t^2(\nu)}+\|\nabla(a^\mu,b^\mu)\|_{L_t^2(H^1)}\bigg) \nonumber\\
\lesssim&\, \big(\eps_0^\frac{1}{2}+\eps_1^\frac{1}{2}\big)\widetilde{\mathcal{X}}^\mu(t),\nonumber\\
\widetilde{J}_{14}\lesssim &\, \int_0^t\big( \|\nabla \{\mathbf{I}-\mathbf{P}\}\widetilde{f}^\mu\|_{\nu} +\|\nabla(\widetilde{a}^\mu,\widetilde{b}^\mu)\|_{L^2}\big)(\|\widetilde{\varrho}^\mu\|_{L^6}+\|\nabla\widetilde{\varrho}^\mu\|_{L^2} )\sum_{|\alpha|\leq
2}\|\partial^\alpha\{\mathbf{I}-\mathbf{P}\}f^\mu\|_{\nu} {\rm d}\tau \nonumber\\
&+\int_{0}^t \|\nabla\widetilde{b}^\mu\|_{L^2} \big( \|\widetilde{\varrho}^\mu\|_{L^6}\|\nabla(b^\mu-u^\mu)\|_{L^3}+\|\nabla\widetilde{\varrho}^\mu\|_{L^2}\|\nabla(b^\mu-u^\mu)\|_{L^\infty}  \big){\rm d}\tau\nonumber\\
&+\int_{0}^t \|\nabla\widetilde{b}^\mu\|_{L^2} \big( \| {\varrho} \|_{L^\infty}\|\nabla(\widetilde{b}^\mu-\widetilde{u}^\mu)\|_{L^2}+\|\nabla{\varrho} \|_{L^3} \| \widetilde{b}^\mu-\widetilde{u}^\mu \|_{L^6}  \big){\rm d}\tau\nonumber\\
\lesssim&\, \|\widetilde\varrho^\mu\|_{L_t^\infty(H^1)}\big( \|\nabla \{\mathbf{I}-\mathbf{P}\}\widetilde{f}^\mu\|_{L_t^2(\nu)} +\|\nabla(\widetilde{a}^\mu,\widetilde{b}^\mu)\|_{L_t^2(L^2)}\big)\sum_{|\alpha|\leq
2}\|\partial^\alpha\{\mathbf{I}-\mathbf{P}\}f^\mu\|_{L_t^2(\nu)}\nonumber\\
&+ \|\nabla\widetilde{b}^\mu\|_{L_t^2(L^2)}\big(\|\widetilde\varrho^\mu\|_{L_t^\infty(H^1)} \|\nabla(b^\mu-a^\mu)\|_{L_t^2(H^2)} +\|\varrho\|_{L_t^\infty(H^3)} \|\nabla(\widetilde{b}^\mu-\widetilde{a}^\mu)\|_{L_t^2(H^2)}\big)\nonumber\\
\lesssim&\, \big(\eps_0^\frac{1}{2}+\eps_1^\frac{1}{2}\big)\widetilde{\mathcal{X}}^\mu(t).
\end{align}

Plugging the estimates \eqref{G4.17}–\eqref{G4.24} into \eqref{G4.16} and applying \eqref{G2.5},
 we obtain the desired   estimate \eqref{G4.15}.
\end{proof}

\begin{lem}\label{L4.3}
It holds that
\begin{align}\label{G4.25}
\int_0^t \|\nabla(\widetilde{a}^\mu,\widetilde{b}^\mu)\|_{L^2}^2{\rm d}\tau\leq C\mu^2+C \widetilde{\mathcal{X}}^\mu(t)+\widetilde{\mathcal{X}}^\mu(0),
\end{align}
where $C>0$ is a constant independent of $\mu$ and time $t$.
\end{lem}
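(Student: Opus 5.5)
This is the difference-level version of Lemma \ref{L3.3}: I would derive the macroscopic fluid-type system for $(\widetilde a^\mu,\widetilde b^\mu)$ and use an interactive functional. Subtracting the moment system \eqref{G3.19} for $f$ from the one for $f^\mu$ gives
\begin{align*}
&\partial_t\widetilde a^\mu+\nabla\cdot\widetilde b^\mu=0,\\
&\partial_t\widetilde b^\mu_i+\partial_i\widetilde a^\mu+\sum_{j}\partial_j\Gamma_{ij}(\{\mathbf{I}-\mathbf{P}\}\widetilde f^\mu)=(\widetilde u^\mu_i-\widetilde b^\mu_i)+\widetilde G_i,\\
&\partial_i\widetilde b^\mu_j+\partial_j\widetilde b^\mu_i=-\partial_t\Gamma_{ij}(\{\mathbf{I}-\mathbf{P}\}\widetilde f^\mu)+\Gamma_{ij}(\widetilde{\mathfrak l}+\widetilde{\mathfrak r}+\widetilde{\mathfrak s})+\widetilde H_{ij}.
\end{align*}
Here $\widetilde G_i$ and $\widetilde H_{ij}$ collect differences of products such as $\varrho^\mu(u^\mu_i-b^\mu_i)-\varrho(u_i-b_i)$, $(1+\varrho^\mu)u^\mu_ia^\mu-(1+\varrho)u_ia$ and $(1+\varrho^\mu)u^\mu_ib^\mu_j-(1+\varrho)u_ib_j$. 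Each of these splits as a ``tilde'' factor times a small background factor coming from $(\varrho^\mu,u^\mu,a^\mu,b^\mu)$ or $(\varrho,u,a,b)$. The viscosity does not enter the $f$-equation, so $\mu$ can appear only through the time integration and the initial layer.

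First, I would test the third equation with $\partial_i\widetilde b^\mu_j+\partial_j\widetilde b^\mu_i$ and sum over $i,j$. Using the first equation, $\sum_{i,j}\|\partial_i\widetilde b^\mu_j+\partial_j\widetilde b^\mu_i\|^2=2\|\nabla\widetilde b^\mu\|^2+2\|\nabla\cdot\widetilde b^\mu\|^2$, which controls $\|\nabla\widetilde b^\mu\|_{L^2}^2$. The $\partial_t\Gamma$ term is moved into a time derivative of $\int(\partial_i\widetilde b_j+\partial_j\widetilde b_i)\Gamma_{ij}$. The resulting $\partial_t\widetilde b$ is substituted from the second equation, which gives terms of size $\|\nabla\{\mathbf I-\mathbf P\}\widetilde f^\mu\|_\nu(\|\nabla\widetilde a^\mu\|+\|\widetilde b^\mu-\widetilde u^\mu\|+\|\nabla\{\mathbf I-\mathbf P\}\widetilde f^\mu\|_\nu)$; these are handled by Young's inequality with a small weight on $\|\nabla\widetilde a^\mu\|^2$. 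Second, I would test the second equation with $\nabla\widetilde a^\mu$ to produce $\|\nabla\widetilde a^\mu\|^2$. The term $\partial_t\widetilde b^\mu\cdot\nabla\widetilde a^\mu$ becomes $\frac{d}{dt}\int\widetilde b^\mu\cdot\nabla\widetilde a^\mu+\|\nabla\cdot\widetilde b^\mu\|^2$, and the last piece is absorbed by the first step after weighting the $a$-estimate by a small constant.

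Then I would integrate in time over $[0,t]$. The boundary terms $\int\widetilde b^\mu\cdot\nabla\widetilde a^\mu$ and $\int\nabla\widetilde b^\mu\,\Gamma(\{\mathbf I-\mathbf P\}\widetilde f^\mu)$ are bounded at time $t$ by $\sup\|\widetilde f^\mu\|_{H^1_{x,v}}^2\le\widetilde{\mathcal X}^\mu(t)$ and at time $0$ by $\widetilde{\mathcal X}^\mu(0)$. This is why the statement carries a plain constant $C$ in front of $\widetilde{\mathcal X}^\mu(t)$ rather than a small one.

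The linear right-hand sides $\|\{\mathbf I-\mathbf P\}\nabla\widetilde f^\mu\|_\nu^2$, $\|\nabla_v\{\mathbf I-\mathbf P\}\widetilde f^\mu\|_\nu^2$ and $\|\widetilde b^\mu-\widetilde u^\mu\|_{H^1}^2$ integrate directly to at most $\widetilde{\mathcal X}^\mu(t)$. Here $\Gamma_{ij}(\mathcal L\{\mathbf I-\mathbf P\}\widetilde f)$ is bounded by the $\nu$-norm of $\{\mathbf I-\mathbf P\}\widetilde f$, and $\Gamma_{ij}(v\cdot\nabla\{\mathbf I-\mathbf P\}\widetilde f)$ by the $\nu$-norm of $\nabla\{\mathbf I-\mathbf P\}\widetilde f$.

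The main obstacle is the nonlinear differences in $\widetilde G$, $\widetilde H$, $\widetilde{\mathfrak r}$ and $\widetilde{\mathfrak s}$, especially the pieces where the tilde falls on $\varrho$ or $u$ and not on $f$, for example $\widetilde\varrho^\mu(u^\mu-b^\mu)$ and $\widetilde u^\mu a^\mu$. For these I would take the tilde factor in $L^\infty_t(H^1)$ or in $L^2_t$ via $\|\widetilde\varrho^\mu\|_{L^6}\lesssim\|\nabla\widetilde\varrho^\mu\|$ and \eqref{G4.5}. The background factor goes in $L^2_t$ or $L^\infty_t$, bounded by $\eps_0^{1/2}+\eps_1^{1/2}$ through \eqref{a-2} and \eqref{b3}, exactly as in the estimates of $\widetilde J_{13}$ and $\widetilde J_{14}$. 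Each such product is then $\lesssim(\eps_0^{1/2}+\eps_1^{1/2})\widetilde{\mathcal X}^\mu(t)$.

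The only possible $\mu$-dependence is indirect, through quantities like $\|\nabla u^\mu\|_{L^2_t}$, and these are uniformly bounded. I would add the $C\mu^2$ allowance by Young's inequality wherever a term such as $\mu\|\nabla u^\mu\|_{L^2_tH^2}\|\nabla\widetilde u^\mu\|_{L^2_t}$ appears, for instance if $\partial_t\widetilde u$ is used in the $\mathfrak r$-terms. Collecting all bounds yields \eqref{G4.25}.
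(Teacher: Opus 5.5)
Your proposal is correct and follows essentially the same route as the paper: derive the difference moment system \eqref{G4.26}, test the momentum-type equation with $\nabla\widetilde a^\mu$ and the symmetric-gradient equation with $\partial_i\widetilde b^\mu_j+\partial_j\widetilde b^\mu_i$, form the interactive functional $\widetilde{\mathcal E}^\mu_0$, bound its boundary values by the $H^1$ energy, and control the nonlinear differences as a tilde factor times a small background factor. The only cosmetic difference is that you put a small weight on the $\widetilde a^\mu$-estimate, whereas the paper adds the two estimates directly using the $\frac14$ versus $\frac34$ bookkeeping; in both versions the $C\mu^2$ term is just slack, since no $\mu$-dependent term actually enters the moment equations.
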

\begin{proof}
It follows from the systems \eqref{G4.1} and \eqref{G4.2} that we can derive the expressions for $\widetilde{a}^{\mu}$ and $\widetilde{b}^{\mu}$:
\begin{equation}\label{G4.26}
\left\{\begin{aligned}
&\partial_{t}\widetilde{a}^{\mu}+{\rm div} \widetilde{b}^{\mu}=0,\\
&\partial_{t} \widetilde{b}^{\mu}_i+\partial_{i} \widetilde{a}^{\mu}+\sum_{j=1}^3\partial_j\Gamma_{ij} (\{\mathbf{I}-\mathbf{P}\}\widetilde{f}^{\mu})\\
&\quad=\widetilde{\varrho}^{\mu}(u_i^{\mu}-b_i^{\mu})+(1+\varrho)(\widetilde{u}^{\mu}_i-\widetilde{b}_i^{\mu})-\widetilde{\varrho}^{\mu}u_i^{\mu} a^{\mu}+(1+\varrho)(\widetilde{u}^{\mu}_ia^{\mu}+u_i\widetilde{a}^{\mu}),  \\
&\partial_{i}\widetilde{b}^{\mu}_j+\partial_j \widetilde{b}^{\mu}_i-\widetilde{\varrho}^{\mu}(u_i^{\mu}b_j^{\mu}+u_j^{\mu}b_i^{\mu})-(1+\varrho)(\widetilde{u}_i^{\mu}b^{\mu}_j+u_i\widetilde{b}_j^{\mu}+\widetilde{u}_j^{\mu}b^{\mu}_i+u_j\widetilde{b}_i^{\mu})\\
& \quad=-\partial_t \Gamma_{ij}(\{\mathbf{I}-\mathbf{P}\}\widetilde{f}^{\mu})+\Gamma_{ij}(\widetilde{\mathfrak{l}}^{\mu}
+\widetilde{\mathfrak{r}}^{\mu}+\widetilde{\mathfrak{s}}^{\mu}),  
 \end{aligned}
 \right.
\end{equation}
for $1\leq i,j\leq 3$. Here, $\widetilde{\mathfrak{l}}^{\mu}$, $\widetilde{\mathfrak{r}}^{\mu}$ and $\widetilde{\mathfrak{s}}^{\mu}$ are given by
\begin{align*} 
\widetilde{\mathfrak{l}}^{\mu}:=\,&\mathcal{L}\{\mathbf{I}-\mathbf{P}\}\widetilde{f}^{\mu}-v\cdot\nabla\{\mathbf{I}-\mathbf{P}\}\widetilde{f}^{\mu}, \nonumber \\
\widetilde{\mathfrak{r}}^{\mu}:=\,&-(\widetilde{u}^{\mu}\cdot\nabla_v\{\mathbf{I}-\mathbf{P}\}f^{\mu}+u\cdot\nabla_v\{\mathbf{I}-\mathbf{P}\}\widetilde{f}^{\mu})+\frac{1}{2}(\widetilde{u}^{\mu}\cdot v\{\mathbf{I}-\mathbf{P}\}f^{\mu}+u\cdot v\{\mathbf{I}-\mathbf{P}\}\widetilde{f}^{\mu}),  \nonumber\\
\widetilde{\mathfrak{s}}^{\mu}:=\,&\frac{\widetilde{\varrho}^{\mu}}{\sqrt{M}}\nabla_v\cdot (\nabla_v (\sqrt{M}\{\mathbf{I}-\mathbf{P}\}f^{\mu} )+v\sqrt{M}\{\mathbf{I}-\mathbf{P}\}f^{\mu}-u^{\mu}\sqrt{M}\{\mathbf{I}-\mathbf{P}\}f^{\mu}             )\nonumber\\
&+\frac{ {\varrho} }{\sqrt{M}}\nabla_v\cdot (\nabla_v (\sqrt{M}\{\mathbf{I}-\mathbf{P}\}\widetilde{f}^{\mu} )+v\sqrt{M}\{\mathbf{I}-\mathbf{P}\}\widetilde{f}^{\mu}           )\nonumber\\
&-\frac{ {\varrho} }{\sqrt{M}}\nabla_v\cdot ( \widetilde{u}^{\mu}\sqrt{M}\{\mathbf{I}-\mathbf{P}\}f^{\mu}  +u  \sqrt{M}\{\mathbf{I}-\mathbf{P}\}\widetilde{f}^{\mu}                  ).
\end{align*}
We first compute the dissipation associated with $\widetilde{a}^{\mu}$. It follows from \eqref{G4.26}$_1$–\eqref{G4.26}$_2$ that  
\begin{align}\label{G4.27}
 \|  \nabla \widetilde{a}^{\mu}\|_{L^2}^2 
=\,&\sum_{i=1}^3\int_{\mathbb{R}^3} \partial_i \widetilde{a}^{\mu} \partial_i \widetilde{a}^{\mu}\mathrm{d}x\nonumber\\
=\,&\sum_{i=1}^3\int_{\mathbb{R}^3} \partial_i \widetilde{a}^{\mu} \bigg(-\partial_{t} \widetilde{b}^{\mu}_i-\sum_{j=1}^3\partial_j\Gamma_{ij}(\{\mathbf{I}-\mathbf{P}\}\widetilde{f}^{\mu}) 
+ \widetilde{\varrho}^{\mu} (u^{\mu}_i-b^{\mu}_i)        \bigg)\mathrm{d}x\nonumber\\
&+\sum_{i=1}^3\int_{\mathbb{R}^3}\partial_i \widetilde{a}^{\mu}\big((1+\varrho)(\widetilde{u}^{\mu}_i-\widetilde{b}_i^{\mu})-\widetilde{\varrho}^{\mu}u_i^{\mu}a^{\mu}+(1+\varrho)(\widetilde{u}^{\mu}_ia^{\mu}+u_i\widetilde{a}^{\mu})   \big){\rm d}x\nonumber\\
=\,&-\frac{{\rm d}}{{\rm d}t}\sum_{i=1}^3\int_{\mathbb{R}^3} \partial_i \widetilde{a}^{\mu}  \widetilde{b}^{\mu}_i\mathrm{d}x+\sum_{i=1}^3\int_{\mathbb{R}^3} \partial_i\partial_t \widetilde{a}^{\mu}
  \widetilde{b}^{\mu}_i\mathrm{d}x\nonumber\\
&+\sum_{i=1}^3\int_{\mathbb{R}^3} \partial_i \widetilde{a}^{\mu} \bigg(-\sum_{j=1}^3\partial_j\Gamma_{ij}(\{\mathbf{I}-\mathbf{P}\}\widetilde{f}^{\mu}) 
+ \widetilde{\varrho}^{\mu} (u^{\mu}_i-b^{\mu}_i)+(1+\varrho)(\widetilde{u}^{\mu}_i-\widetilde{b}_i^{\mu})        \bigg)\mathrm{d}x\nonumber\\
&+\sum_{i=1}^3\int_{\mathbb{R}^3} \partial_i \widetilde{a}^{\mu}\big( -\widetilde{\varrho}^{\mu}u_i^{\mu}a^{\mu}+(1+\varrho)(\widetilde{u}^{\mu}_ia^{\mu}+u_i\widetilde{a}^{\mu})   \big){\rm d}x.    
\end{align}
From \eqref{G4.26}$_1$, we have
\begin{align}\label{G4.28}
\sum_{i=1}^3\int_{\mathbb{R}^3} \partial_i\partial_t \widetilde{a}^{\mu}
  \widetilde{b}^{\mu}_i\mathrm{d}x=-\int_{\mathbb{R}^3} \partial_t \widetilde{a}^{\mu}
 \nabla\cdot \widetilde{b}^{\mu}\mathrm{d}x =\| \nabla\cdot \widetilde{b}^{\mu}\|_{L^2}^2.      
\end{align}
For the remaining terms on the right-hand side of \eqref{G4.27}, we compute
\begin{align}\label{G4.29}
&\sum_{i=1}^3\int_{\mathbb{R}^3} \partial_i \widetilde{a}^{\mu} \bigg(-\sum_{j=1}^3\partial_j\Gamma_{ij}(\{\mathbf{I}-\mathbf{P}\}\widetilde{f}^{\mu}) 
+ \widetilde{\varrho}^{\mu} (u^{\mu}_i-b^{\mu}_i)+(1+\varrho)(\widetilde{u}^{\mu}_i-\widetilde{b}_i^{\mu})        \bigg)\mathrm{d}x\nonumber\\
&+\sum_{i=1}^3\int_{\mathbb{R}^3} \partial_i \widetilde{a}^{\mu}\big( -\widetilde{\varrho}^{\mu}u_i^{\mu}a^{\mu}+(1+\varrho)(\widetilde{u}^{\mu}_ia^{\mu}+u_i\widetilde{a}^{\mu})   \big)  {\rm d}x  \nonumber\\
\leq\,& \frac{1}{4}\|\nabla \widetilde{a}^{\mu}\|_{L^2}^2+C\|\nabla\{\mathbf{I}-\mathbf{P}\}\widetilde{f}^{\mu}\|_{L_{x,v}^2}^2+C(1+\|\varrho\|_{H^3})^2\|\widetilde{u}^{\mu}-\widetilde{b}^{\mu}\|_{L^2}^2\nonumber\\
&+C\| \widetilde{\varrho}^{\mu}\|_{H^1}^2  \| \nabla (u ^{\mu}, a^{\mu}, b^{\mu})\|_{H^{2}}^2 +C(1+\|\varrho\|_{H^3})^2\big(\|\widetilde{u}^{\mu}\|_{H^1}^2\|\nabla a^{\mu}\|_{H^2}^2+\|u\|_{H^3}^2\|\widetilde{a}^{\mu}\|_{L^6}^2\big)\nonumber\\
\leq\,&\Big(\frac{1}{4}+C(\eps_0+\eps_1)\Big)\|\nabla \widetilde{a}^{\mu}\|_{L^2}^2+C\|\widetilde{b}^{\mu}-\widetilde{u}^{\mu}\|_{L^2}^2+C\| \{\mathbf{I}-\mathbf{P}\}\widetilde{f}^{\mu} \|_{L_v^2(H^1)}^2\nonumber\\
&+C\|\widetilde{u}^{\mu}\|_{H^1}^2\|\nabla a^{\mu}\|_{L^2}^2
+C\|\widetilde{\varrho}^{\mu}\|_{H^1}^2  \|\nabla (u ^{\mu}, a^{\mu}, b^{\mu})\|_{H^{2}}^2.
\end{align}
Putting the estimates \eqref{G4.28}--\eqref{G4.29} into \eqref{G4.27}, we arrive at
\begin{align}\label{G4.30}
&-\frac{{\rm d}}{{\rm d}t} \int_{\mathbb{R}^3}\widetilde{a}^{\mu} \nabla\cdot \widetilde{b}^{\mu}\mathrm{d}x+\Big(\frac{3}{4}-C\eps_0\Big)\|\nabla \widetilde{a}^{\mu}\|_{L^2}^2\nonumber\\
\,&\quad \leq\|\nabla\cdot \widetilde{b}^{\mu}\|_{L^2}^2+C\|\widetilde{u}^{\mu}-\widetilde{b}^{\mu}\|_{L^2}^2+C\| \{\mathbf{I}-\mathbf{P}\}\widetilde{f}^{\mu} \|_{L_v^2(H^1)}^2+ C\|\widetilde{u}^{\mu}\|_{H^1}^2\|\nabla a^{\mu}\|_{L^2}^2\nonumber\\
&\qquad +C\| \widetilde{\varrho}^{\mu}\|_{H^1}^2  \| \nabla (u ^{\mu}, a^{\mu}, b^{\mu})\|_{H^{2}}^2.
\end{align}

Next, we shall estimate the dissipation of $\widetilde{b}^\mu$. It is obvious that  
\begin{align} \label{G4.31}
\sum_{i,j=1}^3\| \partial_i\widetilde{b}^{\mu}_j+\partial_j\widetilde{b}^{\mu}_i \|_{L^2}^2=2\|\nabla  \widetilde{b}^{\mu}\|_{L^2}^2+2\|\nabla\cdot  \widetilde{b}^{\mu}\|_{L^2}^2.  
\end{align}
From \eqref{G4.26}$_3$, we arrive at
\begin{align} \label{G4.32}
&\sum_{i,j=1}^3\| \partial_i\widetilde{b}^{\mu}_j+\partial_j\widetilde{b}^{\mu}_i \|_{L^2}^2\nonumber\\
=\,&\sum_{i,j=1}^3\int_{\mathbb{R}^3} (\partial_i\widetilde{b}^{\mu}_j+\partial_j\widetilde{b}^{\mu}_i)
 \big((1+\varrho)(\widetilde{u}_i^{\mu}b^{\mu}_j+u_i\widetilde{b}_j^{\mu}+\widetilde{u}_j^{\mu}b^{\mu}_i+u_j\widetilde{b}_i^{\mu})+\widetilde{\varrho}^{\mu}(u_i^{\mu}b_j^{\mu}+u_j^{\mu}b_i^{\mu}) \big)\mathrm{d}x        \nonumber\\
&+\sum_{i,j=1}^3\int_{\mathbb{R}^3} (\partial_i\widetilde{b}^{\mu}_j+\partial_j\widetilde{b}^{\mu}_i)
 \big( -\partial_t \Gamma_{ij}(\{\mathbf{I}-\mathbf{P}\}\widetilde{f}^{\mu})+\Gamma_{ij}(\widetilde{\mathfrak{l}}^{\mu}+\widetilde{\mathfrak{r}}^{\mu}+\widetilde{\mathfrak{s}}^{\mu})\big)\mathrm{d}x   \nonumber\\
=\,&-\frac{{\rm d}}{{\rm d}t}\sum_{i,j=1}^3\int_{\mathbb{R}^3} (\partial_i\widetilde{b}^{\mu}_j+\partial_j\widetilde{b}^{\mu}_i) \Gamma_{ij}(\{\mathbf{I}-\mathbf{P}\}\widetilde{f}^{\mu})\mathrm{d}x\nonumber\\
&+\sum_{i,j=1}^3\int_{\mathbb{R}^3} (\partial_i\partial_t \widetilde{b}^{\mu}_j+\partial_j\partial_t \widetilde{b}^{\mu}_i)\partial^\alpha\Gamma_{ij}(\{\mathbf{I}-\mathbf{P}\}\widetilde{f}^{\mu})\mathrm{d}x\nonumber\\
&+\sum_{i,j=1}^3\int_{\mathbb{R}^3} (\partial_i\widetilde{b}^{\mu}_j+\partial_j\widetilde{b}^{\mu}_i) 
\big((1+\varrho)(\widetilde{u}_i^{\mu}b^{\mu}_j+u_i\widetilde{b}_j^{\mu}+\widetilde{u}_j^{\mu}b^{\mu}_i+u_j\widetilde{b}_i^{\mu})+\widetilde{\varrho}^{\mu}(u_i^{\mu}b_j^{\mu}+u_j^{\mu}b_i^{\mu}) \big)\mathrm{d}x\nonumber\\
&+\sum_{i,j=1}^3\int_{\mathbb{R}^3} (\partial_i\widetilde{b}^{\mu}_j+\partial_j\widetilde{b}^{\mu}_i)  \Gamma_{ij}(\widetilde{\mathfrak{l}}^{\mu}+\widetilde{\mathfrak{r}}^{\mu}+\widetilde{\mathfrak{s}}^{\mu})  \mathrm{d}x.
\end{align}
By using \eqref{G3.1}, Lemmas \ref{L2.1}--\ref{L2.2}, we obtain
\begin{align}\label{G4.33}
&\sum_{i,j=1}^3\int_{\mathbb{R}^3} (\partial_i\partial_t \widetilde{b}^{\mu}_j+\partial_j\partial_t \widetilde{b}^{\mu}_i)\partial^\alpha\Gamma_{ij}(\{\mathbf{I}-\mathbf{P}\}\widetilde{f}^{\mu})\mathrm{d}x\nonumber\\
=\,& -2\sum_{i,j=1}^3\int_{\mathbb{R}^3} \partial_t \widetilde{b}^{\mu}_i
 \partial_j\Gamma_{ij}(\{\mathbf{I}-\mathbf{P}\}\widetilde{f}^{\mu})\mathrm{d}x\nonumber\\
=\,& 2\sum_{i,j=1}^3\int_{\mathbb{R}^3} \bigg(\partial_i \widetilde{a}^{\mu}+\sum_{m=1}^3\partial_m\Gamma_{im}(\{\mathbf{I}-\mathbf{P}\}\widetilde{f}^{\mu})       \bigg) \partial_j\Gamma_{ij}(\{\mathbf{I}-\mathbf{P}\}\widetilde{f}^{\mu})\mathrm{d}x\nonumber\\
&-2\sum_{i,j=1}^3\int_{\mathbb{R}^3} \big((1+\varrho)(\widetilde{u}^{\mu}_i-\widetilde{b}^{\mu}_i)+\widetilde{\varrho}(u_i^{\mu}-b_i^{\mu})  \big) \partial_j\Gamma_{ij}(\{\mathbf{I}-\mathbf{P}\}\widetilde{f}^{\mu})\mathrm{d}x\nonumber\\
&-2 \sum_{i,j=1}^3\int_{\mathbb{R}^3}    \big(-\widetilde{\varrho}^{\mu}u_i^{\mu} a^{\mu}+(1+\varrho)(\widetilde{u}^{\mu}_ia^{\mu}+u_i\widetilde{a}^{\mu})\big)\partial_j\Gamma_{ij}(\{\mathbf{I}-\mathbf{P}\}\widetilde{f}^{\mu})\mathrm{d}x\nonumber\\
\leq\,& \frac{1}{4}\|\nabla \widetilde{a}^{\mu}\|_{L^2}^2+C\|\nabla \{\mathbf{I}-\mathbf{P}\}f \|_{L_{x,v}^2}^2+C(1+\|\varrho\|_{H^{3}})^2\|\widetilde{u}-\widetilde{b}^{\mu}\|_{L^2}^2+C\| \widetilde{\varrho}^{\mu}\|_{L^2}^2\| (u_i^{\mu}-b_i^{\mu})\|_{L^{\infty}}^2\nonumber\\
&+C\| \widetilde{\varrho}^{\mu}\|_{L^6}^2\| u_i^{\mu} \|_{L^{6}}^2\|a^{\mu}\|_{L^6}^2 +C(1+\|\varrho\|_{H^3})^2(\|\widetilde{u}^{\mu}\|_{L^2}^2\|  a^{\mu}\|_{L^{\infty}}^2+\|\widetilde{a}^{\mu}\|_{L^6}^2\|  u\|_{L^3}^2)\nonumber\\
\leq\,&\Big(\frac{1}{4}+C(\eps_0+\eps_1)\Big)\|\nabla \widetilde{a}^{\mu}\|_{L^2}^2
+C\|\nabla \{\mathbf{I}-\mathbf{P}\}\widetilde{f}^{\mu} \|_{L_{x,v}^2 }^2
+C\|\widetilde{u}^{\mu}-\widetilde{b}^{\mu}\|_{L^2}^2
\nonumber\\
&+C\|\nabla a^{\mu}\|_{L^2}^2\|\widetilde{u}^{\mu}\|_{L^2}^2
+C(1+ \eps_0+\eps_1 )\| \widetilde{\varrho}^{\mu}\|_{H^1}^2\| \nabla(u ^{\mu},b ^{\mu})\|_{H^{1}}^2.
\end{align}
It is easy to verify that
\begin{align}\label{G4.34}
&\sum_{i,j=1}^3\int_{\mathbb{R}^3} (\partial_i\widetilde{b}^{\mu}_j+\partial_j\widetilde{b}^{\mu}_i) 
\big((1+\varrho)(\widetilde{u}_i^{\mu}b^{\mu}_j+u_i\widetilde{b}_j^{\mu}+\widetilde{u}_j^{\mu}b^{\mu}_i+u_j\widetilde{b}_i^{\mu})+\widetilde{\varrho}^{\mu}(u_i^{\mu}b_j^{\mu}+u_j^{\mu}b_i^{\mu}) \big)\mathrm{d}x\nonumber\\
&+\sum_{i,j=1}^3\int_{\mathbb{R}^3} (\partial_i\widetilde{b}^{\mu}_j+\partial_j\widetilde{b}^{\mu}_i)  \Gamma_{ij}(\widetilde{\mathfrak{l}}^{\mu}+\widetilde{\mathfrak{r}}^{\mu}+\widetilde{\mathfrak{s}}^{\mu})  \mathrm{d}x\nonumber\\
\leq\,&\frac{1}{2}\sum_{i,j=1}^3\|\partial_i\widetilde{b}^{\mu}_j+\partial_j\widetilde{b}^{\mu}_i\|_{L^2}^2+C\sum_{i,j=1}^3
 \|  (1+\varrho)(\widetilde{u}_i^{\mu}b^{\mu}_j+u_i\widetilde{b}_j^{\mu}+\widetilde{u}_j^{\mu}b^{\mu}_i+u_j\widetilde{b}_i^{\mu})  \|_{L^2}^2\nonumber\\
&+C\sum_{i,j=1}^3\big(\|\partial^\alpha\Gamma_{ij}(\widetilde{\mathfrak{l}}^{\mu})\|_{L ^2}^2
+\|\partial^\alpha\Gamma_{ij}(\widetilde{\mathfrak{r}}^{\mu})\|_{L ^2}^2+\|\partial^\alpha\Gamma_{ij}(\widetilde{\mathfrak{s}}^{\mu})\|_{L^2}^2                  \big)\nonumber\\
&+C\sum_{i,j=1}^3\|\widetilde{\varrho}^{\mu}(u_i^{\mu}b_j^{\mu}+u_j^{\mu}b_i^{\mu})\|_{L^2}^2.
\end{align}
Notice that the function $\Gamma_{ij}(\cdot)$ can incorporate any velocity derivative and velocity weight.
 Through direct calculations, we deduce  
\begin{align}\label{G4.35}
&\sum_{i,j=1}^3
 \|  (1+\varrho)(\widetilde{u}_i^{\mu}b^{\mu}_j+u_i\widetilde{b}_j^{\mu}+\widetilde{u}_j^{\mu}b^{\mu}_i+u_j\widetilde{b}_i^{\mu})  \|_{L^2}^2+\sum_{i,j=1}^3\|\widetilde{\varrho}^{\mu}(u_i^{\mu}b_j^{\mu}+u_j^{\mu}b_i^{\mu})\|_{L^2}^2\nonumber\\
&+\sum_{i,j=1}^3\big(\|\partial^\alpha\Gamma_{ij}(\widetilde{\mathfrak{l}}^{\mu})\|_{L ^2}^2
+\|\partial^\alpha\Gamma_{ij}(\widetilde{\mathfrak{r}}^{\mu})\|_{L^2}^2+\|\partial^\alpha\Gamma_{ij}(\widetilde{\mathfrak{s}}^{\mu})\|_{L^2}^2                  \big) \nonumber\\
\lesssim\,& (1+\|\varrho\|_{H^3})^2(\|\widetilde{u}^{\mu}\|_{L^2}^2\|\nabla b^{\mu}\|_{H^1}^2+\|\widetilde{b}^{\mu}\|_{L^2}^2\|\nabla u^{\mu}\|_{H^1}^2 )+\|\widetilde{\varrho}^{\mu}\|_{L^6}^2\|u ^{\mu} \|_{L^{6}}^2\|b^{\mu}\|_{L^6}^2\nonumber\\
&+(1+\|u\|_{H^3}^2)\|\{\mathbf{I}-\mathbf{P}\}\widetilde{f}^{\mu}\|_{L_v^2(H^1)}^2+\|\widetilde{u}^{\mu}\|_{L^6}^2\|\{\mathbf{I}-\mathbf{P}\} {f}^{\mu}\|_{L_v^2(H^3)}^2\nonumber\\
& +(1+\|u^{\mu}\|_{H^3}^2)\|\widetilde{\varrho}^{\mu}\|_{L^6}^2\|\{\mathbf{I}-\mathbf{P}\} {f}^{\mu}\|_{L_v^2(H^3)}^2+(1+\|u\|_{H^3}^2)\|\varrho\|_{L^{\infty}}^2\|\{\mathbf{I}-\mathbf{P}\}\widetilde{f}^{\mu}\|_{L_v^2(H^1)}^2\nonumber\\
&+\|\varrho\|_{L^{\infty}}^2\|\{\mathbf{I}-\mathbf{P}\} {f}^{\mu}\|_{L_v^2(H^3)}^2\|\widetilde{u}^{\mu}\|_{H^1}^2\nonumber\\
\lesssim\,&\|(\widetilde{u}^{\mu},\widetilde{b}^{\mu})\|_{H^1}^2\big(\|\nabla( {u}^{\mu}, {b}^{\mu})\|_{H^1}^2+\|\{{\mathbf{I}-\mathbf{P}\} {f}^{\mu}\|_{L_v^2(H^3)}^2}\big)+\|\{\mathbf{I}-\mathbf{P}\}\widetilde{f}^{\mu}\|_{L_v^2(H^1)}^2\nonumber\\
&+\|\widetilde{\varrho}^{\mu}\|_{H^1}^2 \big(\|\nabla (u ^{\mu} ,b^{\mu})\|_{H^{2}}^2+\|\{{\mathbf{I}-\mathbf{P}\} {f}^{\mu}\|_{L_v^2(H^3)}^2} \big). 
\end{align}
Substituting \eqref{G4.35} into \eqref{G4.34}, we obtain
\begin{align}\label{G4.36}
&\sum_{i,j=1}^3\int_{\mathbb{R}^3} (\partial_i\widetilde{b}^{\mu}_j+\partial_j\widetilde{b}^{\mu}_i) 
\big((1+\varrho)(\widetilde{u}_i^{\mu}b^{\mu}_j+u_i\widetilde{b}_j^{\mu}+\widetilde{u}_j^{\mu}b^{\mu}_i+u_j\widetilde{b}_i^{\mu})+\widetilde{\varrho}^{\mu}(u_i^{\mu}b_j^{\mu}+u_j^{\mu}b_i^{\mu}) \big)\mathrm{d}x\nonumber\\
&+\sum_{i,j=1}^3\int_{\mathbb{R}^3} (\partial_i\widetilde{b}^{\mu}_j+\partial_j\widetilde{b}^{\mu}_i)  \Gamma_{ij}(\widetilde{\mathfrak{l}}^{\mu}+\widetilde{\mathfrak{r}}^{\mu}+\widetilde{\mathfrak{s}}^{\mu})  \mathrm{d}x\nonumber\\
\,&\quad \leq \frac{1}{2}\sum_{i,j=1}^3\|\partial_i\widetilde{b}^{\mu}_j+\partial_j\widetilde{b}^{\mu}_i\|_{L^2}^2+C\|(\widetilde{u}^{\mu},\widetilde{b}^{\mu})\|_{H^1}^2\big(\|\nabla( {u}^{\mu}, {b}^{\mu})\|_{H^1}^2+\|\{{\mathbf{I}-\mathbf{P}\} {f}^{\mu}\|_{L_v^2(H^3)}^2}\big)\nonumber\\
&\qquad +C\|\widetilde{\varrho}^{\mu}\|_{H^1}^2 \big(\|\nabla (u ^{\mu} ,b^{\mu})\|_{H^{2}}^2+\|\{{\mathbf{I}-\mathbf{P}\} {f}^{\mu}\|_{L_v^2(H^3)}^2} \big)+C\|\{\mathbf{I}-\mathbf{P}\}\widetilde{f}^{\mu}\|_{L_v^2(H^1)}^2.
\end{align}

By inserting \eqref{G4.31}, \eqref{G4.33}, and \eqref{G4.36} into \eqref{G4.32}, it can be shown 
\begin{align}\label{G4.37}
&\frac{{\rm d}}{{\rm d}t} \sum_{i,j=1}^3\int_{\mathbb{R}^3}(\partial_i\widetilde{b}^{\mu}_j+\partial_j\widetilde{b}^{\mu}_i) \Gamma_{ij}(\{\mathbf{I}-\mathbf{P}\}\widetilde{f}^{\mu})\mathrm{d}x+\|\nabla \widetilde{b}^{\mu}\|_{L^2}^2+\|\nabla\cdot \widetilde{b}^{\mu}\|_{L^2}^2\nonumber\\
\,&\quad \leq \Big(\frac{1}{4}+C(\eps_0+\eps_1)\Big)\|\nabla \widetilde{a}^{\mu}\|_{L^2}^2+C\|(\widetilde{u}^{\mu},\widetilde{b}^{\mu})\|_{H^1}^2\big(\|\nabla( {u}^{\mu}, a^{\mu},{b}^{\mu})\|_{H^2}^2+\|\{{\mathbf{I}-\mathbf{P}\} {f}^{\mu}\|_{L_v^2(H^3)}^2}\big)\nonumber\\
&\qquad +C\|\widetilde{u}^{\mu}-\widetilde{b}^{\mu}\|_{L^2}^2+C\| \{\mathbf{I}-\mathbf{P}\}\widetilde{f}^{\mu} \|_{L_v^2(H^1)}^2 \nonumber\\
&\qquad +C\| \widetilde{\varrho}^{\mu}\|_{H^1}^2 \big(\| \nabla (u ^{\mu} ,b^{\mu})\|_{H^{2}}^2+\| \{{\mathbf{I}-\mathbf{P}\} {f}^{\mu}\|_{L_v^2(H^3)}^2} \big).   
\end{align}

Let's introduce a new temporal functional $\widetilde{\mathcal{E}}_0^\mu(t)$ as follows:
\begin{align}\label{G4.38}
\widetilde{\mathcal{E}}^{\mu}_0(t):= \sum_{i,j=1}^3\int_{\mathbb{R}^3} (\partial_i\widetilde{b}^{\mu}_j+\partial_j\widetilde{b}^{\mu}_i) \Gamma_{ij}(\{\mathbf{I}-\mathbf{P}\}\widetilde{f}^{\mu})\mathrm{d}x-  \widetilde{a}^{\mu} \nabla\cdot \widetilde{b}^{\mu}\mathrm{d}x. 
\end{align}

Combining \eqref{G4.30} and  \eqref{G4.37}, and using the definition of $\widetilde{\mathcal{E}}_0^\mu(t)$ leads to  
\begin{align*}
& \frac{{\rm d}}{{\rm d}t}\widetilde{\mathcal{E}}^{\mu}_0(t)+\|\nabla \widetilde{b}^{\mu}\|_{L^2}^2+\Big(\frac{1}{2}-C(\eps_0+\eps_1)  \Big)\|\nabla \widetilde{a}^{\mu}\|_{L^2}^2\nonumber\\
\,&\quad \leq C\|(\widetilde{u}^{\mu},\widetilde{b}^{\mu})\|_{H^1}^2\big(\|\nabla( {u}^{\mu}, a^{\mu},{b}^{\mu})\|_{H^2}^2+\|\{{\mathbf{I}-\mathbf{P}\} {f}^{\mu}\|_{L_v^2(H^3)}^2}\big)\nonumber\\
&\qquad +C\|\widetilde{\varrho}^{\mu}\|_{H^1}^2 \big(\| \nabla (u ^{\mu} ,b^{\mu})\|_{H^{2}}^2+\| \{{\mathbf{I}-\mathbf{P}\} {f}^{\mu}\|_{L_v^2(H^3)}^2} \big) \nonumber\\
&\qquad +C\| \{\mathbf{I}-\mathbf{P}\}\widetilde{f}^{\mu} \|_{L_v^2(H^1)}^2+C\|\widetilde{u}^{\mu}-\widetilde{b}^{\mu}\|_{L^2}^2,
\end{align*}
which gives rises to
\begin{align}\label{G4.39}
&\widetilde{\mathcal{E}}^{\mu}_0(t) +\int_0^t \|\nabla(\widetilde{a}^{\mu},\widetilde{b}^{\mu})(\tau)\|_{L^2}^2{\rm d}\tau\nonumber\\
\leq\,&
C\|(\widetilde{u}^{\mu},\widetilde{b}^{\mu})\|_{L_t^{\infty}(H^1)}^2\big(\|\nabla( {u}^{\mu}, a^{\mu},{b}^{\mu})\|_{L_t^2(H^2)}^2+\|\{{\mathbf{I}-\mathbf{P}\} {f}^{\mu}\|_{L_t^2(L_v^2(H^3))}^2}\big)+\widetilde{\mathcal{E}}^{\mu}_0(0)\nonumber\\
&+C\| \widetilde{\varrho}^{\mu}\|_{L_t^{\infty}(H^1)}^2 \big(\| \nabla (u ^{\mu} ,b^{\mu})\|_{L_t^2(H^{2})}^2+\| \{{\mathbf{I}-\mathbf{P}\} {f}^{\mu}\|_{L_t^2(L_v^2(H^3))}^2} \big)\nonumber\\
&+C\| \{\mathbf{I}-\mathbf{P}\}\widetilde{f}^{\mu} \|_{L_t^2(L_v^2(H^1))}^2 
+C\|\widetilde{u}^{\mu}-\widetilde{b}^{\mu}\|_{L_t^2(L^2)}^2\nonumber\\
\leq\,& C(1+\eps_0+\eps_1) \widetilde{\mathcal{X}}^\mu(t)+C(\eps_0+\eps_1) \mu^2+\widetilde{\mathcal{E}}^{\mu}_0(0).
\end{align}

By Young's inequality, we have
\begin{align}\label{G4.40}
|\widetilde{\mathcal{E}}_0^\mu(t) |\lesssim \|(\widetilde{\varrho}^\mu,\widetilde{u}^\mu)\|_{H^1}^2+\|\widetilde{f}^\mu\|_{L_v^2(H^1)}^2.   
\end{align}

From the estimates \eqref{G4.7}, \eqref{G4.15}, \eqref{G4.39}, and \eqref{G4.40},  the inequality \eqref{G4.25} follows directly. 
% Therefore, the proof of Lemma \ref{L4.3} is completed.
\end{proof}

\begin{lem}\label{L4.4}
It holds that
\begin{align}\label{G4.41}
\int_0^t \|\nabla\widetilde{\varrho}^\mu \|_{L^2}^2{\rm d}\tau\leq C\mu^2+C \widetilde{\mathcal{X}}^\mu(t)+\widetilde{\mathcal{X}}^\mu(0),
\end{align}
where $C>0$ is a constant independent of $\mu$ and time $t$.
\end{lem}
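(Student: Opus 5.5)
We mimic the proof of Lemma \ref{L3.4}, now for the difference equation \eqref{G4.3}$_2$ at the $L^2$ level (no derivatives applied). We take the $L^2$ inner product of \eqref{G4.3}$_2$ with $\nabla\widetilde{\varrho}^\mu$. This gives
\begin{align*}
P'(1)\|\nabla\widetilde{\varrho}^\mu\|_{L^2}^2
=\,&-\int_{\mathbb{R}^3}\partial_t\widetilde{u}^\mu\cdot\nabla\widetilde{\varrho}^\mu\,{\rm d}x
+\int_{\mathbb{R}^3}(\widetilde{b}^\mu-\widetilde{u}^\mu)\cdot\nabla\widetilde{\varrho}^\mu\,{\rm d}x
-\int_{\mathbb{R}^3}(a\widetilde{u}^\mu+\widetilde{a}^\mu u^\mu)\cdot\nabla\widetilde{\varrho}^\mu\,{\rm d}x\\
&+\mu\int_{\mathbb{R}^3}\frac{\Delta u^\mu}{1+\varrho^\mu}\cdot\nabla\widetilde{\varrho}^\mu\,{\rm d}x
+\int_{\mathbb{R}^3}(\widetilde{F}_2+\widetilde{F}_3)\cdot\nabla\widetilde{\varrho}^\mu\,{\rm d}x.
\end{align*}

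For the time-derivative term, we write $-\frac{{\rm d}}{{\rm d}t}\int\widetilde{u}^\mu\cdot\nabla\widetilde{\varrho}^\mu\,{\rm d}x-\int{\rm div}\,\widetilde{u}^\mu\,\partial_t\widetilde{\varrho}^\mu\,{\rm d}x$. We then substitute $\partial_t\widetilde{\varrho}^\mu=-{\rm div}\,\widetilde{u}^\mu+\widetilde{F}_1$ from \eqref{G4.3}$_1$. The result is bounded by $\|\nabla\widetilde{u}^\mu\|_{L^2}^2+\|\widetilde F_1\|_{L^2}^2$. Here $\|\widetilde F_1\|_{L^2}$ is bounded, via Lemmas \ref{L2.1}--\ref{L2.2}, by $(\eps_0^{1/2}+\eps_1^{1/2})\|\nabla(\widetilde\varrho^\mu,\widetilde u^\mu)\|_{L^2}$.

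The remaining terms are handled with Young's inequality, absorbing $\frac{1}{8}P'(1)\|\nabla\widetilde\varrho^\mu\|_{L^2}^2$ each time:
\begin{itemize}
\item The relaxation term contributes $C\|\widetilde b^\mu-\widetilde u^\mu\|_{L^2}^2$.
\item The $a\widetilde u^\mu+\widetilde a^\mu u^\mu$ term contributes $C(\eps_0+\eps_1)\|\nabla(\widetilde u^\mu,\widetilde a^\mu)\|_{L^2}^2$, using $L^3$--$L^6$ Hölder and the uniform bounds \eqref{a-2}, \eqref{b3}.
\item $\widetilde F_2,\widetilde F_3$ are handled the same way. The factor $\frac{P'(1+\varrho^\mu)}{1+\varrho^\mu}-P'(1)$ is $O(\|\varrho^\mu\|_{L^\infty})$, which makes that piece small times $\|\nabla\widetilde\varrho^\mu\|_{L^2}^2$. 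The difference quotient term is bounded by $\|\widetilde\varrho^\mu\|_{L^6}\|\nabla\varrho\|_{L^3}$.
\end{itemize}

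The viscous term is the one producing the $\mu^2$ source. We simply estimate it as $\mu\|\Delta u^\mu\|_{L^2}\|\nabla\widetilde\varrho^\mu\|_{L^2}\le \frac18P'(1)\|\nabla\widetilde\varrho^\mu\|_{L^2}^2+C\mu^2\|\nabla^2u^\mu\|_{L^2}^2$. After time integration, we bound $\int_0^t\|\nabla^2u^\mu\|_{L^2}^2\,{\rm d}\tau$ by the uniform dissipation of Theorem \ref{T1.1}. There, $\|\nabla u^\mu\|_{H^2}^2\lesssim \mathcal D(\varrho^\mu,u^\mu,f^\mu)$ gives $\int_0^\infty\|\nabla u^\mu\|_{H^2}^2\,{\rm d}\tau\le C\eps_0$ independently of $\mu$. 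This yields $C\mu^2$, and this is exactly where the uniform-in-$\mu$ bound is essential.

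Integrating in time then gives
\[
\int_0^t\|\nabla\widetilde\varrho^\mu\|_{L^2}^2\,{\rm d}\tau\lesssim \Big|\int\widetilde u^\mu\cdot\nabla\widetilde\varrho^\mu\,{\rm d}x\Big|(t)+\Big|\int\widetilde u^\mu\cdot\nabla\widetilde\varrho^\mu\,{\rm d}x\Big|(0)+\int_0^t\big(\|\nabla\widetilde u^\mu\|_{L^2}^2+\|\widetilde b^\mu-\widetilde u^\mu\|_{L^2}^2+\|\nabla\widetilde a^\mu\|_{L^2}^2\big)\,{\rm d}\tau+\mu^2.
\]
The boundary terms are bounded by $\sup_\tau\|(\widetilde\varrho^\mu,\widetilde u^\mu)\|_{H^1}^2\le\widetilde{\mathcal X}^\mu(t)$ and by $\widetilde{\mathcal X}^\mu(0)$. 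The term $\int_0^t\|\nabla\widetilde u^\mu\|_{L^2}^2$ is controlled through \eqref{G4.5}. The remaining integrals are directly parts of $\widetilde{\mathcal X}^\mu(t)$, which gives \eqref{G4.41}.

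The main obstacle is the time-derivative term: it must be rewritten via the continuity equation without losing a derivative on $\widetilde\varrho^\mu$, which works because $\widetilde F_1$ contains only first derivatives of the differences. A second point of care is that the viscous contribution must be charged to $\nabla^2u^\mu$ and not to $\widetilde u^\mu$. Otherwise the rate would degrade to $O(\mu^{1/2})$.
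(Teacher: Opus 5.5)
Your proposal is correct and follows essentially the same route as the paper: you pair \eqref{G4.3}$_2$ with $\nabla\widetilde{\varrho}^\mu$, trade the $\partial_t\widetilde{u}^\mu$ term for a total time derivative via \eqref{G4.3}$_1$, absorb the small multiples of $\|\nabla\widetilde{\varrho}^\mu\|_{L^2}^2$, and charge the viscous term to $\mu^2\|\nabla^2u^\mu\|_{L^2}^2$, whose time integral is bounded uniformly in $\mu$ by Theorem \ref{T1.1}. The only difference is in the last step. The paper feeds Lemmas \ref{L4.1}--\ref{L4.3} into the right-hand side, whereas you simply observe that those integrals are components of $\widetilde{\mathcal{X}}^\mu(t)$. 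That suffices for \eqref{G4.41} as stated, but what is actually needed when closing Theorem \ref{T1.3} is your intermediate inequality, whose right-hand side does not contain $\int_0^t\|\nabla\widetilde{\varrho}^\mu\|_{L^2}^2\,{\rm d}\tau$ itself.
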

\begin{proof} 
From   \eqref{G4.3}$_2$, it holds that
\begin{align}\label{G4.42}
P^\prime(1) \|\nabla   \widetilde{\varrho}^\mu \|_{L^2}^2=&\,-\int_{\mathbb{R}^3} \nabla    \widetilde{\varrho}^\mu  \cdot \partial_t \widetilde{u}^\mu \mathrm{d} x+\int_{\mathbb{R}^3} \nabla    \widetilde{\varrho}^\mu\cdot  \frac{\mu\Delta u^\mu}{1+\varrho^\mu}     \mathrm{d} x\nonumber+\int_{\mathbb{R}^3}\nabla\widetilde{\varrho}^\mu\cdot(\widetilde{b}^\mu-\widetilde{u}^\mu)   \mathrm{d} x \\
& -\int_{\mathbb{R}^3} \nabla   \widetilde{\varrho}^\mu\cdot   ( \widetilde{a}^\mu u^\mu+a \widetilde{u}^\mu+\widetilde{u}^\mu\cdot\nabla u+u^\mu\cdot\nabla\widetilde{u}^\mu )\ \mathrm{d} x + \int_{\mathbb{R}^3}  \nabla   \widetilde{\varrho}^\mu\cdot \widetilde{F}_2 \mathrm{d} x\nonumber\\  
\equiv:&\, \sum_{j=1}^5\widetilde{K}_j.
\end{align}
Using \eqref{G4.3}$_1$, integration by parts, Lemma \ref{L2.1}, and Young's inequality yields
\begin{align}\label{G4.43}
\widetilde{K}_1=&\,-\frac{\mathrm{d}}{\mathrm{d} t} \int_{\mathbb{R}^3} \nabla    \widetilde{\varrho}^\mu \cdot \widetilde{u}^\mu \mathrm{d} x+\int_{\mathbb{R}^3}  {\rm div}   \widetilde{u}^\mu \big ( (1+ {\varrho}^\mu) {\rm div} \widetilde{u}^\mu+\widetilde{\varrho}^\mu{\rm div}u+\widetilde{u}^\mu \cdot \nabla \varrho+u^\mu\cdot\nabla\widetilde{\varrho}^\mu\big) \mathrm{d} x \nonumber\\   
\leq &\,-\frac{\mathrm{d}}{\mathrm{d} t} \int_{\mathbb{R}^3} \nabla    \widetilde{\varrho}^\mu \cdot \widetilde{u}^\mu \mathrm{d} x+C\|\nabla\widetilde{u}^\mu\|_{L^2}^2+C\big(\eps_0^\frac{1}{2}+\eps_1^\frac{1}{2}\big)\|\nabla(\widetilde{\varrho}^\mu,\widetilde{u}^\mu)\|_{L^2}^2.
\end{align}
For the term $\widetilde{K}_2$, by utilizing H\"{o}lder's and Young's inequalities, we reach
\begin{align} 
\widetilde{K}_2   \leq&\,\mu\|\nabla\widetilde{\varrho}^\mu\|_{L^2}\|\Delta u^\mu\|_{L^2}\leq \frac{1}{5}P^\prime (1) \|\nabla\widetilde{\varrho}^\mu\|_{L^2}^2+C   \mu^2 \|\nabla^2u^\mu\|_{L^2}^2.\label{G4.44}
\end{align}
Similarly, for the remaining terms $\widetilde{K}_3 , \widetilde{K}_4 $, and $ \widetilde{K}_5 $,
we have 
\begin{align}
\widetilde{K}_3   \leq &\, \|\nabla\widetilde{\varrho}^\mu\|_{L^2}\|\widetilde{b}^\mu-\widetilde{u}^\mu\|_{L^2} 
\leq  \frac{1}{5}P^\prime(1)\|\nabla\widetilde{\varrho}^\mu\|_{L^2}^2+ C\|\widetilde{b}^\mu-\widetilde{u}^\mu\|_{L^2}^2,\label{G4.45}\\
\widetilde{K}_4\leq&\, \frac{1}{5}P^\prime(1)\|\nabla\widetilde{\varrho}^\mu\|_{L^2}^2+C\| ( \widetilde{a}^\mu u^\mu+a \widetilde{u}^\mu+\widetilde{u}^\mu\cdot\nabla u+u^\mu\cdot\nabla\widetilde{u}^\mu )\|_{L^2}^2 \nonumber\\
\leq&\, \frac{1}{5}P^\prime(1)\|\nabla\widetilde{\varrho}^\mu\|_{L^2}^2+C\big(\eps_0^\frac{1}{2}
+\eps_1^\frac{1}{2}\big)\|\nabla(\widetilde{a}^\mu,\widetilde{b}^\mu)\|_{L^2}^2,\label{G4.45-1}\\
\widetilde{K}_5\leq &\, \frac{1}{5}P^\prime(1)\|\nabla\widetilde{\varrho}^\mu\|_{L^2}^2+C\|\widetilde{\varrho}^\mu\|_{L^6}^2\|\nabla\varrho\|_{L^3}^2+C\|\varrho^\mu\|_{L^\infty}^2\|\nabla\widetilde{\varrho}^\mu\|_{L^2}^2\nonumber\\
\leq&\,  \frac{1}{5}P^\prime(1)\|\nabla\widetilde{\varrho}^\mu\|_{L^2}^2
+C\big(\eps_0^\frac{1}{2}+\eps_1^\frac{1}{2}\big)\|\nabla(\widetilde{a}^\mu,\widetilde{b}^\mu)\|_{L^2}^2.\label{G4.45-2}
\end{align}
Inserting the estimates \eqref{G4.43}–\eqref{G4.45-2} into \eqref{G4.42} yields
\begin{align}\label{G4.46}
&\frac{{\rm d}}{{\rm d}t}\int_{\mathbb{R}^3} \nabla\widetilde{\varrho}^\mu  \cdot\widetilde{u}^\mu {\rm d}x+P^\prime(1)\|\nabla\widetilde{\varrho}^\mu\|_{L^2}^2\nonumber\\
&\quad \lesssim \|\nabla\widetilde{u}^\mu\|_{L^2}^2+\|\widetilde{b}^\mu-\widetilde{u}^\mu\|_{L^2}^2+ \big(\eps_0^\frac{1}{2}+\eps_1^\frac{1}{2}\big)\|\nabla(\widetilde{a}^\mu,\widetilde{b}^\mu)\|_{L^2}^2.
\end{align}

By utilizing the facts that
\begin{align*}
\|\nabla\widetilde{u}^\mu\|_{L^2}^2\lesssim   \|\nabla(\widetilde{b
}^\mu-\widetilde{u}^\mu)\|_{L^2}^2 +\|\nabla\widetilde{b}^\mu\|_{L^2}^2, 
\end{align*}
and
\begin{align*}
\bigg|  \int_{\mathbb{R}^3} \nabla\widetilde{\varrho}^\mu  \cdot\widetilde{u}^\mu {\rm d}x \bigg| \lesssim \|\widetilde{u}^\mu\|_{L^2}^2+\|\nabla\widetilde{\varrho}^\mu\|_{L^2}^2,
\end{align*}
and integrating \eqref{G4.46} over $[0,T]$, we subsequently combine Lemmas \ref{L4.1}--\ref{L4.3} to derive \eqref{G4.41}.
\end{proof}

Similar to the proof of Lemma \ref{L4.2}, 
we apply $\nabla_v$ to \eqref{G4.3}$_3$ and then employ the energy method to derive  
\begin{align}\label{G4.47}
&\sup_{\tau\in[0,t]}\|\nabla_v\{\mathbf{I}-\mathbf{P}\}\widetilde{f}^\mu(\tau)\|_{L_{x,v}^2}^2+ \int_0^t\|\nabla_v \{\mathbf{I}-\mathbf{P}\}\widetilde{f}^\mu(\tau)\|_{\nu}^2{\rm d}\tau \nonumber\\
&\quad \leq C\sum_{|\alpha|\leq 1}\|\partial^\alpha\{\mathbf{I}-\mathbf{P}\}f\|_{\nu}^2+C\|\nabla(\widetilde{a}^\mu,\widetilde{b}^\mu)\|_{L^2}^2+\widetilde{\mathcal{X}}^\mu(0).
\end{align}  
The detailed derivation is omitted for brevity. 
By combining Lemmas \ref{L4.1}--\ref{L4.2}, we obtain the following lemma.
\begin{lem}\label{L4.5}
It holds that
\begin{align}\label{G4.48}
&\sup_{\tau\in[0,t]}\|\nabla_v\{\mathbf{I}-\mathbf{P}\}\widetilde{f}^\mu(\tau)\|_{L_{x,v}^2}^2 \nonumber\\
&\quad + \int_0^t\|\nabla_v \{\mathbf{I}-\mathbf{P}\}\widetilde{f}^\mu(\tau)\|_{\nu}^2{\rm d}\tau\leq \widetilde{\mathcal{X}}^\mu(0)+C\mu^2+C \big(\eps_0^\frac{1}{2}  +\eps_1^\frac{1}{2}\big)\widetilde{\mathcal{X}}^\mu(t),
\end{align}
where $C>0$ is a constant independent of  $\mu$ and time $t$.
\end{lem}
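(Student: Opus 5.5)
The estimate \eqref{G4.48} follows by inserting the already-established bounds into the velocity-derivative energy inequality \eqref{G4.47}. I will first justify \eqref{G4.47} and then close it with Lemmas \ref{L4.1}--\ref{L4.3}.

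\textbf{Step 1: the inequality \eqref{G4.47}.} Subtract the equation \eqref{G3.30} for $\{\mathbf{I}-\mathbf{P}\}f$ from its $\mu$-counterpart for $\{\mathbf{I}-\mathbf{P}\}f^\mu$. Apply $\nabla_v$ to the difference, take the $L^2_{x,v}$ inner product with $\nabla_v\{\mathbf{I}-\mathbf{P}\}\widetilde f^\mu$, and integrate in time. There are three sources of terms.

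\begin{itemize}
\item The operator $\mathcal{L}$, weighted by $(1+\varrho)\ge \tfrac12$ thanks to \eqref{G3.2}, gives the coercive term $\lambda_0\|\nabla_v\{\mathbf{I}-\mathbf{P}\}\widetilde f^\mu\|_\nu^2$. This uses the standard identity $\nabla_v\mathcal{L}g=\mathcal{L}\nabla_v g-\tfrac{v}{2}\cdot\ldots$, whose commutator is bounded by $|g|_\nu$.
\item The commutator $[\nabla_v, v\cdot\nabla_x]=\nabla_x$ produces the term $\int\langle\nabla_x\{\mathbf{I}-\mathbf{P}\}\widetilde f^\mu,\nabla_v\{\mathbf{I}-\mathbf{P}\}\widetilde f^\mu\rangle$. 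By Young's inequality it is bounded by $\eta\|\nabla_v\{\mathbf{I}-\mathbf{P}\}\widetilde f^\mu\|_\nu^2+C_\eta\|\nabla\{\mathbf{I}-\mathbf{P}\}\widetilde f^\mu\|_\nu^2$.
\item The macroscopic sources $\{\mathbf{I}-\mathbf{P}\}(v\cdot\nabla\mathbf{P}\widetilde f^\mu)$ are bounded by $\|\nabla(\widetilde a^\mu,\widetilde b^\mu)\|_{L^2}$.
\end{itemize}

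All remaining nonlinear and difference terms are estimated exactly as $\widetilde I_j$ and $\widetilde J_j$ in Lemmas \ref{L4.1}--\ref{L4.2}. Each is a product of a small factor (of size $\varepsilon_0^{1/2}+\varepsilon_1^{1/2}$, coming from \eqref{a-2} and \eqref{b3}) with quantities controlled by $\widetilde{\mathcal X}^\mu(t)$. Note that no $\mu\Delta u^\mu$ term enters the kinetic equation, so no $\mu^2$ is generated at this step.

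\textbf{Step 2: closing.} The right-hand side of \eqref{G4.47} is really $\sum_{|\alpha|\le1}\|\partial^\alpha\{\mathbf{I}-\mathbf{P}\}\widetilde f^\mu\|^2_\nu$ plus the time integral of $\|\nabla(\widetilde a^\mu,\widetilde b^\mu)\|_{L^2}^2$, with the tilde that the proof needs. To bound it:

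\begin{itemize}
\item The microscopic $|\alpha|\le1$ terms are bounded by \eqref{G4.7} and \eqref{G4.15}, giving $\widetilde{\mathcal X}^\mu(0)+C\mu^2+C(\varepsilon_0^{1/2}+\varepsilon_1^{1/2})\widetilde{\mathcal X}^\mu(t)$.
\item The term $\int_0^t\|\nabla(\widetilde a^\mu,\widetilde b^\mu)\|^2_{L^2}$ needs care. Lemma \ref{L4.3} bounds it only by $C\widetilde{\mathcal X}^\mu(t)$, which lacks a small factor.
\end{itemize}

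\textbf{Main obstacle.} This missing small factor is the main difficulty. My plan is to weight the inequality, rather than use it at full strength. Multiply \eqref{G4.47} by a small constant $\kappa$ and the estimate \eqref{G4.39} from Lemma \ref{L4.3} by another small constant. Then:

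\begin{itemize}
\item Absorb $\kappa\int_0^t\|\nabla(\widetilde a^\mu,\widetilde b^\mu)\|^2_{L^2}$ into the dissipation provided by \eqref{G4.39}.
\item Absorb the $\|\{\mathbf{I}-\mathbf{P}\}\widetilde f^\mu\|$ terms into the dissipation of Lemmas \ref{L4.1}--\ref{L4.2}.
\end{itemize}

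This yields \eqref{G4.48} in the form stated, possibly with a harmless adjustment of constants in front of $\widetilde{\mathcal X}^\mu(0)$. That weighted combination is the point I would verify most carefully. It is also exactly what is needed later, when Lemmas \ref{L4.1}--\ref{L4.5} are summed to give $\widetilde{\mathcal X}^\mu(t)\lesssim\mu^2$.
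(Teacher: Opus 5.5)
Your proof follows the paper's route. The paper also derives the $\nabla_v$ energy inequality \eqref{G4.47} for the difference equation (details omitted) and then closes with Lemmas \ref{L4.1}--\ref{L4.2}. Your Step 2 correctly supplies the point that one-line closing passes over: the $O(1)$ term $\int_0^t\|\nabla(\widetilde a^\mu,\widetilde b^\mu)\|_{L^2}^2\,{\rm d}\tau$ on the right of \eqref{G4.47} is not controlled by Lemmas \ref{L4.1}--\ref{L4.2}, and \eqref{G4.25} as stated only gives $C\widetilde{\mathcal X}^\mu(t)$.

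Your weighted combination closes this, and so does simply substituting \eqref{G4.7} and \eqref{G4.15} into \eqref{G4.39}, provided you use \eqref{G4.39} before its last line. There, the only right-hand terms without a small factor are $\int_0^t\|\{\mathbf{I}-\mathbf{P}\}\widetilde f^\mu\|^2_{L^2_v(H^1)}$, $\int_0^t\|\widetilde b^\mu-\widetilde u^\mu\|_{L^2}^2$, and $|\widetilde{\mathcal E}^\mu_0|$ at times $0$ and $t$, all handled by Lemmas \ref{L4.1}--\ref{L4.2} and \eqref{G4.40}. The result is \eqref{G4.48} with a harmless constant in front of $\widetilde{\mathcal X}^\mu(0)$.
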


With Lemmas \ref{L4.1}–\ref{L4.5} in hand, we are now in a position to prove Theorem \ref{T1.3}.
\begin{proof}[Proof of Theorem \ref{T1.3}]
From Lemmas \ref{L4.1}--\ref{L4.5} and the assumption \eqref{c1}, we deduce that  
\begin{align*}  
\widetilde{\mathcal{X}}^\mu(t) \lesssim \big(\eps_0^\frac{1}{2} + \eps_1^\frac{1}{2}\big)\widetilde{\mathcal{X}}^\mu(t) + \mu^2.  
\end{align*}  
Under the smallness assumptions of $\eps_0^\frac{1}{2}$ and $\eps_1^\frac{1}{2}$, it follows that  
\begin{align*}  
\widetilde{\mathcal{X}}^\mu(t) \lesssim \mu^2.  
\end{align*}  
Therefore, the estimate \eqref{c2} is rigorously verified, which completes the proof of Theorem \ref{T1.3}.       
\end{proof}

\medskip

\section{Analysis of the Linearized  Compressible Euler-VFP System}

\subsection{Linear analysis}
In this section, we derive the time-decay rates of $(\varrho, u, f)$ for the linearized Euler-VFP system of 
\eqref{C2}. Subsequently, we establish a higher-order Lyapunov inequality for the low-frequency decomposition of $(\varrho, u, f)$. To achieve this, we begin by considering the following linearized Cauchy problem:
\begin{equation}\label{G5.1}
\left\{\begin{aligned}
&\partial_t \varrho + {\rm div}u = 0, \\
&\partial_t u    +P^\prime(1) \nabla \varrho - (b - u) = 0, \\
&\partial_t f + v \cdot \nabla f - u \cdot v \sqrt{M}  - \mathcal{L} f =  {S}_f,
\end{aligned}\right.
\end{equation} 
with the initial data 
\begin{equation}\label{G5.2}
(\varrho,u,f)|_{t=0}=(\varrho_0,u_0,f_0)=(\varrho(0,x),u(0,x),f(0,x,v)), \quad (x, v)\in\mathbb{R}^{3}\times \mathbb{R}^3.
\end{equation}
Here, the source term $S_f$ in the linearized Fokker–Planck equation \eqref{G5.1}$_3$ is expressed as  
\begin{align*}  
S_f = \mathrm{div}_v G - \frac{1}{2} v \cdot G + h,  
\end{align*}  
where $G = G(t, x, v) \in \mathbb{R}^3$ and $h = h(t, x, v) \in \mathbb{R}$ satisfy  
\begin{align*}  
\mathbf{P}_0 G = 0, \quad \mathbf{P} h = 0.  
\end{align*}  

For the sake of presentation simplicity, we use $ U(t) = (\varrho(t), u(t), f(t)) $ to denote the solution of the problem \eqref{G5.1}--\eqref{G5.2}, and $ U_0 $ to represent the initial data, i.e., $ U_0 = (\varrho_0, u_0, f_0) $. Applying Duhamel's principle \eqref{G5.1}--\eqref{G5.2}, the solution can be expressed as  
\begin{align}\label{G5.3}  
U(t) = \mathbb{A}(t)U_0 + \int_0^t \mathbb{A}(t - s)(0, 0, S_f(s)) \, \mathrm{d}s,  
\end{align}  
where $ \mathbb{A}(t) $ denotes the solution operator associated with the case $ S_f = 0 $.
Applying the operator $\phi_0(D_x)$ to \eqref{G5.3}, we obtain  
\begin{align}\label{G5.4}  
U^L(t) = \mathbb{A}(t)U^L_{0} + \int_{0}^{t}\mathbb{A}(t-s)\big(0,0, {S}_f^L(s)\big)\,{\rm d}s,  
\end{align}  
where $U^L(t) := \big(\varrho^L(t), u^L(t),   f^L(t)\big)$. In the following, we will derive the time-decay properties of $U(t)$ and $U^L(t)$ .
\begin{thm}\label{T5.1}
Let $1\leq p\leq2$. For any $\alpha$, $\alpha^{\prime}$ with 
$\alpha^{\prime}\leq\alpha$ and $m=|\alpha-\alpha^{\prime}|$, we have 
\begin{align}
& \|\partial^{\alpha}\mathbb{A}(t)U_{0}\|_{\mathcal{Z}_{2}}
\lesssim (1+t )^{-\frac{3}{2}(\frac{1}{p}-\frac{1}{2})-\frac{m}{2}}
\big(\|\partial^{\alpha^{\prime}}U_{0}\|_{\mathcal{Z}_{p}}+\|\partial^{\alpha}U_{0}\|_{\mathcal{Z}_{2}}\big),
\label{G5.5}\\
&\Big\|\partial^{\alpha}\int_{0}^{t}\mathbb{A}(t-s)
\big( 0,0,S_f(s)\big){\rm d}s \Big\|_{\mathcal{Z}_{2}}^{2} \lesssim\int_0^t(1+t-s)^{-3(\frac{1}{p}-\frac{1}{2})-m}\nonumber\\
&\qquad  \qquad  \qquad  \qquad  
\times \big(\big\|\partial^{\alpha^{\prime}}\big(G(s),\nu^{-1/2}h(s)\big)\big\|_{\mathcal{Z}_{p}}^{2}
+\big\|\partial^{\alpha}\big(G(s),\nu^{-1/2}h(s)\big)\big\|_{\mathcal{Z}_{2}}^{2}\big){\rm d}s, \label{G5.6}
\end{align}
and
\begin{align}
 &\|\partial^{\alpha}\mathbb{A}(t)U_{0}^L\|_{\mathcal{Z}_{2}}
\lesssim  (1+t )^{-\frac{3}{2}(\frac{1}{p}-\frac{1}{2})-\frac{m}{2}}
\|\partial^{\alpha^{\prime}}U_{0}\|_{\mathcal{Z}_{p}},
  \label{G5.7}\\
&\Big\|\partial^{\alpha}\int_{0}^{t}\mathbb{A}(t-s)
\big( 0,0,S^L_f(s)\big){\rm d}s\Big\|_{\mathcal{Z}_{2}}^{2} \nonumber\\  
&\quad \lesssim\int_0^t(1+t-s)^{-3(\frac{1}{p}-\frac{1}{2})-m}   \big\|\partial^{\alpha^{\prime}}\big(G(s),\nu^{-1/2}h(s)\big) \big\|_{\mathcal{Z}_{p}}^{2}\label{G5.8}
 {\rm d}s, 
\end{align}
holds for all $t\geq 0$.
\end{thm}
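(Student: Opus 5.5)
Our plan is the standard Fourier energy method for linearized kinetic--fluid systems (in the style of Duan et al.\ for Vlasov--Fokker--Planck and Euler--VFP models). We take the Fourier transform in $x$ of \eqref{G5.1} and derive, for each fixed $\xi\in\mathbb{R}^3$, a pointwise Lyapunov inequality
\begin{align*}
\partial_t \mathcal{E}(t,\xi)+\lambda\frac{|\xi|^2}{1+|\xi|^2}\mathcal{E}(t,\xi)\lesssim \big|\big(\widehat G,\nu^{-1/2}\widehat h\big)(t,\xi)\big|_{L^2_v}^2,
\end{align*}
where $\mathcal{E}(t,\xi)\sim P'(1)|\widehat\varrho|^2+|\widehat u|^2+|\widehat f|_{L^2_v}^2$.

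We build $\mathcal{E}$ in three layers.
\begin{itemize}
\item[(i)] The basic energy identity. We multiply the transformed equations by $P'(1)\overline{\widehat\varrho}$, $\overline{\widehat u}$, $\overline{\widehat f}$ and take real parts. The coupling terms combine into $-|\widehat b-\widehat u|^2$, and \eqref{G2.5} gives $-\lambda_0|\{\mathbf{I}-\mathbf{P}\}\widehat f|_\nu^2$. The source contributes $\mathrm{Re}\langle \widehat S_f,\widehat f\rangle$. Since $\mathbf{P}_0G=0$, this is bounded by $\eta(|\{\mathbf{I}-\mathbf{P}\}\widehat f|_\nu^2+|\widehat b|^2)+C_\eta|(\widehat G,\nu^{-1/2}\widehat h)|^2$, because $\mathrm{div}_vG-\tfrac12 v\cdot G$ pairs only with non-$\sqrt M$ directions and $\mathbf{P}h=0$. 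Here $|\widehat b|^2$ is controlled by $|\widehat b-\widehat u|^2$ plus the macroscopic dissipation from (ii).
\item[(ii)] Macroscopic dissipation from the moment system analogous to \eqref{G3.19}. This is $\partial_t\widehat a+i\xi\cdot\widehat b=0$, together with the $b$-equation and the $\Gamma_{ij}$-equation, now with source moments. Using the interactive functionals $\mathrm{Re}\sum_{ij}(i\xi_i\widehat b_j+i\xi_j\widehat b_i\,|\,\Gamma_{ij}(\{\mathbf{I}-\mathbf{P}\}\widehat f))/(1+|\xi|^2)$ and $\mathrm{Re}(i\xi\widehat a\,|\,\widehat b)/(1+|\xi|^2)$, we gain $\frac{|\xi|^2}{1+|\xi|^2}(|\widehat a|^2+|\widehat b|^2)$.
\item[(iii)] Dissipation for the fluid. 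The functional $\mathrm{Re}(\widehat u\,|\,i\xi\widehat\varrho)/(1+|\xi|^2)$, combined with $\partial_t\widehat u+iP'(1)\xi\widehat\varrho=\widehat b-\widehat u$, gives $\frac{|\xi|^2}{1+|\xi|^2}|\widehat\varrho|^2$ at the cost of $|\widehat b-\widehat u|^2$ and $\frac{|\xi|^2}{1+|\xi|^2}|\widehat u|^2$. The latter is bounded by $|\widehat b-\widehat u|^2+\frac{|\xi|^2}{1+|\xi|^2}|\widehat b|^2$.
\end{itemize}
We then take a suitable small-weight linear combination. The weights are chosen so that the boundary functionals are absorbed and $\mathcal{E}\sim$ the energy. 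The result is a total dissipation dominating $\frac{|\xi|^2}{1+|\xi|^2}\mathcal{E}$.

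Gronwall then yields
\begin{align*}
\mathcal{E}(t,\xi)\lesssim e^{-\lambda\frac{|\xi|^2}{1+|\xi|^2}t}\mathcal{E}(0,\xi)+\int_0^t e^{-\lambda\frac{|\xi|^2}{1+|\xi|^2}(t-s)}\big|\big(\widehat G,\nu^{-1/2}\widehat h\big)(s,\xi)\big|^2_{L^2_v}{\rm d}s.
\end{align*}
To get \eqref{G5.5} we multiply by $|\xi^{\alpha}|^2$ and integrate, splitting $|\xi|\le 1$ and $|\xi|>1$.
\begin{itemize}
\item On $|\xi|\le 1$ the exponent is $\ge c|\xi|^2t$. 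We write $|\xi^\alpha|^2\le |\xi|^{2m}|\xi^{\alpha'}|^2$ and apply Hölder in $\xi$ with Hausdorff--Young $\|\widehat{\partial^{\alpha'}g}\|_{L^{p'}_\xi}\lesssim\|\partial^{\alpha'}g\|_{L^p}$. This is done in $L^2_v$ after Minkowski, which is why $\mathcal{Z}_p=L^2_v(L^p_x)$ is used. The resulting integral $\int_{|\xi|\le1}|\xi|^{2mr}e^{-cr|\xi|^2t}{\rm d}\xi$ produces $(1+t)^{-3(\frac1p-\frac12)-m}$.
\item On $|\xi|>1$ we have exponential decay $e^{-\lambda t/2}$ times $\|\partial^\alpha U_0\|_{\mathcal{Z}_2}^2$.
\end{itemize}
The Duhamel term is handled identically under the time integral, giving \eqref{G5.6}. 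For \eqref{G5.7}--\eqref{G5.8} the cutoff $\phi_0$ kills the high-frequency region, so only the $\mathcal{Z}_p$ term survives. The low-frequency part still costs only $\|\partial^{\alpha'}\cdot\|_{\mathcal{Z}_p}$, since $|\phi_0|\le1$.

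The main obstacle is step (ii) with sources. The $\Gamma_{ij}$-moment and $b$-moment of $\widehat S_f$ must be bounded by $|(\widehat G,\nu^{-1/2}\widehat h)|$ without losing powers of $|\xi|$. This uses integration by parts in $v$ for $\mathrm{div}_vG$, which is why $G$ appears unweighted and $h$ with $\nu^{-1/2}$. In addition, the $\partial_t\Gamma_{ij}$ terms must be rewritten via the equation. One also has to check that the term $-|\widehat b|^2$ from \eqref{G2.5}, combined with the friction coupling $\widehat u\cdot v\sqrt M$, genuinely gives $|\widehat b-\widehat u|^2$ and not merely $|\widehat b|^2$. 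I would verify this first by direct computation of the cross terms.
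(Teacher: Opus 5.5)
Your route is the paper's: the same three-layer pointwise functional, the Lyapunov inequality with weight $\frac{|\xi|^2}{1+|\xi|^2}$, and Gronwall. You then use the low/high-frequency splitting with Minkowski and Hausdorff--Young in $L^2_v(L^p_x)$, which the paper only cites from Kawashima and Duan--Fornasier--Toscani and which you carry out correctly. The cross-term check you flag also works out exactly: $|\widehat u|^2-2\mathfrak{Re}(\widehat u|\widehat b)+|\widehat b|^2=|\widehat b-\widehat u|^2$, as in \eqref{G5.10}--\eqref{G5.12}.

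There is, however, a step in (i) that fails as written. You bound the source pairing by $\eta(|\{\mathbf{I}-\mathbf{P}\}\widehat f|_\nu^2+|\widehat b|^2)+C_\eta|(\widehat G,\nu^{-1/2}\widehat h)|^2$. You then assert that $|\widehat b|^2$ is controlled by $|\widehat b-\widehat u|^2$ plus the macroscopic dissipation $\frac{|\xi|^2}{1+|\xi|^2}|\widehat b|^2$. This is false as $\xi\to 0$; take $\widehat b=\widehat u\neq 0$. At $\xi=0$ the total momentum $\widehat u+\widehat b$ is conserved, so $|\widehat b|^2$ is not dissipated there at all. Letting $\eta$ scale like $|\xi|^2$ instead makes $C_\eta$ blow up. So an $\eta|\widehat b|^2$ on the right cannot be absorbed uniformly in $\xi$.

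The missing observation is that $\mathbf{P}_0G=0$ makes $\nabla_v\cdot G-\frac12 v\cdot G$ orthogonal to the \emph{whole} range of $\mathbf{P}$, not just to $\sqrt M$. The pairing with $\sqrt M$ vanishes identically, and integrating by parts gives
\[
\Big\langle \nabla_v\cdot G-\tfrac12 v\cdot G,\;v_i\sqrt M\Big\rangle=-\langle G_i,\sqrt M\rangle=0 .
\]
Together with $\mathbf{P}h=0$ this yields
\[
\mathfrak{Re}\langle\widehat S_f,\widehat f\rangle=\mathfrak{Re}\langle\widehat S_f,\{\mathbf{I}-\mathbf{P}\}\widehat f\rangle\le\eta|\{\mathbf{I}-\mathbf{P}\}\widehat f|_\nu^2+C_\eta|(\widehat G,\nu^{-1/2}\widehat h)|_{L^2_v}^2 ,
\]
with no $|\widehat b|^2$, which is what the paper uses.

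This is essential rather than cosmetic. Take $G_i=-\sigma_i\sqrt M$, so $\mathbf{P}_0G\neq 0$ and the source is $\sigma\cdot v\sqrt M$. At $\xi=0$ one then has $\partial_t(\widehat u+\widehat b)=\widehat\sigma$. For constant $\widehat\sigma$ the energy grows like $t^2|\widehat\sigma|^2$, whereas your Lyapunov inequality would give $t|\widehat\sigma|^2$. This is why such terms are routed through the non-squared estimate \eqref{G5.7} in Section 6.

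For the same reason, the ``$b$-moment of $\widehat S_f$'' you list as part of the main obstacle in (ii) is zero. Only the $\Gamma_{ij}$-moment matters. It equals $-\langle v_j\sqrt M,\widehat G_i\rangle-\langle v_i\sqrt M,\widehat G_j\rangle+\Gamma_{ij}(\widehat h)$, which is bounded by $|(\widehat G,\nu^{-1/2}\widehat h)|_{L^2_v}$ with no loss in $|\xi|$.
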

\begin{proof}
Motivated by the hypercoercivity argument presented in 
\cite{SK-HMJ-1985}, we establish the proof of Theorem \ref{T5.1} 
through using Fourier analysis techniques. By performing the Fourier
 transform on $x$ to the equation \eqref{G5.1}, we obtain
\begin{equation}\label{G5.9}
\left\{\begin{aligned}
&\partial_{t}\widehat{\varrho}+i\xi\cdot \widehat{u}=0,  \\
&\partial_t \widehat{u}+i\xi P^{\prime}(1)\widehat{\varrho} -(\widehat{b}-\widehat{u})=0,\\
&\partial_{t}\widehat{f}+iv\cdot \xi  \widehat{f}-\widehat{u}\cdot v\sqrt{M}=\mathcal{L}\widehat{f}+\nabla_v\cdot\widehat{G}-\frac{1}{2}v\cdot\widehat{G}+\widehat{h},  
\end{aligned}\right.
\end{equation}
Multiplying \eqref{G5.9}$_1$ and \eqref{G5.9}$_2$ by $P^{\prime}(1)\overline{\widehat{\varrho}}$ and $\overline{\widehat{u}}$, respectively, and integrating the resulting expressions, adding them up, and taking real part of
the results  yield   
\begin{align}\label{G5.10}
\frac{1}{2}\partial_t(P^{\prime}(1)|\widehat{\varrho}|^2+|\widehat{u}|^2 )        
+|\widehat{u}|^2-\mathfrak{Re}   (\widehat{u}|\widehat{b})=0.
\end{align}
Multiplying \eqref{G5.9}$_3$ by $\overline{\widehat{f}}$ and integrating in $  v$ over $\mathbb{R}^3$, we have 
\begin{align*}
&\frac{1}{2}\partial_t \|\widehat{f}\|_{L_v^2}^2+\mathfrak{Re}\int_{\mathbb{R}^3} 
(-\mathcal{L}\{\mathbf{I}-\mathbf{P}\}\widehat{f}|\{\mathbf{I}-\mathbf{P}\}\widehat{f}\mathrm{d}v   )+|\widehat{b}|^2-\mathfrak{Re}(\widehat{u}|\widehat{b})\nonumber\\
 &\quad =\mathfrak{Re}\int_{\mathbb{R}^3}\Big(\nabla_v\cdot\widehat{G}-\frac{1}{2}v\cdot\widehat{G}|
\{\mathbf{I}-\mathbf{P}\}\widehat{f}\Big)\mathrm{d}v
+\mathfrak{Re}\int_{\mathbb{R}^3}(\widehat{h}|\{\mathbf{I}-\mathbf{P}\}\widehat{f})\mathrm{d}v.
\end{align*}
Here the following facts have been utilized:  
\begin{align*}
 \nabla_v\cdot G-\frac{1}{2}v\cdot G\perp {\rm Rang}\mathbf{P},\quad \mathbf{P}h=0.
\end{align*}
It can be inferred from \eqref{G2.5}, $\mathbf{P}_0G=0$, and   Cauchy-Schwarz's inequality that
\begin{align}\label{G5.11}
&\frac{1}{2}\partial_t \|\widehat{f}\|_{L_v^2}^2+\lambda_0 |\{\mathbf{I}-\mathbf{P}\}\widehat{f}|_{\nu}^2+|\widehat{b}|^2-\mathfrak{Re}(\widehat{u}|\widehat{b}) \lesssim \|\widehat{G}\|_{L^2_v}^2+\|\nu^{-\frac{1}{2}}\widehat{h}\|_{L^2_v}^2.
\end{align}
Combining \eqref{G5.10} and \eqref{G5.11}, one gets
\begin{align}\label{G5.12}
&\frac{1}{2}\partial_t(P^{\prime}(1)|\widehat{\varrho}|^2+\|\widehat{f}\|_{L_v^2}^2+|\widehat{u}|^2 )+ \lambda_0 |\{\mathbf{I}-\mathbf{P}\}\widehat{f}|_{\nu}^2+   |\widehat{b}-\widehat{u}|^2 
\lesssim   \|\widehat{G}\|_{L^2_v}^2+\|\nu^{-\frac{1}{2}}\widehat{h}\|_{L^2_v}^2 .
\end{align}
Following the same approach as in \cite[Lemma \ref{L3.1}]{DL-KRM-2013}, we obtain the following estimates for $\widehat{a}$ and $\widehat{b}$:
\begin{align}\label{G5.13}
\partial_t \mathfrak{Re}\mathcal{E}_1(\widehat{f})+\frac{ \lambda_8  |\xi |^2}{1+|\xi |^2} (|\widehat{a
}|^2+|\widehat{b}|^2)\lesssim \|\{\mathbf{I}-\mathbf{P}\}\widehat{f}\|_{L_v^2}^2
+|\widehat{u}-\widehat{b}|^2+\|G\|_{L_v^2}^2+\|\nu^\frac{1}{2}\widehat{h}\|_{L_v^2}^2,   
\end{align}
for some $\lambda_8>0$ independent of $\mu$, 
where 
\begin{align*}
\mathcal{E}_1(\widehat{f}):=\frac{1}{1+|\xi |^2}\sum_{i,j=1}^3\big(\{i\xi _i\widehat{b}_j+   i\xi _j\widehat{b}_i\}|\Gamma_{ij}(\{\mathbf{I}-\mathbf{P}\}\widehat{f})  
   \big)-\frac{1}{1+|\xi |^2}(\widehat{a}|i\xi \cdot\widehat{b}).   
\end{align*}

The additional consideration arises only when analyzing the dissipation of $\varrho$. To address this, we multiply \eqref{G5.9}$_2$ by $\overline{i\xi \widehat{\varrho}}$ and utilize \eqref{G5.9}$_1$, leading to the following derivation:
\begin{align*}
P^{\prime}(1)|\xi |^2|\widehat{\varrho}|^2 &=(\{-\partial_t\widehat{u}-\widehat{u}+\widehat{b}\}|i\xi \widehat{\varrho})=-\partial_t(\widehat{u}|i\xi \widehat{\varrho})+|\xi \cdot \widehat{u}|^2-
(\{\widehat{u}-\widehat{b}\}|i\xi \widehat{\varrho}),
\end{align*}
which  together with Young's inequality gives rise to
\begin{align}\label{G5.14}
\partial_t\frac{\mathfrak{Re}(\widehat{u}|i\xi \widehat{\varrho})}
{1+|\xi |^2}+\lambda_9P^{\prime}(1)\frac{|\xi |^2}{1+|\xi |^2}|
\widehat{\varrho}|^2\lesssim |\widehat{u}-\widehat{b}|^2+\frac{|\xi \cdot\widehat{u}|^2}{1+|\xi |^2},
\end{align}
for some $\lambda_9>0$ independent of $\mu$.

Let's define a functional $\mathcal{E}_{\mathcal{F}}(\widehat{\varrho},\widehat{u},\widehat{f})$ by
\begin{align*}
\mathcal{E}_{\mathcal{F}}(\widehat{\varrho},\widehat{u},\widehat{f})
:=P^{\prime}(1)|\widehat{\varrho}|^2+|\widehat{u}|^2+\|f\|_{L^2_v}^2
+\tau_4\mathfrak{Re}\mathcal{E}_1(\widehat{f})+\tau_5\frac{\mathfrak{Re}(\widehat{u}|i\xi \widehat{\varrho})}{1+|\xi |^2},
\end{align*}
where $0<\tau_4,\tau_5\ll1$ are small constants. 
Thanks to the facts that  
$$
|\mathcal{E}_1(\widehat{f})|\lesssim |\widehat{u}|^2+\|\widehat{f}\|_{L_v}^2,
\quad \Big|\frac{(\widehat{u}|i\xi \widehat{\varrho})}{1+|\xi |^2}\Big|\lesssim 
|\widehat\varrho|^2+|\widehat{u}|^2,
$$
we conclude that
\begin{align*}
\mathcal{E}_{\mathcal{F}}(\widehat{\varrho},\widehat{u},\widehat{f})\backsim |\widehat{\varrho}|^2+|\widehat{u}|^2+\|\widehat{f}\|_{L_v^2}^2.    
\end{align*}
Thus, for any $0 < \tau_5 \ll \tau_4 \ll 1$, it holds
\begin{align}\label{G5.15}
\partial_t\mathcal{E}_{\mathcal{F}}(\widehat{\varrho},\widehat{u},\widehat{f})
+\frac{\lambda_{10}|\xi |^2}{1+|\xi |^2}\mathcal{E}_{\mathcal{F}}(\widehat{\varrho},\widehat{u},\widehat{f})\lesssim   \|\widehat{G}\|_{L^2_v}^2+\|\nu^{-\frac{1}{2}}\widehat{h}\|_{L^2_v}^2 ,  
\end{align}
for some $\lambda_{10}>0$ independent of $\mu$,
Here, we have utilized the following inequalities:  
\begin{align*}  
 |\xi |^2|\hat{b}|^2 + | \hat{b} -  \hat{u}|^2 \geq&\, |\xi |^2|\hat{b}|^2 + |\xi |^2| \hat{b} - \hat{u}|^2  \geq  \frac{1}{2}|\xi |^2|\hat{u}|^2, \quad\qquad\qquad\,\,\,\,\,\,  |\xi | \leq \frac{r_0}{2},& \\  
|\xi |^2|\hat{b}|^2 +  | \hat{b} -  \hat{u}|^2   \geq&\, (\frac{r_0}{2})^2|\hat{b}|^2 +  | \hat{b} -  \hat{u}|^2  \geq \min\Big\{1,\Big(\frac{r_0}{2}\Big)^2\Big\}\frac{1}{2}|\hat{u}|^2,  \,\,\quad |\xi | \geq \frac{r_0}{2}, &
\end{align*} 
where $r_0$ is defined in \eqref{G2.2}.
Combining \eqref{G5.15} with Gronwall's inequality, we deduce that
\begin{align}\label{G5.16}
 \mathcal{E}_{\mathcal{F}}(\widehat{\varrho},\widehat{u},\widehat{f})\leq e^{-\frac{\lambda_{10}|\xi |^2}{1+|\xi |^2}}  \mathcal{E}_{\mathcal{F}}(\widehat{\varrho}_0,\widehat{u}_0,\widehat{f}_0)+\int_{0}^t
 e^{-\frac{\lambda_{10}|\xi |^2}{1+|\xi |^2}(t-s)}(\|\widehat{G}(s)\|_{L^2_v}^2+\|\nu^{-\frac{1}{2}}\widehat{h}(s)\|_{L^2_v}^2)\mathrm{d}s.
\end{align}
As in \cite{Ks-1983} and \cite[Theorem 3.1]{DFT-2010-CMP}, with the aid of \eqref{G5.16}, we can directly obtain the estimates \eqref{G5.5}--\eqref{G5.8} in Theorem \ref{T5.1}. For brevity, we omit the detailed proof here.
\end{proof}

\subsection{Higher-order Lyapunov inequality}
To derive the optimal time decay rates of classical solutions and their spatial derivatives 
 to the  Euler-VFP system \eqref{C2}, we establish the following higher-order Lyapunov inequality.
\begin{prop}\label{P5.2}
For the classical solution $(\varrho, u,f)$ to the  Euler-VFP system \eqref{C2}, there exist a positive constant $\lambda_{11}$ independent of $\mu$, such that 
\begin{align}\label{G5.17}  
&\frac{\rm d}{{\rm d}t}\big( \|\nabla^k(\varrho,u )\|_{L^2}^2+\|\nabla^k f\|_{L_{x,v}^2}^2\big) + \lambda_{11}\big(  \|\nabla^k(\varrho,u)\|_{L^2}^2+\|\nabla^k f\|_{L_{x,v}^2}^2\big)  \lesssim \|\nabla^k(\varrho^L,a^L,b^L)\|_{L^2}^2,  
\end{align}  
for any $k=2,3$ and $0 \leq t < T$.
\end{prop}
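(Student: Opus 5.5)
We will prove \eqref{G5.17} by a high-order energy estimate at the single level $|\alpha|=k$ ($k=2,3$), combined with macroscopic dissipation estimates. Low frequencies are then absorbed through \eqref{G2.3}, which is what produces the right-hand side $\|\nabla^k(\varrho^L,a^L,b^L)\|_{L^2}^2$.

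\textbf{Step 1 (basic high-order estimate).} For $|\alpha|=k$, apply $\partial^\alpha$ to \eqref{C2}. Multiply the three equations by $\frac{P'(1+\varrho)}{(1+\varrho)^2}\partial^\alpha\varrho$, $\partial^\alpha u$ and $\partial^\alpha f$, exactly as in the proof of Lemma \ref{L3.2} with $\mu=0$. This gives
\[
\frac{d}{dt}\mathcal{E}_k+\lambda\big(\|\nabla^k(b-u)\|_{L^2}^2+\|\nabla^k\{\mathbf{I}-\mathbf{P}\}f\|_\nu^2\big)\lesssim \delta\,\mathcal{D}_k,
\]
where $\mathcal{E}_k\sim\|\nabla^k(\varrho,u)\|_{L^2}^2+\|\nabla^kf\|_{L^2_{x,v}}^2$. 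Here $\mathcal{D}_k$ collects $\|\nabla^{k}(\varrho,u,a,b)\|_{L^2}^2$, the order-$k$ (and $k\pm1$) micro norms, and the mixed-derivative norms from Lemma \ref{L3.5}. The nonlinear terms are estimated with Lemma \ref{L2.3} and the smallness of the $H^3$ norms coming from Theorem \ref{T1.2}.

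\textbf{Step 2 (macroscopic dissipation at order $k$).} Repeat the interactive functional of Lemma \ref{L3.3} at exact order $\nabla^{k-1}$, using \eqref{G3.19}. This gives dissipation of $\|\nabla^k(a,b)\|_{L^2}^2$, up to micro terms and $\|\nabla^{k-1}(b-u)\|$ terms. Repeat Lemma \ref{L3.4} at order $\nabla^{k-1}$ to get $\|\nabla^k\varrho\|_{L^2}^2$, controlled by $\|\nabla^k u\|^2$ and $\|\nabla^{k-1}(b-u)\|^2$. Then use $\|\nabla^k u\|\le\|\nabla^k(b-u)\|+\|\nabla^k b\|$. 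A combination with small weights gives $\frac{d}{dt}\widetilde{\mathcal{E}}_k+\lambda\,\mathcal{D}_k\lesssim$ (terms of order $k-1$) $+\,\delta\,\mathcal{D}_k$, with $\widetilde{\mathcal{E}}_k\sim\mathcal{E}_k$.

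\textbf{Step 3 (frequency splitting).} The dissipation contains $\|\nabla^k(\varrho,u,a,b)\|^2$ and $\|\nabla^k\{\mathbf{I}-\mathbf{P}\}f\|_\nu^2$, so it controls $\mathcal{E}_k$ itself, since $\|\nabla^kf\|\lesssim\|\nabla^k(a,b)\|+\|\nabla^k\{\mathbf{I}-\mathbf{P}\}f\|$. The order-$(k-1)$ remainders must also be handled. For a remainder $g$ we write $g=g^L+g^H$ and use $\|\nabla^{k-1}g^H\|\lesssim r_0^{-1}\|\nabla^kg\|$; for large $r_0$ (or by arranging weights) these are absorbed. The low-frequency part is moved to the right-hand side as $\|\nabla^k(\varrho^L,a^L,b^L)\|^2$, using \eqref{G2.3} and the Fourier bound $|\xi|^{k-1}\lesssim|\xi|^{k}$ for $|\xi|\sim r_0$. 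Equivalently, we run the macroscopic estimates of Step 2 on the high-frequency parts only, which carry genuine $\|\nabla^k(\cdot)^H\|$ dissipation as in \eqref{G5.15}. Then $\|\nabla^k(\varrho,a,b)\|^2\le 2\|\nabla^k(\cdot)^H\|^2+2\|\nabla^k(\cdot)^L\|^2$.

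\textbf{Main obstacle.} We expect the hardest part to be the nonlinear term $\varrho\mathcal{L}f$, which contains $\varrho|v|^2f$, together with the $\partial^\alpha$-commutators. These produce $\delta$ times mixed space–velocity norms and order-$(k+1)$ quantities, and those are not part of the order-$k$ Lyapunov functional. We will first try including the weighted mixed-derivative functional of Lemma \ref{L3.5}, restricted to top order, in $\widetilde{\mathcal{E}}_k$. If that fails, we will bound the excess by $\delta\,\mathcal{D}(t)$ and keep it integrable in time, but the stated pointwise-in-time form forces us to close within order $k$. Any remaining $\|\nabla^{k-1}\cdot\|$ pieces are then dealt with by the low/high-frequency argument of Step 3.
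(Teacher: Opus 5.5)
Your architecture is the paper's. Step~1 is Lemma~\ref{L5.3} (Lemma~\ref{L3.2} at the single level $|\alpha|=k$), and the high-frequency version of Step~3 is exactly Lemmas~\ref{L5.4}--\ref{L5.5}: the functional $\mathcal{E}_1^H(t)$ of \eqref{G5.19} for $(a^H,b^H)$ and the cross term $\int\nabla^{k-1}\varrho^H\cdot\nabla^{k-1}{\rm div}u\,{\rm d}x$ for $\varrho^H$. These are added to Step~1 with small weights, using $\|\nabla^k u\|_{L^2}\lesssim\|\nabla^k(b-u)\|_{L^2}+\|\nabla^k b\|_{L^2}$, and the proof ends with $\mathcal{E}_1\lesssim\mathcal{D}_1+\|\nabla^k(\varrho^L,a^L,b^L)\|_{L^2}^2$.

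However, your two versions of Step~3 are not equivalent, and the first one fails. Running Step~2 on the full unknowns leaves remainders such as $C\|\nabla^{k-1}(b-u)\|_{L^2}^2$ and $C\|\nabla^{k-1}\{\mathbf{I}-\mathbf{P}\}f\|_{L^2_{x,v}}^2$ with $O(1)$ constants. They come from $\nabla^{k-1}\partial_i a\cdot\nabla^{k-1}((1+\varrho)(u_i-b_i))$, from $\int\nabla^k\varrho\cdot\nabla^{k-1}(b-u)$, and from $\Gamma_{ij}(\mathcal{L}\{\mathbf{I}-\mathbf{P}\}f)$. Splitting them afterwards does not help. $\|\nabla^{k-1}g^L\|_{L^2}$ is not controlled by $\|\nabla^k g^L\|_{L^2}$, because the region $|\xi|\le r_0$ contains $\xi\to0$, where $|\xi|^{k-1}\gg|\xi|^k$. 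Indeed \eqref{G2.3} only gives the reverse bound $\|\nabla^2 g^L\|_{L^2}\le Cr_0\|\nabla g^L\|_{L^2}$. Moreover, $b-u$ and $\{\mathbf{I}-\mathbf{P}\}f$ are not among the quantities allowed on the right of \eqref{G5.17}.

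What makes the argument work is applying $\phi_1(D_x)$ \emph{before} forming the interactive functionals, so that every $\nabla^{k-1}(\cdot)^H$ becomes $\nabla^k(\cdot)$ via \eqref{G2.3}. Then the only low-frequency terms generated are:
\begin{itemize}
\item $\|\nabla^k\varrho^L\|_{L^2}^2$ in Lemma~\ref{L5.5};
\item $\|\nabla^k b^L\|_{L^2}^2$, through $\|\nabla^k u\|_{L^2}$;
\item the low parts of the small terms $\eps_1^{1/2}\|\nabla^k(\varrho,a,b)\|_{L^2}^2$.
\end{itemize}

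Your ``main obstacle'' is also misdirected. Step~1 produces no order-$(k+1)$ terms:
\begin{itemize}
\item $u\cdot\nabla\partial^\alpha\varrho$, $v\cdot\nabla\partial^\alpha f$ and $u\cdot\nabla_v\partial^\alpha f$ are removed by integration by parts;
\item the symmetrizer $\frac{P'(1+\varrho)}{(1+\varrho)^2}$ reduces the $\varrho$--$u$ coupling to $\int\nabla\big(\frac{P'(1+\varrho)}{1+\varrho}\big)\partial^\alpha\varrho\cdot\partial^\alpha u\,{\rm d}x$;
\item $(1+\varrho)\mathcal{L}\partial^\alpha f$ needs only one $v$-derivative or weight on each factor, so it feeds the $\nu$-dissipation thanks to $1+\varrho\geq\frac12$.
\end{itemize}
The paper records this step as Lemma~\ref{L5.3}, with right-hand side $\eps_1^{1/2}\|\nabla^k(\varrho,a,b)\|_{L^2}^2$. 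Proposition~\ref{P5.2} uses no mixed-derivative functional; Lemma~\ref{L3.5} enters only through the global bound on $\int_0^\infty\mathcal{D}\,{\rm d}\tau$ and the estimate of $R_5$.

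Both of your remedies would break the argument. The functional of Lemma~\ref{L3.5} puts $\sum_{|\alpha|\le3}\|\partial^\alpha\{\mathbf{I}-\mathbf{P}\}f\|_\nu^2$ of \emph{all} orders on the right. An additive $\delta\mathcal{D}(t)$ leaves, after Gronwall, $\int_0^t e^{-\lambda(t-s)}\mathcal{D}(s)\,{\rm d}s$, which has no decay rate and would spoil \eqref{G6.11}.

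If, when you write out the commutator $\partial^\alpha\varrho\,\mathcal{L}f$, you meet a velocity-weighted lower-order factor not controlled by the unweighted $H^3_{x,v}$ energy, such as $\big\|\,|\{\mathbf{I}-\mathbf{P}\}f|_\nu\big\|_{L^\infty_x}$, note two things. It is bounded by $\mathcal{D}(t)^{1/2}$, and it multiplies order-$k$ quantities. Young's inequality then yields $C\mathcal{D}(t)\|\nabla^k\varrho\|_{L^2}^2$. Keep this term multiplicative: the Gronwall step tolerates it because $\int_0^\infty\mathcal{D}\,{\rm d}\tau\lesssim\tilde{\mathcal{E}}_0$.
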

The proof of Proposition \ref{P5.2} relies on Lemmas \ref{L5.3}-- \ref{L5.5}, which are presented below.

By considering the cases $|\alpha| = 2$ and $|\alpha| = 3$ in Lemma \ref{L3.2}, respectively, we can derive the following lemma. For the sake of brevity, the proof is omitted here.
\begin{lem}\label{L5.3}
For the classical solution $(\varrho, u,f)$ to the   Euler-VFP system \eqref{C2}, there exist a positive constant $\lambda_{12}$ independent of $\mu$, such that
\begin{align}\label{G5.18}
& \frac{\rm d}{{\rm d}t} 
\bigg(\bigg\|\frac{\sqrt{P^\prime(1+\varrho)}}{1+\varrho}\nabla^k \varrho\bigg\|_{L^2}^2+\|\nabla^k u\|_{L^2}^2+\|\nabla^k f\|_{L_{x,v}^2}^2\bigg) \nonumber\\ &\quad+\lambda_{12}\big(\|\nabla^k(b-u)\|_{L^2}^2+\|\nabla^k\{\mathbf{I}-\mathbf{P}\}f\|_{\nu}^2\big)
\lesssim \eps_1^\frac{1}{2}\|\nabla^k(\varrho ,a,b)\|_{L^2}^2,
\end{align}
for any $0 \leq t<T$. 
\end{lem}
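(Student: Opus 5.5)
The plan is to repeat the energy computation behind Lemma \ref{L3.2} for a single order $k\in\{2,3\}$, now for the Euler-VFP system \eqref{C2}. Since $\mu=0$, the viscous terms are absent. I apply $\partial^\alpha$ with $|\alpha|=k$ to \eqref{C2} and obtain the analogue of \eqref{G3.11} with $\mu=0$. I then test the density equation with $\frac{P'(1+\varrho)}{(1+\varrho)^2}\partial^\alpha\varrho$, the velocity equation with $\partial^\alpha u$ and the kinetic equation with $\partial^\alpha f$, and sum over $|\alpha|=k$. The symmetrizing weight is essential here. The top-order terms $\frac{P'(1+\varrho)}{1+\varrho}\,{\rm div}\,\partial^\alpha u\,\partial^\alpha\varrho$ and $\frac{P'(1+\varrho)}{1+\varrho}\nabla\partial^\alpha\varrho\cdot\partial^\alpha u$ cancel after integration by parts, up to $\int\nabla\big(\frac{P'(1+\varrho)}{1+\varrho}\big)\cdot\partial^\alpha u\,\partial^\alpha\varrho\,{\rm d}x$. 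The linear coupling $-\partial^\alpha u\cdot v\sqrt{M}$ against $\partial^\alpha f$, combined with $\partial^\alpha(b-u)\cdot\partial^\alpha u$, yields $-\|\partial^\alpha(b-u)\|_{L^2}^2$ together with the $|\partial^\alpha b|^2$ part of $-\langle\mathcal{L}\partial^\alpha f,\partial^\alpha f\rangle$, exactly as in \eqref{G3.4} and \eqref{G3.12}. Coercivity \eqref{G2.5-2} of $\mathcal{L}$ on the microscopic part, together with \eqref{G3.2}, then supplies the left-hand dissipation $\lambda_{12}\big(\|\nabla^k(b-u)\|_{L^2}^2+\|\nabla^k\{\mathbf{I}-\mathbf{P}\}f\|_\nu^2\big)$.

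Next I estimate the nonlinear terms $J_1,\dots,J_{10}$ as in \eqref{G3.13}--\eqref{G3.17-5}. The smallness now comes from the uniform bound \eqref{b3}, which gives $\|(\varrho,u)\|_{H^3}+\|f\|_{H^3_{x,v}}\lesssim\eps_1^{1/2}$, in place of $\delta$. The key difference from Lemma \ref{L3.2} is that the right-hand side must contain only the $k$-th order quantity $\|\nabla^k(\varrho,a,b)\|_{L^2}^2$, not the full $\|\nabla(\varrho,u,a,b)\|_{H^2}^2$. To achieve this I use the commutator estimate of Lemma \ref{L2.3} rather than Lemma \ref{L2.1}. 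For example, I bound $\|\nabla^k(gh)-g\nabla^kh\|_{L^2}\lesssim\|\nabla g\|_{L^\infty}\|\nabla^{k-1}h\|_{L^2}+\|h\|_{L^\infty}\|\nabla^kg\|_{L^2}$. The lower-order factor $\|\nabla^{k-1}h\|_{L^2}$ is then absorbed by interpolation, using Gagliardo–Nirenberg between $\nabla^k$ and low norms that are bounded by $\eps_1^{1/2}$. Where a term produces $\nabla^k u$, I convert it via $\|\nabla^k u\|_{L^2}\le\|\nabla^k(b-u)\|_{L^2}+\|\nabla^k b\|_{L^2}$. The first piece is absorbed into the left side, and the second is a right-hand side term. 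Likewise, $\|\nabla^k f\|$ splits into $\|\nabla^k(a,b)\|_{L^2}$ plus the microscopic part, which is absorbed.

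The term $J_4$ coming from $\partial_t\big[\frac{P'(1+\varrho)}{(1+\varrho)^2}\big]$ is handled by \eqref{NJK-aaaa} and gives $\eps_1^{1/2}\|\nabla^k\varrho\|_{L^2}^2$.

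I expect the main obstacle to be the kinetic term $J_3$, namely $\langle\partial^\alpha(\varrho\mathcal{L}f),\partial^\alpha f\rangle$, together with the weighted terms $\partial^\alpha(\varrho u\cdot vf)$. When derivatives fall on $\varrho$, one needs velocity-weighted and $\nabla_v$ control of lower-order derivatives of $f$, which are not of the form $\nabla^k$. My first attempt is to integrate by parts in $v$ so that the term becomes $-\int\partial^{\alpha-\beta}\varrho\,\langle\sqrt{M}\nabla_v(\partial^\beta f/\sqrt{M}),\sqrt{M}\nabla_v(\partial^\alpha f/\sqrt{M})\rangle$. I then place $\partial^{\alpha-\beta}\varrho$ in $L^\infty$ or $L^3$ and use the $|\cdot|_\nu$ norm of $\{\mathbf{I}-\mathbf{P}\}\partial^\alpha f$ on one side. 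On the other side I use the macroscopic part $\nabla^k(a,b)$ and the mixed derivatives bounded uniformly through Lemma \ref{L3.5} and \eqref{b3}. If the intermediate-order factors cannot be reduced to order $k$, I will accept them multiplied by $\eps_1^{1/2}$, since Proposition \ref{P5.2} only needs the right side to be small relative to the $k$-th order dissipation.
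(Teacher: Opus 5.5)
Your route is the paper's own: the paper only asserts that \eqref{G5.18} follows by running Lemma~\ref{L3.2} at a single order $|\alpha|=k$ with $\mu=0$. The symmetrized pressure term, the linear coupling, $J_1$, $J_2$, $J_4$ and the fluid products do close at order $k$. The most direct way is the H\"older split $L^3\times L^6$ with $\dot H^1\subset L^6$, keeping all velocity weight on the top factor $\partial^\alpha f$. The gap is the term you flag, and your proposed resolution fails.

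Take the piece of $\partial^\alpha(\varrho\mathcal{L}f)$ in which all $k$ derivatives fall on $\varrho$; for $k=3$ the same problem occurs when two of them do. Since $\mathcal{L}=\Delta_v-\frac{|v|^2}{4}+\frac32$, integrating by parts in $v$ gives
\[
-\int_{\mathbb{R}^3}\partial^\alpha\varrho\,\Big\langle \big(\nabla_v+\tfrac{v}{2}\big)\{\mathbf{I}-\mathbf{P}\}f,\ \big(\nabla_v+\tfrac{v}{2}\big)\{\mathbf{I}-\mathbf{P}\}\partial^\alpha f\Big\rangle\,{\rm d}x .
\]
Since $|\cdot|_\nu$ absorbs only one power of $v$, a factor $\frac v2$ must stay on the undifferentiated $\{\mathbf I-\mathbf P\}f$. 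To end with $\|\nabla^k\varrho\|_{L^2}$, $\partial^\alpha\varrho$ must go in $L^2_x$; for $k=3$ this is forced anyway, since $\nabla^3\varrho$ is only in $L^2$. You would then need $\sup_x|\langle v\rangle\{\mathbf I-\mathbf P\}f|_{L^2_v}\lesssim\eps_1^{1/2}$ pointwise in time. Neither \eqref{b3} nor Lemma~\ref{L3.5} gives this. The $H^3_{x,v}$ energy carries no velocity weight, so the mixed derivatives control $\nabla_v\{\mathbf I-\mathbf P\}f$ but not $v\{\mathbf I-\mathbf P\}f$. Weighted norms enter only through the $\nu$-norms in $\mathcal D$, i.e.\ integrated in time.

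What the available bounds actually give is $\|\langle v\rangle\{\mathbf I-\mathbf P\}f\|_{L^2_v(L^\infty)}\lesssim\|\nabla\{\mathbf I-\mathbf P\}f\|_\nu^{1/2}\|\nabla^2\{\mathbf I-\mathbf P\}f\|_\nu^{1/2}$. This leaves $\eps_1^{1/2}\|\nabla\{\mathbf I-\mathbf P\}f\|_\nu^2$ on the right, and for $k=3$ also $\eps_1^{1/2}\|\nabla^2\{\mathbf I-\mathbf P\}f\|_\nu^2$. That is your fallback, but it is not \eqref{G5.18}. Your justification is also wrong: lower-order microscopic norms are not controlled by the order-$k$ dissipation, so Proposition~\ref{P5.2} cannot absorb them. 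This is exactly where the analogy with Lemma~\ref{L3.2} breaks, since there the dissipation contains $\|\{\mathbf I-\mathbf P\}\partial^\alpha f\|_\nu$ for every order.

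Closing needs an extra ingredient. One option is to propagate velocity-weighted norms such as $\|\langle v\rangle\partial^\alpha_x\partial^\beta_v f\|_{L^2_{x,v}}$ uniformly in time. The other is to prove a weaker inequality. By Young,
$\|\nabla^k\varrho\|_{L^2}\|\langle v\rangle\{\mathbf I-\mathbf P\}f\|_{L^2_v(H^2)}\|\nabla^k\{\mathbf I-\mathbf P\}f\|_\nu\le\eta\|\nabla^k\{\mathbf I-\mathbf P\}f\|_\nu^2+C_\eta\mathcal{D}(t)\|\nabla^k\varrho\|_{L^2}^2$.
The right-hand side then becomes $(\eps_1^{1/2}+C\mathcal{D}(t))\|\nabla^k(\varrho,a,b)\|_{L^2}^2$, with $\int_0^\infty\mathcal D\,{\rm d}t\lesssim\eps_1$ by \eqref{b3}. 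That still suffices for the Gronwall step built on Proposition~\ref{P5.2}, via the integrating factor $\exp(C\int_0^t\mathcal D)$. It is not, however, the estimate as stated.
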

\begin{rem}
We   emphasize that the significance of the dissipative structure $b - u$, that is,  
\begin{align*}  
\|\nabla^k u\|_{L^2}^2 \lesssim \|\nabla^k (b - u)\|_{L^2}^2 + \|\nabla^k b\|_{L^2}^2,  
\end{align*}  
  plays a crucial role in the proof of Lemma \ref{L5.3}.
\end{rem}

Next, we apply the low-high frequencies decomposition method (see \cite{LNW-2025-preprint, Ww-CMS-2024}) to obtain more refined energy estimates for the classical solutions of the Euler-VFP system \eqref{C2}. We begin by estimating the dissipation of $(a^{H}, b^{H})$. By applying the operator $\phi_1(D_x)$ (Recall its definition in \eqref{G2.1}) to the equation \eqref{G3.19}, we obtain  
\begin{equation*} 
\left\{\begin{aligned}
&\partial_{t}a^{H}+\nabla\cdot b^{H}=0,\\
&\partial_{t} b_i^{H}+\partial_{i} a^{H}+\sum_{j=1}^3\partial_j\Gamma_{ij}(\{\mathbf{I}-\mathbf{P}\}f)^{H}=\big((1+\varrho)(u_i-b_i)\big)^{H}+\big((1+\varrho)u_ia\big)^{H},  \\
&\partial_{i}b_j^{H}+\partial_j b_i^{H}-\big((1+\varrho)(u_ib_j+u_jb_i)\big)^{H}=-\partial_t \Gamma_{ij}(\{\mathbf{I}-\mathbf{P}\}f)^{H}+\Gamma_{ij}(\mathfrak{l}+\mathfrak{r}+\mathfrak{s})^{H},  
 \end{aligned}
 \right.
\end{equation*}
for $1\leq i,j\leq 3$, where $\mathfrak{l},\mathfrak{r},\mathfrak{s}$, and $\Gamma_{ij}(\cdot)$ are defined 
by \eqref{lrs-1}--\eqref{gammaa}, respectively. Therefore, we define the following temporal functional
$\mathcal{E}_1^H(t)$, which only contains the high frequency part of $f(t,x,v)$, as
\begin{align}\label{G5.19}
\mathcal{E}_1^H(t) := \, & \sum_{i, j=1}^3 \int_{\mathbb{R}^3} \nabla^{k-1}(\partial_i b_j^H+\partial_j b_i^H) \nabla^{k-1} \Gamma_{ij}(\{\mathbf{I}-\mathbf{P}\} f)^H \mathrm{d}x\nonumber\\
& -\int_{\mathbb{R}^3} \nabla^{k-1} a^H \nabla^{k-1}\nabla\cdot b^H \mathrm{d} x.
\end{align}

As shown in \cite[Lemma 5.2]{LNW-2025-preprint}, we proved the case $k=2$ in Lemma~\ref{L5.4}. The case $k=3$ in Lemma~\ref{L5.4} can be derived through a similar process and is therefore omitted for brevity.

\begin{lem}\label{L5.4}
For the classical solution $(\varrho, u,f)$ to the  Euler-VFP system \eqref{C2}, there exist a positive constant $\lambda_{13}$ independent of $\mu$, such that 
\begin{align}\label{G5.20}  
& \frac{\mathrm{d}}{\mathrm{d} t} \mathcal{E}_1^H(t)+\lambda_{13}\big(\|\nabla^k a^H\|_{L^2}^2+\|\nabla^k b^H\|_{L^2}^2\big)\nonumber\\
 &\quad \lesssim \|\nabla^k\{\mathbf{I}-\mathbf{P}\} f\|_{L_{x,v}^2}^2+\|\nabla^k(b-u)\|_{L^2}^2 +\eps_1\|\nabla^k(\varrho,a,b)\|_{L^2}^2,
\end{align}  
for any $k=2,3$ and $0 \leq t < T$.    
\end{lem}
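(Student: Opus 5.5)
The plan is to mimic the proof of Lemma \ref{L3.3} (equivalently the derivation of \eqref{G4.30} and \eqref{G4.37} for the difference system), but applied to the high-frequency projected macroscopic system displayed just before \eqref{G5.19}, at derivative order $k$ instead of order $\le 2$. Throughout, $\|\nabla^k g^H\|_{L^2}\le\|\nabla^k g\|_{L^2}$ and $\phi_1(D_x)$ commutes with derivatives, so every high-frequency quantity can be bounded by its full counterpart.

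\textbf{Estimate for $a^H$.} Apply $\nabla^{k-1}\partial_i$ to the $b_i^H$ equation, pair with $\nabla^{k-1}\partial_i a^H$, and sum over $i$. Writing $\int\nabla^{k-1}\partial_i a^H\,\partial_t\nabla^{k-1}b_i^H\,dx$ as a time derivative and using $\partial_t a^H=-\nabla\cdot b^H$, we obtain
\begin{align*}
-\frac{\rm d}{{\rm d}t}\int_{\mathbb{R}^3}\nabla^{k-1}a^H\,\nabla^{k-1}\nabla\cdot b^H\,{\rm d}x+\|\nabla^k a^H\|_{L^2}^2
\le \|\nabla^{k-1}\nabla\cdot b^H\|_{L^2}^2+\tfrac14\|\nabla^ka^H\|_{L^2}^2+C\|\nabla^k\{\mathbf{I}-\mathbf{P}\}f\|_{L^2_{x,v}}^2+C\|\nabla^{k-1}((1+\varrho)(u-b))^H\|_{L^2}^2+C\|\nabla^{k-1}((1+\varrho)ua)^H\|_{L^2}^2.
\end{align*}
The relaxation term is handled through the high-frequency bound \eqref{G2.3}: $\|\nabla^{k-1}g^H\|_{L^2}\lesssim\|\nabla^k g\|_{L^2}$. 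Then
$$\|\nabla^{k-1}((1+\varrho)(u-b))^H\|_{L^2}\lesssim\|\nabla^k((1+\varrho)(u-b))\|_{L^2}\lesssim\|\nabla^k(b-u)\|_{L^2}+\eps_1^{1/2}\|\nabla^k\varrho\|_{L^2},$$
using Lemma \ref{L2.3} together with \eqref{b3}. For the last factor we use that $\|b-u\|_{L^\infty}$ is small, and the lower-order part of $\nabla^k(b-u)$ is again controlled by the top order. The cubic term $(1+\varrho)ua$ is bounded the same way by $\eps_1\|\nabla^k(\varrho,a,b)\|^2$, after splitting $u=(u-b)+b$ via Lemma \ref{L2.3}.

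\textbf{Estimate for $b^H$.} Apply $\nabla^{k-1}$ to the third equation and use $\sum_{i,j}\|\partial_ib_j+\partial_jb_i\|^2=2\|\nabla b\|^2+2\|\nabla\cdot b\|^2$. Pair with $\nabla^{k-1}(\partial_ib_j^H+\partial_jb_i^H)$. The term $-\partial_t\Gamma_{ij}$ is moved onto $b$ by writing it as $\frac{\rm d}{{\rm d}t}$ of the first piece of $\mathcal{E}_1^H$ plus $2\int\nabla^{k-1}\partial_t b_i^H\,\nabla^{k-1}\partial_j\Gamma_{ij}^H$. The time derivative $\partial_t b^H$ is then replaced using the $b$ equation, as in \eqref{G4.33}. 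This produces $\tfrac14\|\nabla^ka^H\|^2+C\|\nabla^k\{\mathbf{I}-\mathbf{P}\}f\|^2$ plus the same relaxation and nonlinear terms as before. The $\Gamma_{ij}(\mathfrak{l})^H$ term is bounded by $\|\nabla^{k}\{\mathbf{I}-\mathbf{P}\}f\|$, using \eqref{G2.3} for the part without an $x$-derivative. The terms $\Gamma_{ij}(\mathfrak{r}+\mathfrak{s})^H$ and $((1+\varrho)(u_ib_j+u_jb_i))^H$ are bounded by $\eps_1\|\nabla^k(\varrho,a,b)\|^2+\|\nabla^k(b-u)\|^2+\|\nabla^k\{\mathbf{I}-\mathbf{P}\}f\|^2$. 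Here the moments $\Gamma_{ij}$ absorb all $v$-derivatives and polynomial weights, and \eqref{G2.3} is used again to raise the derivative count from $k-1$ to $k$.

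\textbf{Closing.} Adding the two inequalities, with the $b$-estimate taken with weight $1$ so that its $\|\nabla\cdot b^H\|$ dissipation absorbs the $\|\nabla^{k-1}\nabla\cdot b^H\|^2$ on the right of the $a$-estimate, gives \eqref{G5.20} with $\lambda_{13}\approx 1/4$.

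The main obstacle is the nonlinear terms containing $\varrho\,\mathcal{L}\{\mathbf{I}-\mathbf{P}\}f$ (in $\mathfrak{s}$) and $u\cdot v\{\mathbf{I}-\mathbf{P}\}f$ at top order $k=3$. There Lemma \ref{L2.3} places all $k$ derivatives on one factor, and we must avoid needing $\nabla^{k}$ of $f$ beyond $H^3$. We first try to put the extra derivative from \eqref{G2.3} only where needed, that is, bound $\|\nabla^{k-1}(\cdot)^H\|$ directly at order $k-1$ when possible and use \eqref{G2.3} only for the linear relaxation term. The remaining products $\|\varrho\|_{L^\infty}\|\nabla^{k}\{\mathbf{I}-\mathbf{P}\}f\|+\|\nabla^k\varrho\|\,\|\{\mathbf{I}-\mathbf{P}\}f\|_{L^\infty_x}$ are then absorbed using the smallness $\eps_1$ from \eqref{b3}.
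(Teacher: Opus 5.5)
Your proposal is correct and follows the route the paper intends. The paper omits the proof and defers to the $k=2$ case in \cite{LNW-2025-preprint}, i.e.\ the high-frequency, order-$k$ analogue of \eqref{G4.30} and \eqref{G4.37}, added with weight one so that $\|\nabla^{k-1}\nabla\cdot b^H\|_{L^2}^2$ cancels exactly, which gives precisely the functional \eqref{G5.19}.

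Discard the suggestion in your last paragraph to bound the nonlinear terms ``directly at order $k-1$''. Factors such as $\varrho$ are not high-frequency, so that route would leave $\|\nabla^{k-1}(\varrho,a,b)\|_{L^2}$ and $\|\nabla^{k-1}\{\mathbf{I}-\mathbf{P}\}f\|_{L^2_{x,v}}$ on the right, which \eqref{G5.20} does not allow. Instead, as your main argument already does, always apply \eqref{G2.3} to the whole product and then Lemma \ref{L2.3} at order $k$. This costs nothing at $k=3$: $\Gamma_{ij}$ absorbs all $v$-derivatives and weights, and \eqref{G2.3} is never applied to terms like $\Gamma_{ij}(v\cdot\nabla\{\mathbf{I}-\mathbf{P}\}f)$ that already carry $k$ spatial derivatives.
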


Finally, we present the estimates for $\|\nabla^k \varrho^H\|_{L^2}$ with $k = 2, 3$.
\begin{lem}\label{L5.5}
For the classical solution $(\varrho, u,f)$ to the  Euler-VFP system \eqref{C2}, there exist a positive constant $\lambda_{14}$ independent of $\mu$, such that 
\begin{align}   \label{G5.21}
&-\frac{\rm d}{{\rm d}t}\int_{\mathbb{R}^3}\nabla^{k-1}{\rm div} u\cdot\nabla^{k-1}\varrho^H\mathrm{d}x+\lambda_{14}\|\nabla^k \varrho^H\|_{L^2}^{2}\nonumber\\
& \quad  \lesssim  \|\nabla^k(b-u) \|_{L^2}^2+\|\nabla^k\varrho^L\|_{L^2}^2+\|\nabla^k u\|_{L^2}^2 +\eps_1\|\nabla^2( a,b)\|_{L^2}^2, 
\end{align}  
for any $k=2,3$ and $0 \leq t < T$.    
\end{lem}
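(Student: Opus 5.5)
To prove Lemma \ref{L5.5}, I will use the momentum equation of the Euler-VFP system \eqref{C2} to produce dissipation for $\varrho$, with the estimate restricted to the high-frequency part. Write \eqref{C2}$_2$ as
\begin{align*}
\partial_t u + P'(1)\nabla\varrho = (b-u) - au - u\cdot\nabla u - \Big(\frac{P'(1+\varrho)}{1+\varrho}-P'(1)\Big)\nabla\varrho =: (b-u) + N_u .
\end{align*}
Apply $\nabla^{k-1}\phi_1(D_x)$, take the divergence, and test against $\nabla^{k-1}\varrho^H$. Integrating by parts once turns $P'(1)\int \nabla^{k-1}\Delta\varrho^H\,\nabla^{k-1}\varrho^H$ into $-P'(1)\|\nabla^k\varrho^H\|_{L^2}^2$. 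The time-derivative term becomes
\begin{align*}
\int\nabla^{k-1}\partial_t{\rm div}\,u^H\,\nabla^{k-1}\varrho^H = \frac{d}{dt}\int \nabla^{k-1}{\rm div}\,u\cdot\nabla^{k-1}\varrho^H - \int\nabla^{k-1}{\rm div}\,u^H\,\nabla^{k-1}\partial_t\varrho^H .
\end{align*}
Here I use that $\phi_1$ is self-adjoint and idempotent up to a harmless factor; alternatively I can write $u^H$ in both places, since the pairing only sees the high-frequency part.

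Next I substitute $\partial_t\varrho^H = -{\rm div}\,u^H - ({\rm div}(\varrho u))^H$ from \eqref{C2}$_1$. The linear contribution is bounded by $\|\nabla^k u\|_{L^2}^2$. For the nonlinear part $\int \nabla^{k-1}{\rm div}\,u^H\,\nabla^{k-1}({\rm div}(\varrho u))^H$, I move one derivative so that at most $k$ derivatives fall on each factor. Then I use Lemma \ref{L2.3} together with the smallness $\|(\varrho,u)\|_{H^3}\lesssim\eps_1^{1/2}$ from \eqref{b3}. This gives a bound $\eps_1^{1/2}(\|\nabla^k u\|^2+\|\nabla^k\varrho\|^2)$. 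I split $\|\nabla^k\varrho\|\le\|\nabla^k\varrho^L\|+\|\nabla^k\varrho^H\|$, absorb the $\varrho^H$ part into the left side, and keep the $\varrho^L$ part on the right.

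The remaining terms are handled with Cauchy--Schwarz and Young at level $\frac14 P'(1)\|\nabla^k\varrho^H\|^2$:
\begin{itemize}
\item The relaxation term is at most $\|\nabla^k(b-u)\|\,\|\nabla^k\varrho^H\|$.
\item The term $au$, after Lemma \ref{L2.3}, is at most $\|a\|_{L^\infty}\|\nabla^k u\|+\|u\|_{L^\infty}\|\nabla^k a\|$. The second piece gives $\eps_1\|\nabla^k a\|^2$, which I control by the stated $\eps_1\|\nabla^2(a,b)\|^2$-type term; for $k=3$ I interpret this as the corresponding $\nabla^k$ norm, as in Lemma \ref{L5.4}.
\item The term $u\cdot\nabla u$ is at most $\eps_1^{1/2}\|\nabla^k u\|$.
\end{itemize}

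The main obstacle is the pressure nonlinearity $\nabla^{k-1}{\rm div}\big((\frac{P'(1+\varrho)}{1+\varrho}-P'(1))\nabla\varrho\big)^H$, which carries $k+1$ derivatives of $\varrho$. I will not place it directly against $\nabla^{k-1}\varrho^H$. Instead I integrate by parts once to put it against $\nabla^{k}\varrho^H$, so it becomes $\nabla^{k-1}\big(g(\varrho)\nabla\varrho\big)$ with $g(0)=0$. The commutator estimate of Lemma \ref{L2.3} then gives
\begin{align*}
\|\nabla^{k-1}(g\nabla\varrho)\|_{L^2}\lesssim\|g\|_{L^\infty}\|\nabla^k\varrho\|+\|\nabla\varrho\|_{L^3}\|\nabla^{k-1}g\|_{L^6}\lesssim \eps_1^{1/2}\|\nabla^k\varrho\| .
\end{align*}
After the same low/high splitting, the high part is absorbed and the low part goes to the right side.

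Collecting all terms yields \eqref{G5.21} with $\lambda_{14}$ of order $\frac{1}{2}P'(1)$, once $\eps_1$ is small enough.
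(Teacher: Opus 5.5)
Your argument is the paper's in substance: the cross term between $u$ and $\varrho^H$, the continuity equation for $\partial_t\varrho^H$, and product estimates with $\|(\varrho,u)\|_{H^3}\lesssim\eps_1^{1/2}$. You are also right that $\eps_1\|\nabla^2(a,b)\|_{L^2}^2$ should read $\eps_1\|\nabla^k a\|_{L^2}^2$ when $k=3$. However, as written your argument does not produce the functional in \eqref{G5.21}. The multiplier $\phi_1$ is not idempotent, since $\phi_1^2\neq\phi_1$ on $r_0/2<|\xi|<r_0$. Hence $\int\nabla^{k-1}{\rm div}\,u^H\,\nabla^{k-1}\varrho^H\,{\rm d}x\neq\int\nabla^{k-1}{\rm div}\,u\cdot\nabla^{k-1}\varrho^H\,{\rm d}x$, and your displayed identity, with $u$ in the $\frac{\rm d}{{\rm d}t}$ term, is false. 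Your fallback of keeping $u^H$ in both places is consistent and would serve equally well in Proposition \ref{P5.2}, but it proves a variant of the lemma. To get the stated functional, do what the paper does: do not project the momentum equation, but apply $\nabla^{k-1}$ to \eqref{C2}$_2$ and pair it with $\nabla^k\varrho^H$. The linear pressure term then yields $P'(1)\|\nabla^k\varrho^H\|_{L^2}^2+P'(1)\int\nabla^k\varrho^L\cdot\nabla^k\varrho^H\,{\rm d}x$. Young's inequality on this cross term is exactly where $\|\nabla^k\varrho^L\|_{L^2}^2$ enters the paper's right-hand side; in your version it appears only through the $\eps_1$-small nonlinear terms.

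Second, taking the divergence causes a loss of derivatives in the convection term, not only in the pressure term. The expression $\nabla^{k-1}{\rm div}(u\cdot\nabla u)$ contains $u\cdot\nabla\nabla^{k-1}{\rm div}\,u$, i.e.\ $k+1$ derivatives of $u$; for $k=3$ this is $\nabla^4 u$, which is not controlled. The bound $\eps_1^{1/2}\|\nabla^k u\|_{L^2}$ you state holds only after moving the divergence onto $\nabla^{k-1}\varrho^H$, exactly as you did for the pressure nonlinearity. The paper never takes the divergence, so every nonlinear term appears as $\nabla^{k-1}(\cdot)$ tested against $\nabla^k\varrho^H$ and this issue does not arise. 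With these two adjustments your proof goes through.
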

\begin{proof}
By applying the operator $\nabla^k$ ($k = 2, 3$) to the equation \eqref{C2}$_2$, and subsequently multiplying the resulting equation by $\nabla^k \varrho^H$, followed by integrating over $\mathbb{R}^3$, we derive   
\begin{align}\label{G5.22}
P^\prime(1)\|\nabla^k\varrho^H\|_{L^2}^2=\,&-\int_{\mathbb{R}^3}\nabla^k\varrho^H\cdot\nabla^{k-1}\partial_t u\mathrm{d}x+\int_{\mathbb{R}^3}\nabla^k\varrho^H\cdot\nabla^{k-1}(b-u)\mathrm{d}x \nonumber\\
&+\int_{\mathbb{R}^3}\nabla^k\varrho^H\cdot\nabla^{k-1}(au)\mathrm{d}x-\int_{\mathbb{R}^3}\nabla^k\varrho^H\cdot\nabla^{k-1}(u\cdot\nabla u)\mathrm{d}x\nonumber\\
&-\int_{\mathbb{R}^3}\nabla^k\varrho^H\cdot\nabla^{k-1}\bigg(\Big(\frac{P^{\prime}(1)}{1+\varrho}-P^{\prime}(1)       \Big)\nabla\varrho\bigg)\mathrm{d}x + \int_{\mathbb{R}^3}\nabla^k\varrho^H\cdot\nabla^k\varrho^L\mathrm{d}x\nonumber\\
\equiv:\,&\sum_{j=1}^{6}L_j.
\end{align}
By employing \eqref{C2}$_1$ and  integration by parts, and applying Lemmas \ref{L2.1}–\ref{L2.3} together with Young’s inequality, the term $L_1$ can be estimated as follows:
\begin{align}\label{G5.23}
L_{1}=& -\frac{{\rm d}}{{\rm d}t}\int_{\mathbb{R}^3}\nabla^k  \varrho^H\cdot\nabla^{k-1} u\mathrm{d}x+\int_{\mathbb{R}^3}\nabla^k  \partial_t\varrho^H\cdot\nabla^{k-1} u\mathrm{d}x\nonumber\\
=& -\frac{{\rm d}}{{\rm d}t}\int_{\mathbb{R}^3}\nabla^k  \varrho^H\cdot\nabla^{k-1} u\mathrm{d}x+\int_{\mathbb{R}^3}\nabla^{k-1}\big((\varrho+1){\rm div} u+\nabla\varrho\cdot \ u            \big)^H \cdot \nabla^{k-1}{\rm div} u\mathrm{d}x\nonumber\\
\leq& -\frac{{\rm d}}{{\rm d}t}\int_{\mathbb{R}^3}\nabla^k  \varrho^H\cdot\nabla^{k-1} u\mathrm{d}x+C\|\nabla^{k-1}{\rm div}u
\|_{L^2}^2+C\eps_1\|\nabla^k(\varrho,u)\|_{L^2}^2.
\end{align}
For the terms $L_2, \dots, L_6$, by making use of \eqref{G2.3}, Young's inequality, and Lemmas \ref{L2.1}--\ref{L2.3}, it holds that
\begin{align}
L_2\leq&\, \|\nabla^{k-1}\varrho^H\|_{L^2}\|\nabla^k(b-u)\|_{L^2}\nonumber\\
\leq&\, \frac{1}{6}P^\prime(1)\|\nabla^k \varrho^H\|_{L^2}^2+C\|\nabla^k(b-u)\|_{L^2}^2,\label{G5.24-1}\\
L_3\leq&\, \frac{1}{6}P^\prime(1)\|\nabla^k \varrho^H\|_{L^2}^2+C\|\nabla^{k-1}(a,u)\|_{L^6}^2\|(a,u)\|_{L^3}^2\nonumber\\
\leq&\, \frac{1}{6}P^\prime(1)\|\nabla^k \varrho^H\|_{L^2}^2+c\eps_1 \|\nabla^k(a,u)\|_{L^2}^2,\label{G5.24-2}\\
L_4\leq&\, \frac{1}{6}P^\prime(1)\|\nabla^k \varrho^H\|_{L^2}^2+C\|u\|_{L^\infty}^2\|\nabla^k u\|_{L^2}^2+C\|\nabla u\|_{L^3}^2\|\nabla^{k-1} u\|_{L^6}^2\nonumber\\
\leq&\, \frac{1}{6}P^\prime(1)\|\nabla^k \varrho^H\|_{L^2}^2+C\eps_1\|\nabla^k u\|_{L^2}^2,\label{G5.24-3}\\
L_5\leq&\, \frac{1}{6}P^\prime(1)\|\nabla^k \varrho^H\|_{L^2}^2+C\|\varrho\|_{L^\infty}^2\|\nabla^k \varrho\|_{L^2}^2+C\|\nabla \varrho\|_{L^3}^2\|\nabla^{k-1} \varrho\|_{L^6}^2\nonumber\\
\leq&\, \frac{1}{6}P^\prime(1)\|\nabla^k \varrho^H\|_{L^2}^2+C\eps_1\|\nabla^k \varrho\|_{L^2}^2,\label{G5.24-4}\\
L_6\leq&\,\frac{1}{6}P^\prime(1)\|\nabla^k \varrho^H\|_{L^2}^2+ C\|\nabla^k \varrho^L\|_{L^2}^2.\label{G5.24}
\end{align}
Then, \eqref{G5.21} follows by plugging the estimates \eqref{G5.23}--\eqref{G5.24} into \eqref{G5.22}, and hence Lemma \ref{L5.5} is proved.\end{proof}

Below we return to the proof of Proposition \ref{P5.2}.
\begin{proof}[Proof of Proposition \ref{P5.2}]
We define the new energy functional and its corresponding dissipation rate functional as follows:  
\begin{align}\label{G5.25}  
\mathcal{E}_1(t) :=&\, \|\nabla^k(\varrho, u)\|_{L^2}^{2} + \|\nabla^k f\|_{L_{x,v}^2}^2 + \tau_6 \mathcal{E}_1^H(t)+   \tau_7 \int_{\mathbb{R}^3} \nabla^{k-1} \varrho^H \cdot \nabla^{k-1} \mathrm{div} u   {\rm d}x,  \\\label{G5.26}  
\mathcal{D}_1(t) :=&\, \|\nabla^k(b - u)\|_{L^2}^{2} + \|\nabla^k\{\mathbf{I} - \mathbf{P}\} f\|_{\nu}^{2} + \|\nabla^k(\varrho^H, a^H, b^H)\|_{L^2}^2,  
\end{align}  
where $0 < \tau_6, \tau_7 \ll 1$ are   constants, and $k = 2, 3$.
It follows  \eqref{G2.3}, the definition of $\mathcal{E}_1^H(t)$ in \eqref{G5.19}, Young's inequality, we infer that
\begin{align}\label{G5.27}
\mathcal{E}_1(t)\backsim \|\nabla^k(\varrho,u)\|_{L^2}^2+\|\nabla^2 f\|_{L_{x,v}^2}^2,\quad k=2,3.    
\end{align}
Adding \eqref{G5.18}, $\tau_6\times$\eqref{G5.20}, and $\tau_7\times$\eqref{G5.21} gives  
\begin{align}\label{G5.28}  
\frac{{\rm d}}{{\rm d}t}\mathcal{E}_1(t) + \lambda_{15} \mathcal{D}_1(t) \lesssim \|\nabla^k(\varrho^L, a^L, b^L)\|_{L^2}^2,  
\end{align}  
where $\lambda_{15} > 0$, independent of $\mu$, satisfies $\lambda_{15} < \min\{\lambda_{12}, \tau_6\lambda_{13}, \tau_7\lambda_{14}\}$.
Here, we have utilized the following property of the dissipative structures $ b - u $:  
\begin{align*}  
\|\nabla^k u\|_{L^2}^2 \lesssim \|\nabla^k(b - u)\|_{L^2}^2 + \|\nabla^k b\|_{L^2}^2,  
\end{align*}  
and the properties derived from \eqref{G2.1}–\eqref{G2.3}:  
\begin{align*}  
\|\nabla^k (\varrho, a, b)\|_{L^2}^2 \lesssim \|\nabla^k (\varrho^H, a^H, b^H)\|_{L^2}^2 + \|\nabla^k (\varrho^L, a^L, b^L)\|_{L^2}^2.  
\end{align*}  

Adding $\lambda_{15}\times \|\nabla^k(\varrho^L,a^L,b^L)\|_{L^2}^2$ to both sides of \eqref{G5.28}, we obtain  
\begin{align}\label{G5.29}  
\frac{{\rm d}}{{\rm d}t}\mathcal{E}_1(t) + \lambda_{15} \mathcal{E}_1(t) \lesssim \|\nabla^k(\varrho^L,a^L,b^L)\|_{L^2}^2, \quad k = 2,3.  
\end{align}  
Here, we have used the definition in \eqref{G5.26}, which implies that 
\begin{align*}  
\mathcal{E}_1(t) \lesssim \|\nabla^k(\varrho^L,a^L,b^L)\|_{L^2}^2 + \mathcal{D}_1(t).  
\end{align*}  
Consequently, combining \eqref{G5.28} and \eqref{G5.29} allows us to derive \eqref{G5.17}, thereby completing the proof of Proposition \ref{P5.2}.
\end{proof}

\medskip

\section{Optimal Time-Decay Rates for  Classical  Solutions to the   Euler-VFP System}% \eqref{C2}}
This section is devoted to deriving the optimal time-decay rates for $\varrho, u, f$ 
and their spatial gradients to the compressible Euler-VFP system \eqref{C2}, by using the   analysis of the linearized system established in Theorem \ref{T5.1} and the higher-order Lyapunov inequality provided in Lemma \ref{L5.4}.

In contrast to \cite{DL-KRM-2013} on the Euler-VFP system \eqref{A6}, 
we do not need to impose an additional smallness of  $L^1$ norm  
 on the initial condition $(\varrho_0, u_0, f_0)$ to the  system \eqref{C2}. 
 Unlike \cite{LNW-2025-preprint,Ww-CMS-2024}, 
where they required the initial data is bounded in $L^1$ norm, 
here we extend the scope by  assuming  that   the initial data is 
 bounded in the $L^q$-norm for $1 \leq q < \frac{6}{5}$. Subsequently,
  we derive slower time-decay rates for $(\varrho, u, f)$, which will 
  be served as a foundation for the subsequent subsection, where faster time-decay rates will be established.

\subsection{Slower time-decay rates of $(\varrho,u,f)$}
Using Duhamel’s principle and the operator $\mathbb{A}(t) $,  we first reformulate the Cauchy problem   \eqref{C2} as
\begin{align}\label{G6.1}  
U(t)=\mathbb{A}(t)U_{0}+\int_{0}^{t}\mathbb{A}(t-s)\big( S_\varrho(s),S_u(s) , S_f(s)\big) {\rm d}s,
\end{align}  
where the source terms $S_{\varrho}$ and $S_{u}$ are given by  
\begin{align} 
S_\varrho:=\,&-\varrho{\rm div}u-\nabla\varrho\cdot u,  \label{G6.2-1}\\ 
S_u:=\,&-u\cdot\nabla u-\Big(\frac{P^{\prime}(1+\varrho)}{1+\varrho}-P^{\prime}(1)     \Big)\nabla\varrho-au.\label{G6.2}
\end{align}
For the last source term $S_f$, we derive its expression based on the macro-micro decomposition \eqref{G2.4}, leading to  
\begin{align}\label{G6.3}
{ S_f}:=\,&\varrho(\mathcal{L}f+u
\cdot v\sqrt{M}) +(1+\varrho)\Big[-u\cdot\nabla_vf+\frac{1}{2}u\cdot vf\Big]\nonumber\\
=\,&\varrho\mathcal{L}\{\mathbf{I}-\mathbf{P}\}f+\varrho\mathcal{L}\mathbf{P}f
+\varrho u\cdot v\sqrt{M}-(1+\varrho)u\cdot\nabla_v\{\mathbf{I}-\mathbf{P}_0\}f\nonumber\\
&+\frac{1}{2}(1+\varrho)u\cdot v\{\mathbf{I}-\mathbf{P}_0\}f-(1+\varrho)u\cdot\nabla_v \mathbf{P}_0f+\frac{1}{2}(1+\varrho)u\cdot v \mathbf{P}_0 f\nonumber\\
\equiv:\,&h+\varrho(u-b)\cdot v\sqrt{M}+\Big[\nabla_v\cdot G-\frac{1}{2}v\cdot G+(1+\varrho)u\cdot a v\sqrt{M}\Big], 
\end{align}
where $G:=-(1+\varrho)u\{\mathbf{I}-\mathbf{P}_0\}f$ and $h:=\varrho\mathcal{L}\{\mathbf{I}-\mathbf{P}\}f$.

Substituting the equations \eqref{G6.2} and \eqref{G6.3} into \eqref{G6.1}, and then applying the operator $\nabla^l\phi_0(D_x)$ to  $U(t)$, we obtain
\begin{align}\label{G6.4}
\nabla^l U^L(t)=\,\,&\mathbb{A}(t) \nabla^l U^L_0+\int_0^t\mathbb{A}(t-s)(\nabla^l S_\varrho^L(s),0,0)\mathrm{d}s+\int_0^t\mathbb{A}(t-s)(0, \nabla^l S^L_u(s),0)\mathrm{d}s\nonumber\\
&+\int_0^t\mathbb{A}(t-s)\Big(0,0,\nabla^l \Big(\nabla_v\cdot G-\frac{1}{2}v\cdot G \Big)^L\Big)\mathrm{d}s+\int_0^t\mathbb{A}(t-s)(0,0,\nabla^l h^L)\mathrm{d}s\nonumber\\
&+\int_0^t\mathbb{A}(t-s)\big(0,0,\nabla^l \big(\varrho (u-b)\cdot v\sqrt{M} \big)^L\big)\mathrm{d}s\nonumber\\
&+\int_0^t\mathbb{A}(t-s)\big(0,0,\nabla^l \big((1+\varrho)u\cdot av\sqrt{M}\big)^L\big)\mathrm{d}s\nonumber\\
\equiv:\,\,&\sum_{k=1}^7R_k(t).
\end{align}

Under the assumption that $\|(\varrho_0,u_0)\|_{L^q} + \|f_0\|_{\mathcal{Z}_q}$ is bounded for $1 \leq q < \frac{6}{5}$, we present below the slower time decay rates of $(\varrho, u, f)$.
We define
\begin{align}\label{G6.5}
\mathcal{E}_{\infty}(t):=\sup_{0\leq s\leq t}\,&\bigg\{\sum_{l=0}^1(1+s)^{\frac{3}{2} (\frac{1}{q} - \frac{1}{2} )+l}\big(\|\nabla^l(\varrho,u )\|_{L^2}^2 +\|\nabla^l f\|_{L_{x,v}^2}^2\big)\nonumber\\
&+\sum_{l=2}^3(1+s)^{ \frac{3}{2} (\frac{1}{q} - \frac{1}{2} ) +1}\big(\|\nabla^l(\varrho,u )\|_{L^2}^2+\|\nabla^l f\|_{L_{x,v}^2}^2 \big)   \bigg\}.
\end{align}

Now we handle the terms $R_i(t) ( i=1,\dots, 7)$ in \eqref{G6.4}. For any $ l = 0, 1 $, it follows from Theorem \ref{T5.1} that by taking $ p = \frac{4q}{q+2} \in [\frac{4}{3}, \frac{3}{2}) \subset [1, 2] $ in \eqref{G5.7} and applying the interpolation inequality, the term $R_1(t)$ can be estimated as   
\begin{align}\label{G6.6}
\|R_1(t)\|_{\mathcal{Z}_2}\lesssim (1+t)^{-\frac{3}{4} (\frac{1}{q} - \frac{1}{2} )-\frac{l}{2}}  \|U_0\|_{\mathcal{Z}_{\frac{4q}{q+2}}}  \lesssim (1+t)^{-\frac{3}{4} (\frac{1}{q} - \frac{1}{2} )-\frac{l}{2}} \|U_0\|_{\mathcal{Z}_q}^\frac{1}{2}\|U_0\|_{\mathcal{Z}_2}^\frac{1}{2}.
\end{align}

For the terms $R_2(t)$ and $R_3(t)$, by setting $p=0$, $m=l$ and $p=\frac{3}{2} $ , 
$m=l$ in \eqref{G5.7} in Theorem \ref{T5.1}, respectively, and applying Lemmas \ref{L2.1}–\ref{L2.2}, 
Lemma \ref{L2.4}, and the interpolation inequality,  we compute
\begin{align*} 
&\|R_2(t)\|_{\mathcal{Z}_2}+\|R_3(t)\|_{\mathcal{Z}_2}  \nonumber\\
\lesssim &\,
\int_0^{\frac{t}{2}}(1+t-s)^{-\frac{3}{4}-\frac{l}{2}}\big\|\big(S _\varrho (s),S _u(s)\big)\big\|_{\mathcal{Z}_1}\mathrm{d}s  +\int_{\frac{t}{2}}^{t}(1+t-s)^{-\frac{1}{4}-\frac{l}{2}}\big\|\big(S _\varrho (s),S _u(s) \big)\big\|_{\mathcal{Z}_{\frac{3 }{2}}}\mathrm{d}s \nonumber\\
\lesssim&\,\mathcal{E}_{\infty}(t)\int_0^{\frac{t}{2}}(1+t-s)^{-\frac{3}{4}-\frac{l}{2}} (1+s)^{-\frac{3}{2} ( \frac{1}{q}-\frac{1}{2} )}\mathrm{d}s\nonumber\\
&+\mathcal{E}_{\infty}(t)\int_{\frac{t}{2}}^{t}(1+t-s)^{-\frac{1}{4}-\frac{l}{2}} (1+s)^{-\frac{3}{2} ( \frac{1}{q}-\frac{1}{2}  )-\frac{1}{2}}\mathrm{d}s \nonumber\\
\lesssim&\,(1+t)^{-\frac{3}{2} ( \frac{1}{q}-\frac{1}{2}  )-\frac{l}{2}+\frac{1}{4}} \mathcal{E}_{\infty}(t) ,
\end{align*}
for $l=0,1$,
where we have utilized the following facts:
\begin{align*}
\|S_{\varrho} \|_{\mathcal{Z}_1}+\|S_{u} \|_{\mathcal{Z}_1}\lesssim &\,\|(\varrho,u)\|_{L^2} \|(\varrho,u,a)\|_{H^1}\nonumber\\
\lesssim&\,
(1+t)^{-\frac{3}{2} ( \frac{1}{q}-\frac{1}{2} )}\mathcal{E}_{\infty}(t),\nonumber\\
\|S _{\varrho}\|_{\mathcal{Z}_{\frac{3}{2}}}+\|S _{u}\|_{\mathcal{Z}_{\frac{3}{2}}}\lesssim &\, 
\|(\varrho,u)\|_{L^6}\|\nabla(\varrho,u)\|_{L^2}+\|a\|_{L^6}\|u\|_{L^2}\nonumber\\
\lesssim&\, \|\nabla(\varrho,u,a)\|_{H^1}\|(\varrho,u)\|_{H^1}\nonumber\\
\lesssim&\,(1+t)^{-\frac{3}{2} ( \frac{1}{q}-\frac{1}{2}  )-\frac{1}{2}}\mathcal{E}_{\infty}(t).
\end{align*}
Notice that $q \in [1, \frac{6}{5})$, which implies that $-\frac{3}{4} (\frac{1}{q} - \frac{1}{2} ) + \frac{1}{4} < 0$. Therefore, we obtain  
\begin{align}\label{G6.7}
\|R_2(t)\|_{\mathcal{Z}_2} + \|R_3(t)\|_{\mathcal{Z}_2} \lesssim (1+t)^{-\frac{3}{4}
 (\frac{1}{q} - \frac{1}{2} ) - \frac{l}{2}}\mathcal{E}_{\infty}(t),    
\end{align} 
for $l=0,1$.

For the term $R_4(t)$, by applying $\mathbf{P}_0G=0$ and selecting $p=1$, $m=l$, as well as $p=\frac{3}{2}$, $m=0$ in \eqref{G5.8}, respectively, one has
\begin{align}\label{G6.8}
\|R_4(t)\|_{\mathcal{Z}_2}^2\lesssim&\,  \int_0^{\frac{t}{2}}(1+t-s)^{-\frac{3}{2}-l}
\|(1+\varrho)u\{\mathbf{I}-\mathbf{P}_0\}f\|_{\mathcal{Z}_1}^2\mathrm{d}s\nonumber\\
&+ \int_{\frac{t}{2}}^t(1+t-s)^{-\frac{1}{2}}
\big\|\nabla^l\big((1+\varrho)u\{\mathbf{I}-\mathbf{P}_0\}f\big)\big\|_{\mathcal{Z}_{\frac{3}{2}}}^2\mathrm{d}s\nonumber\\
\lesssim&\,  \mathcal{E}_{ \infty}^2(t)\int_0^{\frac{t}{2}}(1+t-s)^{-\frac{3}{2}-l}
(1+s)^{-3 (\frac{1}{q} - \frac{1}{2} )}\mathrm{d}s\nonumber\\
&+ \mathcal{E}_{ \infty}^2(t)\int_{\frac{t}{2}}^t(1+t-s)^{-\frac{1}{2}}(1+s)^{- {3} (\frac{1}{q} - \frac{1}{2} )-1-l}\mathrm{d}s\nonumber\\
\lesssim&\, \Big((1+t)^{-\frac{3}{2}-l}+(1+t)^{- {3} (\frac{1}{q} - \frac{1}{2} )-\frac{1}{2}-l}\Big)\mathcal{E}_{ \infty}^2(t)\nonumber\\
\lesssim&\, (1+t)^{-\frac{3}{2} (\frac{1}{q} - \frac{1}{2} )-l}\mathcal{E}_{ \infty}^2(t),
\end{align}
for $l=0,1$. 

For the term $R_5(t)$, by setting $p=0$, $m=l$ and subsequently $p=\frac{3}{2}$, $m=l$ in \eqref{G5.8} in Theorem \ref{T5.1}, and by utilizing the condition $\mathbf{P}h=0$, we have
\begin{align}\label{G6.9}
\|R_5(t)\|_{\mathcal{Z}_2}^2   \lesssim\,& \int_0^{\frac{t}{2}}(1+t-s)^{-\frac{3}{2}-l}
\|\nu^{-\frac{1}{2}}\varrho\mathcal{L}\{\mathbf{I}-\mathbf{P}\}f\|_{\mathcal{Z}_1}^2\mathrm{d}s\nonumber\\
&+  \int_{\frac{t}{2}}^{t}(1+t-s)^{-\frac{1}{2}-l}
\|\nu^{-\frac{1}{2}}\varrho\mathcal{L}\{\mathbf{I}-\mathbf{P}\}f\|_{\mathcal{Z}_{\frac{3}{2}}}^2\mathrm{d}s\nonumber\\
\lesssim\,&  (1+t)^{-\frac{3}{2}-l}\mathcal{E}_{ \infty}(t)\int_0^{\frac{t}{2}}
\mathcal{D}(s)\mathrm{d}s +(1+t)^{^{-\frac{3}{2} ( \frac{1}{q}-\frac{1}{2}  )-1}}\mathcal{E}_{ \infty}(t)\int_{\frac{t}{2}}^t\mathcal{D}(s)\mathrm{d}s\nonumber\\
\lesssim\,& \tilde{\mathcal{E}}_0 (1+t)^{^{-\frac{3}{2} ( \frac{1}{q}-\frac{1}{2}  )-l}}\mathcal{E}_{ \infty}(t),
\end{align}
for $l=0,1$.

For the remaining terms $R_6(t)$ and $R_7(t)$, we can derive similar estimates by setting $p=1, m=0$ and $p=\frac{3}{2}, m=l$ in \eqref{G5.7} in Theorem \ref{T5.1}, respectively. Applying Lemma \ref{L2.1} and Lemma \ref{L2.4} subsequently yields  
\begin{align}\label{G6.10}
 \|R_6(t)\|_{\mathcal{Z}_2}+\|R_7(t)\|_{\mathcal{Z}_2} 
\lesssim\,&   \int_0^{\frac{t}{2}}(1+t-s)^{-\frac{3}{4}-\frac{l}{2}}\|(1+\varrho)u\cdot a v\sqrt{M}\|_{\mathcal{Z}_1}\mathrm{d}s\nonumber\\
&+  \int_{\frac{t}{2}}^{t}(1+t-s)^{-\frac{1}{4}}\big\|\nabla^l\big((1+\varrho)u\cdot a v\sqrt{M}\big)\big\|_{\mathcal{Z}_{\frac{3}{2}}}\mathrm{d}s\nonumber\\
&+\int_{0}^{\frac{t}{2}}(1+t-s)^{-\frac{3}{4}-\frac{l}{2}}\|\varrho(u-b)\cdot  v\sqrt{M}\|_{\mathcal{Z}_1}\mathrm{d}s\nonumber\\
&+\int_{\frac{t}{2}}^{t}(1+t-s)^{-\frac{1}{4}}\big\|\nabla^l\big(\varrho(u-b)\cdot  v\sqrt{M}\big)\big\|_{\mathcal{Z}_{\frac{3}{2}}}\mathrm{d}s\nonumber\\
\lesssim\,& \int_0^{\frac{t}{2}}(1+t-s)^{-\frac{3}{4}-\frac{l}{2}}\big((1+\|\varrho\|_{L^{\infty}})\|u\|_{L^2}\|a\|_{L^2}+\|\varrho\|_{L^2}\|(u,b)\|_{L^2}\big) 
  \mathrm{d}s\nonumber\\
&+\int_{\frac{t}{2}}^t(1+t-s)^{-\frac{1}{4}}(1+\|\varrho\|_{L^{\infty}})(\|\nabla^l u\|_{{L^2}}\|a\|_{L^6}+\|u\|_{L^6}\|\nabla^l a\|_{L^2})\mathrm{d}s\nonumber\\
&+\int_{\frac{t}{2}}^t(1+t-s)^{-\frac{1}{4}}\|\nabla^l \varrho\|_{L^2}\big(\|a\|_{L^6}\|u\|_{L^{\infty}}+\|(u,b)\|_{L^6}\big)\mathrm{d}s\nonumber\\
&+\int_{\frac{t}{2}}^t(1+t-s)^{-\frac{1}{4}}\|\nabla^l(b-u)\|_{L^2}\|\varrho\|_{L^6}\mathrm{d}s\nonumber\\
\lesssim\,&  \int_0^{\frac{t}{2}}(1+t-s)^{-\frac{3}{4}-\frac{l}{2}}\|( \varrho,u,a,b)\|_{L^2}^2 
  \mathrm{d}s\nonumber\\
&+ \int_{\frac{t}{2}}^t(1+t-s)^{-\frac{1}{4}}\|\nabla^l(\varrho,u,a,b)\|_{L^2}\|\nabla( \varrho,u,a,b)\|_{L^2}\mathrm{d}s\nonumber\\
\lesssim\,&  \mathcal{E}_{ \infty}(t)\int_0^{\frac{t}{2}}(1+t-s)^{-\frac{3}{4}-\frac{l}{2}}(1+s)^{-\frac{3}{2} (\frac{1}{q}-\frac{1}{2}  )}
  \mathrm{d}s\nonumber\\
&+  \mathcal{E}_{ \infty}(t)\int_{\frac{t}{2}}^{t}(1+t-s)^{-\frac{1}{4}}(1+s)^{-\frac{3}{2} (\frac{1}{q}-\frac{1}{2}  )-\frac{l}{2}-\frac{1}{2}}
  \mathrm{d}s\nonumber\\
\lesssim\,& (1+t)^{-\frac{3}{2} (\frac{1}{q}-\frac{1}{2} )+\frac{1}{4} -\frac{l}{2}}\mathcal{E}_{ \infty}(t)\nonumber\\
\lesssim&\,(1+t)^{-\frac{3}{4} (\frac{1}{q} - \frac{1}{2} ) - \frac{l}{2}}\mathcal{E}_{\infty}(t),
\end{align}
for $l=0,1$. 

Thus, inserting \eqref{G6.6}–\eqref{G6.10} into \eqref{G6.4} gives  
\begin{align*}
\|\nabla^l U^L(t)\|_{\mathcal{Z}_2}^2\lesssim (1+t)^{-\frac{3}{2} (\frac{1}{q} - \frac{1}{2} )-l}\Big(   \|U_0\|_{\mathcal{Z}_q}\|U_0\|_{\mathcal{Z}_2} 
+\tilde{\mathcal{E}}_0 \mathcal{E} _{ \infty}(t)+\big(\mathcal{E}_{ \infty}(t)\big)^2\Big),  
\end{align*}
for $l=0,1$.
Combining Proposition \ref{P5.2} with Gronwall's inequality, we have  
\begin{align*}
\|\nabla^k(\varrho,u)(t)\|_{L^2}^2+\|\nabla^k f(t)\|_{L_{x,v}^2}^2\lesssim e^{-\lambda_{11}t}\tilde{\mathcal{E}}_0   +\int_0^t e^{-\lambda_{11}(t-s)}\|\nabla^k U^L(s)\|_{\mathcal{Z}_2}^2{\rm d}s,
\end{align*}
for any $k=2,3$,
which along with \eqref{G2.3} implies that
\begin{align} \label{G6.11}
\|\nabla^k U (t)\|_{\mathcal{Z}_2}^2\lesssim (1+t)^{-\frac{3}{2} (\frac{1}{q} - \frac{1}{2} )-1}\Big( \|U_0\|_{\mathcal{H}^3}^2+  \|U_0\|_{\mathcal{Z}_q}\|U_0\|_{\mathcal{Z}_2} 
+\tilde{\mathcal{E}}_0 \mathcal{E} _{ \infty}(t)+\big(\mathcal{E}_{ \infty}(t)\big)^2\Big),     
\end{align}
for any $k=2,3$.
For $l=0,1$, thanks to \eqref{G2.2}--\eqref{G2.3} and \eqref{G6.11}, we also have
\begin{align}\label{G6.12}
\|\nabla^l U(t)\|_{\mathcal{Z}_2}^2\lesssim \,& \|\nabla^l U^L(t)\|_{\mathcal{Z}_2}^2+\|\nabla^l U^H(t)\|_{\mathcal{Z}_2}^2 \nonumber\\
\lesssim\,& \|\nabla^l U^L(t)\|_{\mathcal{Z}_2}^2+\|\nabla^k U(t)\|_{\mathcal{Z}_2}^2\nonumber\\
\lesssim&\,
(1+t)^{-\frac{3}{2} (\frac{1}{q} - \frac{1}{2} )-l}\Big( \|U_0\|_{\mathcal{H}^3}^2+  \|U_0\|_{\mathcal{Z}_q}\|U_0\|_{\mathcal{Z}_2} 
+\tilde{\mathcal{E}}_0 \mathcal{E} _{ \infty}(t)+\big(\mathcal{E}_{ \infty}(t)\big)^2\Big),    
\end{align}
for $l=0,1$.

From \eqref{G6.11}--\eqref{G6.12} and the definition of $\mathcal{E}_{\infty}(t)$ given in \eqref{G6.5}, we deduce that  
\begin{align*}  
\mathcal{E}_{\infty}(t) \lesssim \|U_0\|_{\mathcal{H}^3}^2 + \|U_0\|_{\mathcal{Z}_q}\|U_0\|_{\mathcal{Z}_2} + \tilde{\mathcal{E}}_0  \mathcal{E}_{\infty}(t) + \big(\mathcal{E}_{\infty}(t)\big)^2.  
\end{align*}  
Since $\tilde{\mathcal{E}}_0 $ is sufficiently small, it follows that  
\begin{align*}  
\mathcal{E}_{\infty}(t) \lesssim \|U_0\|_{\mathcal{H}^3}^2 + \|U_0\|_{\mathcal{Z}_q}\|U_0\|_{\mathcal{Z}_2} + \big(\mathcal{E}_{\infty}(t)\big)^2.  
\end{align*}  
Moreover, since $\|U_{0}\|_{\mathcal{H}^3}$ and $\|U_{0}\|_{\mathcal{Z}_q}\|U_{0}\|_{\mathcal{Z}_2}$ are sufficiently small, applying Lemma \ref{L2.5} yields  
\begin{align*}  
\mathcal{E}_{\infty}(t) \lesssim \|U_0\|_{\mathcal{H}^3}^2 + \|U_0\|_{\mathcal{Z}_q}\|U_0\|_{\mathcal{Z}_2},
\end{align*}  
which implies the decay estimate  
\begin{align}\label{decay-a}  
\|(\varrho,u)\|_{L^2}+\|f\|_{L_{x,v}^2} &\lesssim (1+t)^{-\frac{3}{4} (\frac{1}{q} - \frac{1}{2} )} \Big( \|U_0\|_{\mathcal{H}^3} + \|U_0\|_{\mathcal{Z}_q}^\frac{1}{2}\|U_0\|_{\mathcal{Z}_2}^\frac{1}{2} \Big), \nonumber \\  
\|\nabla(\varrho,u)\|_{H^2}+\|\nabla f\|_{L_v^2(H^2)} &\lesssim (1+t)^{-\frac{3}{4} (\frac{1}{q} - \frac{1}{2} )-\frac{1}{2}} \Big( \|U_0\|_{\mathcal{H}^3} + \|U_0\|_{\mathcal{Z}_q}^\frac{1}{2}\|U_0\|_{\mathcal{Z}_2}^\frac{1}{2} \Big).  
\end{align}

\subsection{Faster optimal time-decay rates of $(\varrho, u, f)$ and Proof of Theorem \ref {T1.4}} 
Noting that the time-decay rates given in \eqref{decay-a} are relatively slow, this subsection aims to establish the faster optimal time-decay rates for $(\varrho,u,f)$, and their spatial gradients.
\begin{proof}[Proof of Theorem \ref{T1.4}]
Denote
\begin{align}\label{G6.14}
\mathcal{E}_{1,\infty}(t):=\sup_{0\leq s\leq t}\,&\bigg\{\sum_{l=0}^1(1+s)^{ {3}  (\frac{1}{q} - \frac{1}{2} )+l}\big(\|\nabla^l(\varrho,u )\|_{L^2}^2 +\|\nabla^l f\|_{L_{x,v}^2}^2\big)\nonumber\\
&+\sum_{l=2}^3(1+s)^{ 3 (\frac{1}{q} - \frac{1}{2} ) +1}\big(\|\nabla^l(\varrho,u )\|_{L^2}^2+\|\nabla^l f\|_{L_{x,v}^2}^2 \big)   \bigg\}.
\end{align}
We provide a more refined energy estimate for the terms $R_i(t) (i=1,\dots, 7)$ in \eqref{G6.4}. To begin with, setting $p = q$ and $m = l$ in \eqref{G5.7} yields  
\begin{align}\label{G6.15}
\|R_1(t)\|_{\mathcal{Z}_2}\lesssim (1+t)^{-\frac{3}{2} ( \frac{1}{q}-\frac{1}{2}  )-\frac{l}{2}}\|U_0\|_{\mathcal{Z}_q},    
\end{align}
for $l=0,1$.
For the terms $R_2(t)$ and $R_3(t)$, we handle the zero-order nonlinear term $au$ separately in the following way:
\begin{align}\label{G6.16}
&\|R_2(t)\|_{\mathcal{Z}_2}+\|R_3(t)\|_{\mathcal{Z}_2}\nonumber\\
\lesssim\,&
\int_0^{\frac{t}{2}}(1+t-s)^{-\frac{3}{4}-\frac{l}{2
}}\|(S_\varrho,S_u) (s)\|_{\mathcal{Z}_1}\mathrm{d}s +\int_{\frac{t}{2}}^{t}(1+t-s)^{-\frac{1}{4}-\frac{l}{2}}\|(S_\varrho,S_u-au )(s)\|_{\mathcal{Z}_{\frac{3}{2}}}\mathrm{d}s\nonumber\\
&+\int_{\frac{t}{2}}^{t}(1+t-s)^{-\frac{1}{4}}\|\nabla^l(au )(s)\|_{\mathcal{Z}_{\frac{3}{2}}}\mathrm{d}s\nonumber\\
\lesssim\,&\int_0^{\frac{t}{2}}(1+t-s)^{-\frac{3}{4}-\frac{l}{2}}\big(\|(\varrho,u)\|_{L^2}\|\nabla (\varrho,u)\|_{L^2}+\|\varrho\|_{L^2}\|\nabla^2u\|_{L^2}+\|a\|_{L^2}\|u\|_{L^2}\big)\mathrm{d}s\nonumber\\
&+\int_{\frac{t}{2}}^t(1+t-s)^{-\frac{1}{4}-\frac{l}{2}}\big(\|(\varrho,u)\|_{L^6}\|\nabla(\varrho,u)\|_{L^2}+\|\varrho\|_{L^6}\|\nabla^2 u\|_{L^2}\big)\mathrm{d}s\nonumber\\
&+\int_{\frac{t}{2}}^t(1+t-s)^{-\frac{1}{4}}\big(\|\nabla^l a\|_{L^2}\|u\|_{L^6}+\|\nabla^l u\|_{L^2}\|a\|_{L^6}     \big)\mathrm{d}s\nonumber\\
\lesssim\,&\Big(\|U_0\|_{\mathcal{H}^3}+\|U_0\|_{\mathcal{Z}_q}^{\frac{1}{2}}\|U_0\|_{\mathcal{Z}_2}^{\frac{1}{2}}   \Big)\mathcal{E}^{\frac{1}{2}}_{1,\infty}(t)\int_0^{\frac{t}{2}}(1+t-s)^{-\frac{3}{4}-\frac{l}{2}}(1+s)^{-\frac{9}{4} (\frac{1}{q} - \frac{1}{2} )}
  \mathrm{d}s\nonumber\\
&+\Big(\|U_0\|_{\mathcal{H}^3}+\|U_0\|_{\mathcal{Z}_q}^{\frac{1}{2}}\|U_0\|_{\mathcal{Z}_2}^{\frac{1}{2}}   \Big)\mathcal{E}^{\frac{1}{2}}_{1,\infty}(t)\int_{\frac{t}{2}}^t(1+t-s)^{-\frac{1}{4}-\frac{l}{2}}(1+s)^{-\frac{9}{4} (\frac{1}{q} - \frac{1}{2} )-1}
  \mathrm{d}s\nonumber\\
&+\Big(\|U_0\|_{\mathcal{H}^3}+\|U_0\|_{\mathcal{Z}_q}^{\frac{1}{2}}\|U_0\|_{\mathcal{Z}_2}^{\frac{1}{2}}   \Big)\mathcal{E}^{\frac{1}{2}}_{1,\infty}(t)\int_{\frac{t}{2}}^t(1+t-s)^{-\frac{1}{4}}(1+s)^{-\frac{9}{4} (\frac{1}{q} - \frac{1}{2} )-\frac{1}{2}-\frac{l}{2}}
  \mathrm{d}s\nonumber\\
\lesssim\,& (1+t)^{-\frac{9}{4} (\frac{1}{q} - \frac{1}{2} )-\frac{l}{2}}\Big(\|U_0\|_{\mathcal{H}^3}+\|U_0\|_{\mathcal{Z}_q}^{\frac{1}{2}}\|U_0\|_{\mathcal{Z}_2}^{\frac{1}{2}}   \Big)\mathcal{E}^{\frac{1}{2}}_{1,\infty}(t),\nonumber\\
\lesssim\,& (1+t)^{-\frac{3}{2} (\frac{1}{q} - \frac{1}{2} )-\frac{l}{2}}\Big(\|U_0\|_{\mathcal{H}^3}+\|U_0\|_{\mathcal{Z}_q}^{\frac{1}{2}}\|U_0\|_{\mathcal{Z}_2}^{\frac{1}{2}}   \Big)\mathcal{E}^{\frac{1}{2}}_{1,\infty}(t), 
\end{align}
for $l
=0,1$.

For the terms $R_4(t)$ and $R_5(t)$, when $q \in [1, \frac{6}{5})$, similar to the derivations in \eqref{G6.8} and \eqref{G6.9}, we arrive at
\begin{align}\label{G6.17}
\|R_4(t)\|_{\mathcal{Z}_2}^2\lesssim&\, \int_0^{\frac{t}{2}}(1+t-s)^{-\frac{3}{2}-l}
\|(1+\varrho)u\{\mathbf{I}-\mathbf{P}_0\}f\|_{\mathcal{Z}_1}^2\mathrm{d}s\nonumber\\
&+ \int_{\frac{t}{2}}^t(1+t-s)^{-\frac{1}{2}}
\big\|\nabla^l\big((1+\varrho)u\{\mathbf{I}-\mathbf{P}_0\}f\big)\big\|_{\mathcal{Z}_{\frac{3}{2}}}^2\mathrm{d}s\nonumber\\
\lesssim&\,(1+t)^{-\frac{3}{2}-l}\Big(\|U_0\|_{\mathcal{H}^3}^2+\|U_0\|_{\mathcal{Z}_q}\|U_0\|_{\mathcal{Z}_2}   \Big)\mathcal{E}_{1,\infty}(t)\nonumber\\
\lesssim&\,(1+t)^{-3 ( \frac{1}{q}-\frac{1}{2}  )-l}\Big(\|U_0\|_{\mathcal{H}^3}^2+\|U_0\|_{\mathcal{Z}_q}\|U_0\|_{\mathcal{Z}_2}   \Big)\mathcal{E}_{1,\infty}(t),
\end{align}
for $l=0,1 $, and
\begin{align}\label{G6.18}
\|R_5(t)\|_{\mathcal{Z}_2}^2   \lesssim\,& \int_0^{\frac{t}{2}}(1+t-s)^{-\frac{3}{2}-l}
\|\nu^{-\frac{1}{2}}\varrho\mathcal{L}\{\mathbf{I}-\mathbf{P}\}f\|_{\mathcal{Z}_1}^2\mathrm{d}s\nonumber\\
&+ \int_{\frac{t}{2}}^{t}(1+t-s)^{-\frac{1}{2}-l}
\|\nu^{-\frac{1}{2}}\varrho\mathcal{L}\{\mathbf{I}-\mathbf{P}\}f\|_{\mathcal{Z}_{\frac{3}{2}}}^2\mathrm{d}s\nonumber\\
\lesssim\,& (1+t)^{-\frac{3}{2}-l} \mathcal{E} _{1,\infty}(t)\int_0^{\frac{t}{2}}
\mathcal{D}(s)\mathrm{d}s +(1+t)^{-3\big(\frac{1}{q}-\frac{1}{2}  \big)-1}\mathcal{E}_{1,\infty}(t)\int_{\frac{t}{2}}^t\mathcal{D}(s)\mathrm{d}s\nonumber\\
\lesssim\,& \tilde{\mathcal{E}}_0 (1+t)^{-3\big(\frac{1}{q}-\frac{1}{2}  \big)-l}\mathcal{E}_{1,\infty}(t),
\end{align}
for $l=0,1$.

For the last two terms $R_6(t)$ and $R_7(t)$, a similar argument to that used in \eqref{G6.10} makes it evident that  
\begin{align}\label{G6.19}
&\|R_6(t)\|_{\mathcal{Z}_2}+\|R_7(t)\|_{\mathcal{Z}_2}\nonumber\\
\lesssim\,&  \int_0^{\frac{t}{2}}(1+t-s)^{-\frac{3}{4}-\frac{l}{2}}\|( \varrho,u,a,b)\|_{L^2}^2 
  \mathrm{d}s\nonumber\\
&+ \int_{\frac{t}{2}}^t(1+t-s)^{-\frac{1}{4}}\|\nabla^l( \varrho,u,a,b)\|_{L^2}\|\nabla(\varrho,u,a,b)\|_{L^2}\mathrm{d}s\nonumber\\
\lesssim\,& \Big(\|U_0\|_{\mathcal{H}^3}+\|U_0\|_{\mathcal{Z}_q}^{\frac{1}{2}}\|U_0\|_{\mathcal{Z}_2}^{\frac{1}{2}}   \Big)\mathcal{E}^{\frac{1}{2}}_{1,\infty}(t)\int_0^{\frac{t}{2}}(1+t-s)^{-\frac{3}{4}-\frac{l}{2}}(1+s)^{-\frac{9}{4} (\frac{1}{q} - \frac{1}{2} )}
  \mathrm{d}s\nonumber\\
&+  \Big(\|U_0\|_{\mathcal{H}^3}+\|U_0\|_{\mathcal{Z}_q}^{\frac{1}{2}}\|U_0\|_{\mathcal{Z}_2}^{\frac{1}{2}}   \Big)\mathcal{E}^{\frac{1}{2}}_{1,\infty}(t)\int_{\frac{t}{2}}^{t}(1+t-s)^{-\frac{1}{4}}(1+s)^{-\frac{9}{4} (\frac{1}{q} - \frac{1}{2} )-\frac{1}{2}-\frac{l}{2}}
  \mathrm{d}s\nonumber\\
\lesssim\,& (1+t)^{-\frac{9}{4} (\frac{1}{q} - \frac{1}{2} )-\frac{l}{2}}\Big(\|U_0\|_{\mathcal{H}^3}+\|U_0\|_{\mathcal{Z}_q}^{\frac{1}{2}}\|U_0\|_{\mathcal{Z}_2}^{\frac{1}{2}}   \Big)\mathcal{E}^{\frac{1}{2}}_{1,\infty}(t),\nonumber\\
\lesssim\,& (1+t)^{-\frac{3}{2} (\frac{1}{q} - \frac{1}{2} )-\frac{l}{2}}\Big(\|U_0\|_{\mathcal{H}^3}+\|U_0\|_{\mathcal{Z}_q}^{\frac{1}{2}}\|U_0\|_{\mathcal{Z}_2}^{\frac{1}{2}}   \Big)\mathcal{E}^{\frac{1}{2}}_{1,\infty}(t), 
\end{align}
for $l
=0,1$.

Substituting all estimates \eqref{G6.15}–\eqref{G6.19} into \eqref{G6.4} yields  
\begin{align}\label{G6.20}
 \|\nabla^l U^L(t)\|_{\mathcal{Z}_2}^2\lesssim (1+t)^{-{3} (\frac{1}{q} - \frac{1}{2} )-l}\big( \|U_0\|_{\mathcal{H}^3}^2+  \|U_0\|_{\mathcal{Z}_q}\|U_0\|_{\mathcal{Z}_2} 
+\mathcal{E}_0\big)\mathcal{E} _{1, \infty}(t),     
\end{align}
for $l=0,1$. Similar to \eqref{G6.11}, using \eqref{G2.3}, one gets
\begin{align}\label{G6.21}
\|\nabla^k U (t)\|_{\mathcal{Z}_2}^2\lesssim (1+t)^{- {3}  (\frac{1}{q} - \frac{1}{2} )-1}\Big(\big( \|U_0\|_{\mathcal{H}^3}^2+  \|U_0\|_{\mathcal{Z}_q}\|U_0\|_{\mathcal{Z}_2} 
+\tilde{\mathcal{E}}_0 \big)\mathcal{E} _{1, \infty}(t)+\|U_0\|_{\mathcal{H}^3\cap\mathcal{Z}_q}^2\Big),     
\end{align}
for any $k=2,3$.
It follows from \eqref{G6.20} and \eqref{G6.21}, together with \eqref{G2.3}, that
\begin{align*} 
\|\nabla^l U(t)\|_{\mathcal{Z}_2}^2\lesssim \,& \|\nabla^l U^L(t)\|_{\mathcal{Z}_2}^2+\|\nabla^l U^H(t)\|_{\mathcal{Z}_2}^2 \nonumber\\
\lesssim\,& \|\nabla^l U^L(t)\|_{\mathcal{Z}_2}^2+\|\nabla^k U(t)\|_{\mathcal{Z}_2}^2\nonumber\\
\lesssim&\,
(1+t)^{- {3}  (\frac{1}{q} - \frac{1}{2} )-l}\Big(\big( \|U_0\|_{\mathcal{H}^3}^2+  \|U_0\|_{\mathcal{Z}_q}\|U_0\|_{\mathcal{Z}_2} 
+\tilde{\mathcal{E}}_0 \big)\mathcal{E} _{1, \infty}(t)+\|U_0\|_{\mathcal{H}^3\cap\mathcal{Z}_q}^2\Big),    
\end{align*}
for $l=0,1$.
Consequently, based on the definition of $\mathcal{E}_{1,\infty}(t)$   in \eqref{G6.14}, we   further derive  that
\begin{align*}
\mathcal{E}_{1,\infty}(t)\lesssim  \big(\|U_0\|_{\mathcal{H}^3}^2+\|U_0\|_{\mathcal{Z}_q}\|U_0\|_{\mathcal{H}^3}+\tilde{\mathcal{E}}_0    \big)\mathcal{E}_{1,\infty}(t)+\|U_0\|_{\mathcal{H}^3\cap{\mathcal{Z}_q}}^2.   
\end{align*}
Due to the smallness of $\|U_0\|_{\mathcal{H}^3}$ and $\tilde{\mathcal{E}}_0 $, we have  
$$
\mathcal{E}_{1,\infty}(t) \lesssim \|U_0\|_{\mathcal{H}^3 \cap \mathcal{Z}_q}^2,
$$  
which verifies that \eqref{d1-1} and \eqref{d1} hold.

Based on the time-decay rates established in \eqref{d1-1}--\eqref{d1}, 
we further investigate the time-decay rates of $(\varrho,u,f)$ in $L^p$-norm  
 for $p\in [2,+\infty]$. We observe that  
\begin{align*}
\|(\varrho,u )\|_{L^2} &\lesssim (1+t)^{-\frac{3}{2} ( \frac{1}{q}-\frac{1}{2} )}, \quad 
\|(\varrho,u )\|_{L^6} \lesssim \|\nabla(\varrho,u )\|_{L^2} \lesssim (1+t)^{-\frac{3}{2} 
( \frac{1}{q}-\frac{1}{2}  )-\frac{1}{2}}, \\
\|f\|_{L_v^2(L^{2})} &\lesssim (1+t)^{-\frac{3}{2} ( \frac{1}{q}-\frac{1}{2} )}, \quad 
\| f\|_{L_v^2(L^{6})} \lesssim \|\nabla f\|_{L_v^2(L^{2})} \lesssim (1+t)^{-\frac{3}{2} ( \frac{1}{q}-\frac{1}{2} )-\frac{1}{2}}.
\end{align*}  
For $p\in [2,6]$, applying the interpolation inequality yields  
\begin{align*}
\|(\varrho,u)\|_{L^p} &\lesssim \|(\varrho,u)\|_{L^2}^{\zeta}\|(\varrho,u )\|_{L^6}^{1-\zeta} \lesssim (1+t)^{-\frac{3}{2}(\frac{1}{q}-\frac{1}{p})}, \\
\|f\|_{L_v^2(L^{p})} &\lesssim \|f\|_{L_v^2(L^{2})}^{\zeta}\|f\|_{L_v^2(L^{6})}^{1-\zeta} \lesssim (1+t)^{-\frac{3}{2}(\frac{1}{q}-\frac{1}{p})},
\end{align*}  
where $\zeta = \frac{6-p}{2p} \in [0,1]$.  

By employing  Lemma \ref{L2.1}, we obtain  
\begin{align*}
\|(\varrho,u)\|_{L^{\infty}} &\lesssim \|\nabla(\varrho,u )\|_{H^{1}} \lesssim (1+t)^{-\frac{3}{2}(\frac{1}{q}-\frac{1}{6})}, \\
\|f\|_{L_v^2(L^{\infty})} &\lesssim \|\nabla f\|_{L_v^2(H^{1})} \lesssim (1+t)^{-\frac{3}{2}(\frac{1}{q}-\frac{1}{6})}.
\end{align*}  
For $p \in [6,\infty]$, using the interpolation inequality again leads to  
\begin{align*}
\|(\varrho,u )\|_{L^p} &\lesssim \|(\varrho,u )\|_{L^6}^{\zeta'} \|(\varrho,u )\|_{L^{\infty}}^{1-\zeta'} \lesssim (1+t)^{-\frac{3}{2}(\frac{1}{q}-\frac{1}{6})}, \\
\|f\|_{L_v^2(L ^{p})} &\lesssim \|f\|_{L_v^2(L ^{6})}^{\zeta'} \|f\|_{L_v^2(L ^{\infty})}^{1-\zeta'} \lesssim (1+t)^{-\frac{3}{2}(\frac{1}{q}-\frac{1}{6})},
\end{align*}  
where $\zeta' = \frac{6}{p} \in [0,1]$. This confirms that \eqref{d2-1}--\eqref{d2} holds.

Finally, we prove the last inequality \eqref{LNW-new}.
It follows from \eqref{C2}$_2$ and \eqref{G3.19}$_2$ that
\begin{align*}
\partial_t(b-u)+2(b-u)=&\,(2+\varrho)au-\varrho(b-u)+u\cdot\nabla u+\nabla a +\Big( \frac{P^\prime(\varrho+1)}{\varrho+1}-P^\prime(1)\Big)\nabla\varrho \nonumber\\
&+P^\prime(1)\nabla\varrho+{\rm div}\big\langle \{v\otimes v-{\rm Id}\}\sqrt{M},\{\mathbf{I}-\mathbf{P}\}f         \big\rangle\nonumber\\
\equiv:&\,\mathcal{Q}_1,
\end{align*} 
which gives rise to
\begin{align}\label{LNWG6.23}
b-u=e^{-2t}(b_0-u_0)+\int_0^t e^{-2(t-\tau)} \mathcal{Q}_1 {\rm d}\tau.    
\end{align}
Taking the $L^2$-norm on both sides of \eqref{LNWG6.23} and using the decay estimates \eqref{d1-1}--\eqref{d1}, we further have
\begin{align}\label{LNWG6.24}
\|(b-u)(t)\|_{L^2}\lesssim&\, e^{-2t}\|b_0-u_0\|_{L^2}+\big(1+\varepsilon^\frac{1}{2}_1\big)\int_0^t e^{-2(t-\tau)} \big(\|\nabla(a,b,\varrho)\|_{L^2}+ \|\nabla f\|_{L_v^2(L^2)}\big){\rm d\tau}\nonumber\\
&+\int_0^t e^{-2(t-\tau)} \big(  \|u\cdot\nabla u\|_{L^2} +\|a u\|_{L^2} +\|\varrho\|_{L^\infty} \|b-u\|_{L^2}     \big){\rm d}\tau\nonumber\\
\lesssim&\, e^{-2t}+\int_0^t e^{-2(t-\tau)} (1+\tau)^{-\frac{3}{2} (\frac{1}{q}-\frac{1}{2}   )-\frac{1}{2} }{\rm d \tau}\nonumber\\
&+ \bigg( \int_0^t  e^{-4(t-\tau)} (1+\tau)^{-3 (\frac{1}{q}-\frac{1}{2}  )-1}{\rm d}\tau  \bigg)^\frac{1}{2} \|b-u\|_{L_t^2(L^2)}       \nonumber\\
\lesssim&\, (1+t)^{-\frac{3}{2} (\frac{1}{q}-\frac{1}{2}   )-\frac{1}{2}}.
\end{align}
We now proceed to derive the remaining time - decay estimate for the microscopic component $\{\mathbf{I}-\mathbf{P}\}f$ in \eqref{LNW-new}. For this purpose, we consider the equation \eqref{C2}$_3$, from which we get
\begin{align}\label{LNWG6.25}
&\partial_t \{\mathbf{I}-\mathbf{P}\}f-(\varrho+1)\mathcal{L}{\{\mathbf{I}-\mathbf{P}\}}f+v\cdot\nabla\{\mathbf{I}-\mathbf{P}\}f+\partial_t\mathbf{P}f \nonumber\\
&\quad=-v\cdot\nabla\mathbf{P}f-(\varrho+1) u\cdot\nabla_v f+(\varrho+1)(u-b)\cdot v\sqrt{M}+\frac{1}{2}(\varrho+1)u\cdot v f.
\end{align}
By taking the $L^2_{x,v}$ inner product of \eqref{LNWG6.25} and using \eqref{G2.5-2}, H\"{o}lder's inequality, 
 Lemma \ref{L2.1}, and Young's inequality, we end up with
\begin{align*}
 &\frac{{\rm d}}{{\rm d}t}\|\{{\mathbf{I}-\mathbf{P}}\}f\|_{L_{x,v}^2}^2+\bar{\lambda}  \|\{{\mathbf{I}-\mathbf{P}}\}f\|_{\nu}^2\nonumber\\
&\quad \lesssim
\| \partial_t \mathbf{P}f\|_{L_{x,v}^2}^2+\big(1+\|(\varrho,u)\|_{H^2}^2 \big)\big(\|\nabla(a,b)\|_{L^2}^2+\|b-u\|_{L^2}^2+\|u\|_{H^2}^2\|\{\mathbf{I}-\mathbf{P}\}f\|_{\nu}^2\big),
\end{align*}
for some constant $\bar{\lambda}>0$,
which together with the fact that $\|\{\mathbf{I}-\mathbf{P}\}f\|_{L_{x,v}^2}\lesssim\|\{\mathbf{I}-\mathbf{P}\}f\|_{\nu}$ yields 
\begin{align}\label{LNWG6.26}
 &\|\{\mathbf{I}-\mathbf{P}\}f(t)\|_{L_{x,v}^2}^2\nonumber\\
 \lesssim&\, e^{-2\bar\lambda t}  \|\{\mathbf{I}-\mathbf{P}\}f_0\|_{L_{x,v}^2}^2+\int_0^t e^{-2(t-\tau)}\big( \|\partial_t{\mathbf{P}f} \|_{L_{x,v}^2}^2+\|\nabla(a,b)\|_{L^2}^2+ \|b-u\|_{L^2}^2  \big){\rm d}\tau \nonumber\\
 \lesssim&\, e^{-2\bar\lambda t}+ \int_0^t e^{-2(t-\tau)}  \|\partial_t{\mathbf{P}f} \|_{L_{x,v}^2}^2{\rm d}\tau+\int_0^t e^{-2(t-\tau)}(1+\tau)^{-\frac{5}{2}}{\rm d}\tau\nonumber\\
 \lesssim&\,(1+t)^{-3 ( \frac{1}{q}-\frac{1}{2}  )-1} + \int_0^t e^{-2(t-\tau)}  \|\partial_t{\mathbf{P}f} \|_{L_{x,v}^2}^2{\rm d}\tau.    
\end{align}
On the other hand, from the equations \eqref{G3.19}$_1$ to \eqref{G3.19}$_2$, it holds that
\begin{align}\label{LNWG6.27}
 \|\partial_t{\mathbf{P}f} \|_{L_{x,v}^2}^2\lesssim &\, \|\partial_t a\|_{L^2}^2+\|\partial_tb\|_{L^2}^2\nonumber\\
 \lesssim&\, (1+\|\varrho\|_{H^2}) \big(\|\nabla (a,b )\|_{L^2}+\|\nabla f\|_{L_{x,v}^2}+\|b-u\|_{L^2} +\|au\|_{L^2}+\|u\cdot b\|_{L^2}\big) \nonumber\\
 \lesssim&\, (1+t)^{-3 ( \frac{1}{q}-\frac{1}{2}  )-1}.
\end{align}
Putting the estimate \eqref{LNWG6.27} into \eqref{LNWG6.26}, we derive
\begin{align}\label{LNWG6.28}
 \|\{\mathbf{I}-\mathbf{P}\}f(t)\|_{L_{x,v}^2} \lesssim (1+t)^{-\frac{3}{2} ( \frac{1}{q}-\frac{1}{2}  )-\frac{1}{2}}.      
\end{align}
Combining the estimates \eqref{LNWG6.24} and \eqref{LNWG6.28}, we consequently obtain \eqref{LNW-new}.
Hence, we complete the proof of Theorem \ref{T1.4}. 
\end{proof}

\medskip

\section{The Periodic Domain Case}

In this section, we address the global existence of  $(\varrho,u,f)$ to the Euler-VFP system \eqref{C2} 
in the spatially periodic domain $\mathbb{T}^3$.  
\begin{thm}\label{T1.5} 
Assume that the initial data $(\varrho_0, u_0, f_0)$ satisfy $F_0 = M + \sqrt{M} f_0 \geq 0$, 
the norm $\|(\varrho_0, u_0)\|_{H^3(\mathbb{T}^3)} + \|f_0\|_{H_{x, v}^3(\mathbb{T}^3\times \mathbb{R}^3)}$ is sufficiently small, and  
\begin{gather*}  
\int_{\mathbb{T}^3} a_0 \, \mathrm{d}x = 0, \quad  
\int_{\mathbb{T}^3} \varrho_0 \, \mathrm{d}x = 0, \quad  
\int_{\mathbb{T}^3} \big(b_0 + (1 + \varrho_0) u_0\big) \, \mathrm{d}x = 0,  
\end{gather*}  
where  
\begin{align*}  
a_0 = \int_{\mathbb{R}^3} \sqrt{M} f_0(x, v) \, \mathrm{d}v, \quad  
b_0 = \int_{\mathbb{R}^3} v \sqrt{M} f_0(x, v) \, \mathrm{d}v.  
\end{align*}  
Then, the Euler-VFP system \eqref{C2} in $\mathbb{T}^3$ admits a unique global strong solution $(\varrho, u, f)$ satisfying $F = M + \sqrt{M} f \geq 0$, and  
\begin{gather*}  
\varrho, u \in C([0, \infty); H^3(\mathbb{T}^3)), \quad f \in C([0, \infty); H_{x,v}^3(\mathbb{T}^3\times\mathbb{R}^3)), \\  
\|(\varrho, u)(t)\|_{H^3(\mathbb{T}^3)} + \|f(t)\|_{H_{x, v}^3(\mathbb{T}^3\times\mathbb{R}^3)} 
\lesssim \big(\|(\varrho_0, u_0)\|_{H^3(\mathbb{T}^3)} + \|f_0\|_{H_{x, v}^3(\mathbb{T}^3\times\mathbb{R}^3)} 
\big) e^{-\varkappa t},  
\end{gather*}  
for some constant $\varkappa > 0$ and for all $t \geq 0$. 
\end{thm}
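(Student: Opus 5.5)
The plan is to redo the uniform a priori estimate of Section 3 on $\mathbb{T}^3$ with $\mu=0$. Poincar\'e's inequality will then upgrade the dissipation so that it controls the full energy, which turns the differential inequality into exponential decay. Local existence follows as in Theorem \ref{T1.1} by linearization and contraction; all the ingredients there (Lemmas \ref{L2.1}--\ref{L2.2}, the commutator estimates, the moment equations \eqref{G3.19}) hold on $\mathbb{T}^3$ up to lower-order terms. Global existence then follows from a continuity argument under the a priori assumption \eqref{G3.1}. Nonnegativity $F\ge0$ comes from the maximum principle exactly as before.

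\textbf{Step 1: conservation laws.} Integrating \eqref{A2} gives conservation of $\int_{\mathbb{T}^3}\rho\,dx$, of $\int\int F\,dv\,dx$, and of the total momentum $\int\rho u\,dx+\int\int vF\,dv\,dx$. In perturbation variables these read $\int\varrho\,dx=0$, $\int a\,dx=0$ and $\int\big((1+\varrho)u+b\big)dx=0$ for all $t$, by the assumptions on the initial data.

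Since $\int\varrho=\int a=0$, Poincar\'e gives $\|(\varrho,a)\|_{L^2}\lesssim\|\nabla(\varrho,a)\|_{L^2}$. For the velocities, let $\bar g$ denote the mean of $g$. Then $2\bar u=\overline{u-b}+\overline{u+b}$. The momentum identity gives $\overline{u+b}=-\overline{\varrho u}$, which is bounded by $\|\varrho\|_{L^2}\|u\|_{L^2}\lesssim\delta\|\nabla\varrho\|_{L^2}$. Hence
\begin{align*}
\|u\|_{L^2}+\|b\|_{L^2}\lesssim \|b-u\|_{L^2}+\|\nabla(u,b)\|_{L^2}+\delta\|\nabla\varrho\|_{L^2}.
\end{align*}

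\textbf{Step 2: energy inequality.} Lemmas \ref{L3.1}--\ref{L3.5} are repeated on $\mathbb{T}^3$ with $\mu=0$; the viscous terms simply drop out. This yields
\begin{align*}
\frac{d}{dt}\mathcal{E}(t)+\lambda\mathcal{D}(\varrho,u,f)(t)\le0,
\end{align*}
with $\mathcal{E}$ as in \eqref{G3.32} and $\mathcal{E}\sim\|(\varrho,u)\|_{H^3}^2+\|f\|_{H^3_{x,v}}^2$. The dissipation $\mathcal{D}$ contains $\|b-u\|_{H^3}^2$, $\|\nabla(\varrho,a,b)\|_{H^2}^2$, the full microscopic part, and the mixed-derivative terms.

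By Step 1, $\mathcal{D}$ controls $\|(\varrho,a,b,u)\|_{L^2}^2$. Combined with $\|\nabla u\|_{H^2}\lesssim\|b-u\|_{H^3}+\|\nabla b\|_{H^2}$, it controls all of $\|(\varrho,u)\|_{H^3}^2$. Writing $f=\mathbf{P}f+\{\mathbf{I}-\mathbf{P}\}f$ and using $\|\partial^\alpha_x\partial^\beta_v\mathbf{P}f\|\lesssim\|\partial^\alpha(a,b)\|$, it also controls $\|f\|_{H^3_{x,v}}^2$. Therefore $\mathcal{E}\lesssim\mathcal{D}$, and Gronwall gives $\mathcal{E}(t)\le\mathcal{E}(0)e^{-2\varkappa t}$.

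\textbf{Main obstacle.} The delicate step is the momentum mean. The conserved quantity involves the nonlinear term $(1+\varrho)u$, so the zero mode of $u$ is only controlled up to the quadratic error $\overline{\varrho u}$. That error must be absorbed using smallness together with the dissipation of $\nabla\varrho$; the absorption is possible because $\|\varrho\|_{L^2}\lesssim\|\nabla\varrho\|_{L^2}$.

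A secondary check is that the Fourier-free Lemma \ref{L3.3} and Lemma \ref{L3.4} carry over on $\mathbb{T}^3$. They do, since they use only integration by parts and the Sobolev inequalities. On the torus, the inequalities of Lemma \ref{L2.1} hold for mean-zero functions, or with an added $L^2$ term that is again controlled through Step 1.
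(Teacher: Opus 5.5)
Your proposal is correct and follows essentially the same route as the paper. Both arguments use conservation of mass and total momentum with Poincar\'e's inequality to control the zero modes of $(\varrho,a)$ and of $u,b$; your identity $\overline{u+b}=-\overline{\varrho u}$ is a cleaner version of the paper's estimate for $\|u+b\|_{L^2}$, and your remark that the torus Sobolev inequalities pick up $L^2$ terms, absorbed via Step 1 into $\delta\mathcal{D}$, makes explicit a point the paper leaves implicit. Both then rerun the Section 3 energy estimates with $\mu=0$ to obtain $\frac{\mathrm{d}}{\mathrm{d}t}\mathcal{E}+\lambda\mathcal{E}\le 0$ and conclude by Gronwall's inequality.
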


\begin{proof}

From \eqref{A2}, we obtain the conservation of mass for both the fluid and particles:  
\begin{gather*}  
\frac{\mathrm{d}}{\mathrm{d} t} \int\!\int_{\mathbb{T}^3\times\mathbb{R}^3} F \,\mathrm{d}x\,\mathrm{d}v = 0, \quad  
\frac{\mathrm{d}}{\mathrm{d} t} \int_{\mathbb{T}^3} \rho \,\mathrm{d}x = 0,  
\end{gather*}  
as well as the conservation of total momentum:  
\begin{align*}  
\frac{\mathrm{d}}{\mathrm{d} t} \left( \int_{\mathbb{T}^3} \rho u \,\mathrm{d}x 
+ \int\!\int_{\mathbb{T}^3\times \mathbb{R}^3} v F \,\mathrm{d}x\,\mathrm{d}v \right) = 0.  
\end{align*}  
Furthermore, based on the assumptions stated in Theorem \ref{T1.5}, we deduce that  
\begin{align}\label{G7.1}  
\int_{\mathbb{T}^3} a \,\mathrm{d}x = 0, \quad  
\int_{\mathbb{T}^3} \varrho \,\mathrm{d}x = 0, \quad  
\int_{\mathbb{T}^3} \big(b + (1+\varrho)u\big) \,\mathrm{d}x = 0,  
\end{align}  
for all $t \geq 0$.

Since the proofs of local existence and uniqueness for classical solutions $(\varrho, u, f)$ 
follow arguments similar to those in $\mathbb{R}^3$, we omit them here for brevity. 
The key step lies in deriving uniform a priori estimates for $(\varrho,u,f)$ that 
exhibit exponential decay. By employing the Poincar\'{e}'s inequality together with \eqref{G7.1}, we obtain  
\begin{align}
\|a\|_{L^2(\mathbb{T}^3)}\lesssim\,& \|\nabla a\|_{L^2(\mathbb{T}^3)},  \label{G7.2-1}\\
\|\varrho\|_{L^2(\mathbb{T}^3)}\lesssim\,& \|\nabla \varrho\|_{L^2(\mathbb{T}^3)},\label{G7.2-2}\\
\|u+b\|_{L^2(\mathbb{T}^3)}\lesssim\,&\|b+\varrho(1+u)\|_{L^2(\mathbb{T}^3)}+\|\varrho u\|_{L^2(\mathbb{T}^3)}\nonumber\\
\lesssim\,&\big\|\nabla\big(  b+\varrho(1+u)          \big)\big\|_{L^2(\mathbb{T}^3)}
+\|\varrho\|_{L^3}\|u\|_{L^{6}(\mathbb{T}^3)}\nonumber\\
\lesssim\,&\|\nabla(b,u)\|_{L^2(\mathbb{T}^3)}
+ \|\nabla u\|_{L^3(\mathbb{T}^3)}\|\varrho\|_{L^6(\mathbb{T}^3)}\nonumber\\
&+ \|u\|_{L^{\infty}(\mathbb{T}^3)}\|\nabla\varrho\|_{L^2(\mathbb{T}^3)}
+\|\varrho\|_{H^1(\mathbb{T}^3)}\|\nabla u\|_{L^2(\mathbb{T}^3)}\nonumber\\
\lesssim\,&  \|\nabla(\varrho,u,b)\|_{L^2(\mathbb{T}^3)}.\label{G7.2}
\end{align}
Therefore, we conclude from \eqref{G7.2} and the basic triangle inequality that 
\begin{align}
\|u\|_{L^2(\mathbb{T}^3)}\lesssim\,&  \|b-u\|_{L^2(\mathbb{T}^3)}+  \|u+b\|_{L^2(\mathbb{T}^3)}
\lesssim  \|b-u\|_{L^2(\mathbb{T}^3)}+ \|\nabla(\varrho,u,b)\|_{L^2(\mathbb{T}^3)},\label{G7.3}\\
\|b\|_{L^2(\mathbb{T}^3)}\lesssim\,& \|b-u\|_{L^2(\mathbb{T}^3)}+\|u\|_{L^2(\mathbb{T}^3)}
\lesssim \|b-u\|_{L^2(\mathbb{T}^3)}+\|\nabla(\varrho,u,b)\|_{L^2(\mathbb{T}^3)}.\label{G7.4}
\end{align}
By combining \eqref{G7.2-1}--\eqref{G7.4} with Lemma \ref{L2.2}, 
one has the estimates of $(\varrho, u, f)$ in   $L^p$-norm for  $ 2\leq p\leq\infty $  as follows:
\begin{align}\label{decay-c}
&\|(\varrho,u )\|_{L^p(\mathbb{T}^3)}+\|f\|_{L_v^2(L^p(\mathbb{T}^3))} 
\lesssim   \|\nabla (  \varrho, u, a,b )\|_{H^1(\mathbb{T}^3)} + \|b-u\|_{L^2(\mathbb{T}^3)}  .    
\end{align}

Now, we define the energy functional $\mathcal{E}_{\mathbb{T}}(t)$ 
and the corresponding dissipation rate functional $\mathcal{D}_{\mathbb{T}}(t)$ in $\mathbb{T}^3$, following the same definitions as those in $\mathbb{R}^3$ (see \eqref{G3.32} and \eqref{D0}). Similar to the estimates established in Lemmas \ref{L3.2}–\ref{L3.5} and \eqref{decay-c}, we eventually deduce that     
\begin{align}\label{G7.6}
\frac{{\rm d}}{{\rm d}t}\mathcal{E}_{\mathbb{T}}(t)+\lambda_{16} \mathcal{E}_{\mathbb{T}}(t)\leq 0,\quad 0\leq t<T,
\end{align}
for some constant $\lambda_{16}>0$.
Consequently, applying Gronwall's inequality yields
 \begin{align*}\mathcal{E}_{\mathbb{T}}(t)\leq e^{-\lambda_{16} t}\mathcal{E}_{\mathbb{T}}(0),\end{align*}
  which establishes the exponential decay of $\mathcal{E}_{\mathbb{T}}(t)$. 
  Since $\mathcal{E}_{\mathbb{T}}(t) \sim \|(\varrho,u)\|_{{H}^3(\mathbb{T}^3)}^2
  +\|f\|_{H_{x,v}^3(\mathbb{T}^3\times\mathbb{R}^3)}^2$ decaying exponentially in time, the proof of Theorem \ref{T1.5} is therefore completed.
\end{proof}

%
%
%\bigskip
%\textbf{Conflict of interest statement:}
%The authors confirm that there are no conflicts of interest associated with this research.
%
%\bigskip	
%\textbf{Data availability statement:}
% 	Data sharing is not applicable to this article as no data sets were generated or analyzed during the current study.
	
\medskip 
\section*{Acknowledgements} 
F. Li and   J. Ni  were supported by NSFC (Grant No.  12331007).  
 F. Li was also supported by the “333 Project" of Jiangsu Province.
D. Wang was supported in part by NSF grants DMS-2219384 and DMS-2510532.

\bibliographystyle{plain}

\end{document}